\documentclass{article}

\usepackage{geometry}
\usepackage{hyperref}
\usepackage{indentfirst}

\usepackage{graphicx,wrapfig,lipsum}
\usepackage{color}
\usepackage{transparent}
\usepackage{subcaption}

\usepackage{amsthm}
\usepackage{amssymb}
\usepackage{amsfonts}
\usepackage{amsmath}
\usepackage{mathtools}
\usepackage{enumitem}

\usepackage{multirow}

\usepackage{adjustbox}
\usepackage{url} 
\usepackage[T1]{fontenc}

\usepackage{float}

\usepackage{tikz} 
\usepackage{tikz-cd}
\usetikzlibrary{positioning}
\usetikzlibrary{arrows}

\usepackage{diagbox}

\usepackage{titlesec}
\titleformat{\subsection}[runin]
  {\normalfont\bfseries}
  {\thesubsection}
  {0.5em}
  {}
  [.]

\usepackage{titlesec}
\titleformat{\subsubsection}[runin]
  {\normalfont\bfseries}
  {\thesubsubsection}
  {0.5em}
  {}
  [.]

\allowdisplaybreaks
\usepackage{mathrsfs}

\usepackage{stmaryrd}
\usepackage{old-arrows}

\usepackage{leftindex}

\usepackage{mathdots}

\DeclareFontFamily{U}{mathx}{}
\DeclareFontShape{U}{mathx}{m}{n}{<-> mathx10}{}
\DeclareSymbolFont{mathx}{U}{mathx}{m}{n}
\DeclareMathAccent{\widehat}{0}{mathx}{"70}
\DeclareMathAccent{\widecheck}{0}{mathx}{"71}

\newcommand{\C}{\mathbb{C}}
\newcommand{\R}{\mathbb{R}}
\newcommand{\Z}{\mathbb{Z}}
\newcommand{\Proj}{\mathbb{P}}
\newcommand{\Ball}{\mathbb{B}}
\newcommand{\Torus}{\mathbb{T}^2}
\newcommand{\Hyperbolic}{\mathbb{H}}

\newcommand{\CP}{\mathbb{CP}}

\newcommand{\RNum}[1]{\lowercase\expandafter{\romannumeral #1\relax}}
\newcommand{\URNum}[1]{\uppercase\expandafter{\romannumeral #1\relax}}

\DeclareMathOperator{\arccot}{arccot}

\theoremstyle{definition}
\newtheorem*{defx*}{Definition}
\newtheorem{definition}{Definition}[section]

\theoremstyle{plain}
\newtheorem{thmx}{Theorem}

\newtheorem{colx}[thmx]{Corollary}
\newtheorem{prox}[thmx]{Proposition}

\newtheorem{theorem}[definition]{Theorem}
\newtheorem{lemma}[definition]{Lemma}
\newtheorem{corollary}[definition]{Corollary}

\newtheorem{proposition}[definition]{Proposition}

\theoremstyle{remark}
\newtheorem{remark}[definition]{Remark}

\DeclareMathOperator{\Real}{Re}
\DeclareMathOperator{\Image}{Im}
\renewcommand{\Re}{\Real}
\renewcommand{\Im}{\Image}
\DeclareMathOperator{\trace}{tr}

\DeclareMathOperator{\SO}{SO}
\DeclareMathOperator{\GL}{GL}
\DeclareMathOperator{\Unitary}{U}

\DeclareMathOperator{\PU}{PU}
\DeclareMathOperator{\SU}{SU}

\DeclareMathOperator{\Isom}{Isom}

\DeclareMathOperator{\sgn}{sgn}

\DeclareMathOperator{\Cyg}{Cyg}

\newcommand{\MM}{\mathcal{M}}
\newcommand{\SSS}{\mathcal{S}}

\newcommand{\hc}{\Hyperbolic_\C^2}
\newcommand{\isphere}{I}

\DeclareMathOperator{\ball}{ball}
\DeclareMathOperator{\Cayley}{Cayley}

\newcommand{\EE}{\tilde{E}}

\usepackage{calc}

\usepackage{accents}
\newcommand{\dbtilde}[1]{\accentset{\approx}{#1}}

\newcommand{\EEE}{\dbtilde{E}}

\usepackage{amsmath}

\usepackage{stmaryrd}
\SetSymbolFont{stmry}{bold}{U}{stmry}{m}{n}

\renewcommand{\bf}{\boldsymbol}

\newcommand{\fp}{p}

\usepackage{listings}

\usepackage{algorithm}
 \usepackage{algpseudocode}
 \usepackage{algorithmicx}
 \algdef{SE}[DOWHILE]{Do}{doWhile}{\algorithmicdo}[1]{\algorithmicwhile\ #1}%

\algrenewcommand\algorithmicrequire{\textbf{Input:}}
\algrenewcommand\algorithmicensure{\textbf{Output:}}



\NewDocumentCommand\vvec{mg}{\overrightarrow{#1}}

\usepackage[backend=biber,style=alphabetic,sorting=nyt,maxbibnames=10,maxalphanames=10,url=true,doi=false,giveninits=true,backref=true]{biblatex}

\renewbibmacro{in:}{}
\DeclareFieldFormat[article]{journaltitle}{\mkbibemph{#1}}

\DefineBibliographyStrings{english}{%
  backrefpage = {},%
  backrefpages = {}%
}

\renewbibmacro*{finentry}{\iflistundef{pageref}{}{}\finentry}
\renewbibmacro*{pageref}{%
  \iflistundef{pageref}
    {}
    {
      \setunit{\adddot\addspace}\printtext{%
        \ifnumgreater{\value{pageref}}{1}
          {\bibstring{backrefpages}\ppspace}
          {\bibstring{backrefpage}\ppspace}%
        \printlist[pageref][-\value{listtotal}]{pageref}\adddot
      }
    }
}

\usepackage{xurl}

\title{Complex Hyperbolic Elliptics\\Preserving Lagrangian Planes}

\author{
 Mengmeng Xu \\
  \small{\emph{School of Mathematical Sciences}} \\
  \small{\emph{Huaqiao University}} \\
  \small{\emph{362021 Quanzhou, China}} \\
  \small{\emph{email:}} \tt{mengmengxu@hqu.edu.cn}
\and 
 Yibo Zhang \\
  \small{\emph{School of Mathematics}} \\
  \small{\emph{Sun Yat-sen University}} \\
  \small{\emph{510275 Guangzhou, China}} \\
  \small{\emph{email:}} \tt{zhangyibo12342000@gmail.com}
}

\date{}

\begin{document}


\maketitle

\begin{abstract}
For a regular, real elliptic isometry \( f \) of finite order in \(\mathbf{H}_{\mathbb{C}}^2\), we determine the explicit cellular structure of the Ford domain of \(\langle f \rangle\) with respect to the extended Cygan metric, provided the centre of the Ford domain lies on the ideal boundary of a Lagrangian plane preserved by \( f \).
As an application, we give a sufficient condition for complex hyperbolic \((n,\infty,\infty)\)-triangle groups to be discrete and faithful that is easy to verify.
\end{abstract}

%
%
\section{Introduction}

An \emph{elliptic} isometry of a Riemannian symmetric space $X$ fixes a point in its interior.
For a non-torsion elliptic isometry, the action on the boundary $\partial X$ yields circular orbits (i.e., having circular closure) for generic points, while degenerate orbits can also exist, which are often key to the full dynamical picture.
In the case where every boundary orbit is a circle, one may ask whether certain distinguished circles among them can characterise the geometry of the elliptic isometry.

In this papar, we study the case when $X = \hc$ is the \emph{complex hyperbolic space} (cf. \cite{Goldman1999,parker-2003-NCHG,Parker-Platis-CHQF}).
This space is the unique simply connected K\"ahler manifold with constant holomorphic section curvature $-1$, 
equipped with the \emph{Bergman metric}, which induces the \emph{Bergman distance}.
The complex hyperbolic space $\hc$ has only two kinds of totally geodesic submanifolds $L\subset \hc$ of real dimension two: \emph{complex geodesics} (isometric to $\Hyperbolic_\C^1$) and \emph{Lagrangian planes} (isometric to $\Hyperbolic_\R^2)$.
Their ideal boundaries $\partial_\infty L \subset \partial \hc$ are called $\C$-circles and $\R$-circles, respectively.
These circles can also be distinguished by the Cartan's angular invariant $\mathbb{A}(p_1,p_2,p_3)$ of three distinct boundary points (cf. \cite[Section 7.1]{Goldman1999}): the angle is $0$ if the points lie on an $\R$-circle and $\pm \pi/2$ if they lie on a $\C$-circle.

The group of holomorphic isometries of $\hc$ is identified with the \emph{projective unitary group} $\PU(2,1)$, defined as the quotient of the group of unitary matrices preserving a Hermitian form of signature $(2,1)$ by its centre.
In the unit ball model of $\hc$, every elliptic isometry in $\PU(2,1)$ is conjugate to one represented by a diagonal matrix of the form $\text{diag}\{e^{i\alpha}, e^{i\beta},1\}$, where $\alpha, \beta \in [0,2\pi)$.
An elliptic isometry in $\PU(2,1)$ is called \emph{regular elliptic} if it has three distinct eigenvalues.
Such an isometry has no fixed point on the boundary $\partial \hc$
and consequently, assuming it is non-torsion, every boundary orbit is a circle.
An elliptic isometry is called \emph{real elliptic} with angle $\theta$ if it is conjugate to the element represented by $\text{diag}\{e^{i\theta}, e^{-i\theta},1\}$.

Real elliptic isometries are characterised as follows: a regular elliptic isometry $f \in \PU(2,1)$ is real elliptic if and only if it preserves a Lagrangian plane $L$.
Any such $L$ must contain the fixed point $\fp_f$ of $f$.
The family $\mathcal{F}$ of all Lagrangian planes satisfying $f(L) = L$ is parameterised by an angle $\theta \in [0, 2\pi)$, i.e., $\mathcal{F} = \{L_{\theta} \mid \theta \in [0,2\pi) \}$.
The ideal boundaries $\partial_\infty L_\theta$ are disjoint $\R$-circles whose union is a torus $\Torus_f \subset \partial \hc$, called the \emph{fixed torus} of $f$.
For a detailed parameterisation when $f$ is represented by $\text{diag}\{e^{i \theta}, e^{-i \theta}, 1\}$, see Definition \ref{definition::standard_fixed_torus} and Figure \ref{figure::torus}.
The fixed torus is also the ideal boundary of an \emph{extor} in the sense of Goldman \cite[Chapter 8]{Goldman1999} given by the pair of positive eigenvectors.
Different regular, real elliptic isometries may share the same fixed torus.
In particular, $\Torus_f = \Torus_{f^k}$ for any positive integer $k$, unless $f^k$ is the identity or fails to be regular.

\subsection{Ford domains of a real elliptic finite cyclic group}  

We study a one-parameter group $G = \{f_{\theta} \mid \theta \in [0, 2\pi) \} \cong \SO(2)$ of real elliptic isometries sharing the same fixed torus.
Consider the associated \emph{isometric spheres} $I(f_{\theta})$ for $\theta \in (0,2\pi)$ about a marked boundary point $q_\infty \in \partial \hc$.
Intuitively, the isometric sphere $I(f_{\theta})$ is the set of points $p \in \overline{\hc}$
that are equidistant from $q_\infty$ and $f_{\theta}^{-1}(q_\infty)$ (see Definition \ref{definition:isometric_sphere}).
The set $I(f_{\theta})$ is homeomorphic to a $3$-ball and contains the fixed point $\fp_f$.
To make this precise,
we identify the boundary $\partial \hc$ with the one-point compactification of the Heisenberg group $\mathcal{N}$ and extend the Cygan metric $d_{\Cyg}$ on $\mathcal{N}$ to $\overline{\hc} \setminus \{q_\infty\}$ (cf. \cite[Section 4.2]{Parker-1992-Shimizu}).
The isometric sphere \(I(f_\theta)\) is also defined as the set of points whose Cygan distance to \(f_\theta^{-1}(q_\infty)\) equals the Cygan distance from \(\fp_f\) to \(f_\theta^{-1}(q_\infty)\).

Let \(\Torus_f\) be the common fixed torus of generic \(f_\theta \in G\).
Assume that \(q_\infty \in \Torus_f\). Then there exists an isometry \(Q\) preserving \(q_\infty\) that sends the isometric sphere \(I(f_\theta)\) to the Cygan sphere centred at \([-\cot(\theta/2),0] \in \mathcal{N}\) with radius \(1/\sin(\theta/2)\) (see Proposition~\ref{proposition::spheres_EEE}). We now describe the intersection pattern of the isometric spheres \(I(f_\theta)\) explicitly.

\begin{prox}
\label{prox::intersection}
Let $G = \{f_{\theta} \mid \theta \in [0,2\pi) \} \le \PU(2,1)$ be a one-parameter subgroup of real elliptic isometries, isomorphic to $\SO(2)$, in which all regular elliptic isometries share the fixed point $\fp_f$ and fixed torus $\Torus_f$.
Suppose that $q_\infty \in \Torus_f$ and
let $I(f_\theta)$ be the isometric sphere of $f_\theta$ about $q_\infty$.
Then, for any $0 < \theta_1 < \theta_2 < \theta_3 < \theta_4 < 2\pi$,
the following patterns hold:
\begin{enumerate}[label=(\arabic*)]
\item The pairwise intersection $I(f_{\theta_1}) \cap I(f_{\theta_2})$ is homeomorphic to a $2$-ball (i.e., a disk).

\item The triple intersection $I(f_{\theta_1}) \cap I(f_{\theta_2}) \cap I(f_{\theta_3})$ is homeomorphic to a crossing.

\item The quadruple intersection  $I(f_{\theta_1}) \cap I(f_{\theta_2}) \cap I(f_{\theta_3}) \cap I(f_{\theta_4})$
consists solely of the point $\fp_f$.
\end{enumerate}
Moreover, for $0 < \theta_1 < \theta_2 < 2\pi$, the punctured disk $I(f_{\theta_1}) \cap I(f_{\theta_2}) \setminus \{\fp_f\}$, after the removal of $12$ disjoint curves connecting $\fp_f$ to the ideal boundary, is foliated by the crossings $I(\theta_1)\cap I(\theta_2)\cap I(\theta
_3)$, where $\theta_3$ ranges over $(0,2\pi) \setminus \{\theta_1,\theta_2\}$.
\end{prox}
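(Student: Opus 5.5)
After normalising by the isometry \(Q\) of Proposition~\ref{proposition::spheres_EEE}, I will work with the extended Cygan spheres \(\SSS_\theta\) with centre \([-\cot(\theta/2),0]\) and radius \(1/\sin(\theta/2)\). In horospherical coordinates \((z,t,u)\in\C\times\R\times\R_{\ge 0}\), and with the centre \((x_\theta,0)\) where \(x_\theta=-\cot(\theta/2)\in\R\), the sphere \(\SSS_\theta\) is
\[
\bigl(|z-x_\theta|^2+u\bigr)^2+\bigl(t-2x_\theta\Im z\bigr)^2=\frac{1}{\sin^4(\theta/2)}.
\]
I would substitute \(c=\cot(\theta/2)\in\R\). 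Then \(1/\sin^2(\theta/2)=1+c^2\), so the equation becomes
\[
\bigl(|z|^2+2c\Re z+c^2+u\bigr)^2+\bigl(t+2c\Im z\bigr)^2=(1+c^2)^2 .
\]
The map \(\theta\mapsto c\) is a bijection from \((0,2\pi)\) onto \(\R\). The fixed point \(\fp_f\) is the common point \(z=0,t=0,u=1\), and one checks that it lies on every sphere.

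The key step is to expand this equation as a polynomial in \(c\):
\[
P_0+P_1c+P_2c^2+P_3c^3=0 .
\]
The \(c^4\) terms cancel. The coefficients \(P_i\) depend only on the point \((z,t,u)\). Explicitly, with \(A=|z|^2+u\), \(x=\Re z\), \(y=\Im z\), I expect
\[
P_3=4x,\quad P_2=4x^2+2A+4y^2-2,\quad P_1=4xA+4ty,\quad P_0=A^2+t^2-1 .
\]
A point therefore lies on \(\SSS_\theta\) if and only if \(c=\cot(\theta/2)\) is a real root of this cubic in \(c\).

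The three intersection patterns then become statements about cubics.

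For (3): a point on four distinct spheres gives a cubic with four distinct roots, so all \(P_i\) vanish. I would solve \(P_3=P_1=P_0=P_2=0\). From \(P_3=0\) we get \(x=0\), and \(P_1=0\) gives \(ty=0\). Checking both cases, with \(u\ge 0\), should leave only \(\fp_f\). I would also check for boundary solutions at \(u=0\).

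For (2): three prescribed roots \(c_1,c_2,c_3\) force the cubic to be proportional to \((c-c_1)(c-c_2)(c-c_3)\). This gives two independent linear relations among the \(P_i\), hence a real-algebraic curve. I would parametrise it explicitly, probably via \(y\) or \(t\) after the substitution \(x=\lambda\) determined by \(P_3\) and \(P_2\). The goal is to show it is a union of two arcs crossing transversally at \(\fp_f\) and ending on the ideal boundary \(u=0\).

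For (1): the pairwise intersection is the set where \(c_1,c_2\) are roots, i.e. where \(P(c)=(c-c_1)(c-c_2)(\alpha c+\beta)\). I would show this is a topological disk by projecting it onto suitable coordinates, for instance \((y,t)\), and exhibiting it as a graph over a disk, using that each Cygan sphere is a star-shaped 3-ball.

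The foliation statement follows from the same factorisation. Each point \(p\ne\fp_f\) in \(\SSS_{\theta_1}\cap\SSS_{\theta_2}\) has a unique third root \(c_3=-\beta/\alpha\). This defines a map \(p\mapsto\theta_3\) whose level sets are exactly the crossings. The map fails to be defined, or \(\theta_3\) coincides with \(\theta_1\) or \(\theta_2\), exactly where \(\alpha=0\) (the root escapes to infinity, i.e. \(\theta_3=0\)) or where \(c_3=c_1\) or \(c_3=c_2\) (a double root). Each of these loci should consist of a few arcs from \(\fp_f\) to the boundary, for twelve curves in total. This reflects that each crossing has four half-branches, and each degenerate value contributes one crossing.

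The main obstacle is the topology in (1) and the count of exactly 12 removed curves. This requires a careful study of the real-algebraic sets, including their behaviour at \(u=0\) and non-degeneracy at \(\fp_f\). I would handle it by explicit parametrisation, computing the tangent cone at \(\fp_f\) to verify the crossing structure.
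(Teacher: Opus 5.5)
Your reduction to the spheres $\SSS_\theta$ and the cubic identity are correct, and this is a genuinely different route from the paper's. There are two harmless slips: the imaginary part of the extended Cygan distance to $[x_\theta,0]$ is $t+2x_\theta\Im z$, so $P_1=4Ax-4ty$; and three prescribed roots impose three, not two, linear conditions on $(P_0,\dots,P_3)$. The cubic pays off for (3). Four distinct values of $\cot(\theta/2)$ kill the cubic, and $P_3=P_1=P_2=P_0=0$ with $u\ge 0$ forces $(z,t,u)=(0,0,1)$ in a few lines; the paper only obtains (3) at the very end, from the foliation. It also gives the set-theoretic part of the foliation for free. On $D=\SSS_{\theta_1}\cap\SSS_{\theta_2}$ one has $P=(c-c_1)(c-c_2)(\alpha c+\beta)$ with $(\alpha,\beta)\neq(0,0)$ off $\fp_f$. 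Hence the triple intersections minus $\fp_f$ are exactly the fibres of $p\mapsto[\alpha(p):\beta(p)]\in\mathbb{RP}^1$ over $\R\setminus\{c_1,c_2\}$, and so are pairwise disjoint.

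However, everything topological is only announced, and that is the substance of (1), (2) and the foliation statement. You must show that every fibre of $[\alpha:\beta]$, including those over $c_1$, $c_2$ and $\infty$ (the source of the twelve removed curves), is exactly four arcs from $\fp_f$ to $\partial_\infty D$, and that the fibres form a foliation. This needs three things:
(a) The local picture at $\fp_f$. This is actually easy in your setting: the gradients of the sphere equations there are proportional to $(2c,0,0,1)$ in $(x,y,t,u)$. So $T_{\fp_f}D$ is the $(y,t)$-plane, $x$ and $u-1$ vanish to second order on $D$ at $\fp_f$, and the quadratic part of $\alpha c_3+\beta$ is a nondegenerate indefinite form for every $c_3\in\R\cup\{\infty\}$.
(b) Absence of critical points of $[\alpha:\beta]$ on $D\setminus\{\fp_f\}$. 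This is what rules out closed loops, branching and extra components, and what makes the partition a genuine foliation.
(c) Each fibre meets the ideal circle in exactly four points. Item (a) plus a degree argument only gives at least four.

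Items (b) and (c) are what the paper gets from explicit global coordinates $(X,Y)=(\cot(\psi_1/2),\cot(\psi_2/2))$ on $D$. In these, $D=\{W\le 0\}$ and the third sphere is the quartic $Q=0$, analysed in Theorem~\ref{theorem::intersection_of_three}, Proposition~\ref{proposition::Q=0_and_W=0} and Lemma~\ref{lemma::derivative_of_hatx}. Since $D$ is only implicit in horospherical coordinates, you would still need some such parametrisation. For (1), star-shapedness of each Cygan sphere about its own centre does not make $D$ a graph over the $(y,t)$-plane. Instead cite Goldman's theorem that two coequidistant bisectors meet in a Giraud disk, as the paper's remark does.
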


\begin{remark}
Here, a \emph{crossing} is defined as the set $\{(x,0),(0,y) \mid |x| \le 1, |y| \le 1\} \subset \R^2$.
\end{remark}

\begin{remark}
The assertion (1) in Proposition~\ref{prox::intersection} is well known. Indeed, the isometric spheres \(I(f_\theta)\) are closures of coequidistant metric bisectors, so their pairwise intersection is connected and its unique connected component is a disk (cf. \cite[Lemmas 9.1.5, 9.1.6 and Theorem 9.2.6]{Goldman1999}), called a \emph{Giraud disk}.
\end{remark}

\begin{remark}
Every isometric sphere \(I(f_\theta)\) extends to an extor in \(\C \Proj^2\). The proof of the assertion (2) in Proposition~\ref{prox::intersection} and Figure~\ref{figure::crossings} suggest that the intersection of such a triple of extors is a figure eight, while a corresponding extension of the foliation should also occur (see Figure \ref{figure::foliations}). We do not pursue this result in this paper.
\end{remark}

For a subgroup $G \le \PU(2,1)$,
the isometric spheres $I(g)$, $g \in G$, form the $3$-dimensional faces of the \emph{Ford domain} of $G$ centred at the marked boundary point $q_\infty \in \partial \hc$.
This domain, denoted by $D_G$, arises as the limit of Dirichlet domains of $G$ when the centre point approaches $q_\infty$ (see Section \ref{section::ford_domain}; cf. \cite[Section 9.3]{Goldman1999}).
From Proposition \ref{prox::intersection}, we deduce the following.

\definecolor{facecentral}{RGB}{232,232,232}
\definecolor{f8fill}{RGB}{219,234,254}
\definecolor{primefill}{RGB}{255,237,213}
\definecolor{unprimefill}{RGB}{220,252,231}
\definecolor{cutred}{RGB}{190,35,35}

\begin{figure}[ht]
\begin{center}
\begin{tikzpicture}[x=0.95cm,y=0.95cm,line join=round,line cap=round]
  \tikzset{
    edge label/.style={font=\tiny,inner sep=0.40pt,fill opacity=0.91,text opacity=1,rounded corners=0.35pt},
    face label/.style={font=\scriptsize\bfseries,inner sep=0pt},
    big face label/.style={font=\Large\bfseries,inner sep=0pt}
  }

  \path[fill=f8fill] (2.9248,0.6676) -- (2.9248,-0.6676) -- (3.5041,-1.8705) -- (4.5479,-2.7029) -- (5.8496,-3.0000) -- (7.1512,-2.7029) -- (8.1951,-1.8705) -- (8.7744,-0.6676) -- (8.7744,0.6676) -- (8.1951,1.8705) -- (7.1512,2.7029) -- (5.8496,3.0000) -- (4.5479,2.7029) -- (3.5041,1.8705) -- cycle; 
  \path[fill=facecentral] (2.9248,-0.6676) -- (2.9248,0.6676) -- (2.3455,1.8705) -- (1.3017,2.7029) -- (0.0000,3.0000) -- (-1.3017,2.7029) -- (-2.3455,1.8705) -- (-2.9248,0.6676) -- (-2.9248,-0.6676) -- (-2.3455,-1.8705) -- (-1.3017,-2.7029) -- (-0.0000,-3.0000) -- (1.3017,-2.7029) -- (2.3455,-1.8705) -- cycle; 
  \path[fill=primefill] (2.3455,1.8705) -- (2.9248,0.6676) -- (3.5041,1.8705) -- cycle; 
  \path[fill=unprimefill] (2.9248,-0.6676) -- (2.3455,-1.8705) -- (3.5041,-1.8705) -- cycle; 
  \path[fill=primefill] (2.3455,1.8705) -- (3.5041,1.8705) -- (3.5041,3.0290) -- (2.3455,3.0290) -- cycle; 
  \path[fill=primefill] (2.3455,3.0290) -- (3.5041,3.0290) -- (3.5041,4.1876) -- (2.3455,4.1876) -- cycle; 
  \path[fill=primefill] (2.3455,4.1876) -- (3.5041,4.1876) -- (3.5041,5.3462) -- (2.3455,5.3462) -- cycle; 
  \path[fill=primefill] (2.3455,5.3462) -- (3.5041,5.3462) -- (3.5041,6.5048) -- (2.3455,6.5048) -- cycle; 
  \path[fill=primefill] (2.3455,6.5048) -- (3.5041,6.5048) -- (2.9248,7.5081) -- cycle; 
  \path[fill=unprimefill] (3.5041,-1.8705) -- (2.3455,-1.8705) -- (2.3455,-3.0290) -- (3.5041,-3.0290) -- cycle; 
  \path[fill=unprimefill] (3.5041,-3.0290) -- (2.3455,-3.0290) -- (2.3455,-4.1876) -- (3.5041,-4.1876) -- cycle; 
  \path[fill=unprimefill] (3.5041,-4.1876) -- (2.3455,-4.1876) -- (2.3455,-5.3462) -- (3.5041,-5.3462) -- cycle; 
  \path[fill=unprimefill] (3.5041,-5.3462) -- (2.3455,-5.3462) -- (2.3455,-6.5048) -- (3.5041,-6.5048) -- cycle; 
  \path[fill=unprimefill] (3.5041,-6.5048) -- (2.3455,-6.5048) -- (2.9248,-7.5081) -- cycle; 

  \draw[black,line width=0.76pt] (2.9248,0.6676) -- (2.9248,-0.6676); 
  \draw[black,line width=0.76pt] (2.9248,-0.6676) -- (3.5041,-1.8705); 
  \draw[black,line width=0.76pt] (3.5041,1.8705) -- (2.9248,0.6676); 
  \draw[black,line width=0.76pt] (2.9248,0.6676) -- (2.3455,1.8705); 
  \draw[black,line width=0.76pt] (2.3455,-1.8705) -- (2.9248,-0.6676); 
  \draw[black,line width=0.76pt] (3.5041,1.8705) -- (2.3455,1.8705); 
  \draw[black,line width=0.76pt] (2.3455,-1.8705) -- (3.5041,-1.8705); 
  \draw[black,line width=0.76pt] (3.5041,3.0290) -- (2.3455,3.0290); 
  \draw[black,line width=0.76pt] (3.5041,4.1876) -- (2.3455,4.1876); 
  \draw[black,line width=0.76pt] (3.5041,5.3462) -- (2.3455,5.3462); 
  \draw[black,line width=0.76pt] (3.5041,6.5048) -- (2.3455,6.5048); 
  \draw[black,line width=0.76pt] (2.3455,-3.0290) -- (3.5041,-3.0290); 
  \draw[black,line width=0.76pt] (2.3455,-4.1876) -- (3.5041,-4.1876); 
  \draw[black,line width=0.76pt] (2.3455,-5.3462) -- (3.5041,-5.3462); 
  \draw[black,line width=0.76pt] (2.3455,-6.5048) -- (3.5041,-6.5048); 

  \draw[black,line width=0.70pt] (3.5041,-1.8705) -- (4.5479,-2.7029); 
  \draw[black,line width=0.70pt] (4.5479,-2.7029) -- (5.8496,-3.0000); 
  \draw[black,line width=0.70pt] (5.8496,-3.0000) -- (7.1512,-2.7029); 
  \draw[black,line width=0.70pt] (7.1512,-2.7029) -- (8.1951,-1.8705); 
  \draw[black,line width=0.70pt] (8.1951,-1.8705) -- (8.7744,-0.6676); 
  \draw[black,line width=0.70pt] (8.7744,-0.6676) -- (8.7744,0.6676); 
  \draw[black,line width=0.70pt] (8.7744,0.6676) -- (8.1951,1.8705); 
  \draw[black,line width=0.70pt] (8.1951,1.8705) -- (7.1512,2.7029); 
  \draw[black,line width=0.70pt] (7.1512,2.7029) -- (5.8496,3.0000); 
  \draw[black,line width=0.70pt] (5.8496,3.0000) -- (4.5479,2.7029); 
  \draw[black,line width=0.70pt] (4.5479,2.7029) -- (3.5041,1.8705); 
  \draw[black,line width=0.70pt] (2.3455,1.8705) -- (1.3017,2.7029); 
  \draw[black,line width=0.70pt] (1.3017,2.7029) -- (0.0000,3.0000); 
  \draw[black,line width=0.70pt] (0.0000,3.0000) -- (-1.3017,2.7029); 
  \draw[black,line width=0.70pt] (-1.3017,2.7029) -- (-2.3455,1.8705); 
  \draw[black,line width=0.70pt] (-2.3455,1.8705) -- (-2.9248,0.6676); 
  \draw[black,line width=0.70pt] (-2.9248,0.6676) -- (-2.9248,-0.6676); 
  \draw[black,line width=0.70pt] (-2.9248,-0.6676) -- (-2.3455,-1.8705); 
  \draw[black,line width=0.70pt] (-2.3455,-1.8705) -- (-1.3017,-2.7029); 
  \draw[black,line width=0.70pt] (-1.3017,-2.7029) -- (-0.0000,-3.0000); 
  \draw[black,line width=0.70pt] (-0.0000,-3.0000) -- (1.3017,-2.7029); 
  \draw[black,line width=0.70pt] (1.3017,-2.7029) -- (2.3455,-1.8705); 
  \draw[black,line width=0.70pt] (3.5041,1.8705) -- (3.5041,3.0290); 
  \draw[black,line width=0.70pt] (2.3455,3.0290) -- (2.3455,1.8705); 
  \draw[black,line width=0.70pt] (3.5041,3.0290) -- (3.5041,4.1876); 
  \draw[black,line width=0.70pt] (2.3455,4.1876) -- (2.3455,3.0290); 
  \draw[black,line width=0.70pt] (3.5041,4.1876) -- (3.5041,5.3462); 
  \draw[black,line width=0.70pt] (2.3455,5.3462) -- (2.3455,4.1876); 
  \draw[black,line width=0.70pt] (3.5041,5.3462) -- (3.5041,6.5048); 
  \draw[black,line width=0.70pt] (2.3455,6.5048) -- (2.3455,5.3462); 
  \draw[black,line width=0.70pt] (3.5041,6.5048) -- (2.9248,7.5081); 
  \draw[black,line width=0.70pt] (2.9248,7.5081) -- (2.3455,6.5048); 
  \draw[black,line width=0.70pt] (2.3455,-1.8705) -- (2.3455,-3.0290); 
  \draw[black,line width=0.70pt] (3.5041,-3.0290) -- (3.5041,-1.8705); 
  \draw[black,line width=0.70pt] (2.3455,-3.0290) -- (2.3455,-4.1876); 
  \draw[black,line width=0.70pt] (3.5041,-4.1876) -- (3.5041,-3.0290); 
  \draw[black,line width=0.70pt] (2.3455,-4.1876) -- (2.3455,-5.3462); 
  \draw[black,line width=0.70pt] (3.5041,-5.3462) -- (3.5041,-4.1876); 
  \draw[black,line width=0.70pt] (2.3455,-5.3462) -- (2.3455,-6.5048); 
  \draw[black,line width=0.70pt] (3.5041,-6.5048) -- (3.5041,-5.3462); 
  \draw[black,line width=0.70pt] (2.3455,-6.5048) -- (2.9248,-7.5081); 
  \draw[black,line width=0.70pt] (2.9248,-7.5081) -- (3.5041,-6.5048); 

  \draw[cutred,dash pattern=on 3.0pt off 1.8pt,line width=1.0pt] (8.7744,-0.6676) -- (8.7744,0.6676);
  \draw[cutred,dash pattern=on 3.0pt off 1.8pt,line width=1.0pt] (-2.9248,0.6676) -- (-2.9248,-0.6676);

  \node[big face label] at (5.8496,0.0000) {$f_8$};
  \node[big face label] at (-0.0000,0.0000) {$f_1$};
  \node[face label] at (2.9248,1.4695) {$f'_2$};
  \node[face label] at (2.9248,-1.4695) {$f_7$};
  \node[face label] at (2.9248,2.4498) {$f'_3$};
  \node[face label] at (2.9248,3.6083) {$f'_4$};
  \node[face label] at (2.9248,4.7669) {$f'_5$};
  \node[face label] at (2.9248,5.9255) {$f'_6$};
  \node[face label] at (2.9248,6.9392) {$f'_7$};
  \node[face label] at (2.9248,-2.4498) {$f_6$};
  \node[face label] at (2.9248,-3.6083) {$f_5$};
  \node[face label] at (2.9248,-4.7669) {$f_4$};
  \node[face label] at (2.9248,-5.9255) {$f_3$};
  \node[face label] at (2.9248,-6.9392) {$f_2$};

  \node[edge label,rotate=-90.00] at (2.8098,0.0000) {$e_{1,8}$};
  \node[edge label,rotate=-64.29] at (3.3946,-1.1822) {$e_{8,7}$};
  \node[edge label,rotate=-38.57] at (3.9138,-2.4274) {$e_{8,6}$};
  \node[edge label,rotate=-12.86] at (5.1587,-3.0269) {$e_{8,5}$};
  \node[edge label,rotate=12.86] at (6.5404,-3.0269) {$e_{8,4}$};
  \node[edge label,rotate=38.57] at (7.7854,-2.4274) {$e_{8,3}$};
  \node[edge label,rotate=64.29] at (8.6469,-1.3471) {$e_{8,2}$};
  \node[edge label,rotate=-90.00] at (8.9544,0.0000) {$e'_{1,8}$};
  \node[edge label,rotate=-64.29] at (8.6469,1.3471) {$e'_{8,7}$};
  \node[edge label,rotate=-38.57] at (7.7854,2.4274) {$e'_{8,6}$};
  \node[edge label,rotate=-12.86] at (6.5404,3.0269) {$e'_{8,5}$};
  \node[edge label,rotate=12.86] at (5.1587,3.0269) {$e'_{8,4}$};
  \node[edge label,rotate=38.57] at (3.9138,2.4274) {$e'_{8,3}$};
  \node[edge label,rotate=64.29] at (3.3946,1.1822) {$e'_{8,2}$};
  \node[edge label,rotate=-64.29] at (2.4549,1.1822) {$e'_{1,2}$};
  \node[edge label,rotate=-38.57] at (1.9202,2.4079) {$e'_{1,3}$};
  \node[edge label,rotate=-12.86] at (0.6853,3.0026) {$e'_{1,4}$};
  \node[edge label,rotate=12.86] at (-0.6853,3.0026) {$e'_{1,5}$};
  \node[edge label,rotate=38.57] at (-1.9202,2.4079) {$e'_{1,6}$};
  \node[edge label,rotate=64.29] at (-2.7748,1.3363) {$e'_{1,7}$};
  \node[edge label,rotate=-90.00] at (-3.0798,0.0000) {$e'_{1,8}$};
  \node[edge label,rotate=-64.29] at (-2.7748,-1.3363) {$e_{1,2}$};
  \node[edge label,rotate=-38.57] at (-1.9202,-2.4079) {$e_{1,3}$};
  \node[edge label,rotate=-12.86] at (-0.6853,-3.0026) {$e_{1,4}$};
  \node[edge label,rotate=12.86] at (0.6853,-3.0026) {$e_{1,5}$};
  \node[edge label,rotate=38.57] at (1.9202,-2.4079) {$e_{1,6}$};
  \node[edge label,rotate=64.29] at (2.4549,-1.1822) {$e_{1,7}$};
  \node[edge label,rotate=0.00] at (2.9248,1.9855) {$e'_{2,3}$};
  \node[edge label,rotate=-0.00] at (2.9248,-1.9855) {$e_{6,7}$};
  \node[edge label,rotate=-90.00] at (3.6591,2.6498) {$e'_{8,3}$};
  \node[edge label,rotate=0.00] at (2.9248,3.1440) {$e'_{3,4}$};
  \node[edge label,rotate=-90.00] at (2.1905,2.6498) {$e'_{1,3}$};
  \node[edge label,rotate=-90.00] at (3.6591,3.6083) {$e'_{8,4}$};
  \node[edge label,rotate=0.00] at (2.9248,4.3026) {$e'_{4,5}$};
  \node[edge label,rotate=-90.00] at (2.1905,3.6083) {$e'_{1,4}$};
  \node[edge label,rotate=-90.00] at (3.6591,4.7669) {$e'_{8,5}$};
  \node[edge label,rotate=0.00] at (2.9248,5.4612) {$e'_{5,6}$};
  \node[edge label,rotate=-90.00] at (2.1905,4.7669) {$e'_{1,5}$};
  \node[edge label,rotate=-90.00] at (3.6591,5.9255) {$e'_{8,6}$};
  \node[edge label,rotate=0.00] at (2.9248,6.6198) {$e'_{6,7}$};
  \node[edge label,rotate=-90.00] at (2.1905,5.9255) {$e'_{1,6}$};
  \node[edge label,rotate=-60.00] at (3.3487,7.0840) {$e'_{8,7}$};
  \node[edge label,rotate=60.00] at (2.5009,7.0840) {$e'_{1,7}$};
  \node[edge label,rotate=90.00] at (2.1905,-2.6498) {$e_{1,6}$};
  \node[edge label,rotate=-0.00] at (2.9248,-3.1440) {$e_{5,6}$};
  \node[edge label,rotate=90.00] at (3.6591,-2.6498) {$e_{8,6}$};
  \node[edge label,rotate=90.00] at (2.1905,-3.6083) {$e_{1,5}$};
  \node[edge label,rotate=-0.00] at (2.9248,-4.3026) {$e_{4,5}$};
  \node[edge label,rotate=90.00] at (3.6591,-3.6083) {$e_{8,5}$};
  \node[edge label,rotate=90.00] at (2.1905,-4.7669) {$e_{1,4}$};
  \node[edge label,rotate=-0.00] at (2.9248,-5.4612) {$e_{3,4}$};
  \node[edge label,rotate=90.00] at (3.6591,-4.7669) {$e_{8,4}$};
  \node[edge label,rotate=90.00] at (2.1905,-5.9255) {$e_{1,3}$};
  \node[edge label,rotate=-0.00] at (2.9248,-6.6198) {$e_{2,3}$};
  \node[edge label,rotate=90.00] at (3.6591,-5.9255) {$e_{8,3}$};
  \node[edge label,rotate=-60.00] at (2.5009,-7.0840) {$e_{1,2}$};
  \node[edge label,rotate=60.00] at (3.3487,-7.0840) {$e_{8,2}$};
\end{tikzpicture}
\end{center}
\caption{Unfolded cellular structure on $\partial \partial_{\infty} D_n$ for $n=9$.}
\label{Figure::ford_domain}
\end{figure}

\begin{thmx}
\label{thmx::ford}
Let $f \in \PU(2,1)$ be a regular, real elliptic isometry of order $n \ge 3$.  
Suppose that $q_\infty \in \Torus_f$.  
Then the Ford domain $D_n = D_{\langle f \rangle}$ has the following cellular structure, depending only on $n$.
\begin{enumerate}[label=(\arabic*)]
\item The boundary of the ideal boundary $\partial\partial_{\infty} D_n$ is a $2$-dimensional cellular complex homeomorphic to the $2$-sphere, obtained by gluing $2(n-2)$ polygons.

\item The closure of $\partial D_n$ in $\overline{\hc}$ is a $3$-dimensional cellular complex homeomorphic to the closed $3$-ball, obtained by taking the cone of $\partial\partial_{\infty} D_n$ whose cone point is the fixed point $\fp_f$.
\end{enumerate}
\end{thmx}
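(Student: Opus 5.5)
We work throughout after applying the isometry \(Q\) fixing \(q_\infty\) from Proposition~\ref{proposition::spheres_EEE}. Write \(f = f_{2\pi/n}\) in the one-parameter group \(G\). The nontrivial elements of \(\langle f\rangle\) are then \(f^k = f_{\theta_k}\) with \(\theta_k = 2\pi k/n\), \(k=1,\dots,n-1\). Their isometric spheres are the Cygan spheres \(I_k\) centred at \([-\cot(\pi k/n),0]\) with radius \(1/\sin(\pi k/n)\), and all of them pass through \(\fp_f\).

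The Ford domain is
\[
D_n=\bigcap_{k=1}^{n-1}\{\text{points exterior to } I_k\}.
\]
Its boundary \(\partial D_n\) is the union of the pieces \(s_k = I_k\cap D_n\). We first determine which \(I_k\) contribute a face.

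\textbf{Step 1: the ideal boundary.} On the Heisenberg group \(\mathcal N\), each \(I_k\) meets the boundary in a Cygan sphere. Every centre lies on the real axis \(\{[x,0]\}\), so the configuration is invariant under the reflection in the \(\R\)-circle (the Lagrangian symmetry) and under \(x\mapsto -x\) combined with \(k\mapsto n-k\).

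We describe \(\partial\partial_\infty D_n\), the part of \(\bigcup_k \partial_\infty I_k\) lying outside all other spheres, by slicing \(\mathcal N\) with the planes normal to the real axis. The intersection patterns come from Proposition~\ref{prox::intersection}, applied with \(\theta_j=2\pi k_j/n\):
\begin{itemize}
\item pairwise, the ideal boundary of each Giraud disk \(I_j\cap I_k\) is a circle;
\item triple intersections are crossings, so on the ideal boundary they give exactly four points (the endpoints of the crossing);
\item quadruple intersections are \(\{\fp_f\}\), so they are empty on the ideal boundary.
\end{itemize}
This should show that the two extreme spheres \(I_1\) and \(I_{n-1}\) (largest radius, centres \(\mp\cot(\pi/n)\)) each contribute one large polygon. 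These are the faces \(f_1\) and \(f_8\) in Figure~\ref{Figure::ford_domain}.

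Each intermediate \(I_k\), \(2\le k\le n-2\), should contribute two quadrilateral or triangular faces \(f_k\) and \(f'_k\), interchanged by the Lagrangian reflection. They appear as two chains running through the region where \(I_1\) and \(I_{n-1}\) overlap. The count is
\[
2 + 2(n-3) = 2(n-2),
\]
which is the number of polygons claimed in part (1).

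The edges are the arcs \(e_{j,k}\subset \partial_\infty(I_j\cap I_k)\). The vertices are the boundary points of the triple crossings, and each vertex is 3-valent. We verify that the complex is a sphere by checking that the faces are disks, and then either computing the Euler characteristic \(V-E+F=2\) or exhibiting the unfolding of Figure~\ref{Figure::ford_domain}.

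\textbf{Step 2: the cone structure.} For part (2), we show that each face \(s_k\) is the cone from \(\fp_f\) over its ideal polygon. The natural tool is the foliation in Proposition~\ref{prox::intersection}: the Giraud disk \(I_j\cap I_k\) minus 12 curves is foliated by crossings, and each crossing has \(\fp_f\) as its centre and its four endpoints on the ideal boundary. Hence every Giraud disk is radially a cone from \(\fp_f\) over its boundary circle. The same holds for the 2-cells \(s_j\cap s_k\): a point lies inside a third sphere \(I_l\) exactly when the crossing through it does, and each arm of a crossing is an arc from \(\fp_f\) to a boundary point. Therefore the part of a Giraud disk cut out by the other spheres is the cone over the corresponding edge. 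Faces of the 3-cells then cone the same way, using that each \(I_k\) is a 3-ball containing \(\fp_f\) and that the Cygan geodesic segments from \(\fp_f\) are compatible with the exterior conditions. The closure of \(\partial D_n\) is then the cone over the 2-sphere of part (1), hence a closed 3-ball.

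\textbf{Main obstacle.} The hardest step is the combinatorics in Step 1. We must show precisely which arcs of the circles \(\partial_\infty(I_j\cap I_k)\) survive, and that this pattern is independent of \(n\) in the claimed form. In particular, we need that \(I_1\cap I_{n-1}\) contributes the two edges \(e_{1,n-1}\) and \(e'_{1,n-1}\), and that consecutive intermediate spheres meet along the ladder shown in Figure~\ref{Figure::ford_domain}.

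We would first attempt this by reducing the order of the crossing endpoints along each circle \(\partial_\infty(I_j\cap I_k)\) to the monotone order of \(\cot(\pi k/n)\), using the explicit centres and radii. This should reduce the question to inequalities among values of \(\cot\) and \(\csc\) at multiples of \(\pi/n\).
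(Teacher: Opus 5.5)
Your reduction to the Cygan spheres $I_k$ and the two-stage plan (ideal combinatorics first, then the cone from $\fp_f$) are essentially the paper's. The gap is Step~1, which is the entire content of part (1): it is only asserted (``this should show'', ``should contribute''), as you acknowledge. The facts you list (Giraud circles, four ideal endpoints per crossing, no ideal quadruple points) do not decide several things. They do not say which arcs of $\partial_\infty(I_j\cap I_k)$ survive. They do not show why $I_1$ carries a single $2(n-2)$-gon in which each circle $\partial_\infty(I_1\cap I_k)$ contributes two edges in the stated cyclic order. Nor do they show why only consecutive intermediate spheres share edges. The check $V-E+F=2$ presupposes exactly this combinatorics, and also that the complex is already known to be a closed surface.

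The missing idea is a relative-position statement on each Giraud disk. The paper first uses the foliation to build a homeomorphism $\Psi_{\theta_1,\theta_2}\colon I(\theta_1)\to\Ball_3$. It sends $I(\theta_1)\cap I(\theta_2)$ to the equatorial disk and the $\theta_3$-crossing to the four rays of argument $\theta_3/4 \bmod \pi/2$. This is the monotone ordering of endpoints you hoped to extract from $\cot(\pi k/n)$; it comes for free from the foliation. The paper then decides which pair of opposite sectors lies in $I_-(\theta_3)$ by evaluating $\widehat{Q}(X,0)=c'_{20}X^2$ at the boundary point $(X,0)$, $X>0$, of $\{W\le 0\}$. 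The answer flips according to whether $\theta_3\in(\theta_1,\theta_2)$ (Proposition~\ref{proposition::homeomorphism-to-3ball}(e)). This yields the key inclusion $I_+(\theta_1)\cap I_+(\theta_2)\cap I(\theta_3)\cap I(\theta_5)\subset I_-(\theta_4)$ for $\theta_1<\theta_3<\theta_4<\theta_5<\theta_2$ (Corollary~\ref{corollary::beside-f1andfn-1}(b)). From it, $f_1$ and $f_{n-1}$ are $2(n-2)$-gons. After gluing them along $e_{1,n-1}$ and $e'_{1,n-1}$, the two remaining boundary circles can only be filled by the ladder of triangles and quadrangles, or by bigons when $n=4$ (a case your ``quadrilateral or triangular'' misses).

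Your ridge argument in Step~2 is sound and is what $\Psi$ encodes. Leaves of the foliation meet only at $\fp_f$, and on the Giraud disk the interface between $I_-(\theta_l)$ and $I_+(\theta_l)$ is the $\theta_l$-crossing. So every arm of another crossing lies on one side, and each ridge is a union of closed sectors, i.e.\ a cone over ideal arcs. For the sides, however, ``Cygan geodesic segments from $\fp_f$'' are not available, since the extended Cygan metric is not geodesic. Nothing in your argument makes the radial structures on the different Giraud disks inside $I_k$ fit together. The paper argues combinatorially instead. By Corollary~\ref{corollary::intersection_of_four}, the only vertex of $\overline{\partial D_n}$ in $\hc$ is $\fp_f$. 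Every triple-intersection arc joining two ideal vertices passes through $\fp_f$, so no side is a prism between $f_k$ and $f'_k$. Hence each side is the cone over its ideal polygon; use this in place of the geodesic-segment step.
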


\begin{remark}
The explicit description for the cellular structure of $\partial \partial_{\infty} D$ is given in Theorem \ref{theorem::ford_domain}.
When $n = 9$, this cellular structure is shown in Figure \ref{Figure::ford_domain}.
\end{remark}

\begin{remark}
Theorem \ref{thmx::ford} is optimal: if we omit the hypothesis that $q_\infty \in \Torus_f$, the Ford domain $D_{\langle f\rangle}$ can have a different cellular structure.
Specifically, as shown in Subsection \ref{subsection::Ford_centred_away},
the triple intersection from Proposition \ref{prox::intersection}
is not necessarily a crossing when $q_\infty \notin \Torus_f$.
\end{remark}

\subsection{Complex hyperbolic triangle groups}

Ford domains are widely used in complex hyperbolic geometry, particularly in the study of complex hyperbolic triangle groups.
We give a brief review below, but refer the reader to Richard Schwartz's survey \cite{Schwartz2002} for further motivation and detail.

Recall that the $(p,q,r)$-triangle group has the presentation
\[
T_{p,q,r} = \langle \sigma_1, \sigma_2, \sigma_3 \ \mid \ \sigma_1^2 = \sigma_2^2 = \sigma_3^2 = (\sigma_2 \sigma_3)^p = (\sigma_3 \sigma_1)^q = (\sigma_1 \sigma_2)^r = 1\rangle,
\]
where $p,q,r \in \Z_{\ge 3} \sqcup \{\infty\}$ and $p \le q \le r$.
If one of $p,q,r$ is $\infty$, 
the corresponding relation is omitted.
A \emph{complex hyperbolic $(p,q,r)$-triangle group} is the image of a representation $\rho: T_{p,q,r} \rightarrow \PU(2,1)$ 
that sends each $\sigma_i$ to a complex reflection $I_i$ about a complex geodesic $C_i$ (see Definition \ref{definition::complex_reflection}).
For each relation $(\sigma_i \sigma_j)^n = 1$, we require the complex geodesics $C_i$ and $C_j$ to meet at an angle $\pi/n$.
We say the group $\Delta(p,q,r) \coloneqq \rho(T_{p,q,r})$ is \emph{discrete and faithful} if the representation $\rho$ is.

We study complex hyperbolic $(n,\infty,\infty)$-triangle groups for $n \ge 3$.
In this setting, the isometry $B \coloneqq I_2 I_3$ is known to be a real elliptic isometry of order $n$ (see \cite[Proposition 4]{Paupert-Will-2017-involution}).
This allows us to apply Theorem \ref{thmx::ford}
to explicitly construct a Ford domain of the index-$2$ subgroup $\langle A\coloneqq I_1 I_2, B \rangle \le \Delta(n,\infty,\infty)$.
This construction, justified by the Poincar\'e polyhedron theorem, is centred at the boundary fixed point $\fp_A \in \partial \hc$ of $A$,
which we identify with $q_\infty$.
A key observation is that $\fp_A \in \Torus_B$.
Therefore, the point $\fp_A$
lies in the fixed torus of every isometry of the form $A^{k}B^j A^{-k}$, where $k \in \Z$ and $j \in \{1,\cdots,n-1\}$.
This leads to the following sufficient condition for discreteness and faithfulness.

\begin{colx}
\label{colx::triangle_group}
Let $\Delta(n,\infty,\infty)$ be a complex hyperbolic $(n,\infty,\infty)$-triangle group with $n \ge 3$. Then $\Delta(n,\infty,\infty)$ is discrete and faithful provided that in each of $O(n^3)$ specified pairs the two Cygan spheres are disjoint, and in one particular pair they are tangent at a single ideal point.
\end{colx}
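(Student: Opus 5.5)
The plan is to apply the Poincar\'e polyhedron theorem to the index-$2$ subgroup $\Gamma = \langle A, B\rangle$ with $A = I_1I_2$ and $B = I_2I_3$, and then pass back to $\Delta(n,\infty,\infty)$.

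\textbf{Normalisation.} Put $q_\infty = \fp_A$ and conjugate so that $A$ is a Heisenberg translation fixing $q_\infty$. After the isometry $Q$ of Proposition~\ref{proposition::spheres_EEE}, the isometric spheres $I(B^j)$, $j=1,\dots,n-1$, become Cygan spheres with centre $[-\cot(j\pi/n),0]$ and radius $1/\sin(j\pi/n)$. Here we use that $B$ is real elliptic of order $n$ and that $\fp_A \in \Torus_B$, which is the key observation recorded just before the corollary.

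\textbf{Candidate domain.} The candidate fundamental domain is
\[
D = D_n \cap D_A .
\]
Here $D_n = D_{\langle B\rangle}$ is the Ford domain described in Theorem~\ref{thmx::ford}, and $D_A$ is a fundamental domain for the cyclic parabolic group $\langle A\rangle$ acting on Heisenberg space, namely a slab bounded by two vertical faces.

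\textbf{Faces and pairings.} The faces of $D$ are the isometric spheres of the conjugates $A^kB^jA^{-k}$. Since $\fp_A \in \Torus_{A^kB^jA^{-k}}$, each such conjugate again has the cellular structure of Theorem~\ref{thmx::ford}, and the translates are the corresponding Cygan spheres shifted by $A^k$. The side pairings are $B^j\colon I(B^j)\to I(B^{-j})$ together with $A$ pairing the two vertical faces.

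\textbf{Reduction to finitely many sphere pairs.} I will show that only $|k|\le 1$ (or a bounded number of $k$) matter. The reason is that the translation length of $A$ is comparable to the radius bound $1/\sin(\pi/n)$, so spheres from far-apart copies are automatically disjoint. The $O(n^3)$ pairs in the hypothesis then arise as follows:
\begin{itemize}
\item $O(n)$ spheres $I(A^kB^jA^{-k})$ for each relevant $k$;
\item compared pairwise with the spheres of the neighbouring copy, giving $O(n^2)$ comparisons for each shift;
\item up to the symmetry $j\mapsto n-j$ and the $O(n)$ edge-cycle checks.
\end{itemize}
Assuming disjointness of these pairs, the boundary of $D$ is the union of $\partial D_n$ and its $A$-translates, glued along the slab walls. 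The combinatorics inside one copy is exactly the cellular structure of Theorem~\ref{theorem::ford_domain}, with ridges the Giraud disks of Proposition~\ref{prox::intersection}(1). The cone structure at $\fp_f$ handles the interior ridges.

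\textbf{Poincar\'e conditions.} Next I check the conditions of the polyhedron theorem. The ridge cycles inside $D_n$ are the cycles of $\langle B\rangle$, since the group is finite, and their cycle transformations are powers of $B$ with total angle $2\pi$, which is automatic for a Ford domain of a finite cyclic group. The ridges on the slab walls give cycle transformation $A$. The remaining condition is the cusp condition at $q_\infty$: consistency there requires that the single ideal tangency specified in the hypothesis occurs. That tangency point is the fixed point of the parabolic $A^{-1}B^{\pm1}$, or an analogous word, which must be parabolic and not loxodromic for $\Gamma$ to be a faithful image of the free product $\Z\ast\Z/n$. The polyhedron theorem then gives that $\Gamma$ is discrete with presentation $\langle A,B \mid B^n\rangle$. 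Adjoining $I_2$, which normalises $\Gamma$ and swaps the roles appropriately, gives discreteness and faithfulness of $\Delta(n,\infty,\infty)$.

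\textbf{Main obstacle.} The hardest step is the reduction to finitely many pairs together with verifying that disjointness of those specific pairs really implies the boundary is exactly the union described, with no extra intersections coming from hidden cells. I would handle this by comparing the Cygan centres and radii explicitly. For the tangency cusp condition I would show that the fixed point of the relevant parabolic lies on both spheres with all other spheres avoiding it.
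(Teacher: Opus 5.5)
Your overall architecture matches the paper's: the Poincar\'e polyhedron theorem for $\langle A,B\rangle$, with boundary made of the $\langle A\rangle$-translates of $\overline{\partial D_{\langle B\rangle}}$, followed by passing to the index-two overgroup. Your remark that ridge cycles inside one copy are automatic, because $D_{\langle B\rangle}$ is an honest fundamental domain for the finite group $\langle B\rangle$, is a legitimate shortcut for the paper's explicit triangle cycles.

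The genuine gap is the reduction to finitely many sphere pairs, which is exactly what the corollary adds to Proposition~\ref{proposition::apply_poincare}. Your justification is that the translation length of $A$ is comparable to the radius bound $1/\sin(\pi/n)$. This does not hold uniformly in $\mathcal{A}$. With $A=T_{[-2,0]}$ one only has $d_{\Cyg}(\boldsymbol{c},A^k\boldsymbol{c})\ge 2|k|$. Meanwhile $\boldsymbol{r}_1=\sqrt{2}\big/\sqrt{3+\cos(2\pi/n)-4\cos(\pi/n)\cos\mathcal{A}}$ equals $1/\sin(\pi/n)\approx n/\pi$ at $\mathcal{A}=\pi/n$, and is of order $n^2$ as $\mathcal{A}\to 0$. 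The normalisation of Proposition~\ref{proposition::spheres_EEE} hides this: it fixes the radii to be $1/\sin(j\pi/n)$ but rescales $A$ by an $\mathcal{A}$-dependent factor, and at $\mathcal{A}=\pi/n$ the displacement of $A$ is still just $2$.

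So nothing in your argument excludes intersections between copies with $2\le |k|\le 2/\sin(\pi/n)$. Your count of $O(n^3)$ pads $O(n^2)$ pairs with ``edge-cycle checks'', which are not sphere pairs, so it does not describe an actual list. Two ingredients are missing.
\begin{enumerate}
\item An a priori bound $\mathcal{A}\ge\pi/n$, which the paper extracts from the hypotheses. If $\mathcal{A}<\pi/n$, then $g=I_1I_2I_3I_2$, with $\operatorname{tr}g=3+16\cos(\pi/n)(\cos(\pi/n)-\cos\mathcal{A})$, is regular elliptic. Then the required-disjoint pair $(I_{n-1,0},I_{1,1})$, isometric to $(I(g),I(g^{-1}))$, would meet. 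This bound yields $\boldsymbol{r}_j\le 1/\sin(\pi/n)$ for all $j$.
\item A triangle-inequality estimate. Since all spheres $I_{j,0}$ pass through $\fp_B$, one gets
\[
d_{\Cyg}(\boldsymbol{c}_{j',0},\boldsymbol{c}_{j,k})-(\boldsymbol{r}_{j'}+\boldsymbol{r}_{j})\ge 2|k|-\frac{4}{\sin(\pi/n)},
\]
so $I_{j',0}\cap I_{j,k}=\emptyset$ for $|k|>2/\sin(\pi/n)$. With $A$-equivariance this leaves $(n-1)^2$ choices of $(j',j)$ times $O(n)$ shifts $0<k\le 2/\sin(\pi/n)$, which is where $O(n^3)$ comes from.
\end{enumerate}

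Two further points need repair. First, your domain $D=D_n\cap D_A$, with $D_A$ a slab between vertical planes, involves only the spheres $I(B^j)$. Yet you then say its faces are all the $I(A^kB^jA^{-k})$, while listing only $B^j$ and $A$ as side pairings. One copy need not fit in such a slab: the horizontal projection of $I_{1,0}$ is a disc of radius $\boldsymbol{r}_1$, which is wider than the slab as soon as $\cos\mathcal{A}>\cos(\pi/n)/2$. Then pieces of $I_{j,\pm1}$ enter the slab, and you need the pairings $A^{\pm1}B^jA^{\mp1}$ and the cycles of the extra ridges where the walls cut the spheres. The paper avoids all this by using the $\langle A\rangle$-invariant domain $\mathcal{D}=\bigcap_{j,k}\overline{I_+(A^kB^jA^{-k})}$ and the Poincar\'e theorem with infinite stabiliser $\langle A\rangle$ (Parker--Will).

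Second, the tangency is not a cusp condition at $q_\infty$. The cusp $q_\infty=\fp_A$ is handled by $\langle A\rangle$. The tangency points are $\fp_{A^k(AB)A^{-k}}\in I_{1,k}\cap I_{n-1,k+1}$, and $AB=I_1I_3$ is unipotent by the very definition of an $(n,\infty,\infty)$-triangle group, so its parabolicity is not something to verify. What the tangency hypothesis supplies is that the $k$-th and $(k+1)$-st copies meet only at this ideal point. That point is then an ideal vertex with parabolic cycle transformation. The chain $\fp_{A^k(AB)A^{-k}}\mapsto\fp_{A^{k+1}(AB)A^{-(k+1)}}$ under $A^{k+1}B^{-1}A^{-(k+1)}$ then admits a consistent system of horoballs.
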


\subsection*{Outline}

Section~\ref{section::preliminary} introduces the complex hyperbolic space $\hc$, the Cygan metric and the fixed torus $\Torus_f$ of a regular, real elliptic isometry.
Section~\ref{section::sphere} studies the family of isometric spheres $\{I(f_\theta) \mid \theta \in (0, 2\pi)\}$ and proves Proposition~\ref{prox::intersection}.
Section~\ref{section::ford_domain} describes the cellular structure of the Ford domain $D_n$ for $\langle f_{2\pi/n}\rangle$, i.e., Theorem~\ref{thmx::ford}.
Section~\ref{section::triangle} investigates complex hyperbolic $(n,\infty,\infty)$-triangle groups and applies the Poincaré polyhedron theorem to obtain Corollary~\ref{colx::triangle_group}.

\subsection*{Acknowledgments}

This work was completed while the second author was visiting Huaqiao University, and he is grateful for the hospitality of the first author.
We would also like to thank Yueping Jiang, Louis Funar, Greg McShane, Martin Deraux, Baohua Xie, Jieyan Wang and Pierre Will for valuable discussions related to some of the topics addressed in this paper.
We thank an anonymous referee for constructive suggestions concerning real elliptic isometries and their fixed tori, which greatly improved the paper.

\tableofcontents

\section{Geometric preliminaries}
\label{section::preliminary}

\subsection{Complex hyperbolic plane}

The \emph{complex hyperbolic plane} $\hc$ is the unique simply connected K\"{a}hler manifold with constant holomorphic sectional curvature $-1$.
This subsection introduces the unit ball and Siegel domain models of $\hc$, its holomorphic isometries, and their classification.

\subsubsection*{Projective models}

Let $\C^{2,1}_H$ denote $\C^3$ equipped with a non-degenerate, indefinite Hermitian form $\langle {\bf{z}}, {\bf{w}} \rangle_H ={\bf{w}^{*}}H{\bf{z}}$ of signature $(2,1)$,
where $H$ is a Hermitian matrix.
We partition $\C^{2,1}_H \setminus \{0\}$ into the disjoint union of the following cones:
\begin{align*}
V_H^- &=
\left\{ \bf{p} \in \C^{2,1}_H
\middle| \langle \bf{p}, \bf{p} \rangle_H < 0 \right\}, \\
V_H^0 &=
\left\{ \bf{p} \in \C^{2,1}_H \setminus \{0\}
\middle| \langle \bf{p}, \bf{p} \rangle_H = 0 \right\}, \\
V_H^+ &=
\left\{ \bf{p} \in \C^{2,1}_H 
\middle| \langle \bf{p}, \bf{p} \rangle_H > 0 \right\}.
\end{align*}
These are called the \emph{negative}, \emph{null} and \emph{positive cones}, respectively.

Let $\Proj: \C^3 \rightarrow \CP^2$ denote the projectivization map.
We adopt the convention that a point $\bf{p} \in \C^3$ is a lift of $p \in \CP^2$
and we use $(z_1,z_2,z_3)^t$ for a vector in $\C^3$ but use $[z_1,z_2,z_3]^t$ for the corresponding equivalence class in $\CP^2$.

The \emph{projective model} of the complex hyperbolic space $\hc$ is defined as $\Proj(V_H^-)$, with boundary $\Proj(V_H^0)$.
The closure $\overline{\hc} \coloneqq \hc \sqcup \partial \hc$ is thus described by $\Proj(V_{H}^- \sqcup V_H^+)$ in this model.
For any subset $U \subset \overline{\hc}$, let $\overline{U}$ denote its closure in $\overline{\hc}$ and define its \emph{ideal boundary} as $\partial_\infty U = \overline{U} \cap \partial \hc$.

Two standard Hermitian forms on $\C^3$ of signature $(2,1)$ are given by the matrices:
\begin{equation*}
H_1 = \left(
\begin{array}{ccc}
1 & 0 & 0 \\
0 & 1 & 0 \\
0 & 0 & -1 \\
\end{array}
\right),
\quad
H_2 = \left(
\begin{array}{ccc}
0 & 0 & 1 \\
0 & 1 & 0 \\
1 & 0 & 0 \\
\end{array}
\right).
\end{equation*}

The points in $\Proj(V_{H_1}^-)$ are precisely those of the form $[w_1,w_2,1]^t$ with $|w_1|^2 + |w_2|^2 < 1$, forming the \emph{unit ball model} of $\hc$.
Its boundary $\Proj(V_{H_1}^0)$ consists of points $[w_1,w_2,1]^t$ with $|w_1|^2 + |w_2|^2 = 1$.
For every $p = [w_1,w_2,1]^t \in \overline{\hc}$ in the unit ball model, the vector $\bf{p} = (w_1,w_2,1)^t$ is its \emph{standard lift}.

The points in $\Proj(V_{H_2}^-)$ are precisely those of the form $[z_1,z_2,1]^t$ with $2 \Re(z_1) + |z_2|^2 < 0$, forming the \emph{Siegel domain model} of $\hc$.
Its boundary $\Proj(V_{H_2}^0)$ consists of points $[z_1,z_2,1]^t$ with $2 \Re(z_1) + |z_2|^2 = 0$ and one exceptional point given by $q_\infty = [1,0,0]^t$.
We again say the \emph{standard lift} of $p = [z_1,z_2,1]^t \in \overline{\hc}$ in the Siegel domain is the vector $\bf{p} = (z_1, z_2, 1)^t$, but the standard lift of $q_\infty$ is $\bf{q}_\infty = (1, 0, 0)^t$.

One can pass between the unit ball model $\Proj(V_{H_1}^-)$ and the Siegal domain model $\Proj(V_{H_2}^-)$ using the following Cayley transform:
\begin{equation*}
\Cayley = \left(
\begin{array}{ccc}
1 / \sqrt{2} & 0 & 1 / \sqrt{2} \\
0 & 1 & 0 \\
1 / \sqrt{2} & 0 & -1 / \sqrt{2} \\
\end{array}
\right)
\end{equation*}
Note that $\Cayley^{-1} = \Cayley$.


%
%
\subsubsection*{Bergman distance}

The complex hyperbolic space $\hc$ is endowed with the \emph{Bergman metric}, whose associated distance $d_{H_i}(\cdot,\cdot)$ is defined by
\(
\cosh^2\left(\dfrac{d_{H_i}(u,v)}{2}\right)=
\dfrac{\langle {\bf{u}}, {\bf{v}} \rangle_{H_i} \langle {\bf{v}}, {\bf{u}} \rangle_{H_i}}{\langle {\bf{u}}, {\bf{u}} \rangle_{H_i} \langle {\bf{v}}, {\bf{v}} \rangle_{H_i}}
\),
where $\bf{u}$ and $\bf{v} \in \C_{H_i}^{2,1}$ are lifts of the points $u$ and $v$.


%
%
\subsubsection*{Holomorphic isometries}

For $i=1,2$, define the unitary group
\[
\Unitary(H_i) = \left\{ A \in \GL(3,\mathbb{C})
\ \middle| \
A^{*}H_i A = H_i \right\}.
\]
The full group of holomorphic isometries of $\hc$, identified with $\Isom(\hc)$,
is the projective unitary group $\PU(H_i) \coloneqq \Unitary(H_i) / \Unitary(1)$.
We work with the special unitary group $\SU(H_i)$, consisting of matrices in $\Unitary(H_i)$ with determinant $1$.
Consequently, we have $\PU(H_i) = \SU(H_i) / \{ I, \omega I, \omega^2 I \}$,
where $\omega = (-1 + i\sqrt{3}) / 2$ is a primitive cube root of unity.
We adopt the notation where parentheses $(\cdots)$ denote a matrix in $\Unitary(H_i)$ or $\SU(H_i)$, while square brackets $[\cdots]$ denote its projective equivalence class in $\PU(H_i)$.

Elements of $\PU(H_i)$ are classified into three types based on their fixed points:
a \emph{loxodromic} isometry fixes exactly two points on the boundary $\partial \hc$;
a \emph{parabolic} isometry fixes exactly one point on $\partial \hc$;
an \emph{elliptic} isometry fixes at least one point in the interior $\hc$.
This classification can be further refined.
A parabolic element is \emph{unipotent} if it is represented by a unipotent matrix in $\SU(H_i)$;
otherwise it is \emph{screw-parabolic}.
An elliptic element is \emph{regular} if all its eigenvalues are distinct;
otherwise it is called \emph{special}.

In the unit ball model, every elliptic isometry is conjugate to
$E_{\alpha,\beta}^{\ball} = \text{diag}\{e^{i \alpha}, e^{i \beta}, 1\}$ for some $\alpha,\beta \in [0, 2\pi]$.
The corresponding element in the Siegel domain model, obtained via the Cayley transform, is $E_{\alpha,\beta}$, as the following:
\begin{equation*}
E_{\alpha,\beta} = \dfrac{1}{2}
\left[
\begin{array}{ccc}
e^{i (2\alpha-\beta) / 3} + e^{-i (\alpha+\beta) / 3}
& 0 &
e^{i (2\alpha-\beta) / 3} - e^{-i (\alpha+\beta) / 3}
\\
0 & 2 e^{i (2\beta - \alpha) / 3} & 0 \\
e^{i (2\alpha-\beta) / 3} - e^{-i (\alpha+\beta) / 3}
& 0 &
e^{i (2\alpha-\beta) / 3} + e^{-i (\alpha+\beta) / 3}
\\
\end{array}
\right].
\end{equation*}
The fixed point of $E_{\alpha,\beta}$ is $\fp_E = [-1,0,1]^t$,
which is independent of the parameters $(\alpha,\beta)$.
Such an isometry is regular if and only if $0 < \alpha \neq \beta < 2\pi$.
It is called \emph{real elliptic} if $\beta = 2\pi - \alpha$.
For convenience, we use 
$E^{\text{ball}}_{\alpha,-\alpha}$
and
$E_{\alpha,-\alpha}$
to denote the real elliptic isometries $E^{\text{ball}}_{\alpha,2\pi-\alpha}$ and $E_{\alpha,2\pi-\alpha}$, respectively.

\begin{proposition}[Theorem 6.2.4 in \cite{Goldman1999}]
\label{proposition::trace-polynomial}
Using $f(z) = |z|^4 - 8 \Real(z^3) + 18 |z|^2 - 27$, we get
\begin{enumerate}[label=(\roman*)]
\item $A \in \SU(H_i)$ represents a loxodromic isometry if and only if $f(\trace A) > 0$;

\item $A \in \SU(H_i)$ represents a regular elliptic isometry if and only if $f(\trace A) < 0$.
\end{enumerate}
\end{proposition}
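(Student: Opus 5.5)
The plan is to read the sign of $f(\trace A)$ as the sign of the discriminant of the characteristic polynomial of $A$. Then I will match the possible eigenvalue configurations of an element of $\SU(H_i)$ with the dynamical types.

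\textbf{Step 1 (characteristic polynomial).} Let $A \in \SU(H_i)$ and $\tau = \trace A$. The relation $A^* H_i A = H_i$ gives $A^{-1} = H_i^{-1} A^* H_i$. So $A^{-1}$ is conjugate to $A^*$, and $\trace A^{-1} = \bar\tau$. Since $\det A = 1$, the sum of the $2\times 2$ principal minors equals $\trace A^{-1}$. Hence
\[
\chi_A(x) = x^3 - \tau x^2 + \bar\tau x - 1 .
\]
I will then expand the classical cubic discriminant formula with coefficients $(1,-\tau,\bar\tau,-1)$. The goal is to verify directly that
\[
\mathrm{Disc}(\chi_A) = \prod_{i<j} (\lambda_i - \lambda_j)^2 = f(\tau).
\]
This is a routine but slightly tedious symbolic computation, and it is the one place where bookkeeping errors are likely. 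To guard against a sign mistake, I will check the identity on $A = \mathrm{diag}\{1,\omega,\omega^2\}$. There $\tau = 0$ and $f(0) = -27$, which is indeed the discriminant of $x^3 - 1$.

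\textbf{Step 2 (eigenvalue configurations).} The relation $A^{-1} \sim A^*$ shows that the eigenvalue multiset is invariant under $\lambda \mapsto 1/\bar\lambda$. Together with $\lambda_1\lambda_2\lambda_3 = 1$, this leaves only two cases:
\begin{itemize}
\item[(a)] all three eigenvalues lie on the unit circle;
\item[(b)] the eigenvalues are $re^{i\varphi}$, $r^{-1}e^{i\varphi}$, $e^{-2i\varphi}$ with $r > 1$.
\end{itemize}

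In case (a), write $\lambda_k = e^{ia_k}$. Each factor satisfies $(\lambda_i - \lambda_j)^2 = -e^{i(a_i+a_j)}\, 4\sin^2((a_i-a_j)/2)$. The product of the phases is $(\lambda_1\lambda_2\lambda_3)^2 = 1$. Therefore
\[
\mathrm{Disc} = -\prod_{i<j} 4\sin^2\bigl((a_i-a_j)/2\bigr) \le 0,
\]
with strict inequality exactly when the eigenvalues are distinct.

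In case (b), $\lambda_1 - \lambda_2$ is $e^{i\varphi}$ times a nonzero real number, so $(\lambda_1 - \lambda_2)^2 = e^{2i\varphi} \cdot (\text{positive})$. The two remaining factors $(\lambda_1-\lambda_3)(\lambda_2-\lambda_3)$ combine to $e^{-2i\varphi}$ times a positive real (this also follows from a direct expansion). Hence $\mathrm{Disc} > 0$.

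\textbf{Step 3 (matching with dynamics).} Finally I will use linear algebra of the Hermitian form.
\begin{itemize}
\item An eigenvector whose eigenvalue has modulus $\neq 1$ is null. So case (b) produces two null eigenvectors with distinct eigenvalues, spanning a non-degenerate plane. Thus $A$ fixes exactly two boundary points, i.e.\ it is loxodromic. Conversely, a loxodromic element has a multiplier $r \ne 1$ at its attracting fixed point, so it falls into case (b).
\item In case (a) with distinct eigenvalues, the eigenvectors are pairwise $H_i$-orthogonal. So exactly one of them is negative, and $A$ is regular elliptic. Conversely, a regular elliptic element is diagonalisable with unit-modulus distinct eigenvalues.
\end{itemize}
Together with Step 1, this gives both equivalences: $f(\tau) > 0$ if and only if $A$ is loxodromic, and $f(\tau) < 0$ if and only if $A$ is regular elliptic. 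Repeated eigenvalues give $f(\tau) = 0$, which is the parabolic and special elliptic range.

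I expect the main obstacle to be the clean verification of the identity $\mathrm{Disc} = f(\tau)$ in Step 1, together with its sign convention.
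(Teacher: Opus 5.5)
Your proposal is correct. It is the standard discriminant argument behind Goldman's Theorem 6.2.4, which the paper cites without proof: you identify $f(\trace A)$ with the discriminant of $x^3-\tau x^2+\bar\tau x-1$ and read off its sign from the two possible eigenvalue configurations.

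Two small touch-ups:
\begin{itemize}
\item In case (b), the quantity with phase $e^{-2i\varphi}$ is the product of squares $(\lambda_1-\lambda_3)^2(\lambda_2-\lambda_3)^2$, not $(\lambda_1-\lambda_3)(\lambda_2-\lambda_3)$. In fact $(\lambda_1-\lambda_3)(\lambda_2-\lambda_3)=e^{-i\varphi}\bigl(2\cos 3\varphi-r-r^{-1}\bigr)$, so $\mathrm{Disc}=(r-r^{-1})^2(r+r^{-1}-2\cos 3\varphi)^2>0$.
\item The converse in Step 3 appeals to an attracting fixed point, which the paper's fixed-point definition of loxodromic does not supply. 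Argue instead that two independent null eigenvectors are never orthogonal. So if all eigenvalues were unimodular, those two eigenvectors would share an eigenvalue, and $A$ would fix a whole $\C$-circle rather than exactly two boundary points.
\end{itemize}
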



%
%
\subsubsection*{Totally geodesic subspaces}

There are five types of totally geodesic submanifolds  
in $\hc$: points, real geodesics, \emph{complex geodesics} (isometric to $\Hyperbolic_\C^1$), \emph{Lagrangian planes} (isometric to $\Hyperbolic_\R^2$) and $\hc$ itself.
Notably, there is no totally geodesic hypersurfaces in $\hc$.
The ideal boundary of a complex geodesic is called a $\C$-circle, while the ideal boundary of a Lagrangian plane is called an $\R$-circle.

Let $C$ be a complex geodesic.
Its \emph{polar vector}, denoted by $n_C$, is the unique positive vector
with respect to the Hermitian form
orthogonal to $C$, 
determined up to positive scalar multiplication.
The following defines a special elliptic isometry in $\PU(H_i)$ associated with $C$.

\begin{definition}
\label{definition::complex_reflection}
The \emph{complex reflection} (of order $2$) in a complex geodesic $C$ is the holomorphic isometry defined by
$R_C(p) = -\bf{p} + 2 \dfrac{\langle\bf{p},n_C\rangle_{H_i}}{\langle n_C,n_C\rangle_{H_i}} n_C$,
where $\bf{p}$ is any lift of $p \in \overline{\hc}$.
The complex geodesic $C$ is called the \emph{mirror} of $R_C$.
\end{definition}

Given a triple of distinct points $p_1, p_2, p_3 \in \partial \hc$,
the \emph{Cartan’s angular invariant} is defined as
\begin{equation*}
\mathbb{A}(p_1, p_2, p_3) \coloneqq
\arg \left(-
\langle \bf{p}_1,\bf{p}_2 \rangle_{H_i}
\langle \bf{p}_2,\bf{p}_3 \rangle_{H_i}
\langle \bf{p}_3,\bf{p}_1 \rangle_{H_i}
\right),
\end{equation*}
where $\bf{p}_i$ are any lifts of $p_i$ and the argument is taken in $(-\pi, \pi]$.
This invariant always takes a value in $[-\pi/2,\pi/2]$
and it characterises the configuration of the points as follows:
$p_1$, $p_2$, $p_3$ lie on a complex geodesic if and only if $\mathbb{A}(p_1, p_2, p_3) = \pm \pi / 2$;
$p_1$, $p_2$, $p_3$ lie on a Lagrangian plane if and only if $\mathbb{A}(p_1, p_2, p_3) = 0$.

\subsection{Boundary}
\label{subsection::boundary}

We identify $\partial \hc$ with the one-point compactification of the Heisenberg group.

\subsubsection*{Heisenberg group}

Let $\mathcal{N}$ be the \emph{Heisenberg group}, a nilpotent Lie group with underlying manifold $\C \times \R$ and coordinates $[z,t]$,
equipped with the group law
\[
[z,t] \cdot [z',t'] = [z+z',t+t'+2\Im (z \overline{z'})].
\]
The closure $\overline{\hc}$ is identified with ${\mathcal{N}}\times{\mathbb{R}_{\geq0}}\sqcup \{q_{\infty}\}$.
A point $q=[z,t,u] \in {\mathcal{N}}\times{\mathbb{R}_{\geq0}}$ corresponds to a point $q \in \overline{\hc}$ in the Siegel domain model with standard lift
\(
{\bf{q}} = \left((-|z|^2-u+it)/2, z, 1\right)^t.
\)
We refer to $[z,t,u]$ as the \emph{horospherical coordinates} of $\overline {\hc}$. 
When $u=0$, the point $[z,t,0]$ lies on $\partial\hc$,
allowing us to identify
$\partial\hc$ with $\mathcal{N}\sqcup \{q_{\infty}\}$. 
For every $u_0 >0$, the level set $\{u=u_0\}$ is called a \emph{horosphere} based at $q_{\infty}$, the super-level set $\{u\ge u_0\}$ is called a \emph{horoball} based at $q_{\infty}$.

The stabilizer subgroup of $q_\infty$ in $\PU(H_2)$ is generated by the following matrices:
\begin{equation*}
T_{[z,t]} =
\begin{bmatrix}
1 & -\overline{z}& \frac{-|z|^{2}+it}{2} \\ 0 & 1 & z \\ 0 & 0 & 1 \end{bmatrix}, \ 
R_{\theta} =
\begin{bmatrix}
1 & 0& 0 \\ 0 & e^{i\theta} & 0 \\ 0 & 0 & 1 \end{bmatrix}
\ \text{and} \
D_{\lambda} =
\begin{bmatrix}
\lambda & 0 & 0 \\
0 & 1 & 0 \\
0 & 0 & 1/\lambda 
\end{bmatrix},
\end{equation*}
where $[z,t]\in\mathcal{N}$, $\theta \in [0,2\pi)$ and $\lambda \in \mathbb{R}\setminus\{0\}$.
These are called the
\emph{Heisenberg translation},
\emph{Heisenberg rotation}
and \emph{Heisenberg dilatation}, respectively.
Their induced actions on $\mathcal{N} \times \R_{\ge 0}$ are given by:
\begin{align*}
T_{[z,t]}: &
[\zeta, \nu, u] \mapsto
[z+\zeta, t+\nu+2 \Im(z \bar{\zeta}), u], 
\\
R_{\theta}: &
[\zeta,\nu,u] \mapsto [e^{i\theta}\zeta, \nu, u],
\\
D_{\lambda}: &
[\zeta,\nu,u] \mapsto
[\lambda \zeta,\lambda^2 \nu,\lambda^2 u].
\end{align*}

\subsubsection*{Cygan metric}

The \emph{Cygan metric} on the Heisenberg group $\mathcal{N}$ is defined for points $\partial \hc \ni p = [z,t] \in \mathcal{N}$ and $\partial \hc \ni q = [w,s] \in \mathcal{N}$ by
\begin{equation*}
d_{\Cyg}(p, q) =
\left| 2 \langle \bf{p}, \bf{q} \rangle_{H_2} \right|^{1/2}
=
\left| |z-w|^2 - i(t - s + 2\Im(z\bar{w})) \right|^{1/2},
\end{equation*}
where $\bf{p}$ and $\bf{q}$ are standard lifts of $p$ and $q$, respectively.
This is the restriction to $\mathcal{N}$ of the \emph{extended Cygan metric}, defined for $p=[z,t,u]$ and $q=[w,s,v] \in \overline{\hc} \setminus \{q_\infty\} = \mathcal{N} \times \R_{\geq 0}$ by
\begin{equation*}
d_{\Cyg}(p, q) = 
\left| |z-w|^2 + |u-v| - i(t-s+2\Im(z\bar{w})) \right|^{1/2}.
\end{equation*}
The extended metric satisfies $d_{\Cyg}(p,q) = |2\langle \bf{p}, \bf{q} \rangle_{H_2}|^{1/2}$ if (at least) one of $p$ and $q$ lies in $\partial \hc$ (i.e., $uv = 0$).
Note that the (extended) Cygan metric is not geodesic.
A \emph{Cygan sphere} in $\overline{\hc}$ with centre $[z_0,t_0] \in \mathcal{N}$ and radius $r>0$, is defined as
\begin{equation*}
S_{[z_0,t_0]}(r) =
\left\{ [z,t,u] \in \overline{\hc} \setminus \{q_\infty\} 
\ \middle| \
d_{\Cyg}([z,t,u], [z_0,t_0,0]) = r \right\}.  
\end{equation*}
Note that $S_{[z_0,t_0]}(r)$ is the image of $S_{[0,0]}(r)$ under the Heisenberg translation $T_{[z_0,t_0]}$.
An alternative description of Cygan spheres are given by the geographic coordinates.

\begin{definition}
\label{definition::geographic}
The \emph{geographic coordinates} $(\alpha,\beta,\omega)$ of a point 
$q = q(\alpha,\beta,\omega) \in S_{[0,0]}(r)$
are defined via the standard lift
\( \bf{q} = \bf{q}(\alpha,\beta,\omega)
= (-r^2 e^{-i\alpha}/2, rwe^{i(-\alpha/2+\beta)}, 1)^t \),
where $\alpha\in [-\pi/2,\pi/2]$,
$\beta\in [0, \pi)$
and $w\in [-\sqrt{\cos(\alpha)}, +\sqrt{\cos(\alpha)}]$.
In particular, the ideal boundary of $S_{[0,0]}(r)$ consists of points with $\omega = \pm\sqrt{\cos(\alpha)}$.
\end{definition}

\subsubsection*{Pullback the Cygan metric to the ball model}

We introduce the Cygan metric defined with respect to an arbitrary boundary point $q^{\ball} \in \Proj(V_{H_1}^0)$ in the unit ball model.

Let $q = [\kappa_1, \kappa_2, 1]^t \in \Proj(V_{H_2}^0)$ be the boundary point in the Siegel domain model corresponding to
$q^{\ball} = \left[\dfrac{\kappa_1 + 1}{\sqrt{2}}, \kappa_2, \dfrac{\kappa_1 - 1}{\sqrt{2}}\right]^t$.
Applying the Heisenberg translation $T_{-q} = T_{[-\kappa_2, -2 \Im(\kappa_1)]}$ that sends $q$ to $[0,0,1]^t$,
so that $H_2 \cdot T_{-q}(q) = q_{\infty}$.
Now, consider a point $\omega = [\omega_1,\omega_2, 1]^t \in \Proj(V_{H_1}^- \sqcup V_{H_1}^0)$ in the unit ball model.
Its image under the Cayley transform is
\(\zeta =\Cayley(\omega) = [\omega_1+1, \sqrt{2} \cdot \omega_2, \omega_1-1]^t \in \Proj(V_{H_2}^+ \sqcup V_{H_2}^0)\).
Applying the transformation $H_2 \cdot T_{-q}$ yields
\begin{equation*}
\widetilde{\zeta} \coloneqq H_2 \cdot T_{-q}(\zeta) =
\left[
\begin{array}{c}
\dfrac{\omega_1 - 1}{\omega_1 + 1 + \sqrt{2} \cdot \overline{\kappa_2} \cdot \omega_2 + \overline{\kappa_1} \cdot (\omega_1 - 1)} \\
\dfrac{\sqrt{2} \cdot \omega_2 -  \kappa_2 \cdot (\omega_1 - 1)}{\omega_1 + 1 + \sqrt{2} \cdot \overline{\kappa_2} \cdot \omega_2 + \overline{\kappa_1} \cdot (\omega_1 - 1)} \\
1 \\
\end{array}
\right].
\end{equation*}

\begin{definition}
The \emph{pullback Cygan metric} with respect to $q^{\ball} \in \Proj(V_{H_1}^0)$
is defined as
\(d_{q^{\ball}} \coloneqq (H_2 \cdot T_{-q} \cdot \Cayley)^* d_{\Cyg} \),
so that $d_{q^{\ball}}(\omega,\omega') = d_{\Cyg}(\widetilde{\zeta}, \widetilde{\zeta'})$
for any \( \omega, \omega' \in \Proj(V_{H_1}^- \sqcup V_{H_1}^0) \setminus \{q^{\ball}\} \).
\end{definition}

An explicit expression for this metric is given below.

\begin{proposition}
\label{proposition::dqball}
For \( \omega, \omega' \in \Proj(V_{H_1}^- \sqcup V_{H_1}^0) \setminus \{q^{\ball}\} \),
if at least one of $\omega, \omega'$ lies in $\partial \hc$, then
\[
d_{q^{\ball}}(\omega,\omega') = 
\left| 
\dfrac{4 \langle
\boldsymbol{\omega},
\boldsymbol{\omega'}
\rangle_{H_1}}{
\left(\omega_1 + 1 + \sqrt{2} \overline{\kappa_2}  \omega_2 + \overline{\kappa_1}  (\omega_1 - 1)\right) \cdot \left(\overline{\omega'_1} + 1 + \sqrt{2} \kappa_2 \overline{\omega'_2} + \kappa_1  (\overline{\omega'_1} - 1)\right)
}
\right|^{1/2}.
\]
\end{proposition}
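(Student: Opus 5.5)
We need to prove Proposition \ref{proposition::dqball}: an explicit formula for the pullback Cygan metric when at least one point lies on the boundary. The plan is to compute directly from the definition, using that the extended Cygan metric satisfies $d_{\Cyg}(p,q)=|2\langle \mathbf{p},\mathbf{q}\rangle_{H_2}|^{1/2}$ for standard lifts whenever one of the points is ideal.

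First, I will check that the map $M \coloneqq H_2\cdot T_{-q}\cdot \Cayley$ sends $\Proj(V_{H_1}^-\sqcup V_{H_1}^0)$ into the closure of the Siegel domain, and that it sends $q^{\ball}$ to $q_\infty$. The former holds because $\Cayley$ intertwines the forms: $\Cayley^* H_2 \Cayley = H_1$. The matrix $T_{-q}$ lies in $\Unitary(H_2)$. The matrix $H_2$, viewed as the involution swapping the first and third coordinates, also preserves $H_2$, since $H_2^*H_2H_2=H_2$. Hence $M\in \Unitary(H_2,H_1)$, that is, $\langle M\mathbf{u},M\mathbf{v}\rangle_{H_2}=\langle \mathbf{u},\mathbf{v}\rangle_{H_1}$ for all $\mathbf{u},\mathbf{v}$. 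Because $T_{-q}$ sends $q$ to $[0,0,1]^t$ and $H_2$ swaps $[0,0,1]^t$ with $q_\infty$, the point $q^{\ball}=\Cayley(q)$ maps to $q_\infty$.

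Next, for $\omega=[\omega_1,\omega_2,1]^t$, I will use the vector $\mathbf{v}=M(\omega_1,\omega_2,1)^t$. Its third coordinate, read off from the displayed expression for $\widetilde\zeta$, is
\[
c(\omega)=\omega_1+1+\sqrt2\,\overline{\kappa_2}\,\omega_2+\overline{\kappa_1}(\omega_1-1).
\]
To confirm this, compute the first coordinate of $T_{-q}\zeta$ as $\zeta_1-\overline{(-\kappa_2)}\cdot\zeta_2+\tfrac{-|\kappa_2|^2-2i\Im\kappa_1}{2}\zeta_3$. On the boundary $2\Re\kappa_1+|\kappa_2|^2=0$, so $\tfrac{-|\kappa_2|^2-2i\Im\kappa_1}{2}=\overline{\kappa_1}$. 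With $\zeta=(\omega_1+1,\sqrt2\omega_2,\omega_1-1)^t$ this gives exactly $c(\omega)$; the swap $H_2$ then moves it to the third slot. Since $\omega\ne q^{\ball}$, we have $c(\omega)\neq0$, and the standard lift of $\widetilde\zeta$ is $\widetilde{\boldsymbol\zeta}=\mathbf{v}/c(\omega)$. The main care point is exactly this step: verifying the normalisation and the identity $\overline{\kappa_1}=(-|\kappa_2|^2-2i\Im\kappa_1)/2$. The identity fails if one forgets that $q$ is null.

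Then, assuming say $\omega'\in\partial\hc$, the image $\widetilde\zeta'$ is ideal. Applying the formula for $d_{\Cyg}$ with one ideal point gives
\[
d_{\Cyg}(\widetilde\zeta,\widetilde\zeta')^2=\bigl|2\langle \mathbf{v}/c(\omega),\mathbf{v}'/c(\omega')\rangle_{H_2}\bigr|
=\frac{2|\langle \boldsymbol\omega,\boldsymbol\omega'\rangle_{H_1}|}{|c(\omega)|\,|c(\omega')|}.
\]
This uses sesquilinearity and the fact that $M$ is an isometry of the forms. Writing $|c(\omega')|=|\overline{c(\omega')}|$ produces the conjugated factor in the statement.

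Finally, the remaining factor of $2$ needs reconciling, since the statement has $4$ in place of $2$. This comes from the $\Cayley$ normalisation: the lift $(\omega_1+1,\sqrt2\omega_2,\omega_1-1)$ used for $\zeta$ is $\sqrt2\,\Cayley(\omega_1,\omega_2,1)^t$. Using that unnormalised lift, the $H_2$-product picks up a factor $2$ relative to $\langle\boldsymbol\omega,\boldsymbol\omega'\rangle_{H_1}$, while $c(\omega)$ is the third coordinate of that same unnormalised lift. Tracking this scalar consistently yields the stated $4\langle\boldsymbol\omega,\boldsymbol\omega'\rangle_{H_1}$ over the product of the two denominators. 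Taking square roots completes the proof.
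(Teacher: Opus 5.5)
Your argument is correct, but it obtains the key identity differently from the paper. The paper writes $\langle\widetilde{\boldsymbol\zeta},\widetilde{\boldsymbol\zeta}'\rangle_{H_2}$ explicitly from the standard lifts as a single fraction over $c(\omega)\overline{c(\omega')}$. It then expands the numerator by hand until it collapses to $2\langle\boldsymbol\omega,\boldsymbol\omega'\rangle_{H_1}$. The nullity of $q$ is used silently there, to cancel the terms $(\kappa_1+\overline{\kappa_1}+|\kappa_2|^2)(\omega_1-1)(\overline{\omega_1'}-1)$.

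You instead get the numerator for free from $\Cayley^*H_2\Cayley=H_1$ together with $T_{-q},H_2\in\Unitary(H_2)$. Nullity of $q$ enters only to identify the $(1,3)$ entry of $T_{-q}$ with $\overline{\kappa_1}$, that is, to match your denominator with the one in the statement. Your version explains why the formula has this shape and where the constant $4$ comes from: one factor $2$ from the Cygan formula, one from the $\sqrt2$ in the Cayley lift. The paper's version is a self-contained check that needs no structural input, but it hides the reason for the cancellation.

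There is one bookkeeping error to fix. With $\mathbf{v}=M(\omega_1,\omega_2,1)^t$, the third coordinate is $c(\omega)/\sqrt2$, not $c(\omega)$, because $\Cayley(\omega_1,\omega_2,1)^t=\tfrac{1}{\sqrt2}(\omega_1+1,\sqrt2\,\omega_2,\omega_1-1)^t$. So your displayed intermediate formula with $2|\langle\boldsymbol\omega,\boldsymbol\omega'\rangle_{H_1}|$ is false as written, off by a factor of $2$. Your last paragraph diagnoses this correctly, but it is cleaner to avoid the detour. Set $\mathbf{u}\coloneqq\sqrt2\,M(\omega_1,\omega_2,1)^t=H_2T_{-q}(\omega_1+1,\sqrt2\,\omega_2,\omega_1-1)^t$ from the start. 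Then $u_3=c(\omega)$, $\widetilde{\boldsymbol\zeta}=\mathbf{u}/c(\omega)$ and $\langle\mathbf{u},\mathbf{u}'\rangle_{H_2}=2\langle\boldsymbol\omega,\boldsymbol\omega'\rangle_{H_1}$. Hence $2\langle\widetilde{\boldsymbol\zeta},\widetilde{\boldsymbol\zeta}'\rangle_{H_2}=4\langle\boldsymbol\omega,\boldsymbol\omega'\rangle_{H_1}/\bigl(c(\omega)\overline{c(\omega')}\bigr)$ with no correction step needed.
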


\begin{proof}
When at least one of $\omega$, $\omega'$ lie on the boundary,
the Cygan distance is also given by the hermitian product, i.e.,
$d_{q^{\ball}}(\omega,\omega')
=
d_{\Cyg}(\widetilde{\zeta},\widetilde{\zeta'})
=
\left| 2 \langle
\boldsymbol{\widetilde{\zeta}},
\boldsymbol{\widetilde{\zeta'}}
\rangle_{H_2} \right|^{1/2}$,
where
\begin{align*}
\langle
\boldsymbol{\widetilde{\zeta}},
\boldsymbol{\widetilde{\zeta'}}
\rangle_{H_2} 
=&~
\dfrac{\omega_1 - 1}{\omega_1 + 1 + \sqrt{2} \overline{\kappa_2}  \omega_2 + \overline{\kappa_1}  (\omega_1 - 1)}
+
\dfrac{\overline{\omega'_1} - 1}{\overline{\omega'_1} + 1 + \sqrt{2} \kappa_2 \overline{\omega'_2} + \kappa_1  (\overline{\omega'_1} - 1)}
\\
&+
\dfrac{\sqrt{2}  \omega_2 -  \kappa_2  (\omega_1 - 1)}{\omega_1 + 1 + \sqrt{2}  \overline{\kappa_2}  \omega_2 + \overline{\kappa_1}  (\omega_1 - 1)}
\cdot
\dfrac{\sqrt{2} \cdot \overline{\omega'_2} -  \overline{\kappa_2}  (\overline{\omega'_1} - 1)}{\overline{\omega'_1} + 1 + \sqrt{2}  \kappa_2  \overline{\omega'_2} + \kappa_1  (\overline{\omega'_1} - 1)}.
\end{align*}
Denote the numerator and the denominator of 
$\langle
\boldsymbol{\widetilde{\zeta}},
\boldsymbol{\widetilde{\zeta'}}
\rangle_{H_2}$
by $\mathcal{N}$ and $\mathcal{D}$, respectively.
We get
\[
\mathcal{D} = \left(\omega_1 + 1 + \sqrt{2} \overline{\kappa_2}  \omega_2 + \overline{\kappa_1}  (\omega_1 - 1)\right) \cdot \left(\overline{\omega'_1} + 1 + \sqrt{2} \kappa_2 \overline{\omega'_2} + \kappa_1  (\overline{\omega'_1} - 1)\right)
\]
and
\begin{align*}
\mathcal{N} =&~
\omega_1 \overline{\omega'_1} + \omega_1 + \sqrt{2} \kappa_2 \overline{\omega'_2} \omega_1
+
\kappa_1 \omega_1 \left(
\overline{\omega'_1} -1
\right)
-
\left(
\overline{\omega'_1} + 1 + \sqrt{2} \kappa_2 \overline{\omega'_2}
+
\kappa_1 \left( \overline{\omega'_1} - 1 \right)
\right) \\
& +
\overline{\omega'_1} \omega_1 + \overline{\omega'_1} + \sqrt{2} \overline{\kappa_2} \omega_2 \overline{\omega'_1}
+
\overline{\kappa_1} \overline{\omega'_1} \left(
\omega_1 -1
\right)
-
\left(
\omega_1 + 1 + \sqrt{2} \overline{\kappa_2} \omega_2
+
\overline{\kappa_1} \left( \omega_1 - 1 \right)
\right) \\
& +
2 \omega_2 \overline{\omega'_2}
- \sqrt{2} \kappa_2 (\omega_1 - 1) \overline{\omega'_2}
+ |\kappa_2|^2 (\omega_1 - 1)(\overline{\omega'_1} - 1)
- \sqrt{2} \overline{\kappa_2} (\overline{\omega'_1} - 1) \omega_2 \\
=&~ 
2 \cdot \langle \omega, \omega' \rangle_{H_1},
\end{align*}
as desired.
\end{proof}

\subsection{Isometric spheres}

Suppose that $g \in \PU(H_2)$ is represented by $(g_{i,j})_{i,j=1}^3 \in \SU(H_2)$ and that $g(q_\infty) \neq q_\infty$.

\begin{definition} \label{definition:isometric_sphere}
The \emph{isometric sphere} of $g$,
denoted by $\isphere(g)$,
is the set
\[
\isphere(g) \coloneqq
\left\{
p \in \overline{\hc}
\ \middle| \
|\langle {\bf{p}}, {\bf{q}}_{\infty} \rangle_{H_2} |
=
|\langle {\bf{p}}, g^{-1}({\bf{q}}_{\infty}) \rangle_{H_2} |
\right\},
\]
where $\bf{p}$ is the standard lift of $p$ and $\bf{q}_\infty$ is the standard lift of $q_\infty$.
\end{definition}

\begin{remark}
For any $g \in \PU(H_2)$,
the modulus $|\langle {\bf{p}}, g({\bf{q}}_{\infty}) \rangle_{H_2} |$ is well-defined.
Indeed, different choice of the representative $\bf{g} \in \SU(H_2)$ of $g$ leaves the modulus $|\langle {\bf{p}}, \bf{g}({\bf{q}}_{\infty}) \rangle_{H_2} |$ unchanged.
\end{remark}

The isometric sphere $I(g)$ is a $3$-ball.
It coincides with the Cygan sphere centred at
$\boldsymbol{c}_g \coloneqq g^{-1}(q_{\infty}) = [\overline{g_{3,2}} / \overline{g_{3,1}}, 2\cdot \Im(\overline{g_{3,3}} / \overline{g_{3,1}})] \in\mathcal{N}$
with radius $\boldsymbol{r}_g \coloneqq \sqrt{2/|g_{3,1}|}$.
The \emph{interior} and the \emph{exterior} of $\isphere(g)$ are defined, respectively, as:
\begin{align*}
\isphere_{-}(g) \coloneqq
\left\{ p \in \overline{\hc}
\ \middle| \
|\langle {\bf{p}}, {\bf{q}}_{\infty} \rangle_{H_2} |
>
|\langle {\bf{p}}, g^{-1}({\bf{q}}_{\infty}) \rangle_{H_2}|
\right\}, \\
\isphere_{+}(g) \coloneqq
\left\{ p \in \overline{\hc}
\ \middle| \
|\langle {\bf{p}}, {\bf{q}}_{\infty} \rangle_{H_2} |
<
|\langle {\bf{p}}, g^{-1}({\bf{q}}_{\infty}) \rangle_{H_2}|
\right\}.
\end{align*}

If $g \in \PU(H_2)$ does not fix $q_{\infty}$ and has a fixed point $\fp_g \neq q_{\infty}$ in $\overline{\hc}$ for which the corresponding eigenvalue has unit modulus (eg., $g$ is elliptic or unipotent),
then $\fp_g \in I(g)$.
If $g \in \PU(H_2)$ fixes $q_{\infty}$ and the corresponding eigenvalue has unit modulus,
then for any $g' \in \PU(H_2)$ that does not fix $q_{\infty}$, we have $I(g') = I(gg')$.

\subsection{Fixed torus of regular, real elliptic isometry}
\label{subsection::elliptic}

Suppose that $g \in \PU(H_2)$ is a regular elliptic isometry. Such an isometry is real elliptic if and only if it preserves a Lagrangian plane. In this subsection, we introduce the union of all $\R$-circles preserved by a regular, real elliptic isometry. We start with the following definitions.

\begin{definition}
\label{definition::standard_fixed_torus}
In the Siegel domain model of $\hc$, the \emph{standard fixed torus} $\Torus$ is defined as
\[
\Torus \coloneqq \left\{
[z_1, z_2, 1]^t \in \partial \hc
~\middle|~
|z_1 + 1|^2 = 2 |z_2|^2
\right\}.
\]
\end{definition}

\begin{definition}
For a point $q = [z_1, z_2, 1]^t \in \partial \hc$, the associated circle $C_q$ through $q$ is defined as
\[
C_q
\coloneqq \left\{
C_q(\theta) \coloneqq E_{\theta,-\theta}(q)
= \left[
\begin{array}{c}
\dfrac{z_1 + 1}{2} \cdot e^{i \theta} + \dfrac{z_1 - 1}{2} \\
z_2 \cdot e^{-i \theta} \\
\dfrac{z_1 + 1}{2} \cdot e^{i \theta} - \dfrac{z_1 - 1}{2}
\end{array}
\right]
~\middle|~
0 \le \theta \le 2 \pi \mod{2 \pi}
\right\},
\]
where $q = C_q(0)$.
\end{definition}

\begin{figure}[htbp]
 \centering
 \begin{subfigure}{0.49\textwidth}
  \centering
  \includegraphics[width=\textwidth]{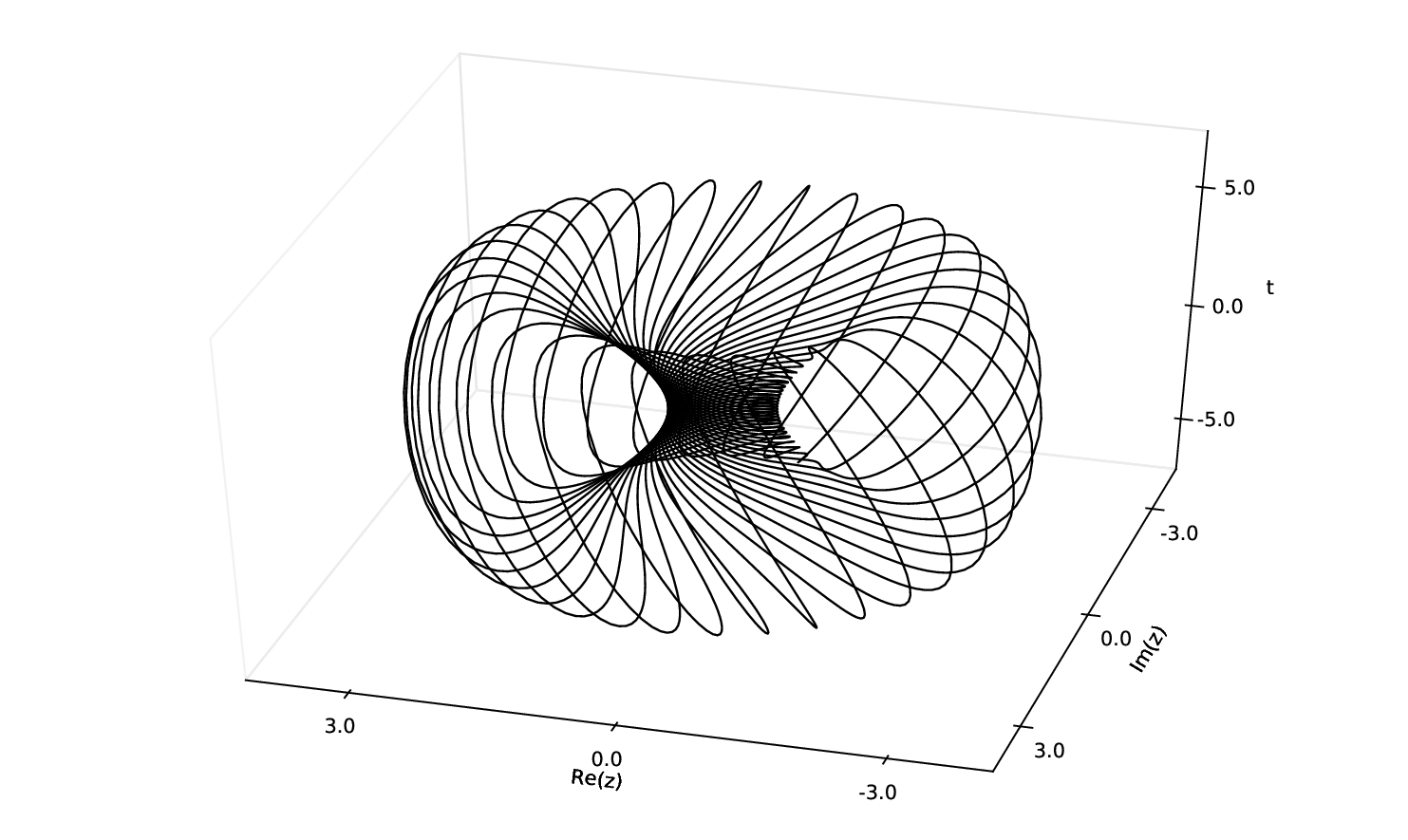}
 \end{subfigure}
 \begin{subfigure}{0.49\textwidth}
  \centering
  \includegraphics[width=\textwidth]{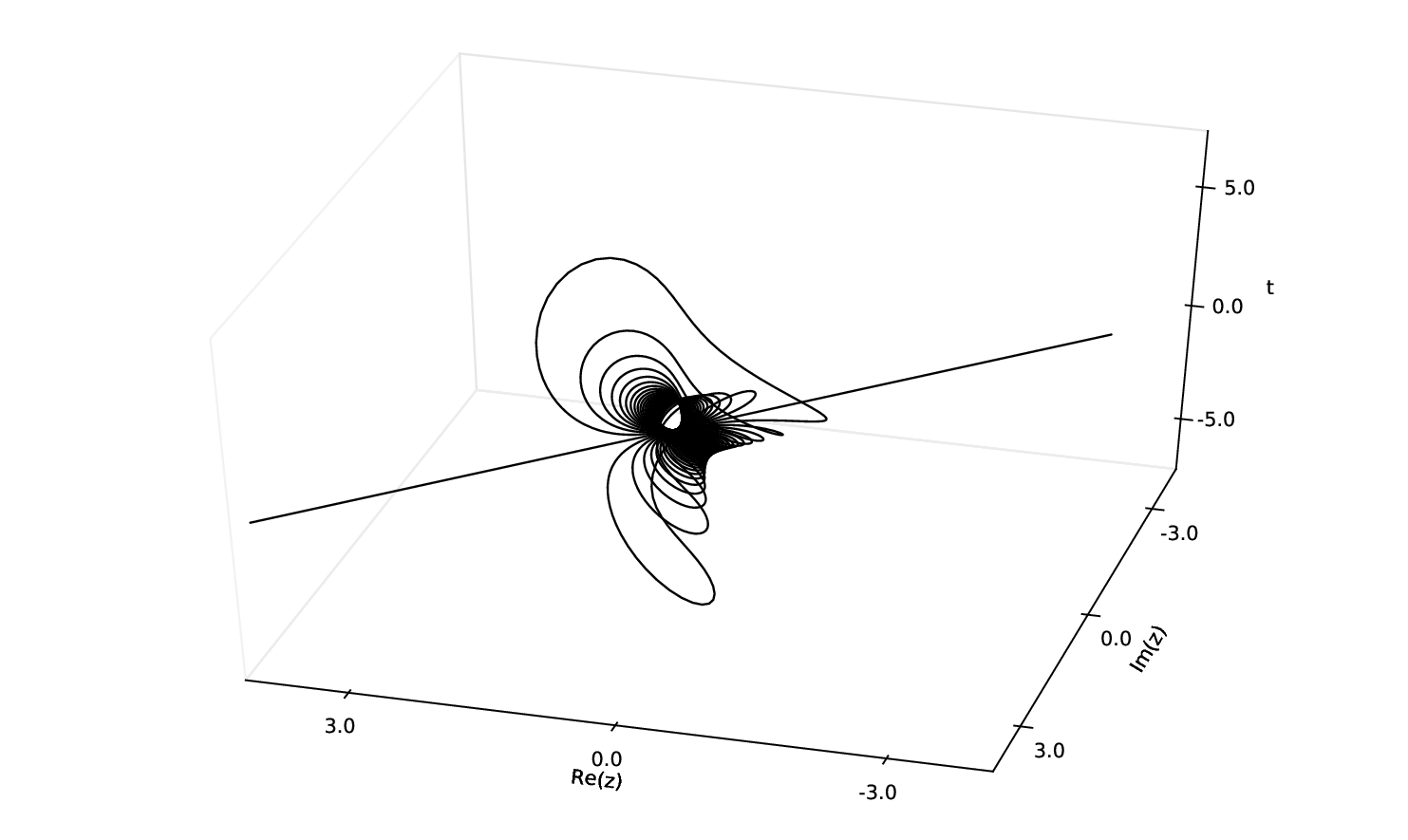}
 \end{subfigure}
 \caption{The standard torus $\Torus$ is foliated by the circles $\{C_q \mid q \in \Torus\}$, which can be explicitly parameterised as $\Torus = \bigsqcup_{\phi \in [0, 2\pi)} C_{q(\phi)}$ with $q(\phi)\coloneqq [-(3+2\sqrt{2}),(2+\sqrt{2})e^{i \phi},1]^t$. The left side of this figure shows the leaves $C_{q(\phi)}$ for $\phi \in \{k \pi / 16 \mid k = 0, 1, \ldots, 31\}$. Now, let $T$ be the Heisenberg translation sending $q_0 = [-1-2i, \sqrt{2}, 1]^t \in \Torus$ to $[0,0] \in \mathcal{N}$. The right side of this figure displays the images of the leaves $C_{q(\phi)}$ under $H_2 \cdot T$. In particular, the leaf $C_{q(0)}$ containing $q_0$ is mapped to the one-point compactification of an affine line.}
 \label{figure::torus}
\end{figure}

The torus $\Torus$ is foliated by the family $\{C_q \mid q \in \Torus\}$. Although this family is parametrized by $\Torus$, it is essentially a one-parameter family: distinct points $q \neq q'$ may satisfy $C_q = C_{q'}$. Each circle $C_q$ is also preserved by $E_{\theta,-\theta}$ for all $\theta$, and consequently $E_{\theta,-\theta}(\Torus) = \Torus$. Figure~\ref{figure::torus} illustrates this foliation of $\Torus$ together with its image under an isometry sending $q_0 = [-1-2i, \sqrt{2}, 1]^t \in \Torus$ to $q_\infty$. As the following propositions show, the torus $\Torus$ is precisely the union of all $\R$-circles preserved by $E_{\theta,-\theta}$.

\begin{proposition}
\label{proposition::torus_1}
For every $q \in \Torus$, the circle $C_q$ is an $\R$-circle.
\end{proposition}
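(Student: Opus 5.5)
The plan is to move to the unit ball model, where $E_{\theta,-\theta}$ becomes diagonal and the torus $\Torus$ becomes a product of two circles. There I will exhibit a unitary change of coordinates that carries $C_q$ onto the ideal boundary of the standard Lagrangian plane $\{(x_1,x_2,1)^t \mid x_i\in\R\}$. Since $C_q(\theta)=E_{\theta,-\theta}(q)$ is the orbit of $q$ under the one-parameter group, it is enough to understand this orbit in coordinates adapted to that group.

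\emph{Step 1 (describe $\Torus$ in the ball model).} Apply $\Cayley=\Cayley^{-1}$ to $q=[z_1,z_2,1]^t$. This gives $[(z_1+1)/\sqrt2,\, z_2,\, (z_1-1)/\sqrt2]^t$, so in normalized ball coordinates
\[
w_1=\frac{z_1+1}{z_1-1},\qquad w_2=\frac{\sqrt2\, z_2}{z_1-1}.
\]
The defining condition $|z_1+1|^2=2|z_2|^2$ of Definition~\ref{definition::standard_fixed_torus} becomes $|w_1|=|w_2|$. Together with the boundary condition $|w_1|^2+|w_2|^2=1$, this gives $|w_1|=|w_2|=1/\sqrt2$. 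Hence $\Cayley(\Torus)=\{(e^{i\phi},e^{i\psi},\sqrt2)/\sqrt2\}$.

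One caveat: the point with $z_1=1$ would have $w_1+\!$ denominator zero. But $z_1=1$ is incompatible with $2\Re(z_1)+|z_2|^2=0$, so every point of $\Torus$ is covered by this formula, apart from the harmless check at $q_\infty$, which does not lie on $\Torus$.

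By construction $E_{\theta,-\theta}$ corresponds to $E^{\ball}_{\theta,-\theta}=\mathrm{diag}\{e^{i\theta},e^{-i\theta},1\}$. Therefore the image of $C_q$ is
\[
\Bigl\{\tfrac{1}{\sqrt2}\bigl(e^{i(\phi+\theta)},\,e^{i(\psi-\theta)}\bigr)\ \Bigm|\ \theta\in[0,2\pi)\Bigr\}.
\]

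\emph{Step 2 (normalize).} Apply the element $\mathrm{diag}\{e^{-i\phi},e^{-i\psi},1\}\in\Unitary(H_1)$, which is a holomorphic isometry. This carries the curve to $\{(e^{i\theta},e^{-i\theta})/\sqrt2\}$.

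Then apply the unitary map $M$ acting on the first two coordinates by
\[
(w_1,w_2)\mapsto\Bigl(\frac{w_1+w_2}{\sqrt2},\ \frac{w_1-w_2}{\sqrt2\, i}\Bigr)
\]
and fixing the third coordinate. The $2\times2$ block is unitary, so $M$ preserves $H_1$. It sends the curve to $\{(\cos\theta,\sin\theta,1)^t\}$, which is exactly the ideal boundary of the Lagrangian plane $\{[x_1,x_2,1]^t \mid x_1,x_2\in\R,\ x_1^2+x_2^2<1\}$, i.e.\ the standard $\R$-circle.

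Since holomorphic isometries (and the Cayley transform) map Lagrangian planes to Lagrangian planes, $C_q$ is the preimage of an $\R$-circle, hence an $\R$-circle.

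As a cross-check, and an alternative route, one can verify with Cartan's invariant that $\mathbb{A}=0$ for any three points of the normalized curve. Their lifts $(\cos\theta_j,\sin\theta_j,1)^t$ are real vectors, so every Hermitian product is real and negative. The product in the definition of $\mathbb{A}$ is then $-(\text{negative})^3>0$, giving argument $0$.

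The main obstacle is Step 1: the bookkeeping of the Cayley transform and the projective normalization must be done carefully, so that $|z_1+1|^2=2|z_2|^2$ really translates into $|w_1|=|w_2|$ and the Siegel-model $E_{\theta,-\theta}$ really corresponds to $\mathrm{diag}\{e^{i\theta},e^{-i\theta},1\}$. The latter holds by the definition of $E_{\alpha,\beta}$ as the Cayley conjugate of $E^{\ball}_{\alpha,\beta}$, up to the scalar normalization into $\SU(H_2)$, which is projectively irrelevant. Everything after Step 1 is a direct computation.
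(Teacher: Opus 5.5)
Your argument is correct, but it takes a different route from the paper.

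The paper stays in the Siegel model. It sends $q$ to $q_\infty$ by $H_2 \cdot T_{-q}$, and Lemma~\ref{lemma::torus_to_line} shows that the image of $C_q \setminus \{q\}$ is an affine line in $\mathcal{N}$ whose coordinates are affine in $\cot(\theta/2)$. The paper then verifies, through an explicit identity using both $|\kappa_1+1|^2 = 2|\kappa_2|^2$ and $2\Re(\kappa_1)+|\kappa_2|^2 = 0$, that this line satisfies the Heisenberg equation of an infinite $\R$-circle.

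You instead move to the ball model. There $\Cayley(\Torus)$ is the Clifford torus $|w_1| = |w_2| = 1/\sqrt{2}$ and $E_{\theta,-\theta}$ is $\mathrm{diag}\{e^{i\theta},e^{-i\theta},1\}$. You then carry the orbit by explicit unitaries onto $\{(\cos\theta,\sin\theta,1)^t\}$, the boundary of the standard real Lagrangian plane. Your bookkeeping all checks out:
\begin{itemize}
\item the Cayley normalisation;
\item $E_{\alpha,\beta}$ being the Cayley conjugate of $E^{\ball}_{\alpha,\beta}$ up to a scalar;
\item the unitarity of $M$;
\item the fact that the orbit fills the whole circle.
\end{itemize}

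Your route is shorter and more conceptual. It explains why $\Torus$ is a torus, and it shows that all leaves $C_q$ are equivalent under the diagonal centraliser of $E_{\theta,-\theta}$. The same framework also makes the $\C$-circle orbits of Remark~\ref{remark::not_all_R_circles} visible as the cases $w_1 = 0$ or $w_2 = 0$.

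The paper's route costs more algebra, but it yields a by-product: the explicit affine parametrisation of $H_2 T_{-q}(C_q)$ with the constants $\mathcal{C}_z$ and $\mathcal{C}_t$. This is reused in Proposition~\ref{proposition::spheres_EEE} to locate the centres of the isometric spheres. With your approach, that Heisenberg picture would need a separate computation.
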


\begin{proposition}
\label{proposition::torus_2}
Every $\R$-circle preserved by $E_{\theta,-\theta}$ for some $\theta \in (0, 2\pi) \setminus \{\pi\}$ coincides with a leaf $C_q$.
\end{proposition}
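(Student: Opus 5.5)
The plan is to work in the unit ball model, where \(E_{\theta,-\theta}\) becomes \(E^{\ball}_{\theta,-\theta}=\mathrm{diag}\{e^{i\theta},e^{-i\theta},1\}\) after conjugating by \(\Cayley\). There I would classify all invariant Lagrangian planes by linear algebra, and then transport the answer back through \(\Cayley\) and compare it with Definition \ref{definition::standard_fixed_torus}.

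\textbf{Step 1 (the invariant real subspace).} An \(\R\)-circle is \(\partial_\infty L\) for a Lagrangian plane \(L=\Proj(W)\cap\hc\). Here \(W\subset\C^3\) is a real \(3\)-dimensional subspace on which \(\langle\cdot,\cdot\rangle_{H_1}\) is real-valued, and \(W\otimes_\R\C=\C^3\). If \(E^{\ball}_{\theta,-\theta}\) preserves the circle, it preserves \(L\), because \(L\) is determined by its boundary. Hence some scalar multiple \(\mu E^{\ball}_{\theta,-\theta}\) preserves \(W\). The element is regular for \(\theta\ne\pi\), so its fixed point \(e_3=(0,0,1)^t\) is its unique fixed point in \(\hc\). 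An elliptic isometry preserving the complete hyperbolic plane \(L\) fixes a point of \(L\), so \(e_3\in L\). We may therefore assume \(e_3\in W\). Then \(\mu e_3\in W\), which forces \(\mu\in\R\) with \(|\mu|=1\), that is \(\mu=\pm1\). Since the form is real on \(W\), the map \(\mu E^{\ball}_{\theta,-\theta}\) preserves \(P\coloneqq W\cap e_3^{\perp}\). This \(P\) is a real \(2\)-plane in \(\C^2=\mathrm{span}(e_1,e_2)\) that is Lagrangian for the standard positive Hermitian form.

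\textbf{Step 2 (classifying \(P\)).} The complexification \(P\otimes\C\to\C^2\) is an isomorphism, and the real operator \(\mu\,\mathrm{diag}(e^{i\theta},e^{-i\theta})|_P\) has eigenvalues \(\mu e^{\pm i\theta}\). These are distinct and non-real exactly because \(\theta\notin\{0,\pi\}\); this is where the hypothesis \(\theta\neq\pi\) enters. The eigenvectors are mapped into \(\C e_1\) and \(\C e_2\) respectively, and complex conjugation of \(P\otimes\C\) swaps them. It follows that
\[P=\{(\alpha z,\beta\bar z)\mid z\in\C\}\]
for some fixed \(\alpha,\beta\in\C^*\). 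Next, the condition that \(\Im\big(|\alpha|^2z\bar w+|\beta|^2\bar z w\big)=0\) for all \(z,w\) forces \(|\alpha|=|\beta|\). Consequently \(\partial_\infty L\) is the set of points \([\alpha z,\beta\bar z,1]^t\) with \(|z|^2\cdot 2|\alpha|^2=1\). This is a single circle lying in \(\{|w_1|=|w_2|=1/\sqrt2\}\), on which the product \(w_1w_2\) has a fixed argument.

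\textbf{Step 3 (comparison with the leaves).} Under \(\Cayley\), a boundary point \([w_1,w_2,1]^t\) corresponds to \([z_1,z_2,1]^t\) with \(z_1=(w_1+1)/(w_1-1)\) and \(z_2=\sqrt2\,w_2/(1-w_1)\), up to normalisation. I would check that \(|z_1+1|^2=2|z_2|^2\) is equivalent to \(|w_1|=|w_2|\) on \(\partial\hc\), so the circle found above lies in \(\Torus\). Pick any point \(q\) of this circle. The orbit \(C_q=\{E_{\phi,-\phi}(q)\}\) corresponds in the ball to \(\{(e^{i\phi}w_1,e^{-i\phi}w_2)\}\), which is exactly the circle \(\{(\alpha z,\beta\bar z)\}\) with \(|z|\) fixed. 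Hence \(\partial_\infty L=C_q\).

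I expect the main obstacle to be Step 1. One must justify that the invariant Lagrangian passes through the fixed point, and handle the scalar ambiguity \(\mu\) carefully. In particular, it must be checked that the case \(\mu=-1\) gives nothing new: it only changes the eigenvalues by a sign, not the eigenlines, so the form of \(P\) obtained in Step 2 is unchanged. The remaining steps are direct computations.
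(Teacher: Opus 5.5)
Your proof is correct, but it takes a different route from the paper's.

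\textbf{The paper's route.} The paper argues pointwise in the Siegel model. For a point $q=[z_1,z_2,1]^t$ of the invariant $\R$-circle $\partial L$, it uses $\fp_E\in L$ and the vanishing of the angular invariant of $(\fp_E,q,E_{\theta,-\theta}(q))$ to get $|z_1+1|^2=2|z_2|^2$. Strictly speaking this is the reality of the Hermitian triple product, since $\fp_E$ is an interior point, and the displayed equation there has a sign typo. In your ball coordinates this is the one-line computation $\Im\bigl(|w_1|^2e^{-i\theta}+|w_2|^2e^{i\theta}\bigr)=(|w_2|^2-|w_1|^2)\sin\theta$. So the paper's route is shorter, but it only yields $\partial L\subset\Torus$. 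The passage to $\partial L=C_q$ is left implicit. It could be supplied by density of the orbit of $q$ in $C_q$ when the element has infinite order. In the torsion case one would instead use three distinct orbit points, Proposition~\ref{proposition::torus_1}, and uniqueness of the $\R$-circle through them.

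\textbf{What your route buys.} Your classification of the invariant real forms as $W=\{(\alpha z,\beta\bar z,t)\}$ with $|\alpha|=|\beta|$ produces the leaf directly. It treats torsion and non-torsion elements uniformly and identifies $\Torus$ with the Clifford torus $|w_1|=|w_2|=1/\sqrt2$. Read backwards (every such $W$ is a real form), it also proves Proposition~\ref{proposition::torus_1} without the computations around Lemma~\ref{lemma::torus_to_line}.

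\textbf{Small corrections.}
\begin{itemize}
\item The Cayley formula is $z_2=\sqrt2\,w_2/(w_1-1)$. This is harmless, since only $|z_2|$ enters.
\item Once $\mu\in\R$ you simply have $E^{\ball}_{\theta,-\theta}(W)=\mu^{-1}W=W$. So nothing forces $|\mu|=1$, and the case $\mu=-1$ needs no separate discussion.
\item You use that $L$ determines $W$ up to a complex scalar to produce $\mu$; this deserves its one-line proof. Lifts in $W$ and in $E^{\ball}_{\theta,-\theta}(W)$ of points of $L$ differ by scalars $\lambda(p)$ with $\lambda(p)\overline{\lambda(p')}\in\R$. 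This holds because $\langle\mathbf{p},\mathbf{p}'\rangle_{H_1}$ is real and nonzero for negative vectors.
\end{itemize}
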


\begin{remark}
\label{remark::not_all_R_circles}
Proposition \ref{proposition::torus_1} claims that
every circle $C_q$ with $q \in \Torus$ is an $\R$-circle.
The condition $q \in \Torus$ cannot be removed.
Indeed, every real elliptic isometry $E_{\theta,-\theta}$ has two distinct positive eigenvectors,
implying the existence of two distinct complex geodesics $L_1$ and $L_2$ with $L_1 \cap L_2 = \{\fp_E\}$ that are preserved by $E_{\theta,-\theta}$.
If $E_{\theta,-\theta}$ is non-torsion, it follows that there are exactly two distinct circles $C_{q_1}$ and $C_{q_2}$ that are $\C$-circles.
By revisiting the description of $\C$-circles in \cite[40]{parker-2003-NCHG}, 
one can verify that they are the infinite $\C$-circle $C_{[0,0]}$ and the finite $\C$-circle $C_{[\sqrt{2},0]}$, respectively.
\end{remark}

Since every regular, real elliptic isometry $f$ is conjugate to some $E_{\theta,-\theta}$, the $\R$-circles preserved by $f$ are pairwise disjoint and their union is a torus. We call this union the \emph{fixed torus} of $f$ and denote it by $\Torus_f$. In particular, $\Torus_{E_{\theta,-\theta}} = \Torus$ for all $\theta \in (0,2\pi) \setminus \{\pi\}$.

The proofs of Propositions~\ref{proposition::torus_1} and~\ref{proposition::torus_2} are given in the remainder of this subsection.

\begin{proof}[Proof of Proposition \ref{proposition::torus_1}]
For $q = [\kappa_1, \kappa_2, 1]^t \in \Torus$, applying $H_2 \cdot T_{-q}$ to the $C_q$ yields the following.

\begin{lemma}
\label{lemma::torus_to_line}
Suppose that $\widetilde{\bf{c}_{\theta}} = [\widetilde{z_{\theta}}, \widetilde{t_{\theta}}] \coloneqq \left( H_2 \cdot T_{-q} \right) \left(C_q(\theta)\right)$, for $\theta \in (0, 2\pi)$. Then
\[
\widetilde{z}_{\theta_2} - \widetilde{z}_{\theta_1} = \mathcal{C}_z \cdot
\left(
\cot{\frac{\theta_2}{2}} - \cot{\frac{\theta_1}{2}}
\right)
\quad
\text{and}
\quad
\widetilde{t}_{\theta_2} - \widetilde{t}_{\theta_1} = \mathcal{C}_t \cdot
\left(
\cot{\frac{\theta_2}{2}} - \cot{\frac{\theta_1}{2}}
\right),
\]
where $\mathcal{C}_z = \dfrac{\kappa_2 (\kappa_1 + 3) i}{4 |\kappa_2|^2}$
and $\mathcal{C}_t = -\dfrac{\Re(\kappa_1) + 1}{2 |\kappa_2|^2}$.
Hence $(H_2 \cdot T_{-q}) (C_q) \setminus q_{\infty}$ is an affine line.
\end{lemma}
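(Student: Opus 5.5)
The plan is a direct computation: write $C_q(\theta)$ explicitly, apply $H_2\cdot T_{-q}$ to it, read off the horospherical coordinates $[\widetilde{z}_\theta,\widetilde{t}_\theta]$, and then use the torus condition $|\kappa_1+1|^2=2|\kappa_2|^2$ to show that their $\theta$-dependence is affine in $\cot(\theta/2)$.

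\textbf{Computing the image.} Set $a=(\kappa_1+1)/2$ and $b=(\kappa_1-1)/2$, so that $C_q(\theta)=[ae^{i\theta}+b,\ \kappa_2e^{-i\theta},\ ae^{i\theta}-b]^t$. Applying $T_{-q}=T_{[-\kappa_2,-2\Im\kappa_1]}$ and then $H_2$ (which swaps the first and third coordinates) gives a point of the form $[X_\theta/D_\theta,\ Y_\theta/D_\theta,\ 1]^t$. Here $D_\theta$ is the first coordinate of $T_{-q}(C_q(\theta))$:
\[
D_\theta=(ae^{i\theta}+b)+\overline{\kappa_2}\,\kappa_2e^{-i\theta}+\tfrac{-|\kappa_2|^2-2i\Im\kappa_1}{2}\,(ae^{i\theta}-b),
\]
and the numerators are
\[
Y_\theta=\kappa_2e^{-i\theta}-\kappa_2(ae^{i\theta}-b),\qquad X_\theta=ae^{i\theta}-b.
\]

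\textbf{Simplifying with the boundary and torus conditions.} Since $q\in\partial\hc$, we have $2\Re\kappa_1+|\kappa_2|^2=0$, hence $-|\kappa_2|^2-2i\Im\kappa_1=2\Re\kappa_1-2i\Im\kappa_1=2\overline{\kappa_1}$. The torus condition reads $|\kappa_1+1|^2=2|\kappa_2|^2=-4\Re\kappa_1$, which gives $|\kappa_1|^2+6\Re\kappa_1+1=0$. Substituting these, and multiplying numerator and denominator by $e^{-i\theta/2}$, each of $X_\theta$, $Y_\theta$, $D_\theta$ becomes a real-linear combination of $\cos(\theta/2)$ and $\sin(\theta/2)$ with constant complex coefficients.

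The key claim to verify is that $D_\theta$ is proportional to $\sin(\theta/2)$ alone, i.e.\ its $\cos(\theta/2)$ coefficient vanishes. That coefficient is where the torus relation should enter, and $D_0=0$ is consistent with $C_q(0)=q$ being sent to $q_\infty$. Granting this, dividing by $D_\theta$ gives
\[
\widetilde{z}_\theta=\frac{Y_\theta}{D_\theta}=\alpha+\beta\cot\tfrac{\theta}{2},
\]
and similarly $\widetilde{t}_\theta=2\Im(X_\theta/D_\theta)$ is affine in $\cot(\theta/2)$. Taking differences then yields the two stated formulas, and identifying $\beta$ and the $t$-slope with $\mathcal{C}_z$ and $\mathcal{C}_t$ uses the same two relations once more.

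\textbf{Conclusion.} As $\theta$ ranges over $(0,2\pi)$, $\cot(\theta/2)$ ranges over all of $\R$. So the image $(H_2\cdot T_{-q})(C_q)\setminus q_\infty$ is $\{[\alpha,\tau]+s[\mathcal{C}_z,\mathcal{C}_t]\}$, an affine line in $\C\times\R$, provided $(\mathcal{C}_z,\mathcal{C}_t)\neq 0$. This holds because $\kappa_2\neq 0$, which follows from the torus condition together with $q\neq[-1,0,1]^t$, the latter not being a boundary point.

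\textbf{Main obstacle.} The hard step is the algebraic bookkeeping showing that the $\cos(\theta/2)$ part of $D_\theta$ cancels and that the constants come out exactly as $\mathcal{C}_z$ and $\mathcal{C}_t$. I would first carry out the check in the special case $\kappa_2\in\R_{>0}$, using the Heisenberg rotation $R_\phi$, which commutes with the $E_{\theta,-\theta}$ structure and preserves $\Torus$. The general case then follows by equivariance: $\mathcal{C}_z$ scales by $e^{i\phi}$ and $\mathcal{C}_t$ is unchanged.
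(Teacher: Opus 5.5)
Your overall approach matches the paper's: compute $H_2T_{-q}C_q(\theta)$ explicitly and take ratios. Your expressions for $X_\theta$, $Y_\theta$, $D_\theta$ and the identity $-|\kappa_2|^2-2i\Im\kappa_1=2\overline{\kappa_1}$ are correct. However, the step you single out as the key one does not hold. Both $Y_\theta$ and $D_\theta$ contain $e^{i\theta}$, constant and $e^{-i\theta}$ terms; the $e^{-i\theta}$ term comes from the entry $\kappa_2e^{-i\theta}$ of $C_q(\theta)$. So multiplying by $e^{-i\theta/2}$ does not make them linear in $\cos(\theta/2),\sin(\theta/2)$, and $D_\theta$ is not proportional to $\sin(\theta/2)$.

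A quick check shows your claimed structure cannot hold. It would make $X_\theta/D_\theta$ affine in $\cot(\theta/2)$. But the real part of $X_\theta/D_\theta$ is $-|\widetilde{z}_\theta|^2/2$, which is quadratic in $\cot(\theta/2)$.

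What actually happens is
\[
D_\theta=\tfrac{|\kappa_1+1|^2}{2}e^{i\theta}-\tfrac{|\kappa_1-1|^2}{2}+|\kappa_2|^2e^{-i\theta}=2|\kappa_2|^2(\cos\theta-1)=-4|\kappa_2|^2\sin^2(\theta/2).
\]
Here the torus condition makes the two $e^{\pm i\theta}$ coefficients equal, so $D_\theta$ is real and vanishes to second order at $\theta=0$. The constant term uses $|\kappa_1-1|^2=|\kappa_1+1|^2-4\Re\kappa_1=4|\kappa_2|^2$.

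The affine dependence on $\cot(\theta/2)$ then comes from each numerator carrying exactly one factor of $\sin(\theta/2)$. Indeed $Y_\theta$ and $\Im X_\theta$ are degree-one trigonometric polynomials vanishing at $\theta=0$:
\begin{itemize}
\item $Y_\theta=\sin(\theta/2)\bigl(-i\kappa_2(\kappa_1+3)\cos(\theta/2)+\kappa_2(\kappa_1-1)\sin(\theta/2)\bigr)$;
\item $\Im X_\theta=\sin(\theta/2)\bigl((\Re\kappa_1+1)\cos(\theta/2)-\Im\kappa_1\sin(\theta/2)\bigr)$.
\end{itemize}
The reality of $D_\theta$ is what justifies $\widetilde{t}_\theta=2\Im(X_\theta)/D_\theta$. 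Dividing then gives $\mathcal{C}_z$ and $\mathcal{C}_t$ directly. This is exactly the vector displayed in the paper's proof. Your rotation-equivariance reduction is valid but unnecessary.

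Separately, $\kappa_2\neq 0$ does not give $(\mathcal{C}_z,\mathcal{C}_t)\neq 0$; it only ensures the constants are defined. In fact $\mathcal{C}_t=0$ for the paper's standard point $\kappa_1=-1-2i$. So what you need is $\mathcal{C}_z\neq 0$, i.e.\ $\kappa_1\neq -3$. This follows from your relation $|\kappa_1|^2+6\Re\kappa_1+1=0$, which gives $|\kappa_1+3|^2=8$. Hence $|\mathcal{C}_z|=1/(\sqrt{2}\,|\kappa_2|)$, the value the paper uses later.
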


\begin{proof}[Proof of Lemma \ref{lemma::torus_to_line}]
Notice that $H_2 \cdot T_{-q} \cdot q = H_2 \cdot T_{-q} \cdot C_q(0) = q_\infty$
and
\begin{equation*}
H_2 \cdot T_{-q} \cdot C_q(\theta)
= \left[
\begin{array}{c}
\frac{1}{2}\left(
(\kappa_1 + 1) e^{i \theta} + (1 - \kappa_1) 
\right) \\
-i \cdot \kappa_2 (3 + \kappa_1) \cdot \sin{\frac{\theta}{2}} \cos{\frac{\theta}{2}} + \kappa_2 (\kappa_1 - 1) \cdot \sin^2{\frac{\theta}{2}} \\
-4 |\kappa_2|^2 \cdot \sin^2{\frac{\theta}{2}}
\end{array}
\right].
\end{equation*}
Therefore, this lemma comes from
\begin{align*}
\widetilde{z}_{\theta_2} - \widetilde{z}_{\theta_1} &=
\frac{[H_2 \cdot T_{-q} \cdot C_q(\theta_2)]_2}{[H_2 \cdot T_{-q} \cdot C_q(\theta_2)]_3}
-
\frac{[H_2 \cdot T_{-q} \cdot C_q(\theta_1)]_2}{[H_2 \cdot T_{-q} \cdot C_q(\theta_1)]_3}
= \mathcal{C}_z \cdot
\left(
\cot{\frac{\theta_2}{2}} - \cot{\frac{\theta_1}{2}}
\right), \\
\widetilde{t}_{\theta_2} - \widetilde{t}_{\theta_1} &= 
2 \Im \left(
\frac{[H_2 \cdot T_{-q} \cdot C_q(\theta_2)]_1}{[H_2 \cdot T_{-q} \cdot C_q(\theta_2)]_3}
\right)
-
2 \Im \left(
\frac{[H_2 \cdot T_{-q} \cdot C_q(\theta_1)]_1}{[H_2 \cdot T_{-q} \cdot C_q(\theta_1)]_3}
\right)
= \mathcal{C}_t \cdot
\left(
\cot{\frac{\theta_2}{2}} - \cot{\frac{\theta_1}{2}}
\right),
\end{align*}
as desired.
\end{proof}

We now return to the proof of Proposition~\ref{proposition::torus_1}.
An $\R$-circle passing through $q_\infty$ is the one-point compactification of the affine line
\[
\left\{
[x \cdot e^{i \theta_0} + x_0 + iy_0, v_0 + 2x \cdot y_0 \cos{\theta_0} - 2x \cdot x_0 \sin{\theta_0}]
~\middle|~
x \in \R
\right\} \subset \mathcal{N}
\]
for some $\theta_0 \in [0,2\pi)$ and $(x_0+i y_0, v_0) \in \mathcal{N}$ (see \cite[41]{parker-2003-NCHG}; cf. \cite[Section 4.4.9]{Goldman1999}).
Lemma \ref{lemma::torus_to_line} has shown that the curve $H_2 \cdot T_{-q} \cdot C_q \setminus q_\infty$ is indeed an affine line.
Instead of using $\theta \in (0,2\pi)$, we use $x \coloneqq -|\mathcal{C}_z| \cdot \cot{(\theta/2)} \in \R$ to parameterise this affine line.

We claim that the affine line parameterised by $x$ is an $\R$-circle passing through $q_\infty$ given by $(\theta_0, x_0, y_0, v_0)$.
As $x_0 + i y_0 = \widetilde{z}_{\pi}$ and $v_0 = \widetilde{t}_{\pi}$ and
$H_2 \cdot T_{-q} \cdot C_q(\pi) = \left[-\kappa_1, \kappa_2(\kappa_1 - 1), -4|\kappa_2|^2\right]^t$,
we get
$x_0 = \Re\left( \dfrac{\kappa_2 (\kappa_1 - 1)}{-4|\kappa_2|^2} \right)$
and
$y_0 = \Im\left( \dfrac{\kappa_2 (\kappa_1 - 1)}{-4|\kappa_2|^2} \right)$.
As $\widetilde{z}_{2 \cdot \arccot{\left(-x / |\mathcal{C}_z| \right)}} - \widetilde{z}_{\pi} = x \cdot \dfrac{-\mathcal{C}_z}{|\mathcal{C}_z|}$ for all $x \in \R$,
we get $e^{i \theta_0} = \dfrac{-\mathcal{C}_z}{|\mathcal{C}_z|}$.
It remains to verify that
\[
x\cdot\left(2 y_0 \cdot \cos{\theta_0} - 2 x_0 \cdot \sin{\theta_0}\right)
= \widetilde{t}_{2 \cdot \arccot{\left(-x / |\mathcal{C}_z| \right)}} - \widetilde{t}_{\pi} = x \cdot \dfrac{-\mathcal{C}_t}{|\mathcal{C}_z|}
\]
for all $x \in \R$,
which is equivalent to show that $2 y_0 \cdot \cos{\theta_0} - 2 x_0 \cdot \sin{\theta_0} = -\dfrac{\mathcal{C}_t}{|\mathcal{C}_z|}$.
More precisely, we need
$\Im\left( \kappa_2(\kappa_1 - 1) \right) \cdot \Re\left( \kappa_2(3 + \kappa_1) i \right) -
\Re\left( \kappa_2(\kappa_1 - 1) \right) \cdot \Im\left( \kappa_2(3 + \kappa_1) i \right)
=
4 |\kappa_2|^2 \cdot \left( \Re(\kappa_1) + 1 \right)$, where the left hand side of the desired equation is equal to
\begin{align*}
& - \Im(\kappa_1 \kappa_2 + \kappa_2)^2 + 4 \cdot \Im(\kappa_2)^2 - \Re(\kappa_1 \kappa_2 + \kappa_2)^2 + 4 \cdot \Re(\kappa_2)^2 \\
= & 4 |\kappa_2|^2 - |\kappa_1 \kappa_2 + \kappa_2|^2 \\
= & 4|\kappa_2|^2 - |\kappa_2|^2 \cdot |\kappa_1 + 1|^2 \\
= & 4 |\kappa_2|^2 - 2 |\kappa_2|^4
= 4 |\kappa_2|^2 \cdot \left( 1 - \frac{|\kappa_2|^2}{2} \right)
= 4|\kappa_2|^2 \cdot\left( \Re(\kappa_1) + 1 \right).
\end{align*}
Thus, the circle $C_q$ is an $\R$-circle.
\end{proof}

\begin{proof}[Proof of Proposition \ref{proposition::torus_2}]
Let $q = [z_1, z_2, 1]^t \in \partial \hc$ satisfy $|z_2|^2 + 2 \Re(z_1) = 0$. Suppose that $q$ lies on an $\R$-circle $\partial L$ preserved by $E_{\theta,-\theta}$. The fixed point $\fp_E$ of $E_{\theta,-\theta}$ must lie on the Lagrangian plane $L$, so Cartan's angular invariant gives
\(
\mathbb{A}(\fp_E, q, E_{\theta,-\theta}(q)) = 0
\).
Hence $|z_1+1|^2 + 2|z_2|^2 = 0$.
\end{proof}

\section{Intersection of isometric spheres}
\label{section::sphere}

From now on, we fix $q \in \Torus$ represented by
$\bf{q} = (\kappa_1, \kappa_2, 1)^t$, where $\kappa_1$ and $\kappa_2$ satisfy $|\kappa_2|^2 + 2 \Re(\kappa_1) = 0$ and $|\kappa_1 + 1|^2 = 2 |\kappa_2|^2$.
Consider the $\R$-circle $C_q = \{C_q(\theta) = E_{\theta,-\theta} (q)\}$ through $q$.
Set
\[
\EE_{\theta,-\theta,q} \coloneqq (H_2 T_{-q}) \cdot E_{\theta,-\theta} \cdot (H_2 T_{-q})^{-1}.
\]
The isometric sphere $I(\EE_{-\theta,\theta,q})$ is the Cygan sphere $S_{\bf{\widetilde{c}}_{\theta}}(\bf{\widetilde{r}}_{\theta})$ in $\overline{\hc}$ with $\boldsymbol{\widetilde{c}}_\theta \coloneqq \EE_{\theta,-\theta,q}(q_\infty) = H_2 \cdot T_{-q} \cdot C_q(\theta)$
and
\[
\boldsymbol{\widetilde{r}}_\theta \coloneqq
\sqrt{2} \cdot |(\EE_{-\theta,\theta,q})_{3,1}|^{-1/2}
=
\sqrt{2} \cdot |(\EE_{-\theta,\theta,q} \cdot q_\infty)_{3}|^{-1/2}
=
\frac{1}{\sqrt{2} |\kappa_2| \cdot \sin{(\theta / 2)}}.
\]

The following well-known proposition allows us to fix the choice of \(q \in \Torus\).

\begin{proposition}
\label{proposition::spheres_EEE}
For every \(q \in \Torus\), there exists an isometry \(Q_q \in \PU(H_2)\) that sends \(q\) to \(q_\infty\) and, for every \(\theta\), maps the isometric sphere \(I(\EE_{-\theta,\theta,q})\) to the Cygan sphere of radius \(1/\sin(\theta/2)\) centred at \([-\cot(\theta/2),0] \in \mathcal{N}\). Moreover, for any distinct \(q, q' \in \Torus\), there exists an isometry that maps \(I(\EE_{-\theta,\theta,q})\) to \(I(\EE_{-\theta,\theta,q'})\) for every \(\theta\).
\end{proposition}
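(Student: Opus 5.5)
Since $H_2 T_{-q}$ already sends $q$ to $q_\infty$, I will take $Q_q \coloneqq M\cdot H_2 T_{-q}$, where $M$ lies in the stabiliser of $q_\infty$ and is a composition of a Heisenberg translation, a Heisenberg rotation and a Heisenberg dilatation. Each of these maps Cygan spheres to Cygan spheres. A translation or rotation is a Cygan isometry. The dilatation $D_\lambda$ multiplies Cygan distances by $|\lambda|$, as the formula for the extended Cygan metric shows. Moreover, $I(\EE_{-\theta,\theta,q})$ is the Cygan sphere $S_{\widetilde{\bf c}_\theta}(\widetilde{\bf r}_\theta)$. It therefore suffices to choose $M$ so that, for every $\theta$, it sends $\widetilde{\bf c}_\theta$ to $[-\cot(\theta/2),0]$ and rescales $\widetilde{\bf r}_\theta=1/(\sqrt2|\kappa_2|\sin(\theta/2))$ to $1/\sin(\theta/2)$.

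I will build $M$ in three steps. First, by Lemma \ref{lemma::torus_to_line} and the proof of Proposition \ref{proposition::torus_1}, the centres $\widetilde{\bf c}_\theta$ lie on the $\R$-circle $(H_2T_{-q})(C_q)\setminus\{q_\infty\}$. This is the affine line through $\widetilde{\bf c}_\pi=[x_0+iy_0,v_0]$ with direction $e^{i\theta_0}$, parametrised affinely by $\cot(\theta/2)$. Apply the translation $T_{\widetilde{\bf c}_\pi}^{-1}$. It sends this $\R$-circle to an $\R$-circle through $[0,0]$ and $q_\infty$, which must be the line $\{[xe^{i\theta_0},0]\}$. It also sends $\widetilde{\bf c}_\theta$ to $[\mathcal{C}_z\cot(\theta/2),0]$, because Lemma \ref{lemma::torus_to_line} gives the $z$-differences, and the $t$-coordinate vanishes by the identity verified in the proof of Proposition \ref{proposition::torus_1}. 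Second, apply a rotation $R_\phi$ with $e^{i\phi}=-|\mathcal{C}_z|/\mathcal{C}_z$; the centres become $[-|\mathcal{C}_z|\cot(\theta/2),0]$. Third, apply $D_\lambda$ with $\lambda=\sqrt2|\kappa_2|$. The radii become $1/\sin(\theta/2)$ as required, and the centres become $[-\lambda|\mathcal{C}_z|\cot(\theta/2),0]$.

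The remaining point, and the main obstacle, is the scalar identity $\sqrt2|\kappa_2|\,|\mathcal{C}_z|=1$, i.e. $|\kappa_1+3|^2=8|\kappa_2|^2$. I will derive it from the two defining equations of $\Torus$. Writing $a=\Re\kappa_1$, the equation $|\kappa_1+1|^2=2|\kappa_2|^2=-4a$ gives $|\kappa_1|^2=-6a-1$. Then $|\kappa_1+3|^2=|\kappa_1|^2+6a+9=8=\ldots$; indeed this seems to force $|\kappa_2|^2=1$, which is suspicious. So before relying on it I will recheck $\mathcal{C}_z$ directly from the displayed formula for $H_2T_{-q}C_q(\theta)$. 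If a genuine discrepancy remains, I will absorb it by choosing the dilatation factor so that the centres match exactly. Then I will verify, using the identity $2y_0\cos\theta_0-2x_0\sin\theta_0=-\mathcal{C}_t/|\mathcal{C}_z|$, that the resulting radius still equals $1/\sin(\theta/2)$; possibly this uses the constraint on $|\kappa_2|$ coming from the relation $2|\kappa_2|^2=-4\Re\kappa_1$. This bookkeeping is the only nonformal step.

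The last assertion then follows immediately. For distinct $q,q'\in\Torus$, the isometry $Q_{q'}^{-1}Q_q$ maps $I(\EE_{-\theta,\theta,q})$ onto the common model sphere and then back onto $I(\EE_{-\theta,\theta,q'})$, simultaneously for all $\theta$.
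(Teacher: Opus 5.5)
Your construction is the paper's own: translate the line of centres through the origin, rotate it onto the real axis, and dilate. The paper uses $T'$, $R'$ and $D_{1/|\mathcal{C}_z|}$, normalising at $\theta_1=\pi$.

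The ``main obstacle'' you flag is only an algebra slip. Since $|\mathcal{C}_z| = |\kappa_1+3|/(4|\kappa_2|)$, the condition $\sqrt2|\kappa_2|\,|\mathcal{C}_z| = 1$ reads $|\kappa_1+3|^2 = 8$, with no factor $|\kappa_2|^2$. Your computation $|\kappa_1+3|^2 = |\kappa_1|^2 + 6\Re(\kappa_1) + 9 = 8$, from the two equations defining $\Torus$, proves exactly that. Nothing forces $|\kappa_2|^2 = 1$: at the paper's base point $(\kappa_1,\kappa_2) = (-1-2i,\sqrt2)$ one has $|\kappa_1+3|^2 = 8$ but $|\kappa_2|^2 = 2$. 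This identity is precisely the paper's remark $|\mathcal{C}_z| = 1/(\sqrt2|\kappa_2|)$.

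Discard the fallback of re-choosing the dilatation factor, because it could not work. Translations and rotations preserve both $d_{\Cyg}(\widetilde{\bf c}_\theta,\widetilde{\bf c}_\pi)$ and $\widetilde{\bf r}_\theta$, and $D_\lambda$ multiplies both by the same factor $|\lambda|$. So the ratio $d_{\Cyg}(\widetilde{\bf c}_\theta,\widetilde{\bf c}_\pi)/\widetilde{\bf r}_\theta = \sqrt2|\kappa_2|\,|\mathcal{C}_z|\,|\cos(\theta/2)|$ is the same for every admissible $M$. It has to equal the model value $|\cos(\theta/2)|$, so the identity is necessary, and no choice of $\lambda$ could compensate if it failed.

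With this corrected, your argument is complete, including the use of $Q_{q'}^{-1}Q_q$ for the last assertion.
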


\begin{proof}
Fix $0 < \theta_1 < 2\pi$. For $q \in \Torus$, choose a Heisenberg translation $T'$ and a rotation $R'$ such that $(R' T' H_2 T_{-q}) \cdot (C_q \setminus \{q_\infty\})$ is the real axis of $\C \subset \mathcal{N}$, with the origin at $(R' T' H_2 T_{-q}) \cdot C_q(\theta_1)$ and moving rightward as $\theta$ increases. Set $\EEE_{\theta,-\theta,q} = (R' T' H_2 T_{-q}) \, E_{\theta,-\theta} \, (R' T' H_2 T_{-q})^{-1}$. Then $I(\EEE_{-\theta,\theta,q})$ is the Cygan sphere $S_{\tilde{\bf{c}}_\theta}(\tilde{r}_\theta)$ with centre $\tilde{\bf{c}}_\theta = (R' T' H_2 T_{-q}) \cdot C_q(\theta)$ and radius $\tilde{r}_\theta = \sqrt{2}\,|(\EEE_{-\theta,\theta,q})_{3,1}|^{-1/2}$.

By Lemma \ref{lemma::torus_to_line}, the centre of $I(\EE_{-\theta,\theta,q})$ satisfies $\widetilde{z}_\theta - \widetilde{z}_{\theta_1} = \mathcal{C}_z \bigl( \cot(\theta/2) - \cot(\theta_1/2) \bigr)$. Applying $R' T'$ sends the centre to $[\widetilde{z}_\theta',0]$ with $\widetilde{z}_\theta' - \widetilde{z}_{\theta_1}' = \pm |\widetilde{z}_\theta - \widetilde{z}_{\theta_1}|$ and the radius remains $\tilde{r}_\theta$.
Hence, the isometric sphere $I(\EEE_{-\theta,\theta,q})$ is the Cygan sphere $S_{\bf{\dbtilde{c}}_{\theta}}(\bf{\dbtilde{r}}_{\theta})$ with
\[
\bf{\dbtilde{c}}_\theta =
\left[ |\mathcal{C}_z| \cdot \left( \cot{\frac{\theta_1}{2}} - \cot{\frac{\theta}{2}} \right), 0 \right] \in \mathcal{N}
\quad
\text{and}
\quad
\bf{\dbtilde{r}}_\theta = 
\frac{1}{\sqrt{2} |\kappa_2| \cdot \sin{(\theta / 2)}}.
\]
Note that \(|\mathcal{C}_z| = \dfrac{1}{\sqrt{2}\,|\kappa_2|}\). Hence, applying the Heisenberg dilatation \(D_{1/|\mathcal{C}_z|}\) and setting \(\theta_1 = \pi\), we obtain a Cygan sphere centred at \([-\cot(\theta/2),0]\) with radius \(1/\sin(\theta/2)\).
\end{proof}


\subsection{Standard model}
\label{subsection::standard}

From now on, we fix $q = [\kappa_1,\kappa_2,1]$ with $(\kappa_1, \kappa_2) = (-1-2i, \sqrt{2})$.

To describe the isometric spheres $I(\EE_{\theta,-\theta,q})$ for $\theta \in [0,2\pi)$ explicitly,
we pull them back into the unit ball model along $H_2 \cdot T_{-q} \cdot \Cayley$,
as follows.
First of all, the $\R$-circle $C_q$ through $q$ in the unit ball model is given by
\begin{equation*}
\Cayley(C_q) = \left\{
\Cayley(C_q(\theta))
=
\left[
\begin{array}{c}
\frac{\kappa_1 + 1}{\kappa_1 - 1} \cdot e^{i \theta} \\
\frac{\sqrt{2} \kappa_2}{\kappa_1 - 1} \cdot e^{-i \theta} \\
1
\end{array}
\right]
=
\left[
\begin{array}{c}
(1 + i) \cdot e^{i \theta} \\
-(1 - i) \cdot e^{-i \theta}\\
2
\end{array}
\right]
~\middle|~
0 \le \theta < 2\pi
\right\}.
\end{equation*}
Define
\[
I(\theta) \coloneqq (H_2 \cdot T_{-q} \cdot \Cayley)^{-1} \cdot I(\EE_{-\theta, \theta, q}),
\]
which is the isometric sphere in the unit ball model and
is called the \emph{standard model of the intersection} in our consideration.
We use $I_-(\theta)$ to denote the pull-back of the interior of the isometric sphere $I(\EE_{-\theta,\theta,q})$
and
also use $I_+(\theta)$ to denote the pull-back of the exterior.

The pull-back of $q_\infty$ is
$q^{\ball} = (H_2 \cdot T_{-q} \cdot \Cayley)^{-1} (q_\infty) = \Cayley(q) = [1+i,-(1-i),2]^t$.

\begin{proposition}
\label{proposition::standard-spheres}
For every point $\omega = [\omega_1, \omega_2, 1]^t \in \overline{\hc}$ in the unit ball model,
we have
\[
\omega \in I(\theta) \sqcup I_-(\theta)
\quad
\text{if and only if}
\quad
\dfrac{
\left|
(1-i) \cdot \omega_1 \cdot e^{-i \theta} - (1+i) \cdot \omega_2 \cdot e^{i\theta} - 2
\right|
}{
\left|
(1-i) \cdot \omega_1 - (1+i) \cdot \omega_2 - 2
\right|
} \le 1.
\]
Furthermore, $\omega \in I(\theta)$ if and only if the equality holds.
\end{proposition}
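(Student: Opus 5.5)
The plan is to pull the defining inequality of the isometric sphere (Definition \ref{definition:isometric_sphere}) back to the unit ball model along $M \coloneqq H_2 \cdot T_{-q} \cdot \Cayley$. Write $g = \EE_{-\theta,\theta,q}$, so that $g^{-1} = \EE_{\theta,-\theta,q} = (H_2 T_{-q}) E_{\theta,-\theta} (H_2 T_{-q})^{-1}$. A point $p$ of the Siegel model lies in $I(g) \sqcup I_-(g)$ exactly when $|\langle \bf{p}, g^{-1}(\bf{q}_\infty)\rangle_{H_2}| \le |\langle \bf{p}, \bf{q}_\infty\rangle_{H_2}|$, with equality characterising $I(g)$. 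Here $g^{-1}$ may be taken to be any $\SU(H_2)$-representative, by the remark following Definition \ref{definition:isometric_sphere}. The goal is to rewrite both Hermitian products in terms of $\omega = M^{-1}(p)$ and the ball form $H_1$.

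The first step is to check that $M$ intertwines the forms up to a positive constant: $M^* H_2 M = c\, H_1$ with $c>0$. This holds because $\Cayley^* H_2 \Cayley = H_1$ by direct multiplication, and because $T_{-q} \in \Unitary(H_2)$. For $H_2$ itself, viewed as the involution $[z_1,z_2,z_3]\mapsto[z_3,z_2,z_1]$, we have $H_2^* H_2 H_2 = H_2$. Consequently, for lifts $\boldsymbol{\omega}$ and $\bf{v}$ in $\C^{2,1}_{H_1}$,
\[
\langle M\boldsymbol{\omega}, M\bf{v}\rangle_{H_2} = c\,\langle \boldsymbol{\omega}, \bf{v}\rangle_{H_1}.
\]
The standard lift $\bf{p}$ of $p$ is $\lambda M\boldsymbol{\omega}$ for some scalar $\lambda$. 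This common factor $\lambda$ appears on both sides of the inequality and therefore cancels.

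The second step is to match the lifts of $q_\infty$ and of the centre consistently. This is the only delicate point, since the moduli must be compared using the same normalisation. Write $\bf{q}_\infty = \mu\, M \bf{q}^{\ball}$ with $\bf{q}^{\ball} = (1+i, -(1-i), 2)^t$, which is the lift of $\Cayley(q)$ computed above. Then
\[
g^{-1}(\bf{q}_\infty) = \mu\, M\, E^{\ball}_{\theta,-\theta}\, \bf{q}^{\ball}
\]
up to a unit-modulus scalar. This uses $g^{-1} = M E^{\ball}_{\theta,-\theta} M^{-1}$, which holds because $E_{\theta,-\theta} = \Cayley\, E^{\ball}_{\theta,-\theta}\, \Cayley^{-1}$ by construction. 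It also uses that $E^{\ball}_{\theta,-\theta} = \mathrm{diag}\{e^{i\theta}, e^{-i\theta}, 1\}$ has determinant $1$, so its image is a legitimate unimodular representative. The same factor $\mu$ therefore multiplies both sides, and the inequality becomes
\[
|\langle \boldsymbol{\omega}, E^{\ball}_{\theta,-\theta}\bf{q}^{\ball}\rangle_{H_1}| \le |\langle \boldsymbol{\omega}, \bf{q}^{\ball}\rangle_{H_1}|.
\]

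The final step is the explicit evaluation. We have $E^{\ball}_{\theta,-\theta}\bf{q}^{\ball} = ((1+i)e^{i\theta}, -(1-i)e^{-i\theta}, 2)^t$, and $\langle \boldsymbol{\omega},\bf{v}\rangle_{H_1} = \omega_1\overline{v_1} + \omega_2\overline{v_2} - \overline{v_3}$. Substituting gives the numerator $(1-i)\omega_1 e^{-i\theta} - (1+i)\omega_2 e^{i\theta} - 2$. Setting $\theta = 0$ gives the denominator $(1-i)\omega_1 - (1+i)\omega_2 - 2$. This denominator vanishes only at $\omega = q^{\ball}$, which is the excluded point, so the ratio is defined everywhere else.

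The strict inequality corresponds to $I_-$, the pullback of the interior, where $|\langle \bf{p},\bf{q}_\infty\rangle| > |\langle \bf{p}, g^{-1}\bf{q}_\infty\rangle|$. Equality corresponds to $I(\theta)$. This gives both assertions of the proposition. I expect the main obstacle to be bookkeeping rather than anything conceptual: confirming $M^*H_2M = cH_1$ with $c>0$, and making sure the lifts of $q_\infty$ and $g^{-1}(q_\infty)$ are transported with the same scalar. Apart from that, every step is a routine substitution.
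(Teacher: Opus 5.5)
Your proof is correct, and it takes a more direct route than the paper. The paper never returns to the Hermitian-product definition of $I(g)$. Instead it describes $I(\EE_{-\theta,\theta,q})$ as the Cygan sphere with centre $H_2T_{-q}C_q(\theta)$ and radius $\boldsymbol{\widetilde{r}}_\theta=1/(2\sin(\theta/2))$, pulls the centre back to the ideal point $\omega'=\Cayley(C_q(\theta))$, and applies the pullback distance formula of Proposition~\ref{proposition::dqball}. The $\omega'$-factor of the denominator there equals $4(1+i)\sin^2(\theta/2)$; it cancels against $\boldsymbol{\widetilde{r}}_\theta^{\,2}$ and leaves exactly your ratio.

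You instead transport the inequality $|\langle \bf{p},g^{-1}\bf{q}_\infty\rangle_{H_2}|\le|\langle\bf{p},\bf{q}_\infty\rangle_{H_2}|$ through $M=H_2T_{-q}\Cayley$. You use only that $M$ intertwines the forms and that $g^{-1}=ME^{\ball}_{\theta,-\theta}M^{-1}$ for a determinant-one representative. In fact $M^*H_2M=H_1$, so $c=1$, and since only moduli are compared its sign would not matter anyway.

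Your version needs neither the radius formula nor the explicit denominators of Proposition~\ref{proposition::dqball}, whose proof is essentially a coordinate check of your intertwining identity. It also makes the independence from lift normalisations transparent, rather than the outcome of a cancellation. The paper's version keeps the metric picture of $I(\theta)$ as a Cygan sphere with explicit centre and radius. That is the form used in Proposition~\ref{proposition::spheres_EEE} and in the Cygan-distance estimates of Section~\ref{section::triangle}.

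One small complement to your remark on the excluded point: $q^{\ball}$ lies in $I_+(\theta)$, because $\langle\bf{q}_\infty,\bf{q}_\infty\rangle_{H_2}=0\neq\langle\bf{q}_\infty,g^{-1}\bf{q}_\infty\rangle_{H_2}$. So reading the ratio as $+\infty$ there is consistent with the statement.
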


\begin{remark}
The denominator $|(1-i) \cdot \omega_1 - (1+i) \cdot \omega_2 - 2|$ vanishes if and only if $\omega = q^{\ball}$.
\end{remark}

\begin{remark}
The point $[0,0,1]^t \in \hc$ lies on every standard isometric sphere $I(\theta)$.
\end{remark}

\begin{proof}[Proof of Proposition \ref{proposition::standard-spheres}]
Recall that the isometric sphere $I(\EE_{-\theta,\theta,q})$ is the Cygan sphere with centre
$\boldsymbol{\widetilde{c}}_\theta = H_2 \cdot T_{-q} \cdot C_q(\theta)$
and radius
\(
\boldsymbol{\widetilde{r}}_\theta
=
\dfrac{1}{\sqrt{2} |\kappa_2| \cdot \sin{(\theta/2)}}
=
\dfrac{1}{2 \cdot \sin{(\theta/2)}}
\).
Then, the centre of $I(\theta)$ is
\[
(H_2 \cdot T_{-q} \cdot \Cayley)^{-1}(\bf{\widetilde{c}}_{\theta})
=
\Cayley(C_q(\theta))
=
[(1+i) \cdot e^{i \theta}, -(1-i)\cdot e^{-i \theta}, 2]^t \eqqcolon \omega'.
\]
Thus, by Proposition \ref{proposition::dqball}, a point $\omega = [\omega_1, \omega_2, 1]^t$
belongs to $I(\theta) \sqcup I_-(\theta)$ if and only if
\[
\boldsymbol{\widetilde{r}}_\theta
\ge
d_{q^{\ball}}(\omega,\omega')
=
\left|
\dfrac{
4 \cdot
\left(
\dfrac{1-i}{2} e^{-i \theta}
\omega_1
- \dfrac{1+i}{2} e^{i \theta} \omega_2 - 1
\right)
}{
8 \big(
\left(-1+i\right) \omega_1 + \left(1+i\right) \omega_2 + 2
\big)
\cdot
\sin^2{\left( \theta/2 \right)}
}
\right|^{1/2},
\]
which is equivalent to the desired inequality.
\end{proof}

We conclude this subsection with the proof of Proposition \ref{prox::intersection}.

\begin{proof}[Proof of Proposition \ref{prox::intersection}]
By Proposition \ref{proposition::spheres_EEE}, there exists an isometry sending each \(I(f_{\theta})\) to \(I(\theta)\). Hence it suffices to study intersections of the sets \(I(\theta)\). With this reduction, assertions (1), (2), and (3) follow respectively from Theorem \ref{theorem::intersection_of_two}, Corollary \ref{corollary::intersection_of_three}, and Corollary \ref{corollary::intersection_of_four}, while the desired foliation follows from Corollary \ref{corollary::foliation_of_intersections}.
\end{proof}

%
\subsection{Intersection of two isometric spheres}
\label{subsection::2spheres}

Let $0 < \theta_1 < \theta_2 < 2\pi$ be arbitrary.
Suppose that $\omega = [\omega_1, \omega_2, 1]^t \in I(\theta_1) \cap I(\theta_2)$.
By Proposition \ref{proposition::standard-spheres}, we get
\[
\dfrac{
\left|
(1-i) \cdot \omega_1 \cdot e^{-i \theta_1} - (1+i) \cdot \omega_2 \cdot e^{i\theta_1} - 2
\right|
}{
\left|
(1-i) \cdot \omega_1 - (1+i) \cdot \omega_2 - 2
\right|
}
=
\dfrac{
\left|
(1-i) \cdot \omega_1 \cdot e^{-i \theta_2} - (1+i) \cdot \omega_2 \cdot e^{i\theta_2} - 2
\right|
}{
\left|
(1-i) \cdot \omega_1 - (1+i) \cdot \omega_2 - 2
\right|
}
= 1.
\]
To further illustrate the intersection $I(\theta_1) \cap(\theta_2)$, we define the coordinates $(\psi_1, \psi_2)$ by
\begin{align*}
(1-i) \cdot \omega_1 \cdot e^{-i \theta_1} - (1+i) \cdot \omega_2 \cdot e^{i\theta_1} - 2
=
(-1) \cdot \left(
(1-i) \cdot \omega_1 - (1+i) \cdot \omega_2 - 2
\right)
\cdot e^{i \psi_1}, \\
(1-i) \cdot \omega_1 \cdot e^{-i \theta_2} - (1+i) \cdot \omega_2 \cdot e^{i\theta_2} - 2
=
(-1) \cdot \left(
(1-i) \cdot \omega_1 - (1+i) \cdot \omega_2 - 2
\right)
\cdot e^{i \psi_2}.
\end{align*}
The coordinates $(\psi_1, \psi_2)$ also parameterise the union of the extors extending $I(\theta_1)$ and $I(\theta_2)$, which is a torus (cf. \cite[Section 8.3.4]{Goldman1999}). We now describe the intersection $I(\theta_1) \cap I(\theta_2)$ in $(\psi_1, \psi_2)$.

\begin{proposition}
\label{proposition::intersection-of-two-isometric-spheres-using-psi}
The intersection $I(\theta_1) \cap I(\theta_2)$, parameterised by $(\psi_1, \psi_2) \in [0, 2\pi)^2$, is described by the inequality
\begin{align*}
\widetilde{W}(\psi_1, \psi_2) \coloneqq &
\left(
\left(
\cos{\left( \theta_1 \right)} - 1
\right)^2
+
\left(
\cos{\left( \theta_2 \right)} - 1
\right)^2
+
\left(
\cos{\left( \theta_1 - \theta_2 \right)} - 1
\right)^2
\right) 
\\
&+ 
2
\left( 1 - \cos{\left(\theta_1\right)}
\right)
\left(\cos{\left(\theta_2\right)}
- 1
\right)
\cdot 
\cos{\left(\psi_1 - \psi_2\right)} \\
&+ 
2
\left( \cos{\left(\theta_2\right)}
-1
\right)
\left(\cos{\left(\theta_2 - \theta_1\right)}
- 1
\right)
\cdot 
\cos{\left(\psi_1\right)} \\
&+
2
\left( \cos{\left(\theta_1\right)}
-1
\right)
\left(\cos{\left(\theta_2 - \theta_1\right)}
- 1
\right)
\cdot 
\cos{\left(\psi_2\right)} 
\le 0.
\end{align*}
\end{proposition}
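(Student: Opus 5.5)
Set $D \coloneqq (1-i)\omega_1 - (1+i)\omega_2 - 2$, which is nonzero away from $q^{\ball}$. On $I(\theta_1)\cap I(\theta_2)$ the defining relations of $(\psi_1,\psi_2)$ then form a $2\times 2$ complex linear system in the unknowns $u \coloneqq (1-i)\omega_1$ and $v \coloneqq (1+i)\omega_2$, with $D = u - v - 2$. The plan is to solve this system for $\omega$ and impose $|\omega_1|^2+|\omega_2|^2\le 1$.

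\textbf{Step 1: rewrite the relations.} Substituting $D$, the two relations read
\begin{align*}
u e^{-i\theta_k} - v e^{i\theta_k} - 2 = -(u-v-2)e^{i\psi_k}, \qquad k=1,2.
\end{align*}
Equivalently,
\begin{align*}
u\left(e^{-i\theta_k}+e^{i\psi_k}\right) - v\left(e^{i\theta_k}+e^{i\psi_k}\right) = 2\left(1+e^{i\psi_k}\right), \qquad k=1,2.
\end{align*}

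\textbf{Step 2: solve by Cramer's rule.} The determinant is
\[
\Delta = -\left(e^{-i\theta_1}+e^{i\psi_1}\right)\left(e^{i\theta_2}+e^{i\psi_2}\right) + \left(e^{-i\theta_2}+e^{i\psi_2}\right)\left(e^{i\theta_1}+e^{i\psi_1}\right).
\]
Cramer's rule gives $u$ and $v$ as ratios of trigonometric polynomials in $(\psi_1,\psi_2)$. Conversely, any pair $(\psi_1,\psi_2)$ with $\Delta\ne0$ gives a unique point of $\C^2$, i.e. of $\CP^2$ off the line at infinity. That point automatically satisfies the two equal-modulus conditions of Proposition \ref{proposition::standard-spheres}. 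So $(\psi_1,\psi_2)$ parametrise the intersection of the two extors, and the part lying in $\overline{\hc}$ is cut out by the single condition
\[
|\omega_1|^2+|\omega_2|^2-1 = \tfrac12\left(|u|^2+|v|^2\right)-1 \le 0 .
\]

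\textbf{Step 3: clear denominators.} Multiply the condition by $|\Delta|^2>0$. This gives
\[
\tfrac12\left(|N_u|^2+|N_v|^2\right) - |\Delta|^2 \le 0,
\]
where $N_u$ and $N_v$ are the Cramer numerators. Expanding each squared modulus gives a real trigonometric polynomial in $\psi_1$, $\psi_2$, $\psi_1-\psi_2$ with coefficients in $\theta_1,\theta_2$.

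\textbf{Step 4: simplify (main obstacle).} I expect the constant terms and the coefficients of $\cos\psi_1$, $\cos\psi_2$ and $\cos(\psi_1-\psi_2)$ to collapse to the stated ones. The sine terms and the terms in $\psi_1+\psi_2$ should cancel, possibly after the whole expression turns out to be a positive multiple (such as $4$) of $\widetilde W$. This cancellation is the step where the work lies.

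To organise it, I would use the symmetric form of the relations: write $a_k = e^{i\theta_k}$, and note that the rows have the shape $(a_k^{-1}+x_k)$, $(a_k+x_k)$, $(1+x_k)$ with $x_k=e^{i\psi_k}$. The determinants then factor into differences such as $(a_1-a_2)$, $(1-a_k)$ and $(a_1^{-1}-a_2^{-1})$. Their squared moduli give exactly the factors
\[
|1-a_k|^2 = 2\left(1-\cos\theta_k\right), \qquad |a_1-a_2|^2 = 2\left(1-\cos(\theta_1-\theta_2)\right).
\]
This is why the final expression is quadratic in these three quantities.

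A useful sanity check is the point $\omega=[0,0,1]^t$, which lies on every $I(\theta)$. There $u=v=0$, so $D=-2$ and both relations force $e^{i\psi_k}=-1$, i.e. $\psi_1=\psi_2=\pi$. At this point $\widetilde W$ should be strictly negative; with $A=1-\cos\theta_1$, $B=1-\cos\theta_2$ and $C=1-\cos(\theta_1-\theta_2)$, the value is $A^2+B^2+C^2-2AB-2BC-2AC$. I would use this, together with a numerical check at a few sample $(\psi_1,\psi_2)$, to fix the overall sign and normalisation before doing the symbolic simplification by hand or with computer algebra.

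Finally, for the degenerate cases I would show that $\Delta=0$ never happens for $\omega\in\overline{\hc}$, by checking that the corresponding points lie on the line at infinity or at $q^{\ball}$. This makes the parametrisation by $(\psi_1,\psi_2)$ bijective onto $I(\theta_1)\cap I(\theta_2)$.
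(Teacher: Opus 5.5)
Your route is the paper's. Both solve the same linear system by Cramer's rule, with $\omega_1=(1+i)\mathcal N_1/\mathcal D$ and $\omega_2=(1-i)\mathcal N_2/\mathcal D$, so in your notation $N_u=-2\mathcal N_1$, $N_v=-2\mathcal N_2$, $\Delta=-\mathcal D$. Both then clear the denominator and expand the squared moduli. The cancellation you expect does occur, with exactly the factor you guessed: $\tfrac12(|N_u|^2+|N_v|^2)-|\Delta|^2=4\widetilde W$, using only the addition formulas and $\sin^2=1-\cos^2$.

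Your normalisation is the correct one. The paper's text states the condition as $|\mathcal N_1|^2+|\mathcal N_2|^2-|\mathcal D|^2\le 0$, which drops the factor $2=|1\pm i|^2$. The expansion it actually displays is that of $2(|\mathcal N_1|^2+|\mathcal N_2|^2)$, so its final formula is still right.

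The one genuine difference is the degenerate case $\Delta=0$. The paper handles it by an explicit case analysis (its Claims 1 and 2). Your line-at-infinity argument is cleaner, but it needs one more line: the two relations, viewed as linear forms in $(u,v,\omega_3)$ with third coefficients $-2(1+e^{i\psi_k})$, must have rank $2$. This holds because proportional rows would force $\lambda=(e^{i\theta_2}-1)/(e^{i\theta_1}-1)$ to equal $\overline{\lambda}$, which is impossible for $0<\theta_2-\theta_1<2\pi$. The mention of $q^{\ball}$ is unnecessary, since $q^{\ball}$ never satisfies the relations.

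The same rank-$2$ fact also shows $4\widetilde W=\tfrac12(|N_u|^2+|N_v|^2)>0$ wherever $\Delta=0$. You need this for the converse inclusion of $\{\widetilde W\le 0\}$ in the image of the parametrisation. Neither your sketch nor the paper spells it out.
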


\begin{proof}
Set $u_j \coloneqq (1 - i)\left(e^{-i \theta_j} + e^{i \psi_j}\right)$
and
$v_j \coloneqq (1 + i)\left(e^{i \theta_j} + e^{i \psi_j}\right)$, for $j=1,2$.
Then, we have
\( \omega_1 u_1 - \omega_2 v_1 = 2\cdot\left( e^{i \psi_1} + 1
\right) \)
and
\( \omega_1 u_2 - \omega_2 v_2 = 2\cdot\left( e^{i \psi_2} + 1
\right) \).
We first have the following progressive claims:

\noindent
\textbf{Claim 1:}
If $u_1 v_2 = u_2 v_1$, then either
\begin{itemize}[label = -]
\item $\theta_1 = \pi$ and $\psi_1 = 0$, or
\item $\theta_2 = \pi$ and $\psi_2 = 0$, or
\item $\theta_2 - \theta_1 = \pi$ and $|\psi_2 - \psi_1| = \pi$, or
\item $\theta_1 \neq \pi$, $\theta_2 \neq \pi$, $\theta_2 - \theta_1 \neq \pi$, $\cos(\psi_1) = -\cos(\theta_1)$ and $\cos(\psi_2) = -\cos(\theta_2)$.
\end{itemize}

\noindent
\textbf{Claim 2:}
If $u_1 v_2 = u_2 v_1$, then there is no solution for $(\omega_1, \omega_2)$.

\begin{proof}[Proof of Claim 1]
The relation $u_1 v_2 = u_2 v_1$ holds if and only if
\[
\left(e^{i \theta_1}  + e^{i \psi_1}\right) \cdot \left(e^{- i \theta_2} + e^{i \psi_2}\right)
=
\left(e^{i \theta_2} + e^{i \psi_2}\right) \cdot \left(e^{- i \theta_1} + e^{i \psi_1}\right).
\]
It implies that $\sin(\theta_1 - \theta_2) + \sin(\theta_1) \cdot
e^{i \psi_2} = \sin(\theta_2) \cdot e^{i \psi_1}$.
If $\sin(\theta_2) = 0$, then $\theta_2 = \pi$ and $e^{i \psi_2} = 1$, so $\psi_2 = 0$.
If $\sin(\theta_1) = 0$, then $\theta_1 = \pi$ and $e^{i \psi_1} = 1$, so $\psi_1 = 0$.
If $\sin(\theta_1 - \theta_2) = 0$, then $\theta_2 - \theta_1 = \pi$ and $e^{i \psi_2} = - e^{i \psi_1}$, so $|\psi_2 - \psi_1| = \pi$.
Otherwise, since $|e^{i \psi_2}| = 1$, we get
$|e^{i \psi_1} \cdot \sin(\theta_2) - \sin(\theta_1 - \theta_2)| = |\sin(\theta_1)|$.
Rewriting yields
\begin{equation*}
\sin^2(\theta_1) = \sin^2(\theta_2) - 2 \sin(\theta_2) \sin(\theta_1 - \theta_2) \cos(\psi_1) + \sin^2(\theta_1 - \theta_2).
\end{equation*}
Given $\sin(\theta_2) \neq 0$
and $\sin(\theta_1 - \theta_2) \neq 0$, it follows that
\[
\cos(\psi_1) = \dfrac{\sin^2(\theta_2) + \sin^2(\theta_1 - \theta_2) - \sin^2(\theta_1)}{2 \sin(\theta_2) \sin(\theta_1 - \theta_2)}
= -\cos(\theta_1).
\]
Symmetrically, we also obtain $\cos(\psi_2) = -\cos(\theta_2)$.
\end{proof} 

\begin{proof}[Proof of Claim 2]
When $\theta_1 = \pi$ and $\psi_1 = 0$, we have $u_1 = 0$ and $v_1 = 0$,
but they imply the contradiction $0 = \omega_1 u_1 - \omega_2 v_1 = 2 (e^{i \psi_1} + 1) = 4$.
When $\theta_2 = \pi$ and $\psi_2 = 0$, we have $u_2 = 0$ and $v_2 = 0$, but they imply
the contradiction $0 = \omega_1 u_2 - \omega_2 v_2 = 2 (e^{i \psi_2} + 1) = 4$.
When $\theta_2 - \theta_1 = \pi$ and $|\psi_2 - \psi_1| = \pi$, we have $u_2 = -u_1$ and $v_2 = -v_1$.
Therefore, we get $2(e^{i \psi_2} + 1) = - 2(e^{i \psi_1} + 1)$, but it implies a contradiction $2 = -2$.

Now, suppose that $\theta_1 \neq \pi$, $\theta_2 \neq \pi$, $\theta_2 - \theta_1 \neq \pi$, $\cos(\psi_1) = -\cos(\theta_1)$ and $\cos(\psi_2) = -\cos(\theta_2)$.
In this case, for every $j \in \{1,2\}$, we have either $\theta_j + \psi_j \equiv \pi$ or $\theta_j - \psi_j \equiv \pi \mod 2\pi$.
Equivalently, for every $j$, either $u_j = 0$ or $v_j = 0$, for every $j$.
If $u_2 = v_1 = 0$, then $u_1 v_2 = 0$, i.e., one of $u_1$, $v_2$ vanishes.
If $u_1 = v_2 = 0$, then $u_2 v_1 = 0$, i.e., one of $u_2$, $v_1$ vanishes.
Therefore, either $u_1 = u_2 = 0$ or $v_1 = v_2 = 0$.

If $u_1 = u_2 = 0$, then $v_j = 2i \cdot (e^{i \theta_j} + e^{i \psi_j}) = 2i \cdot \sin(\theta_j)$.
Thus, the relations $\omega_1 u_j - \omega_2 v_j = 2 (e^{i \psi_j} + 1) \coloneqq c_j$ become $\omega_2 = - c_1 / v_1 = - c_2 / v_2$. 
Hence, we get $\dfrac{1 - e^{-i \theta_1}}{\sin(\theta_1)} = \dfrac{1 - e^{-i \theta_2}}{\sin(\theta_2)}$,
which implies the contradiction $\theta_1 = \theta_2$.
Similarly, if $v_1 = v_2 = 0$, then there is no solution for $(\omega_1, \omega_2)$.
\end{proof}

Now, we continue the proof of Proposition \ref{proposition::intersection-of-two-isometric-spheres-using-psi}.
Since $u_1 v_2 - u_2 v_1 \neq 0$, we write
\begin{align*}
\omega_1 &= 2 \dfrac{
\left( e^{i \psi_1} + 1 \right) v_2 - \left( e^{i \psi_2} + 1 \right) v_1
}{u_1 v_2 - u_2 v_1}
=
(1 + i)
\dfrac{
e^{i \psi_1} (e^{i \theta_2} - 1) + e^{i \psi_2} (1 - e^{i \theta_1}) + e^{i \theta_2} - e^{i \theta_1}
}{
-2i \sin(\theta_1) e^{i \psi_2}
+ 2i \sin(\theta_2) e^{i \psi_1}
+ 2i \sin(\theta_2 - \theta_1)
},
\\
\omega_2 &= 2 \dfrac{
\left( e^{i \psi_1} + 1 \right) u_2 - \left( e^{i \psi_2} + 1 \right) u_1
}{u_1 v_2 - u_2 v_1}
=
(1 - i)
\dfrac{
e^{i \psi_1} (e^{-i \theta_2} - 1) + e^{i \psi_2} (1 - e^{-i \theta_1}) + e^{-i \theta_2} - e^{-i \theta_1}
}{
-2i \sin(\theta_1) e^{i \psi_2}
+ 2i \sin(\theta_2) e^{i \psi_1}
+ 2i \sin(\theta_2 - \theta_1)
}.
\end{align*}
For simplicity, we denote the numerators and the common denominator on the right-hand side of the expressions above by $\mathcal{N}_1$, $\mathcal{N}_2$ and $\mathcal{D} \neq 0$, respectively.
Therefore, the intersection $I(\theta_1) \cap I(\theta_2)$, given by $|\omega_1|^2 + |\omega_2|^2 \le 1$, is also described by $|\mathcal{N}_1|^2 + |\mathcal{N}_2|^2 - |\mathcal{D}|^2 \le 0$.

We have the following calculations.
\begin{align*}
|\mathcal{N}_1|^2 + |\mathcal{N}_2|^2
=&~ 2 \cdot \left(
12 - 4 \cos(\theta_2) - 4 \cos(\theta_1) - 4 \cos(\theta_1 - \theta_2)
\right) \\
&+ 8 \cdot \left(
\cos(\theta_1) + \cos(\theta_2) - \cos(\theta_1 - \theta_2) - 1
\right)
\cdot \cos(\psi_1 - \psi_2) \\
&+ 8 \cdot \left(
1 - \cos(\theta_2) - \cos(\theta_1 - \theta_2) + \cos(\theta_1)
\right)
\cdot \cos(\psi_1) \\
&+ 8 \cdot \left(
1 - \cos(\theta_1) - \cos(\theta_1 - \theta_2) + \cos(\theta_2)
\right)
\cdot \cos(\psi_2). \\
|\mathcal{D}|^2
=&~
4 \sin^2(\theta_1) + 4 \sin^2(\theta_2) + 4 \sin^2(\theta_2 - \theta_1) \\
&- 8 \sin(\theta_1) \sin(\theta_2) \cdot \cos(\psi_1 - \psi_2) \\
&+ 8 \sin(\theta_2) \sin(\theta_2 - \theta_1) 
\cdot \cos(\psi_1) \\
&- 8 \sin(\theta_1) \sin(\theta_2 - \theta_1)
\cdot \cos(\psi_2).
\end{align*}
Define the function 
$\widetilde{W} \coloneqq (|\mathcal{N}_1|^2 + |\mathcal{N}_2|^2 - |\mathcal{D}|^2) / 4$.
By combining the above results, we obtain the desired expression. This completes the proof of Proposition \ref{proposition::intersection-of-two-isometric-spheres-using-psi}.
\end{proof}

The following lemma claims that it is sufficient to consider only $(\psi_1, \psi_2) \in (0, 2\pi)^2$.

\begin{lemma}
If $\psi_1 = 0$ or $\psi_2 = 0$,
then $\widetilde{W}(\psi_1, \psi_2) > 0$.
\end{lemma}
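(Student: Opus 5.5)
The plan is to substitute $\psi_1 = 0$ (resp.\ $\psi_2 = 0$) directly into the expression for $\widetilde{W}$ from Proposition~\ref{proposition::intersection-of-two-isometric-spheres-using-psi}, and then complete a square in the single remaining variable. First I will introduce the abbreviations
\[
A \coloneqq 1 - \cos(\theta_1), \quad B \coloneqq 1 - \cos(\theta_2), \quad C \coloneqq 1 - \cos(\theta_2 - \theta_1).
\]
Since $0 < \theta_1 < \theta_2 < 2\pi$, each of $\theta_1$, $\theta_2$ and $\theta_2 - \theta_1$ lies in $(0, 2\pi)$, so $A, B, C > 0$ strictly. In this notation the formula of Proposition~\ref{proposition::intersection-of-two-isometric-spheres-using-psi} becomes
\[
\widetilde{W}(\psi_1,\psi_2) = A^2 + B^2 + C^2 - 2AB\cos(\psi_1 - \psi_2) + 2BC\cos(\psi_1) + 2AC\cos(\psi_2).
\]
I will check the three sign conversions term by term; this is the only point where a sign error could creep in.

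For $\psi_1 = 0$, the expression reduces to
\[
(B+C)^2 + A^2 - 2A(B - C)\cos(\psi_2).
\]
Bounding the last term using $\cos(\psi_2) \le 1$ in absolute value gives the lower bound
\[
(B+C)^2 + A^2 - 2A|B-C| = \bigl(A - |B-C|\bigr)^2 + (B+C)^2 - (B-C)^2 = \bigl(A - |B-C|\bigr)^2 + 4BC.
\]
This is strictly positive because $B, C > 0$.

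The case $\psi_2 = 0$ is symmetric. There the expression is
\[
(A+C)^2 + B^2 - 2B(A - C)\cos(\psi_1),
\]
which is bounded below by $\bigl(B - |A-C|\bigr)^2 + 4AC > 0$.

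I expect no real obstacle. The only delicate points are correctly regrouping the coefficients of the proposition into the $A, B, C$ form, and confirming the strict positivity of $A$, $B$ and $C$ from the hypothesis $0 < \theta_1 < \theta_2 < 2\pi$.
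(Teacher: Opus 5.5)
Your proof is correct and follows the same route as the paper: substitute $\psi_1=0$ (resp.\ $\psi_2=0$), bound the single remaining cosine term by its absolute value, and conclude strict positivity from $0<\theta_1<\theta_2<2\pi$. The only difference is cosmetic. The paper inserts the strict bound $|B-C|<\max\{B,C\}$ and leaves the final square-completion to the reader, whereas your identity $(A-|B-C|)^2+4BC$ makes the last step explicit.
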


\begin{proof}
Assume that $\psi_1=0$.
We get
\begin{align*}
\widetilde{W}(0,\psi_2) =& 
\left(
\left(
\cos{\left( \theta_1 \right)} - 1
\right)^2
+
\left(
\cos{\left( \theta_2 \right)} - 1
\right)^2
+
\left(
\cos{\left( \theta_1 - \theta_2 \right)} - 1
\right)^2
\right) \\
&+ 
2
\left(
\cos{\left(\theta_1\right)}
-1
\right)
\left(\cos{\left(\theta_1 - \theta_2\right)}
- 
\cos{\left(\theta_2\right)}
\right)
\cdot 
\cos{\left(\psi_2\right)} \\
&+
2
\left( \cos{\left(\theta_2\right)}
-1
\right)
\left(\cos{\left(\theta_2 - \theta_1\right)}
- 1
\right) \\
\ge & 
\left(
\left(
\cos{\left( \theta_1 \right)} - 1
\right)^2
+
\left(
\cos{\left( \theta_2 \right)} - 1
\right)^2
+
\left(
\cos{\left( \theta_1 - \theta_2 \right)} - 1
\right)^2
\right) \\
&-
2
\left|
\cos{\left(\theta_1\right)}
-1
\right|
\cdot
\left|\cos{\left(\theta_1 - \theta_2\right)}
- 
\cos{\left(\theta_2\right)}
\right| \\
&+
2
\left( \cos{\left(\theta_2\right)}
-1
\right)
\left(\cos{\left(\theta_2 - \theta_1\right)}
- 1
\right) \\
> & 
\left(
\left(
\cos{\left( \theta_1 \right)} - 1
\right)^2
+
\left(
\cos{\left( \theta_2 \right)} - 1
\right)^2
+
\left(
\cos{\left( \theta_1 - \theta_2 \right)} - 1
\right)^2
\right) \\
&-
2
\left|
\cos{\left(\theta_1\right)}
-1
\right|
\cdot
\max\left\{
\left|\cos{\left(\theta_1 - \theta_2\right)}
- 
1
\right|
,
\left|
\cos{\left(\theta_2\right)}
- 1
\right|
\right\}
\\
&+
2
\left( \cos{\left(\theta_2\right)}
-1
\right)
\left(\cos{\left(\theta_2 - \theta_1\right)}
- 1
\right),
\end{align*}
where the strict inequality $``>"$ follows from the triangle inequality applied to
$\cos(\theta_1 - \theta_2)$, $\cos(\theta_2)$ and $1$,
noting that equality cannot hold since $\theta_2 \in (0,2\pi)$ and $\theta_1 \neq \theta_2$.
Therefore, we must have $\widetilde{W}(0,\psi_2) > 0$.
The case $\psi_2 = 0$ is similar.
\end{proof}

Using the substitutions $X \coloneqq \cot(\psi_1/2)$, $Y \coloneqq \cot(\psi_2/2)$, we rewrite Proposition \ref{proposition::intersection-of-two-isometric-spheres-using-psi} as follows.

\begin{proposition}
\label{proposition::intersection-of-two-isometric-spheres}
The intersection $I(\theta_1) \cap I(\theta_2)$, parameterised by $(\psi_1, \psi_2) \in (0, 2\pi)^2$, is described by the inequality
\(
W(X,Y) \coloneqq
c_{22} X^2Y^2 + c_{20} X^2 + c_{02} Y^2 + 2 c_{11} XY + c_{00} \le 0,
\)
where
\begin{align*}
c_{22} &\coloneqq 2 \sin^2{\left(\frac{\theta_1 - \theta_2}{2}\right)} \cdot
\left(
6 - \cos{\left(\theta_1 - \theta_2\right)}
- \cos{\left(\theta_1 + \theta_2\right)}
- 2 \cos{\left(\theta_1\right)}
- 2 \cos{\left(\theta_2\right)}
\right), \\
c_{20} &\coloneqq 2 \sin^2{\left(\frac{\theta_2}{2}\right)} \cdot
\left(
6 - 2\cos{\left(\theta_1 - \theta_2\right)}
- \cos{\left(2\theta_1 - \theta_2\right)}
- 2 \cos{\left(\theta_1\right)}
- \cos{\left(\theta_2\right)}
\right), \\
c_{02} &\coloneqq 2 \sin^2{\left(\frac{\theta_1}{2}\right)} \cdot
\left(
6 - 2\cos{\left(\theta_1 - \theta_2\right)}
- \cos{\left(\theta_1 - 2\theta_2\right)}
- \cos{\left(\theta_1\right)}
- 2 \cos{\left(\theta_2\right)}
\right), \\
c_{11} &\coloneqq -16 \sin^2{\left(\frac{\theta_1}{2}\right)} \sin^2{\left(\frac{\theta_2}{2}\right)}
\quad
\text{and}
\quad
c_{00} \coloneqq -16 \sin^2{\left(\frac{\theta_1}{2}\right)} \sin^2{\left(\frac{\theta_2}{2}\right)} \sin^2{\left(\frac{\theta_1 - \theta_2}{2}\right)}.
\end{align*}
\end{proposition}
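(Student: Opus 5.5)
The plan is to obtain $W$ directly from $\widetilde{W}$ in Proposition~\ref{proposition::intersection-of-two-isometric-spheres-using-psi}. By the preceding lemma we may restrict to $(\psi_1,\psi_2)\in(0,2\pi)^2$, so $\sin(\psi_j/2)\neq 0$. On this domain $(\psi_1,\psi_2)\mapsto(X,Y)=(\cot(\psi_1/2),\cot(\psi_2/2))$ is a diffeomorphism onto $\R^2$.

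I will use the half-angle identities
\[
\cos\psi_j=\frac{X_j^2-1}{X_j^2+1},\qquad \sin\psi_j=\frac{2X_j}{X_j^2+1},
\]
with $X_1=X$ and $X_2=Y$. Expanding $\cos(\psi_1-\psi_2)=\cos\psi_1\cos\psi_2+\sin\psi_1\sin\psi_2$ gives
\[
\cos(\psi_1-\psi_2)=\frac{(X^2-1)(Y^2-1)+4XY}{(X^2+1)(Y^2+1)}.
\]
Multiply $\widetilde{W}$ by the positive factor $(X^2+1)(Y^2+1)$. The inequality $\widetilde{W}\le 0$ is then equivalent to $P(X,Y)\le 0$, where $P$ is a polynomial of bidegree at most $(2,2)$.

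Abbreviate $a=\cos\theta_1-1$, $b=\cos\theta_2-1$, $c=\cos(\theta_1-\theta_2)-1$, and $S=a^2+b^2+c^2$. In this notation $\widetilde{W}=S-2ab\cos(\psi_1-\psi_2)+2bc\cos\psi_1+2ac\cos\psi_2$. Collecting monomials gives:
\begin{itemize}
\item $X^2Y^2$: coefficient $S-2ab+2bc+2ac$;
\item $X^2$: coefficient $S+2ab+2bc-2ac$;
\item $Y^2$: coefficient $S+2ab-2bc+2ac$;
\item $XY$: coefficient $-8ab$;
\item constant: $S-2ab-2bc-2ac$.
\end{itemize}
Odd monomials such as $X$, $XY^2$, or $X^2Y$ cannot appear, since $\sin\psi_j$ only enters through the product $\sin\psi_1\sin\psi_2$. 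Thus $P$ has exactly the form claimed.

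It remains to match these coefficients with $c_{22},c_{20},c_{02},c_{11},c_{00}$, up to a common positive constant. This is the only step needing real work. The idea is to use $a=-2\sin^2(\theta_1/2)$, $b=-2\sin^2(\theta_2/2)$, and $c=-2\sin^2((\theta_1-\theta_2)/2)$.

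\begin{itemize}
\item \emph{$XY$ term.} The coefficient is $-8ab=-32\sin^2(\theta_1/2)\sin^2(\theta_2/2)$. This equals $2c_{11}$ if $P$ is compared with $W$ directly, so the normalising factor is $1$.
\item \emph{Constant term.} The quantity $S-2ab-2bc-2ac$ is the classical Heron-type expression $a^2+b^2+c^2-2ab-2bc-2ca$. For these $a,b,c$ it should factor as $-16\sin^2(\theta_1/2)\sin^2(\theta_2/2)\sin^2((\theta_1-\theta_2)/2)$. I will verify this by writing everything in $e^{i\theta_j}$, or by checking the identity $(\sqrt a\pm\sqrt b\pm\sqrt c)$-product form with $\sqrt{-a}=\sqrt2|\sin(\theta_1/2)|$ and a sine addition formula.
\item \emph{Remaining three.} For $c_{22}$, $c_{20}$, $c_{02}$, I expect each combination to factor out the indicated $2\sin^2(\cdot/2)$ factor. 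The check is to substitute cosines, expand with product-to-sum formulas, and compare.
\end{itemize}

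These trigonometric verifications are routine but error-prone, and the sign bookkeeping in $c_{20}$ versus $c_{02}$ is the main obstacle. If the factorisations do not close by hand, I would verify them as polynomial identities in $e^{i\theta_1}$ and $e^{i\theta_2}$.
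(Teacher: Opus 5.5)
Your proposal is correct and follows the same route as the paper: substitute the half-angle expressions for $\cos\psi_1$, $\cos\psi_2$ and $\cos(\psi_1-\psi_2)$, then set $W=\widetilde{W}\cdot(X^2+1)(Y^2+1)$; the normalising factor is indeed $1$, and the paper does not write out the coefficient check at all. The identities you left as ``expected'' do hold. For example, the $X^2Y^2$ coefficient is $(a+b+c)^2-4ab$, and the identity $\sin^2\frac{\theta_1}{2}-\sin^2\frac{\theta_2}{2}=\sin\frac{\theta_1+\theta_2}{2}\sin\frac{\theta_1-\theta_2}{2}$ extracts the factor $\sin^2\frac{\theta_1-\theta_2}{2}$; $c_{20}$ and $c_{02}$ factor in the same way.
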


\begin{proof}
This proposition is derived from Proposition \ref{proposition::intersection-of-two-isometric-spheres-using-psi} by making the substitutions
\begin{equation*}
\cos(\psi_1) = \dfrac{\cot^2\left(\psi_1/2\right)-1}{\cot^2\left(\psi_1/2\right)+1} = \dfrac{X^2 - 1}{X^2 + 1},
\quad
\cos(\psi_2) = \dfrac{\cot^2\left(\psi_2/2\right)-1}{\cot^2\left(\psi_2/2\right)+1} = \dfrac{Y^2 - 1}{Y^2 + 1}
\end{equation*}
and
\begin{align*}
\cos(\psi_1 - \psi_2)
=& \cos(\psi_1) \cos(\psi_2) + \sin(\psi_1) \sin(\psi_2) \\
=&
\dfrac{\cot^2\left(\psi_1/2\right)-1}{\cot^2\left(\psi_1/2\right)+1}
\cdot
\dfrac{\cot^2\left(\psi_2/2\right)-1}{\cot^2\left(\psi_2/2\right)+1}
+
\dfrac{2\cot\left(\psi_1/2\right)}{\cot^2\left(\psi_1/2\right)+1}
\cdot
\dfrac{2\cot\left(\psi_2/2\right)}{\cot^2\left(\psi_2/2\right)+1} \\
=&
\dfrac{X^2Y^2 - X^2 - Y^2 + 4XY + 1}{(X^2+1) (Y^2+1)},
\end{align*}
where we define $W(X,Y) \coloneqq \widetilde{W}(\psi_1, \psi_2) \cdot (X^2+1) (Y^2+1)$.
\end{proof}

\begin{lemma}
\label{lemma::parameters-W}
The coefficients of $W(X,Y)$ satisfy $c_{22} > 0$, $c_{20} > 0$, $c_{02} > 0$ and $c_{11}^2 - c_{20}c_{02} < 0$.
\end{lemma}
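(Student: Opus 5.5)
The plan is to rewrite every coefficient in terms of the three half-angle quantities
\[
S_1 \coloneqq \sin^2(\theta_1/2), \quad S_2 \coloneqq \sin^2(\theta_2/2), \quad S_d \coloneqq \sin^2\bigl((\theta_2-\theta_1)/2\bigr).
\]
Since $0<\theta_1<\theta_2<2\pi$, the angles $\theta_1$, $\theta_2$ and $\theta_2-\theta_1$ all lie in $(0,2\pi)$. Hence $S_1,S_2,S_d\in(0,1]$, and each of them equals $1$ only when the corresponding angle is $\pi$. The tool is the identity $1+\cos x = 2\cos^2(x/2) = 2(1-\sin^2(x/2))$, which will turn each bracketed trigonometric factor into a product.

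\textbf{Step 1: $c_{22}>0$.} By the product-to-sum formula $\cos(\theta_1-\theta_2)+\cos(\theta_1+\theta_2) = 2\cos\theta_1\cos\theta_2$, the bracket in $c_{22}$ equals
\[
6-2\cos\theta_1\cos\theta_2-2\cos\theta_1-2\cos\theta_2 = 8-2(1+\cos\theta_1)(1+\cos\theta_2) = 8-8(1-S_1)(1-S_2).
\]
This is strictly positive, because $S_1,S_2>0$. The prefactor $2S_d$ is also positive, so $c_{22}>0$.

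\textbf{Step 2: $c_{20}>0$ and $c_{02}>0$.} For $c_{20}$, put $a=\theta_1$ and $d=\theta_2-\theta_1$. Then $2\theta_1-\theta_2=a-d$ and $\theta_2=a+d$, so $\cos(2\theta_1-\theta_2)+\cos\theta_2 = 2\cos a\cos d$. The bracket becomes $8-2(1+\cos a)(1+\cos d) = 8-8(1-S_1)(1-S_d)$, and therefore
\[
c_{20} = 16\,S_2\,(S_1+S_d-S_1S_d).
\]
For $c_{02}$, write $\theta_1=\theta_2-d$. Then $\theta_1-2\theta_2=-(\theta_2+d)$, and the same manipulation gives
\[
c_{02} = 16\,S_1\,(S_2+S_d-S_2S_d).
\]
Each factor $S_j+S_d(1-S_j)$ is at least $S_j>0$, so both coefficients are positive.

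\textbf{Step 3: the discriminant, $c_{11}^2<c_{20}c_{02}$.} Here $c_{11}^2 = 256\,S_1^2S_2^2$ and
\[
c_{20}c_{02} = 256\,S_1S_2\,\bigl(S_1+S_d(1-S_1)\bigr)\bigl(S_2+S_d(1-S_2)\bigr).
\]
After dividing by $256\,S_1S_2>0$, the claim reduces to
\[
\bigl(S_1+S_d(1-S_1)\bigr)\bigl(S_2+S_d(1-S_2)\bigr) > S_1S_2.
\]
Each factor is at least $S_j$, since $S_d>0$ and $1-S_j\ge 0$. The inequality for factor $j$ is strict unless $S_j=1$, that is, unless $\theta_j=\pi$. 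The angles $\theta_1$ and $\theta_2$ cannot both equal $\pi$, so at least one factor is strict. Because all quantities are positive, the product inequality is strict.

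The only step I expect to need care is Step 2: finding the right regrouping of the angles, namely $(\theta_1,\theta_2-\theta_1)$ for $c_{20}$ and $(\theta_2,\theta_2-\theta_1)$ for $c_{02}$, so that the brackets collapse into the form $8-8(1-S)(1-T)$. Once that factorisation is in place, the remaining positivity and discriminant arguments are immediate.
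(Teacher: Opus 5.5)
Your proof is correct, and it follows the paper's route. The paper only says the inequalities follow from a direct computation; your half-angle factorisations $c_{22}=16S_d(S_1+S_2-S_1S_2)$, $c_{20}=16S_2(S_1+S_d-S_1S_d)$, $c_{02}=16S_1(S_2+S_d-S_2S_d)$ and $c_{11}=-16S_1S_2$ all check out and carry that computation out explicitly, including the strictness in Step 3 via $\theta_1\neq\theta_2$.
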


\begin{proof}
These inequalities follow from a direct computation.
\end{proof}

\begin{lemma}
\label{lemma::W_with_XY=1}
The coefficients of $W(X,Y)$ satisfy $c_{20}+c_{02}+2c_{11}=c_{22}+c_{00}$.
Moreover, if $|XY| = 1$ then $W(X,Y) > 0$.
\end{lemma}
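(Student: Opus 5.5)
The identity will come from the trigonometric form of Proposition~\ref{proposition::intersection-of-two-isometric-spheres-using-psi}, not from expanding the five coefficients. The positivity claim will then follow by evaluating $W$ on the curves $XY=\pm1$ and using Lemma~\ref{lemma::parameters-W}.

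\textbf{The identity.} The values $\psi_1=\psi_2=\pi/2$ give $X=Y=1$, so $W(1,1)=c_{22}+c_{20}+c_{02}+2c_{11}+c_{00}$. Since $(X^2+1)(Y^2+1)=4$ there, this equals $4\widetilde W(\pi/2,\pi/2)$. At $\psi_1=\psi_2=\pi/2$ we have $\cos\psi_1=\cos\psi_2=0$ and $\cos(\psi_1-\psi_2)=1$. Write $a=\cos\theta_1-1$, $b=\cos\theta_2-1$, $c=\cos(\theta_1-\theta_2)-1$. Then
\[
\widetilde W(\pi/2,\pi/2)=a^2+b^2+c^2-2ab.
\]
Similarly, $\psi_1=\pi/2$, $\psi_2=-\pi/2$ gives $X=1$, $Y=-1$, so $W(1,-1)=c_{22}+c_{20}+c_{02}-2c_{11}+c_{00}$. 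Since $\cos(\psi_1-\psi_2)=-1$ there,
\[
W(1,-1)=4(a^2+b^2+c^2+2ab).
\]
Subtracting the two evaluations gives $4c_{11}=-16ab$. This agrees with the stated $c_{11}=-16\sin^2(\theta_1/2)\sin^2(\theta_2/2)$, since $ab=4\sin^2(\theta_1/2)\sin^2(\theta_2/2)$.

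To separate $c_{22}+c_{00}$ from $c_{20}+c_{02}$, I would use the asymptotics of $W$. Letting $X\to\infty$ with $Y$ fixed gives $W/X^2\to c_{22}Y^2+c_{20}$. This equals $(Y^2+1)\,\widetilde W(0,\psi_2)$, which is linear in $\cos\psi_2$. Matching coefficients yields
\[
c_{22}+c_{20}=2\widetilde W(0,0)\quad\text{and}\quad c_{20}-c_{22}=2\widetilde W(0,\pi).
\]
The same argument at $\psi_1=\pi$ gives
\[
c_{02}+c_{00}=2\widetilde W(\pi,0)\quad\text{and}\quad c_{00}-c_{02}=2\widetilde W(\pi,\pi).
\]
The identity then reduces to
\[
\widetilde W(0,\pi)+\widetilde W(\pi,0)+2ab=\widetilde W(\pi,\pi)+\widetilde W(0,0)
\]
in the variables $a,b,c$. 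Each side is a quadratic polynomial in $a,b,c$ read off from Proposition~\ref{proposition::intersection-of-two-isometric-spheres-using-psi}, so this is a direct check. Alternatively, one can expand the stated $c_{ij}$ using product-to-sum formulas; this is routine.

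\textbf{Positivity on $|XY|=1$.} On $XY=\varepsilon$ with $\varepsilon=\pm1$ and $Y=\varepsilon/X$,
\[
W=c_{22}+c_{00}+2\varepsilon c_{11}+c_{20}X^2+c_{02}X^{-2}.
\]
By AM--GM, $c_{20}X^2+c_{02}X^{-2}\ge 2\sqrt{c_{20}c_{02}}$, using $c_{20},c_{02}>0$ from Lemma~\ref{lemma::parameters-W}. Substituting the identity, $c_{22}+c_{00}=c_{20}+c_{02}+2c_{11}$, so
\[
W\ge c_{20}+c_{02}+2c_{11}(1+\varepsilon)+2\sqrt{c_{20}c_{02}}.
\]

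For $\varepsilon=-1$ this bound is $(\sqrt{c_{20}}+\sqrt{c_{02}})^2>0$.

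For $\varepsilon=1$ it is $(\sqrt{c_{20}}+\sqrt{c_{02}})^2+4c_{11}$. Since $c_{11}<0$, this requires $4|c_{11}|<(\sqrt{c_{20}}+\sqrt{c_{02}})^2$. Lemma~\ref{lemma::parameters-W} gives $|c_{11}|<\sqrt{c_{20}c_{02}}$, so $4|c_{11}|<4\sqrt{c_{20}c_{02}}\le(\sqrt{c_{20}}+\sqrt{c_{02}})^2$, with the last step by AM--GM. This gives strict positivity.

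\textbf{Main obstacle.} The main obstacle is the identity itself, namely organising the trigonometric bookkeeping. The evaluation trick above reduces it to a quadratic identity in $a,b,c$. If that identity were to fail, I would fall back to expanding the stated $c_{ij}$ directly with product-to-sum formulas.
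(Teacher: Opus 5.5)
Your treatment of $|XY|=1$ is correct, and it differs mildly from the paper's. You substitute the identity and close with $c_{11}^2<c_{20}c_{02}$ and AM--GM. The paper instead keeps $c_{22}+c_{00}$ and uses its explicit positive value $-8\sin^2\bigl(\frac{\theta_1-\theta_2}{2}\bigr)(\cos\theta_1\cos\theta_2-1)$.

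The gap is in the identity, on which your positivity step depends. Start from $c_{22}Y^2+c_{20}=(Y^2+1)\widetilde{W}(0,\psi_2)$ with $Y=\cot(\psi_2/2)$. Dividing by $Y^2$ and letting $\psi_2\to 0$ gives $c_{22}=\widetilde{W}(0,0)$; setting $\psi_2=\pi$ (so $Y=0$) gives $c_{20}=\widetilde{W}(0,\pi)$. Your relations $c_{22}+c_{20}=2\widetilde{W}(0,0)$ and $c_{20}-c_{22}=2\widetilde{W}(0,\pi)$ mistake the constant and $\cos\psi_2$-coefficients of $\widetilde{W}(0,\psi_2)$ for its values at $\psi_2=0,\pi$. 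For instance, at $(\theta_1,\theta_2)=(\pi/2,\pi)$ one has $c_{22}=\widetilde{W}(0,0)=8$ and $c_{20}=\widetilde{W}(0,\pi)=12$, which your relations contradict. The $\psi_1=\pi$ relations have the same defect; correctly, $c_{02}=\widetilde{W}(\pi,0)$ and $c_{00}=\widetilde{W}(\pi,\pi)$.

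As a result, the reduced identity you propose to check is false. By Proposition~\ref{proposition::intersection-of-two-isometric-spheres-using-psi},
\[
\widetilde{W}(0,\pi)+\widetilde{W}(\pi,0)+2ab=2(a^2+b^2+c^2)+6ab,
\]
while $\widetilde{W}(\pi,\pi)+\widetilde{W}(0,0)=2(a^2+b^2+c^2)-4ab$. Since $a,b<0$, we have $ab>0$, so the two sides differ. Your ``direct check'' would therefore fail, leaving only the unexecuted fallback of expanding the $c_{ij}$.

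The repair stays inside your method. With the correct relations and your (correct) $c_{11}=-4ab$, the identity is equivalent to
\[
\widetilde{W}(0,\pi)+\widetilde{W}(\pi,0)-\widetilde{W}(0,0)-\widetilde{W}(\pi,\pi)=8ab.
\]
In this alternating sum the constant, $\cos\psi_1$ and $\cos\psi_2$ terms cancel, and the $\cos(\psi_1-\psi_2)$ term contributes $-2ab\cdot(-4)=8ab$. More structurally, any $\widetilde{W}=\alpha_0+\alpha_1\cos\psi_1+\alpha_2\cos\psi_2+\kappa\cos(\psi_1-\psi_2)$ yields $c_{20}+c_{02}+2c_{11}=c_{22}+c_{00}=2(\alpha_0+\kappa)$. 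Here that common value is $2\widetilde{W}(\pi/2,\pi/2)=2(a-b)^2+2c^2$, which agrees with the paper's closed form. With this correction, your proof of the lemma is complete.
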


\begin{proof}
Observe that
\(
c_{20} + c_{02} + 2 c_{11} = c_{22} + c_{00} = -8 \sin^2{\left(\dfrac{\theta_1 - \theta_2}{2}\right)}
\cdot
\left(
\cos{\left(\theta_1\right)}
\cos{\left(\theta_2\right)}
-1
\right)
\).
If $XY = 1$ then, by Lemma \ref{lemma::parameters-W} and the above observation, we get
\[
W(X,Y) \ge c_{22} + 2\sqrt{c_{20} c_{02}} + 2 c_{11} + c_{00}
\ge c_{22} + 2 |c_{11}| + 2 c_{11} + c_{00}
= c_{22} + c_{00} > 0.
\]
If $XY = -1$, then
$W(X,Y) \ge c_{20} + c_{02} + 2 \sqrt{c_{20} c_{02}}$ > 0.
\end{proof}

We therefore arrive at the following theorem.

\begin{theorem}
\label{theorem::intersection_of_two}
For $0 < \theta_1 < \theta_2 < 2\pi$,
the intersection $I(\theta_1) \cap I(\theta_2)$ is homeomorphic to a $2$-disk.
\end{theorem}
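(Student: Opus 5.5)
By Proposition \ref{proposition::intersection-of-two-isometric-spheres}, the intersection $I(\theta_1)\cap I(\theta_2)$ is homeomorphic, via the coordinates $(\psi_1,\psi_2)\in(0,2\pi)^2$ and then $(X,Y)=(\cot(\psi_1/2),\cot(\psi_2/2))\in\R^2$, to the sublevel set
\[
\mathcal{W}\coloneqq\{(X,Y)\in\R^2 \mid W(X,Y)\le 0\}.
\]
The map from $(\omega_1,\omega_2)$ to $(\psi_1,\psi_2)$ is continuous and injective on the intersection: $\omega$ is recovered from $(\psi_1,\psi_2)$ by the explicit formulas for $\omega_1,\omega_2$ with nonvanishing denominator $\mathcal{D}$, by Claim 2. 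The cot substitution is a homeomorphism $(0,2\pi)\to\R$, and the lemma excluding $\psi_j=0$ shows that nothing is lost. The plan is therefore to prove that $\mathcal{W}$ is a closed topological disk, and then to transport this back. Compactness of $I(\theta_1)\cap I(\theta_2)$ makes the continuous bijection onto $\mathcal{W}$ a homeomorphism.

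The key structural observation is that, for fixed $Y$, $W(\cdot,Y)$ is a quadratic in $X$:
\[
W(X,Y)=(c_{22}Y^2+c_{20})X^2+2c_{11}Y\,X+(c_{02}Y^2+c_{00}).
\]
By Lemma \ref{lemma::parameters-W} the leading coefficient $c_{22}Y^2+c_{20}$ is strictly positive. Hence each horizontal slice $\mathcal{W}\cap\{Y=\text{const}\}$ is either empty, a single point, or a closed interval $[X_-(Y),X_+(Y)]$, where
\[
X_\pm(Y)=\frac{-c_{11}Y\pm\sqrt{\Delta(Y)}}{c_{22}Y^2+c_{20}}
\]
and
\[
\Delta(Y)=c_{11}^2Y^2-(c_{22}Y^2+c_{20})(c_{02}Y^2+c_{00}).
\]
Expanding gives
\[
\Delta(Y)=-c_{22}c_{02}Y^4+(c_{11}^2-c_{20}c_{02}-c_{22}c_{00})Y^2-c_{20}c_{00}.
\]
This is an even quartic with negative leading coefficient and positive constant term $-c_{20}c_{00}$, since $c_{00}<0$. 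Viewed as a quadratic in $Y^2$, it therefore has exactly one positive root $Y_0^2$, because the product of its roots is $c_{20}c_{00}/c_{22}c_{02}<0$.

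Consequently $\Delta(Y)\ge0$ exactly on $[-Y_0,Y_0]$, with $\Delta>0$ in the interior. So $\mathcal{W}$ is the region between the two continuous graphs $X_-\le X_+$ over $[-Y_0,Y_0]$, and the graphs meet only at the endpoints. Such a region is homeomorphic to a closed disk: map each slice linearly onto a chord of the unit disk. It is also compact, which confirms that no part of the intersection escapes to $\psi_j\to 0$.

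The main obstacle is making sure that this slice argument is airtight. Two points need care.

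First, $\mathcal{W}$ must be nonempty, and in particular $Y_0>0$ must genuinely hold. This follows from $-c_{20}c_{00}>0$. Concretely, at $Y=0$ the slice is $\{c_{20}X^2+c_{00}\le0\}$, a nondegenerate interval. As a sanity check, the point $[0,0,1]^t$ lies in every $I(\theta)$.

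Second, the inverse parametrisation must be a global homeomorphism. Here Claim 2 gives $\mathcal{D}\ne0$ everywhere, so $(\psi_1,\psi_2)\mapsto\omega$ is continuous. Its inverse is continuous because $\psi_j$ is defined as the argument of a nonvanishing ratio; the denominator vanishes only at $q^{\ball}$, which is an ideal point not on the finite spheres.

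Lemma \ref{lemma::W_with_XY=1} is not strictly needed for the disk conclusion, but it places the disk inside a region where $|XY|\neq1$. I would mention it as confirming that the disk lies in one component of $\{|XY|\ne1\}$ containing the origin, namely $\{|XY|<1\}$. This is consistent with the $Y=0$ slice.
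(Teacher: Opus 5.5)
Your proof is correct, but it takes a different route from the paper.

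The paper argues radially. It writes $W(tX,tY)=c_{22}X^2Y^2t^4+q(X,Y)t^2+c_{00}$, where $q$ is the quadratic part. Positive definiteness of $q$ (this is where $c_{20}>0$ and $c_{11}^2<c_{20}c_{02}$ from Lemma \ref{lemma::parameters-W} enter), together with $c_{22}>0$, makes $W$ strictly increasing along every ray from the origin, starting from $c_{00}<0$. Hence $\{W\le 0\}$ is star-shaped about the origin and bounded by a circle that each ray meets exactly once.

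You instead fibre over $Y$. Each slice is the sublevel set of a convex quadratic in $X$. The discriminant, viewed as a quadratic in $Y^2$, has negative leading coefficient and positive constant term. So the slices are nonempty exactly for $|Y|\le Y_0$ and nondegenerate in the interior, which exhibits $\mathcal{W}$ as the region between two continuous graphs.

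What your route buys: it uses only $c_{22},c_{20},c_{02}>0$ and $c_{00}<0$, and never needs $c_{11}^2<c_{20}c_{02}$. You also supply a step the paper leaves implicit, namely that the $(\psi_1,\psi_2)$ coordinates give a homeomorphism: a continuous bijection from the compact intersection onto $\mathcal{W}$.

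What the paper's route buys: star-shapedness about $(X,Y)=(0,0)$, which corresponds to $\psi_1=\psi_2=\pi$, i.e. the fixed point $[0,0,1]^t$. That radial structure is what the later sections use. The crossings $Q=0$ pass through the origin, the sectors in Proposition \ref{proposition::circular-sector} are parametrised radially, and $\overline{\partial D_n}$ is a cone from that point.
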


\begin{proof}
By Proposition \ref{proposition::intersection-of-two-isometric-spheres}, the intersection $I(\theta_1) \cap I(\theta_2)$ is given by
\[
W(X,Y) = 
c_{22}X^2Y^2 + (X,Y)
\left(
\begin{array}{cc}
c_{20} & c_{11} \\
c_{11} & c_{02}
\end{array}
\right)
\left(
\begin{array}{c}
X \\
Y
\end{array} 
\right) + c_{00} \le 0.
\]
Since both the quartic term $c_{22}X^2Y^2$ and the quadratic form, which is an ellipse by Lemma \ref{lemma::parameters-W}, are strictly increasing along rays from the origin, every non-empty level set of $W$ with respect to $(X,Y)$ is either the single point $\{(0,0)\}$ or is homeomorphic to a circle.
\end{proof}

\subsection{Intersection of three isometric spheres}
\label{subsection::3spheres}

Let $0 < \theta_1 < \theta_2 < 2\pi$ and $0 \le \theta_3 \le 2 \pi$ be arbitrary.

\begin{proposition}
\label{proposition::intersection-of-three-isometric-spheres}
For every $\theta_3 \not\in \{0, \theta_1, \theta_2, 2\pi\}$,
the intersection $I(\theta_1) \cap I(\theta_2) \cap \left( 
I(\theta_3) \sqcup I_-(\theta_3)\right) \subset I(\theta_1) \cap I(\theta_2)$,
parameterised by $(\psi_1, \psi_2) \in (0,2\pi)^2$,
is described by
$\widehat{Q}(X,Y) \coloneqq c_{22}' X^2 Y^2 + c_{20}' X^2 + c_{02}' Y^2 + 2 c_{11}' XY \le 0$
and $W(X,Y) \le 0$,
where $X \coloneqq \cot{\frac{\psi_1}{2}}$, $Y \coloneqq \cot{\frac{\psi_2}{2}}$ and
\begin{align*}
c_{22}' &=
64 \cdot \sin{\left( \frac{\theta_3}{2} \right)}
\sin^2{\left( \frac{\theta_1 - \theta_2}{2} \right)}
\sin{\left( \frac{\theta_1 - \theta_3}{2} \right)}
\sin{\left( \frac{\theta_2 - \theta_3}{2} \right)}
\sin{\left( \frac{\theta_3 - (\theta_1 + \theta_2)}{2} \right)}, \\
c_{20}' &=
64 \cdot \sin^2{\left( \frac{\theta_2}{2} \right)}
\sin{\left( \frac{\theta_3}{2} \right)}
\sin{\left( \frac{\theta_1 - \theta_3}{2} \right)}
\sin{\left( \frac{\theta_2 - \theta_3}{2} \right)}
\sin{\left( \frac{\theta_1 - \theta_2 + \theta_3}{2} \right)}, \\
c_{02}' &=
64 \cdot \sin^2{\left( \frac{\theta_1}{2} \right)}
\sin{\left( \frac{\theta_3}{2} \right)}
\sin{\left( \frac{\theta_1 - \theta_3}{2} \right)}
\sin{\left( \frac{\theta_2 - \theta_3}{2} \right)}
\sin{\left( \frac{\theta_2 - \theta_1 + \theta_3}{2} \right)}, \\
c_{11}' &=
-64 \cdot \sin{\left( \frac{\theta_1}{2} \right)}
\sin{\left( \frac{\theta_2}{2} \right)}
\sin^2{\left( \frac{\theta_3}{2} \right)}
\sin{\left( \frac{\theta_1 - \theta_3}{2} \right)}
\sin{\left( \frac{\theta_2 - \theta_3}{2} \right)}.
\end{align*}
In particular, the intersection $I(\theta_1) \cap I(\theta_2) \cap I(\theta_3)$
is given by
$\widehat{Q}(X,Y) = 0$ and $W(X,Y) \le 0$.
\end{proposition}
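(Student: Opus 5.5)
We substitute the parametrisation of $I(\theta_1)\cap I(\theta_2)$ obtained in the proof of Proposition~\ref{proposition::intersection-of-two-isometric-spheres-using-psi} into the defining inequality of $I(\theta_3)\sqcup I_-(\theta_3)$ from Proposition~\ref{proposition::standard-spheres}. On $I(\theta_1)\cap I(\theta_2)$ the point $\omega$ is written as $\omega_1=(1+i)\mathcal{N}_1/\mathcal{D}$ and $\omega_2=(1-i)\mathcal{N}_2/\mathcal{D}$, with $\mathcal{D}\neq 0$ by Claim~2. The membership condition in $I(\theta_3)\sqcup I_-(\theta_3)$ is
\[
\bigl|(1-i)\omega_1e^{-i\theta_3}-(1+i)\omega_2e^{i\theta_3}-2\bigr|^2-\bigl|(1-i)\omega_1-(1+i)\omega_2-2\bigr|^2\le 0 .
\]
After multiplying by $|\mathcal{D}|^2>0$ this becomes a polynomial inequality in $e^{\pm i\psi_1}$ and $e^{\pm i\psi_2}$. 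It is convenient to use the defining relation of $\psi_1$ to simplify the second term: $(1-i)\omega_1-(1+i)\omega_2-2$ equals $-\bigl((1-i)\omega_1e^{-i\theta_1}-(1+i)\omega_2e^{i\theta_1}-2\bigr)e^{-i\psi_1}$. Better still, we can write both expressions directly as linear combinations. The linear form $L_\theta(\omega)\coloneqq(1-i)\omega_1e^{-i\theta}-(1+i)\omega_2e^{i\theta}-2$ has $L_\theta+2$ in the span of $e^{-i\theta}$ and $e^{i\theta}$ (with coefficients $(1-i)\omega_1$ and $-(1+i)\omega_2$). Hence $L_{\theta_3}+2$ is determined from $L_0+2$, $L_{\theta_1}+2$ and $L_{\theta_2}+2$ by a fixed $3$-term linear relation.

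Concretely, we write $a=(1-i)\omega_1$ and $b=-(1+i)\omega_2$, and set $\Lambda\coloneqq L_0=a+b-2$. The definitions of $\psi_j$ give $ae^{-i\theta_j}+be^{i\theta_j}=2-\Lambda e^{i\psi_j}$ for $j=1,2$, and $a+b=2+\Lambda$. Solving the first two equations for $a$ and $b$ (the determinant is $2i\sin(\theta_2-\theta_1)$, which is nonzero except when $\theta_2-\theta_1=\pi$; that case is handled by continuity or directly) expresses $ae^{-i\theta_3}+be^{i\theta_3}$ as
\[
\frac{\sin(\theta_2-\theta_3)}{\sin(\theta_2-\theta_1)}\bigl(2-\Lambda e^{i\psi_1}\bigr)+\frac{\sin(\theta_3-\theta_1)}{\sin(\theta_2-\theta_1)}\bigl(2-\Lambda e^{i\psi_2}\bigr),
\]
using the standard identity $e^{\pm i\theta_3}\sin(\theta_2-\theta_1)=e^{\pm i\theta_1}\sin(\theta_2-\theta_3)+e^{\pm i\theta_2}\sin(\theta_3-\theta_1)$. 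The third relation $a+b=2+\Lambda$ pins down $\Lambda$ in terms of $\psi_1,\psi_2$ alone. The condition $|L_{\theta_3}|^2\le|\Lambda|^2$ then becomes, after division by $|\Lambda|^2$ ($\Lambda\ne0$ since $\omega\ne q^{\ball}$), a real trigonometric inequality
\[
A_0+A_1\cos\psi_1+A_2\cos\psi_2+A_{12}\cos(\psi_1-\psi_2)\le 0,
\]
with coefficients that are trigonometric in $\theta_1,\theta_2,\theta_3$. The equality case gives $I(\theta_3)$ itself.

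Next we apply the half-angle substitution exactly as in the proof of Proposition~\ref{proposition::intersection-of-two-isometric-spheres}, with $X=\cot(\psi_1/2)$ and $Y=\cot(\psi_2/2)$, and multiply by $(X^2+1)(Y^2+1)>0$. This produces a biquadratic polynomial with possible monomials $X^2Y^2$, $X^2$, $Y^2$, $XY$ and $1$. The claim is that the constant term vanishes and that the remaining coefficients factor as $c'_{22},c'_{20},c'_{02},c'_{11}$, up to a common positive factor that we may discard. The constant term corresponds to $\psi_1=\psi_2=\pi$. I would check its vanishing directly: at $\psi_1=\psi_2=\pi$ the identity above gives $L_{\theta_3}+2=2+\Lambda$ up to sign consistency, so $|L_{\theta_3}|=|\Lambda|$ holds identically. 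Geometrically, this point lies on every $I(\theta)$, since it is the fixed point $[0,0,1]^t$ and $\Lambda=-2$ there.

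The main obstacle is the bookkeeping: we must simplify the resulting coefficients into the stated product-of-sines form, and we must control the sign of the discarded common factor, which has to be positive for the direction of the inequality to be preserved. I would do this with product-to-sum identities, verifying each factorisation by checking its zeros. For example, $c'_{20}$ should vanish when $\theta_3\in\{\theta_1,\theta_2\}$, since then the third sphere coincides with one of the first two, and it should also vanish when $\theta_3=\theta_2-\theta_1$; then compare one numeric value. Finally, intersecting with $W\le0$ from Proposition~\ref{proposition::intersection-of-two-isometric-spheres} restricts to the Giraud disk and yields the statement.
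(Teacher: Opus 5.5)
Your proposal is correct and is essentially the paper's proof. You substitute the $(\psi_1,\psi_2)$-parametrisation of $I(\theta_1)\cap I(\theta_2)$ into the criterion of Proposition~\ref{proposition::standard-spheres}, expand in $\cos\psi_1$, $\cos\psi_2$, $\cos(\psi_1-\psi_2)$, pass to $X,Y$, and show the constant term vanishes. The differences are minor: your three-term relation among the $e^{\pm i\theta}$ replaces the paper's direct expansion of $\mathcal{N}_1e^{-i\theta_3}-\mathcal{N}_2e^{i\theta_3}-\mathcal{D}$. You also get the vanishing constant term geometrically, not from the identity $\delta_0-\delta_1-\delta_2+\delta_{1,2}-\delta'_0=0$. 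At $(\psi_1,\psi_2)=(\pi,\pi)$ the three relations force $a=b=0$ and $\Lambda=-2$, so the point is $[0,0,1]^t$ and $L_{\theta_3}=-2=\Lambda$. The coefficients of your linear relation do not sum to $1$, so this, not ``sign consistency'', is the reason. Your normalisation by $|\Lambda|^2$ changes the paper's $\widehat{Q}$ only by the positive constant factor $1/\delta'_0$, so the sign of the discarded factor takes care of itself. The remaining factorisation of the coefficients is the same computation the paper also leaves as straightforward.
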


\begin{proof}
Suppose that $\omega \in I(\theta_1) \cap I(\theta_2)$.
As in the proof of Proposition \ref{proposition::intersection-of-two-isometric-spheres-using-psi}, given $(\psi_1, \psi_2)$, we get
\begin{align*}
\omega_1 &
=
(1 + i)
\dfrac{
e^{i \psi_1} (e^{i \theta_2} - 1) + e^{i \psi_2} (1 - e^{i \theta_1}) + e^{i \theta_2} - e^{i \theta_1}
}{
-2i \sin(\theta_1) e^{i \psi_2}
+ 2i \sin(\theta_2) e^{i \psi_1}
+ 2i \sin(\theta_2 - \theta_1)
}
\eqqcolon (1+i) \dfrac{\mathcal{N}_1}{\mathcal{D}},
\\
\omega_2 &
=
(1 - i)
\dfrac{
e^{i \psi_1} (e^{-i \theta_2} - 1) + e^{i \psi_2} (1 - e^{-i \theta_1}) + e^{-i \theta_2} - e^{-i \theta_1}
}{
-2i \sin(\theta_1) e^{i \psi_2}
+ 2i \sin(\theta_2) e^{i \psi_1}
+ 2i \sin(\theta_2 - \theta_1)
}
\eqqcolon (1+i) \dfrac{\mathcal{N}_2}{\mathcal{D}}.
\end{align*}
We further suppose that $\omega \in I(\theta_3) \sqcup I_-(\theta_3)$.
Therefore, by Proposition \ref{proposition::standard-spheres}, we get
\[
1 \ge 
\dfrac{
\left|
(1-i) \cdot \omega_1 \cdot e^{-i \theta_3} - (1+i) \cdot \omega_2 \cdot e^{i\theta_3} - 2
\right|
}{
\left|
(1-i) \cdot \omega_1 - (1+i) \cdot \omega_2 - 2
\right|
}
=
\dfrac{
\left|
\mathcal{N}_1 e^{-i \theta_3} - \mathcal{N}_2 e^{i\theta_3} - \mathcal{D}
\right|
}{
\left|
\mathcal{N}_1 - \mathcal{N}_2 - \mathcal{D}
\right|
}.
\]
Equivalently, we get
$0 \ge |\mathcal{N}_1 e^{-i\theta_3}-\mathcal{N}_2 e^{i\theta_3}-\mathcal{D}|^2 - |\mathcal{N}_1-\mathcal{N}_2-\mathcal{D}|^2$, where
\begin{align*}
\mathcal{N}_1 e^{-i\theta_3}-\mathcal{N}_2 e^{i\theta_3}-\mathcal{D}
=&~
2 i \cdot e^{i \psi_1}
\left(
\sin(\theta_2 - \theta_3) + \sin(\theta_3) - \sin(\theta_2) 
\right) \\
& + 2 i \cdot e^{i \psi_2}
\left(
\sin(\theta_3 - \theta_1) + \sin(\theta_1) - \sin(\theta_3)
\right) \\
& + 2 i \cdot
\left(
\sin(\theta_2 - \theta_3) + \sin(\theta_3 - \theta_1) + \sin(\theta_1 - \theta_2) 
\right), \\
\mathcal{N}_1-\mathcal{N}_2-\mathcal{D}
=&
\left.
\left(\mathcal{N}_1 e^{-i\theta_3}-\mathcal{N}_2 e^{i\theta_3}-\mathcal{D}
\right)
\right|_{\theta_3 = 0}
= 2i \cdot \left(
\sin(\theta_2) - \sin(\theta_1) + \sin(\theta_1 - \theta_2)
\right).
\end{align*}
We suppose that
$\left|\mathcal{N}_1 e^{-i\theta_3}-\mathcal{N}_2 e^{i\theta_3}-\mathcal{D}\right|^2 / 4 = \delta_0 + \delta_1 \cos(\psi_1) + \delta_2 \cos(\psi_2) + \delta_{1,2} \cos(\psi_1 - \psi_2)$
and suppose that
$\left|\mathcal{N}_1-\mathcal{N}_2-\mathcal{D}\right|^2 / 4 = \delta'_0$.
The coefficients $\delta_0$, $\delta_1$, $\delta_2$, $\delta_{1,2}$ and $\delta'_0$ are given by the following expressions:
\begin{align*}
\delta_0 =&~
\left(
\sin(\theta_2-\theta_3) + \sin(\theta_3) - \sin(\theta_2)
\right)^2
+
\left(
\sin(\theta_3-\theta_1) + \sin(\theta_1) - \sin(\theta_3)
\right)^2
\\
&
+
\left(
\sin(\theta_2 - \theta_3)
+ \sin(\theta_3 - \theta_1)
+ \sin(\theta_1 - \theta_2)
\right)^2,
\\
\delta_1 =&~
2 \cdot \left(
\sin(\theta_2-\theta_3)+\sin(\theta_3)-\sin(\theta_2)
\right)
\cdot
\left(
\sin(\theta_2-\theta_3)+\sin(\theta_3-\theta_1)+\sin(\theta_1-\theta_2)
\right) \\
=&~ 32 \cdot 
\sin{\left(\dfrac{\theta_2}{2}\right)}
\sin{\left(\dfrac{\theta_3}{2}\right)}
\sin{\left(\dfrac{\theta_1 - \theta_2}{2}\right)}
\sin{\left(\dfrac{\theta_1 - \theta_3}{2}\right)}
\sin^2{\left(\dfrac{\theta_2 - \theta_3}{2}\right)}, \\
\delta_2 =&~
2 \cdot \left(
\sin(\theta_3-\theta_1)+\sin(\theta_1)-\sin(\theta_3)
\right)
\cdot
\left(
\sin(\theta_2-\theta_3)+\sin(\theta_3-\theta_1)+\sin(\theta_1-\theta_2)
\right) \\
=&~ -32 \cdot
\sin{\left(\dfrac{\theta_1}{2}\right)}
\sin{\left(\dfrac{\theta_3}{2}\right)}
\sin{\left(\dfrac{\theta_1-\theta_2}{2}\right)}
\sin^2{\left(\dfrac{\theta_1-\theta_3}{2}\right)}
\sin{\left(\dfrac{\theta_2-\theta_3}{2}\right)}, \\
\delta_{1,2} =&~
2 \cdot \left(
\sin(\theta_2-\theta_3)+\sin(\theta_3)-\sin(\theta_2)
\right)
\cdot
\left(
\sin(\theta_3-\theta_1)+\sin(\theta_1)-\sin(\theta_3)
\right) \\
=&~ -32 \cdot
\sin{\left(\dfrac{\theta_1}{2}\right)}
\sin{\left(\dfrac{\theta_2}{2}\right)}
\sin^2{\left(\dfrac{\theta_3}{2}\right)}
\sin{\left(\dfrac{\theta_1-\theta_3}{2}\right)}
\sin{\left(\dfrac{\theta_2-\theta_3}{2}\right)}, \\
\delta'_0
=&~
\left(
\sin(\theta_2) - \sin(\theta_1) + \sin(\theta_1 - \theta_2)
\right)^2.
\end{align*}
We also observe that
\begin{align*}
0 =&~
\Big(
\big(
\sin(\theta_2-\theta_3)+\sin(\theta_3)-\sin(\theta_2)
\big)
+
\big(
\sin(\theta_3-\theta_1)+\sin(\theta_1)-\sin(\theta_3)
\big) \\
&-
\big(
\sin(\theta_2-\theta_3)+\sin(\theta_3-\theta_1)-\sin(\theta_1-\theta_2)
\big)
\Big)^2
-
\Big(
\sin(\theta_1-\theta_2)+\sin(\theta_2)-\sin(\theta_1)
\Big)^2 \\
=&~ \delta_0 - \delta_1 - \delta_2 + \delta_{1,2} - \delta'_0.
\end{align*}
Thus, we make the substitutions $X \coloneqq \cot(\psi_1/2)$ and $Y \coloneqq \cot(\psi_2/2)$. Define the function
\begin{align*}
\widehat{Q}(X,Y) =&~ \dfrac{(X^2+1)(Y^2+1)}{4}
\left(
|\mathcal{N}_1 e^{-i\theta_3}-\mathcal{N}_2 e^{i\theta_3}-\mathcal{D}|^2 - |\mathcal{N}_1-\mathcal{N}_2-\mathcal{D}|^2
\right) \\
=&~
(\delta_0 - \delta'_0) \cdot (X^2+1)(Y^2+1)
+ \delta_1 \cdot (X^2-1)(Y^2+1)
+ \delta_2 \cdot (X^2+1)(Y^2-1) \\
&+ \delta_{1,2} \cdot \left( \left(X^2-1\right) \left(Y^2-1\right)+4XY\right) \\
=&~ (\delta_0 - \delta'_0 + \delta_1 + \delta_2 + \delta_{1,2}) \cdot X^2Y^2
+ (\delta_0 - \delta'_0 + \delta_1 - \delta_2 - \delta_{1,d 2}) \cdot X^2 \\
&+ (\delta_0 - \delta'_0 - \delta_1 + \delta_2 - \delta_{1,2}) \cdot Y^2
+ 4\delta_{1,2} \cdot XY
+ (\delta_0 - \delta'_0 - \delta_1 - \delta_2 + \delta_{1,2}).
\end{align*}
A straightforward calculation then yields the desired inequality.
\end{proof}

\begin{definition}
For every $\theta_3 \in [0, 2\pi]$,
the function $Q_{\theta_1,\theta_2,\theta_3}$ is defined by
\[
Q_{\theta_1,\theta_2,\theta_3}(X,Y) = 
Q(X,Y) \coloneqq c_{22}'' X^2 Y^2 + c_{20}'' X^2 + c_{02}'' Y^2 + 2 c_{11}'' XY,
\]
where
\begin{align*}
&c_{22}'' = 
\sin^2{\left( \frac{\theta_1 - \theta_2}{2} \right)}
\sin{\left( \frac{\theta_3 - (\theta_1 + \theta_2)}{2} \right)},
\quad
&c_{11}'' =
- \sin{\left( \frac{\theta_1}{2} \right)}
\sin{\left( \frac{\theta_2}{2} \right)}
\sin{\left( \frac{\theta_3}{2} \right)}, \\
&c_{20}'' = 
\sin^2{\left( \frac{\theta_2}{2} \right)}
\sin{\left( \frac{\theta_1 - \theta_2 + \theta_3}{2} \right)},
\quad
&c_{02}'' = 
\sin^2{\left( \frac{\theta_1}{2} \right)}
\sin{\left( \frac{\theta_2 - \theta_1 + \theta_3}{2} \right)}.
\end{align*}
\end{definition}

\begin{corollary}
\label{corollary::Q(X,Y)}
For every $\theta_3 \not\in \{0, \theta_1, \theta_2, 2\pi\}$,
the intersection $I(\theta_1) \cap I(\theta_2) \cap I(\theta_3) \subset I(\theta_1) \cap I(\theta_2)$,
parameterised by $(\psi_1, \psi_2) \in (0,2\pi)^2$,
is described by
$Q(X,Y) = 0$
and $W(X,Y) \le 0$,
where $X \coloneqq \cot{(\psi_1 / 2)}$ and $Y \coloneqq \cot{(\psi_2 / 2)}$.
\end{corollary}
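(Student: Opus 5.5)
The plan is to show that the quartic $\widehat{Q}$ of Proposition~\ref{proposition::intersection-of-three-isometric-spheres} is a nonzero constant multiple of $Q = Q_{\theta_1,\theta_2,\theta_3}$ whenever $\theta_3 \notin \{0,\theta_1,\theta_2,2\pi\}$. The two then have the same zero set, and the corollary follows directly from the final sentence of Proposition~\ref{proposition::intersection-of-three-isometric-spheres}. That sentence says $I(\theta_1)\cap I(\theta_2)\cap I(\theta_3)$ is cut out in the $(X,Y)$-coordinates by $\widehat{Q}(X,Y)=0$ together with $W(X,Y)\le 0$.

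Set
\[
K \coloneqq \sin\left(\frac{\theta_3}{2}\right)\sin\left(\frac{\theta_1-\theta_3}{2}\right)\sin\left(\frac{\theta_2-\theta_3}{2}\right).
\]
Next, compare the four coefficients of $\widehat{Q}$ with those of $Q$ one at a time.
\begin{itemize}
\item In $c_{22}'$, pull out the three sines making up $K$. What remains is $64\sin^2\left(\frac{\theta_1-\theta_2}{2}\right)\sin\left(\frac{\theta_3-(\theta_1+\theta_2)}{2}\right)$, so $c_{22}' = 64K\,c_{22}''$.
\item The same extraction gives $c_{20}' = 64K\,c_{20}''$ and $c_{02}' = 64K\,c_{02}''$.
\item In $c_{11}'$, the factor $\sin^2(\theta_3/2)$ supplies one copy of $\sin(\theta_3/2)$ to $K$. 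The leftover $-\sin(\theta_1/2)\sin(\theta_2/2)\sin(\theta_3/2)$ is exactly $c_{11}''$, so $c_{11}' = 64K\,c_{11}''$.
\end{itemize}
Since neither $\widehat{Q}$ nor $Q$ has a constant term, this gives the identity $\widehat{Q}(X,Y) = 64K\,Q(X,Y)$ for all $(X,Y)$.

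The only remaining point is that $K\neq 0$ under the hypothesis. Here $\theta_1,\theta_2\in(0,2\pi)$, and $\theta_3\in[0,2\pi]$ lies outside $\{0,\theta_1,\theta_2,2\pi\}$. Then $\theta_3/2\in(0,\pi)$, and $(\theta_j-\theta_3)/2\in(-\pi,\pi)\setminus\{0\}$ for $j=1,2$, so none of the three sine factors vanishes. Hence $\{\widehat{Q}=0\}=\{Q=0\}$. Combining this with the condition $W\le 0$ from Proposition~\ref{proposition::intersection-of-three-isometric-spheres} yields the stated description.

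I do not expect a genuine obstacle, since all the computational work was already done in Proposition~\ref{proposition::intersection-of-three-isometric-spheres}. The one point that needs care is the coefficient matching for $c_{11}'$, where the square $\sin^2(\theta_3/2)$ has to be split correctly between $K$ and $c_{11}''$. The sign of $K$ does not matter here, though it would matter for the inequality version involving $I_-(\theta_3)$, which the corollary does not claim.
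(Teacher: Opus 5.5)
Your proposal is correct and is essentially the paper's argument. The paper only says the corollary follows immediately from Proposition~\ref{proposition::intersection-of-three-isometric-spheres}, and the implicit step is exactly the factorisation $\widehat{Q} = 64K\,Q$ with $K=\sin(\theta_3/2)\sin((\theta_1-\theta_3)/2)\sin((\theta_2-\theta_3)/2)\neq 0$, which you make explicit, with correct coefficient matching (including the $c_{11}'$ case) and a correct non-vanishing check for $K$.
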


\begin{proof}
This corollary follows immediately from Proposition \ref{proposition::intersection-of-three-isometric-spheres}.
\end{proof}

\begin{lemma}
\label{lemma::parameters-Q}
For every $\theta_3 \in [0, 2\pi]$,
the coefficients of $Q(X,Y)$ satisfy $c_{11}''^2 - c_{20}'' c_{02}'' > 0$.
\end{lemma}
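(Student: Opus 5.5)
The plan is to factor out the common trigonometric weights and reduce the claim to the product identity $\sin(x+y)\sin(x-y)=\sin^2 x-\sin^2 y$. This gives an expression for $c_{11}''^2-c_{20}''c_{02}''$ that does not depend on $\theta_3$.

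Put $a\coloneqq\theta_1/2$, $b\coloneqq\theta_2/2$, $c\coloneqq\theta_3/2$. Since $0<\theta_1<\theta_2<2\pi$ and $\theta_3\in[0,2\pi]$, we have $0<a<b<\pi$ and $c\in[0,\pi]$. Reading off the definitions of the coefficients of $Q$, set $d\coloneqq a-b$. Then
\[
c_{11}''^2=\sin^2 a\,\sin^2 b\,\sin^2 c,
\qquad
c_{20}''c_{02}''=\sin^2 b\,\sin^2 a\,\sin(c+d)\,\sin(c-d),
\]
because $(\theta_1-\theta_2+\theta_3)/2=c+d$ and $(\theta_2-\theta_1+\theta_3)/2=c-d$.

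Applying $\sin(c+d)\sin(c-d)=\sin^2 c-\sin^2 d$ gives
\[
c_{11}''^2-c_{20}''c_{02}''=\sin^2 a\,\sin^2 b\,\bigl(\sin^2 c-\sin^2 c+\sin^2 d\bigr)=\sin^2\!\Bigl(\frac{\theta_1}{2}\Bigr)\sin^2\!\Bigl(\frac{\theta_2}{2}\Bigr)\sin^2\!\Bigl(\frac{\theta_1-\theta_2}{2}\Bigr).
\]

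Strict positivity then follows from the parameter ranges:
\begin{itemize}
\item $a,b\in(0,\pi)$, so $\sin a\neq0$ and $\sin b\neq0$;
\item $0<b-a<\pi$, so $\sin(a-b)\neq0$.
\end{itemize}
Hence the product is strictly positive for every $\theta_3\in[0,2\pi]$, including the endpoints $\theta_3\in\{0,2\pi\}$ and the values $\theta_3\in\{\theta_1,\theta_2\}$.

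The only step needing care is checking the argument signs in $c_{20}''$ and $c_{02}''$, so that the two sine factors really are $\sin(c+d)$ and $\sin(c-d)$ with the same $d$. Once that is confirmed, the argument is purely algebraic and requires no case analysis.
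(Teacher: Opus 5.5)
Your proof is correct and is essentially the paper's argument. The paper just records the identity $c_{11}''^2 - c_{20}''c_{02}'' = \sin^2(\theta_1/2)\sin^2(\theta_2/2)\sin^2((\theta_1-\theta_2)/2)$, printed with a typo that drops the square on $c_{11}''$; you derive it via $\sin(c+d)\sin(c-d)=\sin^2 c-\sin^2 d$ and then check strict positivity from $0<\theta_1<\theta_2<2\pi$ (consider renaming your $a,b,c$, since the paper already uses those letters for $-c_{20}''/c_{22}''$, $-c_{11}''/c_{22}''$, $-c_{02}''/c_{22}''$).
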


\begin{proof}
This follows from 
\(
c_{11}'' - c_{20}'' c_{02}'' = \sin^2{(\theta_1/2)}
\sin^2{(\theta_2/2)}
\sin^2{((\theta_1 - \theta_2)/2)}
\).
\end{proof}

When $c_{22}'' \neq 0$, we also define the coefficients
$a \coloneqq - c_{20}'' / c_{22}''$, $b \coloneqq - c_{11}'' / c_{22}''$ and $c \coloneqq - c_{02}'' / c_{22}''$.

\begin{lemma}
\label{lemma::a+2b+c=-1}
Suppose that $\theta_3 \in [0, 2\pi]$ satisfies $c_{22}'' \neq 0$.
Then
the coefficients $a, b, c$ satisfy the relations $b^2 > ac$ and $a+2b+c = -1$.
\end{lemma}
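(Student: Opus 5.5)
Both claims reduce to statements about the coefficients $c_{22}'', c_{20}'', c_{02}'', c_{11}''$ once we multiply through by $c_{22}''$ (nonzero by hypothesis); no geometry is needed. Throughout, write $A \coloneqq \theta_1/2$, $B \coloneqq \theta_2/2$, $C \coloneqq \theta_3/2$.

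\textbf{The inequality $b^2 > ac$.} By definition of $a,b,c$,
\[
b^2 - ac = \frac{c_{11}''^2 - c_{20}''c_{02}''}{c_{22}''^2}.
\]
By Lemma~\ref{lemma::parameters-Q}, the numerator is positive. Its proof gives the numerator as $\sin^2 A\,\sin^2 B\,\sin^2(A-B)$. This product is strictly positive because $0<\theta_1<\theta_2<2\pi$, so $A,B\in(0,\pi)$ and $A-B\in(-\pi,0)$. Hence $b^2>ac$.

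\textbf{The relation $a+2b+c=-1$.} Multiplying by $-c_{22}''$, it is equivalent to the identity
\[
\sin^2 B\,\sin(A-B+C)+\sin^2 A\,\sin(B-A+C)-2\sin A\sin B\sin C=\sin^2(A-B)\,\sin(C-A-B).
\]
Both sides are of the form $\alpha\sin C+\beta\cos C$, with $\alpha,\beta$ depending only on $A,B$. It therefore suffices to match the $\cos C$ and $\sin C$ coefficients separately.

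For the $\cos C$ coefficient, the left side contributes
\[
\sin^2 B\,\sin(A-B)-\sin^2 A\,\sin(A-B)=\sin(A-B)\,(\sin^2B-\sin^2A).
\]
Using $\sin^2B-\sin^2A=\sin(B-A)\sin(B+A)$, this equals $-\sin^2(A-B)\sin(A+B)$. This is exactly the $\cos C$ coefficient of the right side.

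For the $\sin C$ coefficient, we must show
\[
\cos(A-B)\,(\sin^2A+\sin^2B)-2\sin A\sin B=\sin^2(A-B)\cos(A+B).
\]
This is the step most prone to algebra slips. We substitute $\sin^2A+\sin^2B=1-\cos(A+B)\cos(A-B)$ and $2\sin A\sin B=\cos(A-B)-\cos(A+B)$. The left side then becomes
\[
\cos(A+B)\bigl(1-\cos^2(A-B)\bigr)=\cos(A+B)\sin^2(A-B),
\]
as required. This proves the identity, and hence $a+2b+c=-1$.
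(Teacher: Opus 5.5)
Your proof is correct and follows the same route as the paper. The paper obtains $a+2b+c=-1$ from the identity $c_{20}''+c_{02}''+2c_{11}''=c_{22}''$, stated without verification, and obtains $b^2>ac$ from Lemma~\ref{lemma::parameters-Q} after dividing by $c_{22}''^2$; your check of the $\cos C$ and $\sin C$ coefficients correctly supplies the trigonometric verification the paper omits.
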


\begin{proof}
The lemma follows immediately from
$c_{20}'' + c_{02}'' + 2 c_{11}'' = c_{22}''$.
\end{proof}

\begin{lemma}
\label{lemma::fixed_point}
Suppose that $\theta_1 \neq \pi$ and $\theta_2 \neq \pi$.
Define the point
\[
p_\nabla = \left(X_\nabla, Y_\nabla\right)
\coloneqq \left(\tan{\left(\frac{\theta_1}{2}\right)}, \tan{\left(\frac{\theta_2}{2}\right)}\right).
\]
Then, for every $\theta_3 \in [0,2\pi]$,
the point $p_\nabla$ satisfies
$Q(X_\nabla, Y_\nabla) = 0$
and
$W(X_\nabla, Y_\nabla) > 0$.
\end{lemma}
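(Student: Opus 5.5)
The plan is to substitute $p_\nabla$ directly and reduce both claims to half-angle trigonometric identities. Throughout I write $\alpha=\theta_1/2$, $\beta=\theta_2/2$, $\varphi=\theta_3/2$, $\delta=\alpha-\beta$, $\sigma=\alpha+\beta$. Since $\theta_1,\theta_2\neq\pi$, we have $\cos\alpha\cos\beta\neq 0$, so $p_\nabla$ is well defined. Since $\theta_1,\theta_2\in(0,2\pi)$, we also have $\sin\alpha\sin\beta\neq 0$.

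\textbf{The identity $Q(p_\nabla)=0$.} Multiply $Q(X_\nabla,Y_\nabla)$ by $\cos^2\alpha\cos^2\beta$ and use $X_\nabla=\sin\alpha/\cos\alpha$, $Y_\nabla=\sin\beta/\cos\beta$. Every term then carries the common factor $\sin^2\alpha\sin^2\beta$, leaving
\[
\sin^2\delta\,\sin(\varphi-\sigma)+\cos^2\beta\,\sin(\varphi+\delta)+\cos^2\alpha\,\sin(\varphi-\delta)-2\cos\alpha\cos\beta\,\sin\varphi .
\]
This is of the form $A\sin\varphi+B\cos\varphi$, so it vanishes for every $\theta_3$ exactly when $A=B=0$.

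For $B$, the coefficient of $\cos\varphi$ is $-\sin^2\delta\sin\sigma+(\cos^2\beta-\cos^2\alpha)\sin\delta$. The identity $\cos^2\beta-\cos^2\alpha=\sin\sigma\sin\delta$ makes it zero.

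For $A$, the coefficient of $\sin\varphi$ is $\sin^2\delta\cos\sigma+(\cos^2\alpha+\cos^2\beta)\cos\delta-2\cos\alpha\cos\beta$. Using $\cos^2\alpha+\cos^2\beta=1+\cos\sigma\cos\delta$, this simplifies to $\cos\sigma+\cos\delta-2\cos\alpha\cos\beta=0$.

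\textbf{The inequality $W(p_\nabla)>0$.} Since $W=\widetilde W\cdot(X^2+1)(Y^2+1)$, it suffices to evaluate $\widetilde W$ at the corresponding angles. The relation $\cot(\psi_j/2)=\tan(\theta_j/2)$ means $\psi_j\equiv\pi-\theta_j$. These are exactly the exceptional angles from Claim 1 in the proof of Proposition~\ref{proposition::intersection-of-two-isometric-spheres-using-psi}, where $\cos\psi_j=-\cos\theta_j$ and $\cos(\psi_1-\psi_2)=\cos(\theta_1-\theta_2)$.

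Put $a=1-\cos\theta_1$, $b=1-\cos\theta_2$, $c=1-\cos(\theta_1-\theta_2)$. Then $a,b>0$, $c>0$, and $\cos\psi_1=a-1$, $\cos\psi_2=b-1$. Substituting into the formula of Proposition~\ref{proposition::intersection-of-two-isometric-spheres-using-psi} gives a symmetric polynomial:
\[
\widetilde W=a^2+b^2+c^2-2ab-2bc-2ca+6abc .
\]

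\textbf{Main obstacle.} The key step is showing this polynomial is positive, and here I am least sure of the exact form. I would rewrite it in half angles via $a=2\sin^2\alpha$, $b=2\sin^2\beta$, $c=2\sin^2\delta$. The quadratic part $a^2+b^2+c^2-2ab-2bc-2ca$ is then a Heron-type expression. For squared sines of the angles $\alpha,\beta,\alpha-\beta$ it should equal $-16\sin^2\alpha\sin^2\beta\sin^2\delta$ times a cosine factor; I would first check this against the identity $\sin^2\alpha-\sin^2\beta=\sin\sigma\sin\delta$. The cubic term $6abc=48\sin^2\alpha\sin^2\beta\sin^2\delta$ should then dominate, so that
\[
\widetilde W=16\sin^2\alpha\sin^2\beta\sin^2\delta\,(3-\cdots)>0 .
\]
If this bookkeeping fails, my fallback is the following. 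View $\widetilde W$ as a quadratic in $c$. Show that its minimum over the admissible range of $c$, which is constrained by $a$ and $b$ through the angle relation, is positive. Boundary cases are excluded by $\theta_1\ne\theta_2$.

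Strict positivity needs all three of $\sin\alpha$, $\sin\beta$, $\sin\delta$ to be nonzero, which follows from $0<\theta_1<\theta_2<2\pi$.
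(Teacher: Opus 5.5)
Your approach is the paper's own; the paper's proof is just ``a direct computation''. Everything you actually carried out is correct:
\begin{itemize}
\item the reduction of $Q(p_\nabla)$ to $A\sin\varphi+B\cos\varphi$ with $A=B=0$;
\item the identification $\psi_j\equiv\pi-\theta_j$;
\item the formula $\widetilde W(p_\nabla)=a^2+b^2+c^2-2ab-2bc-2ca+6abc$ with $a=1-\cos\theta_1$, $b=1-\cos\theta_2$, $c=1-\cos(\theta_1-\theta_2)$.
\end{itemize}
The gap is the decisive step. Positivity is left as a guess (``times a cosine factor'', ``should dominate''), so as written $W(p_\nabla)>0$ is not proved. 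Your fallback is not well posed either: $c$ is determined by $\theta_1,\theta_2$, so there is no ``admissible range'' of $c$ to minimise over.

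The missing fact is that there is no cosine factor: the quadratic part equals exactly $-2abc$. Put $u=\sin^2\alpha$, $v=\sin^2\beta$, $w=\sin^2\delta$, so $a=2u$, $b=2v$, $c=2w$. Using $(u-v)^2=\sin^2\sigma\, w$ and $u+v=1-\cos\sigma\cos\delta$, you get
\[
u^2+v^2+w^2-2uv-2vw-2wu = w\bigl(\sin^2\sigma+\sin^2\delta-2+2\cos\sigma\cos\delta\bigr) = -w(\cos\sigma-\cos\delta)^2 = -4uvw .
\]
This is equivalent to the vanishing Gram determinant
\[
1-\cos^2\theta_1-\cos^2\theta_2-\cos^2(\theta_1-\theta_2)+2\cos\theta_1\cos\theta_2\cos(\theta_1-\theta_2)=0
\]
of three unit vectors in the plane. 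You could also read it off the paper directly: the quadratic part is $\widetilde W(\pi,\pi)=W(0,0)=c_{00}=-16uvw$.

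Hence $\widetilde W(p_\nabla)=-16uvw+48uvw=32\sin^2\alpha\sin^2\beta\sin^2\delta>0$, that is, $W(p_\nabla)=32\tan^2\alpha\tan^2\beta\sin^2\delta>0$. With this line inserted, your proof is complete.
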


\begin{proof}
This follows from a direct computation.
\end{proof}

Assume that $c_{22}'' \neq 0$.
We define two families of intervals and associated curves as follows.

\begin{definition}
Suppose that $a \neq 0$.
For each $\epsilon, \tau, \sigma \in \{\pm 1\}$,
let $J^x_{\epsilon,\tau,\sigma} \subset \R_{\ge 0}$
be the maximal interval containing $0$ such that 
for all non-zero $\chi \in J^x_{\epsilon,\tau,\sigma}$
we have $(\sigma \chi - 2b)^2 - 4ac > 0$ and $\widehat{x}_\tau(\sigma \chi) > 0$, where
\[
\widehat{x}_{\pm 1}(\sigma \chi) =
\widehat{x}_{\pm}(\sigma \chi) \coloneqq
\frac{\sigma \chi}{2a} \left(
\left(\sigma \chi - 2b \right)
\pm
\sigma \sqrt{\left( \sigma \chi - 2b \right)^2 - 4ac}
\right).
\]
\end{definition}

\begin{definition}
Suppose that $c \neq 0$.
For each $\epsilon, \tau, \sigma \in \{\pm 1\}$,
let $J^y_{\epsilon,\tau,\sigma} \subset \R_{\ge 0}$ be the maximal interval containing $0$ such that for all non-zero $\chi \in J^y_{\epsilon,\tau,\sigma}$ we have
$(\sigma \chi - 2b)^2 - 4ac > 0$ and $\widehat{y}_\tau(\sigma \chi) > 0$, where
\[
\widehat{y}_{\pm 1}(\sigma \chi) =
\widehat{y}_{\pm}(\sigma \chi) \coloneqq
\frac{\sigma \chi}{2c} \left(
\left( \sigma \chi - 2b \right)
\pm
\sigma
\sqrt{\left( \sigma \chi - 2b \right)^2 - 4ac}
\right).
\]
\end{definition}

\begin{definition}
The path $\gamma_{\epsilon,\tau,\sigma}: J^x_{\epsilon,\tau,\sigma} \rightarrow \R^2$ and
the path $\delta_{\epsilon,\tau,\sigma}: J^y_{\epsilon,\tau,\sigma} \rightarrow \R^2$
are given by
\[
\gamma_{\epsilon,\tau,\sigma}:
\chi \mapsto
\left( \epsilon \cdot \sqrt{\widehat{x}_\tau (\sigma \chi)}, \epsilon \cdot \frac{\sigma \chi}{\sqrt{\widehat{x}_\tau (\sigma \chi)}} \right)
\quad
\text{and}
\quad
\delta_{\epsilon,\tau,\sigma}:
\chi \mapsto
\left( \epsilon \cdot \frac{\sigma \chi}{\sqrt{\widehat{y}_\tau (\sigma \chi)}},
\epsilon \cdot \sqrt{\widehat{y}_\tau (\sigma \chi)}
\right).
\]
\end{definition}

We will show that the intersection $I(\theta_1) \cap I(\theta_2) \cap I(\theta_3)$ consists of four subarcs of these curves.

\begin{lemma}
\label{lemma::aneq0_but_c=0}
Suppose that $\theta_3 \in (0,2\pi) \setminus \{\theta_1,\theta_2\}$
satisfies $c_{22}'' \neq 0$, $a \neq 0$ but $c = 0$,
Then, there exists an open domain $\Omega \subset \R^2$ such that $\{W \le 0\} \subset \Omega$, $\{X=0\} \subset \Omega$ and the locus of $X = \dfrac{2b \cdot Y}{Y^2 - a}$ intersects the domain $\Omega$ in the concatenation of two curves in the form $\gamma_{\epsilon,\tau,\sigma}$.
\end{lemma}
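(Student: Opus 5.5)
The idea is to rewrite the equation $Q=0$ in the product variable $P\coloneqq XY$, in which the paths $\gamma_{\epsilon,\tau,\sigma}$ are naturally parametrised, and to take $\Omega$ to be the open region $\{|XY|<1\}$, whose boundary is controlled by Lemma~\ref{lemma::W_with_XY=1}.

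\textbf{Step 1: the factorisation.} Dividing $Q$ by $-c''_{22}\neq 0$ and using the definitions of $a,b,c$ gives
\[
-Q/c''_{22}=X^2Y^2-aX^2-2bXY-cY^2 .
\]
With $c=0$ this factors as $X\,(XY^2-aX-2bY)$. Hence the zero locus of $Q$ is the union of the axis $\{X=0\}$ and the curve $X=2bY/(Y^2-a)$. Since $c=0$, Lemma~\ref{lemma::a+2b+c=-1} gives $2b=-1-a$.

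\textbf{Step 2: the parametrisation in $P=XY$.} On the curve, write $P=XY$. The equation $P^2-aX^2-2bP=0$ becomes
\[
X^2=\frac{P(P-2b)}{a}.
\]
Now set $c=0$ in $\widehat x_\pm(\sigma\chi)$, where $\sigma\chi=P$. The square root equals $|\sigma\chi-2b|$, so one choice of $\tau$ gives $\widehat x_\tau(\sigma\chi)=P(P-2b)/a$ and the other gives $0$. Therefore every point of the curve with $X\neq0$ and $P\neq 2b$ is $\gamma_{\epsilon,\tau,\sigma}(\chi)$ with
\[
\epsilon=\operatorname{sgn}X,\qquad \sigma=\operatorname{sgn}P,\qquad \chi=|P|.
\]
Here $\tau$ is the nondegenerate choice determined by $\sigma$, $\operatorname{sgn}a$ and $\operatorname{sgn}(P-2b)$. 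The defining conditions of $J^x_{\epsilon,\tau,\sigma}$ hold exactly where $P(P-2b)/a>0$ and $P\ne 2b$.

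The curve passes through the origin, at $Y=0$ and $P=0$. Near the origin $P=-2bY^2/a+O(Y^4)$, so $\sigma$ is constant, equal to $\operatorname{sgn}(-b/a)$, while $\epsilon=\operatorname{sgn}X$ flips as $Y$ crosses $0$. So the branch through the origin is the concatenation of $\gamma_{+1,\tau,\sigma}$ and $\gamma_{-1,\tau,\sigma}$ glued at $\chi=0$. Along this branch the relation $P=2bY^2/(Y^2-a)$ shows that $|P|$ is monotone in $Y^2$, which justifies using $\chi$ as the parameter.

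\textbf{Step 3: the domain $\Omega$.} By Lemma~\ref{lemma::W_with_XY=1}, $W>0$ on $\{|XY|=1\}$. Since $W(0,0)=c_{00}<0$, the set $\{W\le0\}$, whose structure is described in Theorem~\ref{theorem::intersection_of_two}, lies in the open set $\{|XY|<1\}$. This set obviously contains the axis $\{X=0\}$, so it satisfies the two containment requirements of the lemma.

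\textbf{Main obstacle.} The difficulty is to make sure that, inside $\Omega$, the curve has no component other than the branch through the origin, and that this branch never reaches $P=2b$. I would split into cases by the sign of $a$.

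If $a<0$, then $Y^2-a>0$ everywhere. The curve is a single graph over $Y$, and $P$ moves monotonically from $0$ towards $2b$ without attaining it. So the whole intersection with $\Omega$ is the concatenated branch.

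If $a>0$, there are poles at $Y=\pm\sqrt a$. The components with $|Y|>\sqrt a$ satisfy $|P|>|2b|=|1+a|>1$, so they lie outside $\Omega$. The middle component $|Y|<\sqrt a$ has $P$ of sign opposite to $b$. It therefore avoids $2b$, and it leaves $\Omega$ through $|P|=1$.

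I expect to need a short direct computation with $2b=-1-a$, to check that these sign and magnitude claims hold in all subcases. This includes the borderline configurations, where I would first try to shrink $\Omega$ to the connected component of $\{|XY|<1\}\cap\{P\neq 2b\}$ containing the axes, and argue that it still contains $\{W\le0\}$.
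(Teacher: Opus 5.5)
Your argument is correct and is essentially the paper's. You parametrise the branch $X=2bY/(Y^2-a)$ by $P=XY$ to identify it with the curves $\gamma_{\pm1,\tau,\sigma}$, and split on the sign of $a$. For $a>0$ you use $a+2b=-1$ and Lemma~\ref{lemma::W_with_XY=1} to keep the outer branches $|Y|>\sqrt{a}$, where $|XY|>|2b|>1$, away from $\{W\le 0\}$.

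The only difference is your uniform choice $\Omega=\{|XY|<1\}$, whereas the paper takes $\Omega=\R^2$ when $a<0$. This makes the two pieces initial segments of the $\gamma_{\pm1,\tau,\sigma}$ rather than the whole curves, which is harmless because $\{W\le0\}\subset\{|XY|<1\}$.

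No borderline configuration needs extra care: $c_{11}''<0$ for $\theta_3\in(0,2\pi)$ forces $b\neq0$, hence $a\neq-1$, so the branch never degenerates. One minor slip: it is $Q/c_{22}''$, not $-Q/c_{22}''$, that equals $X^2Y^2-aX^2-2bXY-cY^2$.
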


\begin{lemma}
\label{lemma::cneq0_but_a=0}
Suppose that $\theta_3 \in (0,2\pi) \setminus \{\theta_1,\theta_2\}$
satisfies $c_{22}'' \neq 0$, $c \neq 0$ but $a = 0$.
Then, there exists an open domain $\Omega \subset \R^2$ such that $\{W\le 0\} \subset \Omega$, $\{Y=0\} \subset \omega$ and the locus of $Y = \dfrac{2b \cdot X}{X^2 - c}$ intersects the domain $\Omega$ in the concatenation of two curves in the form $\delta_{\epsilon,\tau,\sigma}$.
\end{lemma}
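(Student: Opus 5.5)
The plan is to mirror the proof of Lemma~\ref{lemma::aneq0_but_c=0}, exchanging the roles of $X$ and $Y$. Exchanging $X$ and $Y$ exchanges $\theta_1$ and $\theta_2$ in the coefficients, hence exchanges $a$ and $c$ and fixes $b$; the $\delta$-curves are exactly the images of the $\gamma$-curves under $(X,Y)\mapsto(Y,X)$.

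\textbf{Factor $Q$.} Since $c_{22}''\neq 0$, we have $Q=c_{22}''\,(X^2Y^2-aX^2-cY^2-2bXY)$. With $a=0$ this gives
\[
Q(X,Y)=c_{22}''\,Y\,\bigl(X^2Y-cY-2bX\bigr).
\]
So $\{Q=0\}$ is the union of the line $\{Y=0\}$ and the locus $Y(X^2-c)=2bX$, i.e.\ $Y=2bX/(X^2-c)$ away from $X^2=c$. By Lemma~\ref{lemma::a+2b+c=-1} with $a=0$ we get $2b=-1-c$, which fixes the sign of $b$ relative to $c$.

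\textbf{Identify the second component with $\delta$-curves.} Put $s\coloneqq XY$. The equation $X^2Y^2-cY^2-2bXY=0$ becomes $Y^2=s(s-2b)/c$. Write $s=\sigma\chi$ with $\chi\ge 0$ and $\sigma\in\{\pm1\}$. For $a=0$ the two roots $\widehat y_{\pm}(\sigma\chi)$ are $0$ and $\sigma\chi(\sigma\chi-2b)/c$, so the non-degenerate root is precisely our $Y^2$. Hence each point of the locus with $Y\neq 0$ has the form $\delta_{\epsilon,\tau,\sigma}(\chi)$, where $\epsilon=\sgn Y$ and $\tau$ selects the non-zero root. Note that $(\sigma\chi-2b)^2-4ac=(\sigma\chi-2b)^2$ is positive as long as $\sigma\chi\neq 2b$. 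The locus passes through the origin (at $\chi=0$). Near the origin it has the two branches $\sigma=\pm1$ (one for each sign of $XY$), each with a single $\epsilon$ determined by the sign of $Y$. These two pieces concatenate at $(0,0)$.

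\textbf{Choose $\Omega$.} Take $\Omega$ to be the connected component containing the origin of the open region $\{|XY|<1\}$, possibly shrunk as described below.
\begin{itemize}[label=--]
\item $\Omega\supset\{Y=0\}$ trivially.
\item $\Omega\supset\{W\le 0\}$: the set $\{W\le0\}$ is a star-shaped disk around the origin (proof of Theorem~\ref{theorem::intersection_of_two}, with $W(0,0)=c_{00}<0$), and $W>0$ on $\{|XY|=1\}$ by Lemma~\ref{lemma::W_with_XY=1}.
\item The poles $X^2=c$ (present when $c>0$) lie outside $\Omega$: approaching them forces $|Y|\to\infty$ with $X$ bounded away from $0$, so $|s|\to\infty$.
\end{itemize}
Consequently, inside $\Omega$ the locus is parametrised by $\chi\in[0,1)$ on each branch. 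It remains to check that this range lies in the maximal interval $J^y_{\epsilon,\tau,\sigma}$, i.e.\ that $\widehat y_\tau(\sigma\chi)>0$ for $0<\chi<1$.

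\textbf{Main obstacle.} The difficulty is the possible second zero of $Y^2$, at $s=2b=-(1+c)$. If $|1+c|<1$, the curve returns to the axis $Y=0$ inside $\{|XY|<1\}$ and the maximal interval stops before $1$. In that case I would shrink $\Omega$ to cut the curve at $|s|=|2b|$. Then I must show $\{W\le0\}$ still fits, i.e.\ that $W>0$ on the cutting level set. The first thing to try is the argument of Lemma~\ref{lemma::W_with_XY=1}, using Lemma~\ref{lemma::parameters-W} and the identity $c_{20}+c_{02}+2c_{11}=c_{22}+c_{00}$ to bound $W$ from below on $|XY|=|2b|$. If that bound is insufficient, I would instead use Lemma~\ref{lemma::fixed_point}: the point $p_\nabla$ lies on $\{Q=0\}$ with $W(p_\nabla)>0$, so cutting the curve at $p_\nabla$ separates it outside $\{W\le0\}$. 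Given the symmetry with Lemma~\ref{lemma::aneq0_but_c=0}, this sign bookkeeping should be identical after the swap $\theta_1\leftrightarrow\theta_2$.
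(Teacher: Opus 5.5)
Your overall route is the one the paper intends. The paper proves only the mirror Lemma~\ref{lemma::aneq0_but_c=0}: factor $Q=c_{22}''\,Y(X^2Y-cY-2bX)$, parametrise by $s=XY$, use $2b=-1-c$, and discard unwanted branches via Lemma~\ref{lemma::W_with_XY=1}. However, two steps are wrong as written.

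\textbf{The branch labels are wrong.} The two halves of the curve through the origin are distinguished by $\epsilon$, not by $\sigma$. The curve $Y(X^2-c)=2bX$ is tangent at the origin to $Y=-(2b/c)X$. Hence $XY\approx-(2b/c)X^2$ has the same sign $\sigma=-\sgn(bc)$ on both sides of the origin, while $\epsilon=\sgn Y$ flips. For the other sign $\sigma=\sgn(bc)$, the non-zero root is $\widehat y_\tau(\sigma\chi)=\chi(\chi-2|b|\sgn c)/c$. This is negative for $0<\chi<1$ (when $c>0$ use $2|b|=1+c>1$), so $J^y_{\epsilon,\tau,\sgn(bc)}=\{0\}$. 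Your final check ``$\widehat y_\tau(\sigma\chi)>0$ for $0<\chi<1$'' therefore fails on one of your two branches. The correct statement is that the locus is $\delta_{+1,\tau,\sigma}\cup\delta_{-1,\tau,\sigma}$, with a common $\tau$ and $\sigma=-\sgn(bc)$.

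\textbf{Your ``main obstacle'' does not exist.} Since $c_{11}''\neq0$ we have $b\neq0$, so on $Y(X^2-c)=2bX$ the coordinate $Y$ vanishes only at $X=0$; the curve never returns to the axis. Suppose $|2b|=|1+c|<1$, which forces $c<0$. As $\chi\uparrow|2b|$, $\widehat y_\tau\to0$ and $|X|=\chi/\sqrt{\widehat y_\tau}\to\infty$. So each half runs off to infinity asymptotically to the $X$-axis, and $\delta_{\epsilon,\tau,\sigma}$ on $J^y_{\epsilon,\tau,\sigma}=[0,|2b|)$ is already the entire half-curve. Nothing needs to be cut, and the inequality ``$W>0$ on $\{|XY|=|2b|\}$'', which you leave unproved, is not needed. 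Your claim that the locus in $\Omega$ is parametrised by $\chi\in[0,1)$ is also false in this case; the range is $[0,\min(1,|2b|))$.

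The clean argument splits on the sign of $c$, just as the paper splits on the sign of $a$:
\begin{itemize}
\item If $c<0$, then $X^2-c>0$, the locus is the two $\delta$-curves on $[0,|2b|)$, and one can take $\Omega=\R^2$.
\item If $c>0$, then $2b=-(1+c)<-1$. The branch $|X|<\sqrt c$ has $XY\ge0$ and is $\delta_{\pm1,\tau,+1}$ on $J^y=[0,\infty)$. The outer branches satisfy $XY=2b/(1-c/X^2)<2b<-1$, so they lie outside $\{|XY|<1\}\supset\{W\le0\}$ by Lemma~\ref{lemma::W_with_XY=1}.
\end{itemize}
This explicit bound, not your remark about neighbourhoods of the poles, is what removes the outer branches, since they also extend to $|X|\to\infty$.
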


We will prove only Lemma \ref{lemma::aneq0_but_c=0} but omit the proof of Lemma \ref{lemma::cneq0_but_a=0}, which is quite similar.

\begin{proof}[Proof of Lemma \ref{lemma::aneq0_but_c=0}]
Recall $\widehat{x}_{\pm}(\sigma\chi)
=
\frac{\sigma\chi}{2a} \left(
\left( \sigma\chi - 2b \right)
\pm
|\sigma\chi - 2b|
\right)$,
one of which, say $\widehat{x}_\tau(\sigma\chi) = \dfrac{\sigma\chi}{a} (\sigma\chi - 2b)$
while the other is $0$,
where $\tau = \sgn(\sigma \chi - 2b)$.
We first claim that the locus of $X = \dfrac{2b \cdot Y}{Y^2 - a}$
is contained within the union of the curves $\gamma_{\epsilon,\tau,\sigma}$ with
$\epsilon, \tau, \sigma \in \{\pm 1\}$.
To see this,
consider a point $(X,Y)$
on this locus.
Set $\chi = |XY|$ and $\sigma\chi = XY$.
Then $\sigma\chi = \dfrac{2b \cdot Y}{Y^2 - a} Y$
and consequently, we get $Y^2 = \dfrac{\sigma\chi a}{\sigma\chi - 2b}$
and $X^2 = \dfrac{\chi^2}{Y^2} = \widehat{x}_\tau(\sigma\chi)$
with $\tau = \sgn(XY - 2b)$.
Therefore, the point $(X,Y)$ lies on $\gamma_{\sgn(X),\tau,\sigma}$.

We consider the two cases for the sign of $a$.

\textbf{Case 1:} $a < 0$.
In this case, the dominator $Y^2 - a$ is always positive.
We have $\epsilon = \sgn(X) = \sgn(bY)$, $\sigma = \sgn(XY) = \sgn(b)$ and $\tau = \sgn(\sigma\chi - 2b) = \sgn(ab)$.
Thus, the locus of $X = \dfrac{2b \cdot Y}{Y^2 - a}$
is the concatenation of the curves
$\gamma_{\sgn(b),\sgn(b),\sgn(ab)}$ and $\gamma_{-\sgn(b),\sgn(b),\sgn(ab)}$.
The open domain $\Omega$ is defined to be the whole space $\R^2$.

\textbf{Case 2:} $a > 0$.
From Lemma \ref{lemma::a+2b+c=-1} (i.e. $a+2b+c=-1$) and $c=0$, we have $a + 2b = -1$, which implies $2b < -1$.
The locus over $|Y| < \sqrt{a}$ is the concatenation of the two curves $\gamma_{-\sgn(bY),-\sgn(b),-\sgn(ab)}$.
It suffices to show that the locus over $|Y| > \sqrt{a}$ lies outside the disk $W \le 0$.
For $Y > \sqrt{a}$, the product $XY = \dfrac{2b \cdot Y^2}{Y^2 - a} = \cfrac{2b}{1 - a/Y^2} < 0$ is strictly increasing, tending to $2b$ as $Y\rightarrow +\infty$.
By Lemma \ref{lemma::W_with_XY=1}, we conclude that $W \le 0$.
symmetric argument applies to $Y < -\sqrt{a}$.
\end{proof}

We therefore arrive at the following results.

\begin{theorem}
\label{theorem::intersection_of_three}
For every $\theta_3 \in [0,2\pi]$,
there exists an open domain $\Omega \subset \R^2$ such that $\{W\le 0\} \subset \Omega$
and the locus of $Q=0$ in $\Omega$ is homeomorphic to a crossing, i.e.,
the subset
\[
\left\{
(x,0), (0,y) ~\middle|~ |x| \le 1, |y| \le 1
\right\} 
\subset
\left\{
(x,y) \in \R^2 ~\middle|~ x^2 + y^2 \le 1
\right\}.
\]
More precisely, if $\theta_3$ satisfies $c_{22}'' \neq 0$ and $abc \neq 0$, then the locus of $Q(X,Y) = 0$ in $\Omega$ is the union of the underlying curves given in the following table.
\begin{center}
\begin{tabular}{c|c|c|c|c}
\hline
$a$ & $b$ & $c$ & underlying curves & domain of $\chi$ \\
\hline
$>0$ &  & $<0$ & $\gamma_{\epsilon,+1,+1} \cup \gamma_{\epsilon,+1,-1}$ & $J^x_{\epsilon,+1,+1} = J^x_{\epsilon,+1,-1} = [0,\infty)$ \\
$<0$ &  & $>0$ & $\gamma_{\epsilon,-1,+1} \cup \gamma_{\epsilon,-1,-1}$ & $J^x_{\epsilon,-1,+1} = J^x_{\epsilon,-1,-1} = [0,\infty)$ \\
$>0$ & $<0$ & $>0$ & $\gamma_{\epsilon,+1,+1} \cup \gamma_{\epsilon,-1,+1}$ & $J^x_{\epsilon,+1,+1} = J^x_{\epsilon,-1,+1} = [0,\infty)$ \\
$>0$ & $>0$ & $>0$ & $\emptyset$ & \\
$<0$ & $>0$ & $<0$ & $\gamma_{\epsilon,+1,+1} \cup \gamma_{\epsilon,-1,+1}$ & $J^x_{\epsilon,+1,+1} = J^x_{\epsilon,-1,+1} = [0,\lambda_1)$ \\
$<0$ & $<0$ & $<0$ & $\gamma_{\epsilon,+1,-1} \cup \gamma_{\epsilon,-1,-1}$ & $J^x_{\epsilon,+1,-1} = J^x_{\epsilon,-1,-1} = [0,-\lambda_2)$ \\
\hline
\end{tabular}
\end{center}
where $\epsilon \in \{\pm 1\}$, $\lambda_1 \coloneqq 2b-2\sqrt{ac}$
and $\lambda_2 \coloneqq 2b + 2\sqrt{ac}$. 
\end{theorem}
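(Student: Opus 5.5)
The plan is to solve $Q(X,Y)=0$ through the product $P \coloneqq XY$. Dividing $Q=0$ by $-c_{22}''\neq 0$ gives
\[
P^2 = aX^2 + 2bP + cY^2 .
\]
Fix $P=\sigma\chi$ and substitute $Y=\sigma\chi/X$. This gives a quadratic in $X^2$:
\[
a X^4 - \sigma\chi(\sigma\chi-2b)X^2 + c\chi^2 = 0,
\]
whose roots are exactly $\widehat{x}_\pm(\sigma\chi)$. The real points of $\{Q=0\}$ with prescribed $XY=\sigma\chi$ are therefore the points $\gamma_{\epsilon,\tau,\sigma}(\chi)$, over those $\tau$ with discriminant $(\sigma\chi-2b)^2-4ac\ge 0$ and $\widehat{x}_\tau(\sigma\chi)>0$. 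The same computation with the roles of $X$ and $Y$ swapped produces the curves $\delta$.

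\textbf{Generic case} ($c_{22}''\neq 0$, $abc\neq 0$). I would go through the sign patterns of $(a,b,c)$ in the table. The product of the two roots is $c\chi^2/a$, and their sum is $\sigma\chi(\sigma\chi-2b)/a$.
\begin{itemize}
\item If $ac<0$, the discriminant is always positive and exactly one root is positive for every $\chi>0$. This gives the two branches $\sigma=\pm1$ on $[0,\infty)$, joined through the origin: two arcs crossing at $0$.
\item If $ac>0$, both roots have the same sign, namely that of $\sigma(\sigma\chi-2b)/a$. The discriminant vanishes at $\chi=|2b\pm2\sqrt{ac}|$. By Lemma \ref{lemma::a+2b+c=-1} we have $b^2>ac$, so $\lambda_1,\lambda_2$ are real and of one sign. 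This yields the intervals $[0,\lambda_1)$ and $[0,-\lambda_2)$ of the table, or the empty set when $a,b,c>0$; in that last case the only solution is the origin.
\end{itemize}
In each row the $\epsilon=\pm1$ copies give the four half-branches emanating from $(0,0)$. Two of them are tangent to $X=0$ and two to $Y=0$, as the leading behaviour $\widehat{x}_\tau\sim\chi$ or $\sim\chi^2$ shows. Together they form a crossing.

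\textbf{Truncation.} For the bounded intervals, the branch with parameter $\chi\to\lambda$ runs off to infinity or along the degenerate locus. Here I would use Lemma \ref{lemma::W_with_XY=1}: since $W>0$ on $|XY|=1$, and the relation $a+2b+c=-1$ locates $\lambda_{1,2}$ relative to $\pm1$, I define $\Omega$ to be the connected component of $\{|XY|<1\}$ or of $\{|XY|<\mu\}$, for a suitable $\mu$, containing $\{W\le0\}$. Inside $\Omega$ only the four arcs from the origin remain, each exiting $\Omega$ exactly once. When $\Omega=\R^2$ (the unbounded rows), the crossing is the whole locus, compactified appropriately.

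\textbf{Remaining cases.} The cases $a=0$ or $c=0$ are Lemmas \ref{lemma::aneq0_but_c=0} and \ref{lemma::cneq0_but_a=0}. The case $a=c=0$ gives $P(P-2b)=0$, that is, the two axes together with a hyperbola $XY=2b$; since $2b=-1$ there, Lemma \ref{lemma::W_with_XY=1} excludes the hyperbola. The case $c_{22}''=0$ makes $Q$ a quadratic form, indefinite by Lemma \ref{lemma::parameters-Q}, so its zero set is two lines through the origin: a crossing once intersected with a disk-like $\Omega$. Finally $\theta_3\in\{0,2\pi\}$ gives $c_{11}''=0$ and reduces to one of these cases.

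The main obstacle is the truncation step for rows with $ac>0$, $b$ of the same sign. There I must show that the far part of the branch (beyond the fold at $\chi=|\lambda|$, where the two $\tau$-branches meet) lies in $\{W>0\}$. I would try to prove $|\lambda_1|\ge1$ (respectively $|\lambda_2|\ge1$) from $a+2b+c=-1$ together with $b^2>ac$, so that the cut at $|XY|=1$ separates the fold from $\{W\le0\}$. If that inequality fails, I would instead bound $W$ from below along the fold using the explicit coefficients of Lemma \ref{lemma::parameters-W}.
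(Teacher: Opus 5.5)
Your reduction via $P=XY$ to the quadratic $aX^4-\sigma\chi(\sigma\chi-2b)X^2+c\chi^2=0$ follows the paper, as does your handling of the degenerate cases ($c_{22}''=0$; $a=0$ or $c=0$; $a=c=0$ with $2b=-1$). The generic case, however, has two genuine gaps.

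\textbf{The row $a,b,c>0$.} The $\emptyset$ in the table does not mean the locus reduces to the origin. By your own sign rule, for $\sigma=-1$ both roots $\widehat{x}_\pm(-\chi)$ are positive. The discriminant $(\chi+2b)^2-4ac$ exceeds $4(b^2-ac)>0$. So $\{Q=0\}$ would contain whole branches in $\{XY<0\}$, and further ones where $XY\ge\lambda_2$. Your reading would also contradict the theorem itself, since a single point is not a crossing. What the row records is that this sign pattern never occurs, and you must prove that from the explicit coefficients. Since $c_{11}''<0$, $b>0$ forces $c_{22}''>0$, while $a,c>0$ force $c_{20}''<0$ and $c_{02}''<0$. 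The resulting sign conditions on $(\theta_3-\theta_1-\theta_2)/2$, $(\theta_1-\theta_2+\theta_3)/2$ and $(\theta_2-\theta_1+\theta_3)/2$ are incompatible with $0<\theta_1<\theta_2<2\pi$; this is the paper's Case 3.3.

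\textbf{Truncation in the rows with $a<0$, $c<0$.} Here, for each $\epsilon$, the branches $\tau=\pm1$ meet at the fold. So $\{Q=0\}$ consists of two closed loops through the origin, and you need each loop to leave $\{W\le 0\}$.

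Your planned inequality $|\lambda_1|\ge 1$ (resp. $|\lambda_2|\ge1$) does not follow from $a+2b+c=-1$ and $b^2>ac$, and it is in fact false. Substituting $2b=-1-a-c$ gives $\lambda_1=-1+(\sqrt{|a|}-\sqrt{|c|})^2$ and $\lambda_2=-1+(\sqrt{|a|}+\sqrt{|c|})^2$.
\begin{itemize}
\item In the row $a,b,c<0$ one has $|a|+2|b|+|c|=1$, hence always $\lambda_2\in(-1,0)$. The whole loop then lies inside $\{|XY|<1\}$, and your cut removes nothing.
\item In the row $a<0<b$, $c<0$, $\lambda_1$ can also lie in $(0,1)$. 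For $\theta_1=0.1$, $\theta_2=0.5$, $\theta_3=2\pi-0.41$ one finds $\lambda_1\approx 0.21$.
\end{itemize}
Your fallback of bounding $W$ along the fold is not an argument.

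The missing idea is the point $p_\nabla=(\tan(\theta_1/2),\tan(\theta_2/2))$ of Lemma~\ref{lemma::fixed_point}. It lies on $\{Q=0\}$ for every $\theta_3$ and satisfies $W(p_\nabla)>0$; the paper checks that $\theta_1,\theta_2\neq\pi$ in these rows, so it is defined. Since $p_\nabla\neq 0$, its product $X_\nabla Y_\nabla$ must lie in the admissible range $(0,\lambda_1]$, resp. $[\lambda_2,0)$. Because $Q$ and $W$ are even, $p_\nabla$ and $-p_\nabla$ are then points of the two loops lying outside the disk. This is the witness the paper uses in Cases 3.4 and 3.5. Together with the intersection count of Proposition~\ref{proposition::Q=0_and_W=0}, it cuts each loop down to two arcs from the origin.

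Your $|XY|<1$ cut does suffice in the row $a,c>0$, $b<0$, where $\lambda_1<2b=-1-a-c<-1$.
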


\begin{proposition}
\label{proposition::Q=0_and_W=0}
For every $\theta_3 \in [0, 2\pi]$, the locus $Q=0$ intersects $W=0$ in four points.
\end{proposition}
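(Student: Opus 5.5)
We combine the algebraic forms of $Q$ and $W$ with the crossing structure of Theorem~\ref{theorem::intersection_of_three}. By that theorem, inside an open domain $\Omega \supset \{W \le 0\}$ the locus $\{Q=0\}\cap\Omega$ is a crossing centred at the origin. Note that $(0,0)$ satisfies $Q(0,0)=0$ and $W(0,0)=c_{00}<0$. The crossing consists of four arcs emanating from the origin; each arc is one of the curves $\gamma_{\epsilon,\tau,\sigma}$ or $\delta_{\epsilon,\tau,\sigma}$ restricted to its interval $J$. Since $\{W \le 0\}$ is a closed topological disk containing the origin in its interior (Theorem~\ref{theorem::intersection_of_two}), it suffices to show two things. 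First, each of the four arcs leaves $\{W\le 0\}$. Second, each arc crosses the level set $\{W=0\}$ exactly once. Four arcs then give four points.

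\textbf{Exit step.} Along each arc we track the function $\chi = |XY|$ together with one coordinate.
\begin{itemize}
\item For arcs with $J=[0,\infty)$: as $\chi\to\infty$ the point goes to infinity in $\R^2$, while $\{W\le0\}$ is compact (because $c_{22},c_{20},c_{02}>0$ by Lemma~\ref{lemma::parameters-W}). So the arc exits.
\item For arcs with $J=[0,\lambda_1)$ or $[0,-\lambda_2)$: at the endpoint the discriminant vanishes, and the arc either escapes to infinity or passes through a point where $|XY|$ reaches $|\lambda_i|$. Here we would use $a+2b+c=-1$ (Lemma~\ref{lemma::a+2b+c=-1}) to see that $XY$ crosses the value $\pm1$ along the arc. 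Lemma~\ref{lemma::W_with_XY=1} gives $W>0$ whenever $|XY|=1$, so the arc has already left the disk.
\item Alternatively, when it applies, we use the point $p_\nabla$ of Lemma~\ref{lemma::fixed_point}. It lies on $Q=0$ with $W(p_\nabla)>0$, and it sits on one branch.
\end{itemize}
The degenerate cases $c_{22}''=0$ or $abc=0$ are handled through Lemmas~\ref{lemma::aneq0_but_c=0} and \ref{lemma::cneq0_but_a=0}, where the same $XY$-monotonicity argument applies.

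\textbf{Uniqueness step (main obstacle).} The key is to parametrise each branch by $s=XY=\sigma\chi$. On the branch, $X^2=\widehat{x}_\tau(s)$ and $Y^2=s^2/X^2$, so $W$ restricted to the branch becomes
\[
w(s)=c_{22}s^2+c_{20}\widehat{x}_\tau(s)+c_{02}\,s^2/\widehat{x}_\tau(s)+2c_{11}s+c_{00}.
\]
It is enough to show that $w$ is strictly increasing in $|s|$ on the portion of the branch where $w\le 0$. This would use the radial-monotonicity argument from the proof of Theorem~\ref{theorem::intersection_of_two}, namely that the quartic term and the positive definite quadratic form both grow. In addition we need the fact that, along a branch of $Q=0$, the radius $X^2+Y^2$ grows with $|s|$.

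If this monotonicity is hard to establish directly, the fallback is algebraic. Eliminate $X$ between $Q=0$ and $W=0$: since both are biquadratic in $(X,Y)$ with only even monomials and the $XY$ term, subtracting a suitable multiple of $Q$ from $W$ leaves a relation linear in $XY$ plus a constant. Specifically, $c_{22}''W - c_{22}Q$ kills the $X^2Y^2$ term and yields a quadratic form in $(X,Y)$ equal to a constant. Intersecting this conic with $Q=0$ gives a resultant of degree at most $8$ in $Y$, with solutions symmetric under $(X,Y)\mapsto(-X,-Y)$. Counting its real roots inside $\Omega$ and combining with the exit step, which gives at least four intersection points, we expect to pin the count to exactly four. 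This is the step we expect to require the most case-checking, particularly the sign analysis in the rows of the table with $a,c<0$.
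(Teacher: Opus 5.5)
\textbf{The uniqueness step has a genuine gap.} Your existence half (each of the four arcs leaves $\{W\le 0\}$) is essentially what the proof of Theorem~\ref{theorem::intersection_of_three} already shows. The $|XY|=1$ route you describe for it cannot work in the bounded cases, though. When $a,b,c<0$, Lemma~\ref{lemma::a+2b+c=-1} gives $2b=-1+|a|+|c|$, hence $|\lambda_2|=1-(\sqrt{|a|}+\sqrt{|c|})^2<1$. So $XY$ never reaches $-1$ on the arc, and $p_\nabla$ is indispensable there.

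The real problem is uniqueness, which is the whole content of the proposition.
\begin{itemize}
\item Radial monotonicity of $W$ says nothing about a curved branch of $Q=0$.
\item Adding ``$X^2+Y^2$ increases along the branch'' does not help. The set $\{W\le 0\}$ is only star-shaped about the origin, so a curve of increasing radius can leave it and re-enter it.
\item The term-by-term growth you invoke is false. In $w(s)$ the summand $2c_{11}s$ is strictly decreasing for $s>0$ because $c_{11}<0$. The sum $c_{20}\widehat{x}_\tau(s)+c_{02}s^2/\widehat{x}_\tau(s)$ has no evident monotonicity.
\item Your fallback only bounds the number of intersection points by $8$. Each arc crosses $W=0$ an odd number of times (with multiplicity), and the arcs come in two antipodal pairs. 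A bound of $8$ therefore still permits $2\cdot 1+2\cdot 3=8$ points, so ``we expect to pin the count to four'' is exactly the missing step.
\end{itemize}

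\textbf{The missing idea is the slope parametrisation used in the paper.} Parametrise $\{Q=0\}\setminus\{(0,0)\}$, where $XY\neq 0$, by the slope $m=Y/X$ rather than by $XY$.
\begin{itemize}
\item For fixed $m$, the equation $Q(X,mX)=0$ determines $X^2=-(c_{20}''+2c_{11}''m+c_{02}''m^2)/(c_{22}''m^2)$ uniquely. Hence each arc maps injectively onto an interval of slopes, and the two non-antipodal arcs give disjoint intervals.
\item Substituting into $m^2W(X,mX)$ gives a quartic polynomial $\heartsuit(m)$. Its roots with $X^2>0$ are exactly the antipodal pairs of points of $\{Q=0\}\cap\{W=0\}$.
\item The identities $c_{20}''+c_{02}''+2c_{11}''=c_{22}''$ and $c_{20}+c_{02}+2c_{11}=c_{22}+c_{00}$ give $\heartsuit(1)=0$. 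This root is spurious, since it corresponds to $X^2=-1$. In your Bezout language, $(X,Y)=\pm(i,i)$ are always non-real common zeros of $Q$ and $W$, and this is what cuts $8$ down to $6$.
\item $\heartsuit$ is negative at the origin end of each slope interval and positive beyond the exit point. So each interval contains an odd number of roots. Only three roots are available, so each interval contains exactly one, giving exactly four points.
\end{itemize}

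The degenerate cases also need their own one-variable argument. Radial monotonicity suffices only when the locus consists of lines through the origin. For the branch $Y=2bX/(X^2-c)$ the paper reduces to a quadratic in $X^2$ with a sign change at the endpoints. Your ``same $XY$-monotonicity argument'' does not supply this.
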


\begin{corollary}
\label{corollary::intersection_of_three}
For every $\theta_3 \not\in \{\theta_1, \theta_2\}$,
the intersection $I(\theta_1) \cap I(\theta_2) \cap I(\theta_3) \subset I(\theta_1) \cap I(\theta_2)$
is homeomorphic to a crossing.
\end{corollary}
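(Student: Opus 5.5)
We will deduce Corollary~\ref{corollary::intersection_of_three} from Corollary~\ref{corollary::Q(X,Y)}, Theorem~\ref{theorem::intersection_of_three} and Proposition~\ref{proposition::Q=0_and_W=0}.

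\textbf{Reduction to the disk $\{W\le 0\}$.} Corollary~\ref{corollary::Q(X,Y)} identifies $I(\theta_1)\cap I(\theta_2)\cap I(\theta_3)$, inside the disk $I(\theta_1)\cap I(\theta_2)$, with $\{Q=0\}\cap\{W\le 0\}$ in the coordinates $(X,Y)=(\cot(\psi_1/2),\cot(\psi_2/2))$. This uses the facts that $\psi_j\neq 0$ on the intersection (the lemma preceding Proposition~\ref{proposition::intersection-of-two-isometric-spheres}) and that the map $(\psi_1,\psi_2)\mapsto(X,Y)$ is a homeomorphism $(0,2\pi)^2\to\R^2$. By Theorem~\ref{theorem::intersection_of_two}, $\{W\le 0\}$ is a closed topological disk $\Delta$ whose boundary is the circle $\{W=0\}$. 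Moreover, the proof of that theorem shows that $\Delta$ is star-shaped with respect to the origin, since $W$ is strictly increasing along rays.

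\textbf{Position of the crossing relative to $\Delta$.} Theorem~\ref{theorem::intersection_of_three} provides an open domain $\Omega\supset\Delta$ in which $\{Q=0\}$ is a crossing $K$, i.e.\ four arcs issuing from a common centre. The centre is the origin, since $Q(0,0)=0$ because $Q$ has no constant term. Each arc runs from the origin to the frontier of $\Omega$ (or to infinity). The origin lies in the interior of $\Delta$, because $W(0,0)=c_{00}<0$. Each arc therefore starts inside $\Delta$ and eventually leaves it. Proposition~\ref{proposition::Q=0_and_W=0} says that $\{Q=0\}$ meets $\partial\Delta$ in exactly four points. Hence each of the four arcs meets $\partial\Delta$ exactly once: it meets it at least once by connectedness, and the total count is four.

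The key point is that each arc crosses $\partial\Delta$ only once and does not re-enter $\Delta$. This follows because a re-entry would produce a second boundary point on the same arc, contradicting the count of four. Consequently $K\cap\Delta$ consists of the origin together with four initial subarcs, each ending at a distinct point of $\partial\Delta$. This set is homeomorphic to the standard crossing $\{(x,0),(0,y)\mid |x|\le1,|y|\le1\}$: send the four arcs parametrically onto the four half-segments, with the origin going to the origin.

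\textbf{Main obstacle.} The delicate step is transversality, namely making sure each arc genuinely leaves $\Delta$ at its boundary point rather than touching $\partial\Delta$ tangentially from inside. A tangential touch would still be counted once among the four points, but the arc could then stay in $\Delta$ beyond it. I would handle this as follows. An arc cannot remain in the compact set $\Delta$ until it reaches $\partial\Omega$ or infinity, since $\Delta\subset\Omega$ and $\Delta$ is compact, so it must exit $\Delta$ somewhere. If the only boundary point on the arc were a tangency, the arc would have no exit point, which is impossible. So that unique point is an exit, and the arc lies outside $\Delta$ afterwards.

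\textbf{Degenerate values of $\theta_3$.} The cases $c_{22}''=0$ or $abc=0$ are covered by the first sentence of Theorem~\ref{theorem::intersection_of_three}, with Lemmas~\ref{lemma::aneq0_but_c=0} and~\ref{lemma::cneq0_but_a=0}. The argument above uses only the crossing structure in $\Omega$ and the four-point count, so it applies uniformly to all $\theta_3\notin\{\theta_1,\theta_2\}$.
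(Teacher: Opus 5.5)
Your proposal is correct and is essentially the paper's own argument: the paper deduces the corollary directly from Theorem~\ref{theorem::intersection_of_three} and Proposition~\ref{proposition::Q=0_and_W=0}. Your counting and intermediate-value step (each of the four arms starts at the origin, where $W=c_{00}<0$, must leave the compact disk $\{W\le 0\}\subset\Omega$, and so crosses $\{W=0\}$ exactly once without re-entering) just makes explicit what the paper calls immediate. One small slip: $Q(0,0)=0$ only puts the origin on the locus, not at the centre of the crossing; that comes from the explicit curves in Theorem~\ref{theorem::intersection_of_three}, all of which emanate from $(0,0)$, or from the origin being a node of $\{Q=0\}$, since $(c_{11}'')^2-c_{20}''c_{02}''>0$ by Lemma~\ref{lemma::parameters-Q}.
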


\begin{proof}
This corollary follows immediately from Theorem 
\ref{theorem::intersection_of_three}
and Proposition \ref{proposition::Q=0_and_W=0}.
\end{proof}

Using the parameters $(\psi_1, \psi_2) \in (0, 2\pi)^2$ for the circle $W=0$ and the locus $Q=0$, we depict various configurations of $(\theta_1, \theta_2, \theta_3)$ in Figure \ref{figure::crossings}.
These figures also demonstrate the intersection of ${W \le 0}$ and ${Q = 0}$, which is a crossing as claimed in Corollary \ref{corollary::intersection_of_three}.

\begin{figure}[htb]
 \centering
 \begin{subfigure}[b]{0.3\textwidth}
  \centering
  \includegraphics[width=\textwidth]{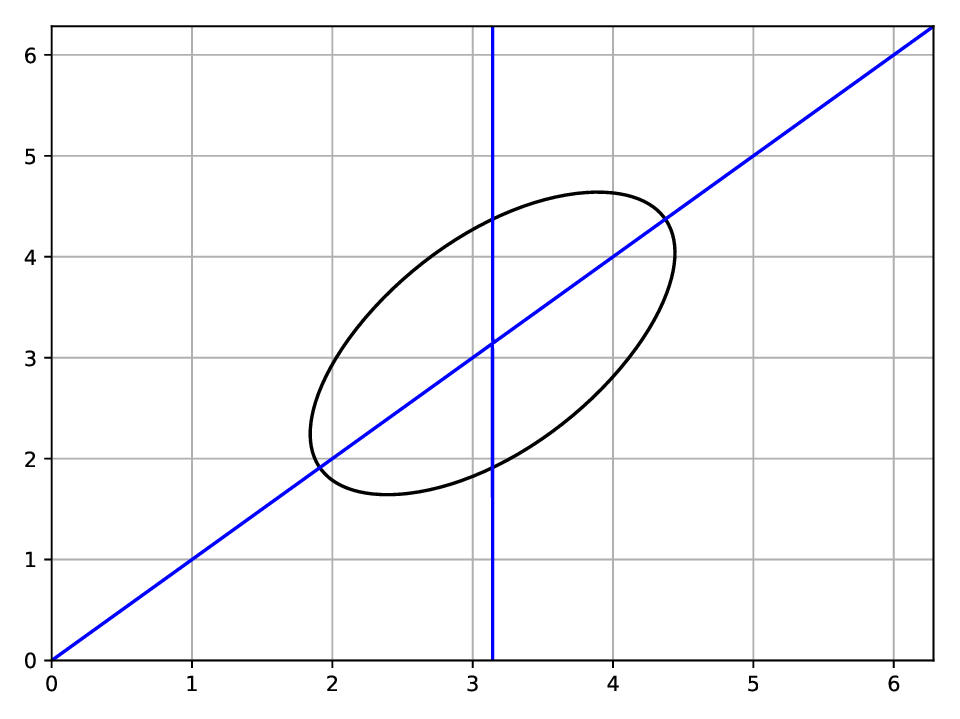}
  \caption{$(\theta_1,\theta_2,\theta_3) = \left(\dfrac{\pi}{2}, \pi, \dfrac{3\pi}{2}\right)$}
 \end{subfigure}
 \hfill
 \begin{subfigure}[b]{0.3\textwidth}
  \centering
  \includegraphics[width=\textwidth]{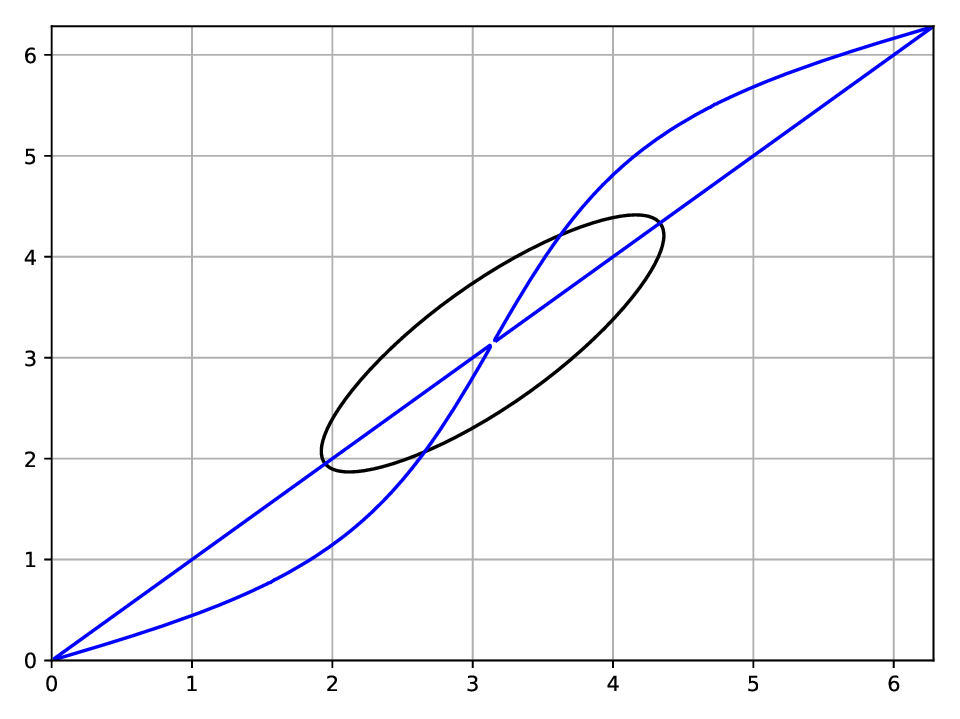}
  \caption{$(\theta_1,\theta_2,\theta_3) = \left(\dfrac{\pi}{2}, \dfrac{3\pi}{4}, \dfrac{5\pi}{4}\right)$}
 \end{subfigure}
 \hfill
 \begin{subfigure}[b]{0.3\textwidth}
  \centering
  \includegraphics[width=\textwidth]{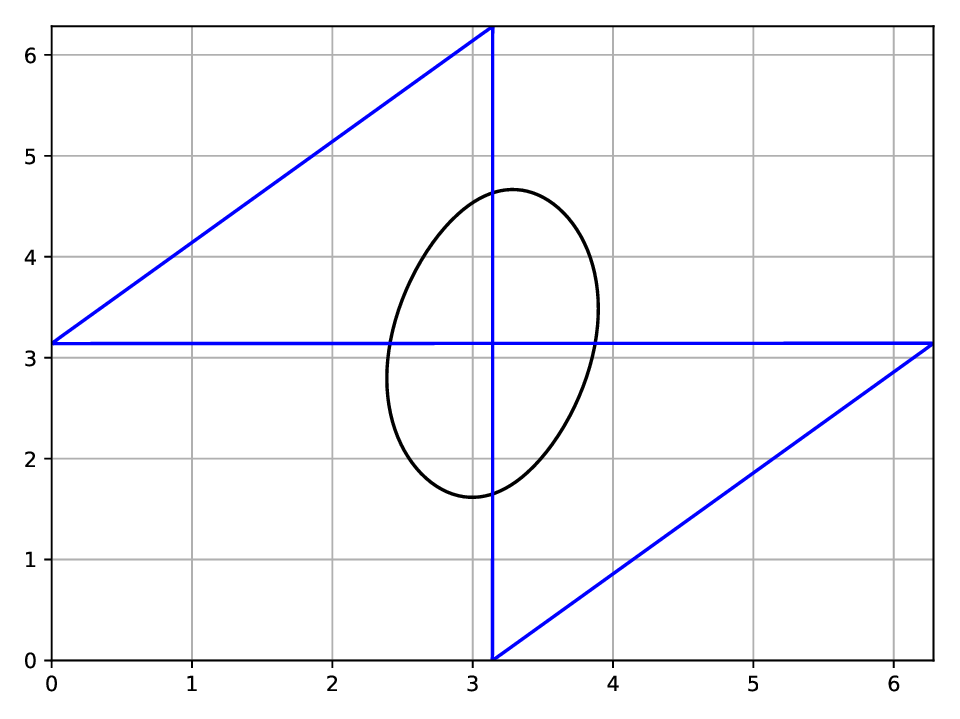}
  \caption{$(\theta_1,\theta_2,\theta_3) = \left(\dfrac{\pi}{4}, \dfrac{5\pi}{4}, \pi\right)$}
 \end{subfigure}
 \\
 \vspace{0.5cm}
 \begin{subfigure}[b]{0.3\textwidth}
  \centering
  \includegraphics[width=\textwidth]{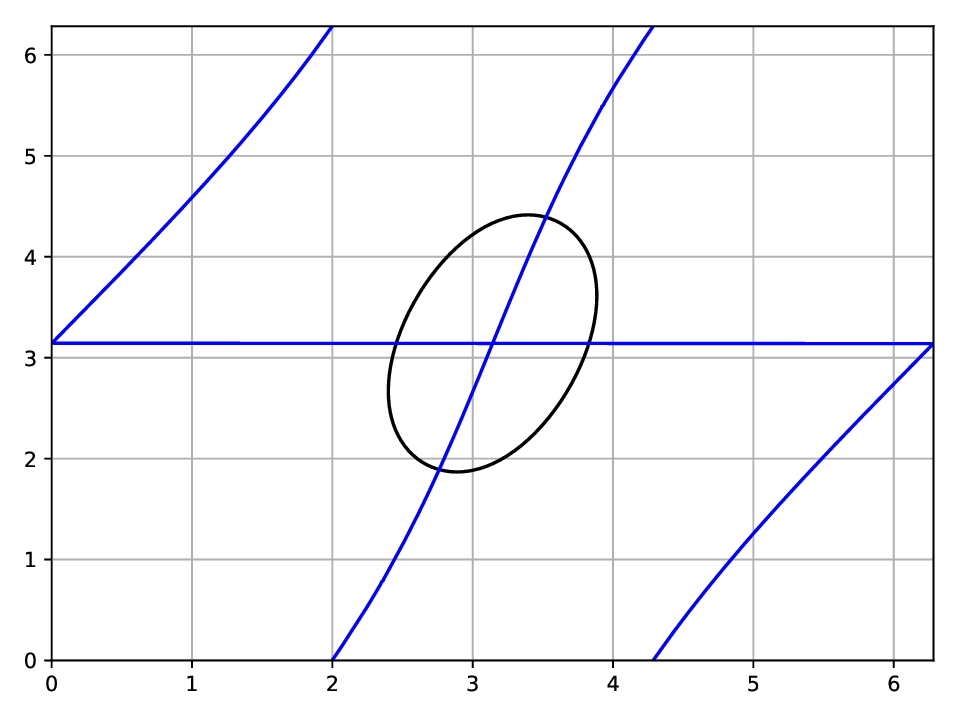}
  \caption{$(\theta_1,\theta_2,\theta_3) = \left(\dfrac{\pi}{4}, \dfrac{3\pi}{4}, \dfrac{\pi}{2}\right)$}
 \end{subfigure}
 \hfill
 \begin{subfigure}[b]{0.3\textwidth}
  \centering
  \includegraphics[width=\textwidth]{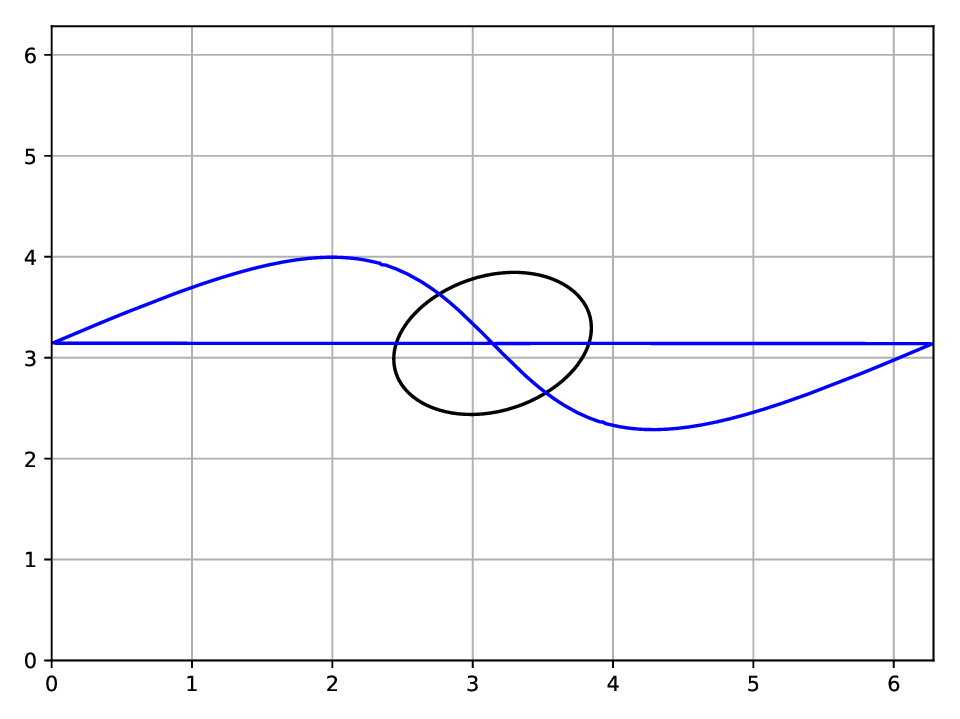}
  \caption{$(\theta_1,\theta_2,\theta_3) = \left(\dfrac{\pi}{4}, \dfrac{7\pi}{4}, \dfrac{3\pi}{2}\right)$}
 \end{subfigure}
 \hfill
 \begin{subfigure}[b]{0.3\textwidth}
  \centering
  \includegraphics[width=\textwidth]{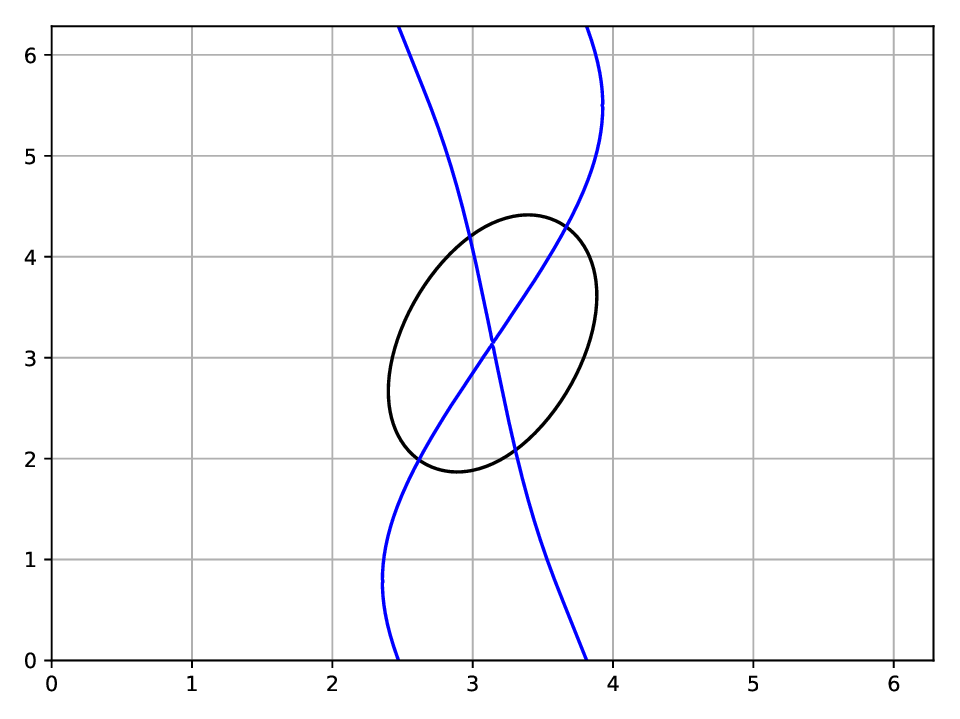}
  \caption{$(\theta_1,\theta_2,\theta_3) = \left(\dfrac{\pi}{4}, \dfrac{3\pi}{4}, \dfrac{7\pi}{4}\right)$}
 \end{subfigure}
 \\
 \vspace{0.5cm}
 \begin{subfigure}[b]{0.3\textwidth}
  \centering
  \includegraphics[width=\textwidth]{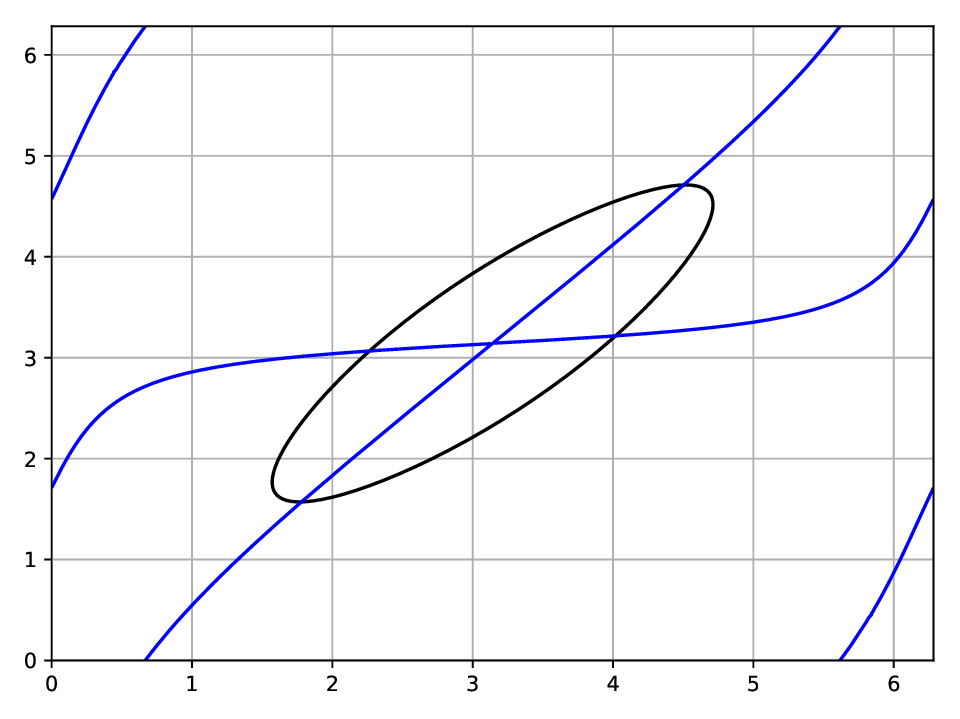}
  \caption{$(\theta_1,\theta_2,\theta_3) = \left(\dfrac{6\pi}{7}, \dfrac{8\pi}{7}, \dfrac{\pi}{3}\right)$}
 \end{subfigure}
 \hfill
 \begin{subfigure}[b]{0.3\textwidth}
  \centering
  \includegraphics[width=\textwidth]{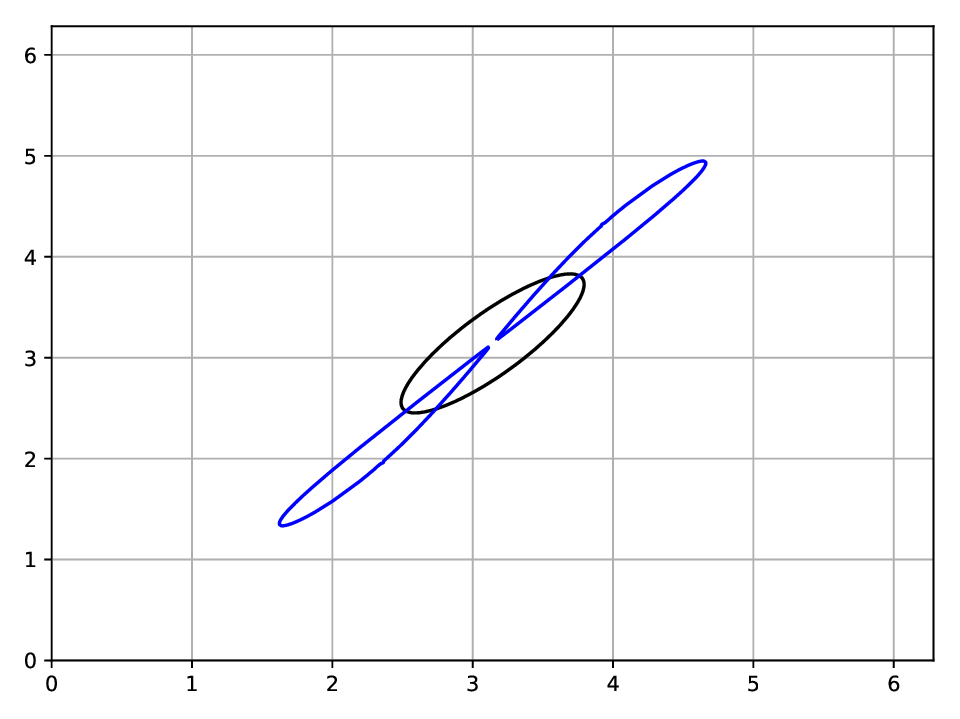}
  \caption{$(\theta_1,\theta_2,\theta_3) = \left(\dfrac{\pi}{4}, \dfrac{3\pi}{8}, \dfrac{3\pi}{4}\right)$}
 \end{subfigure}
 \hfill
 \begin{subfigure}[b]{0.3\textwidth}
  \centering
  \includegraphics[width=\textwidth]{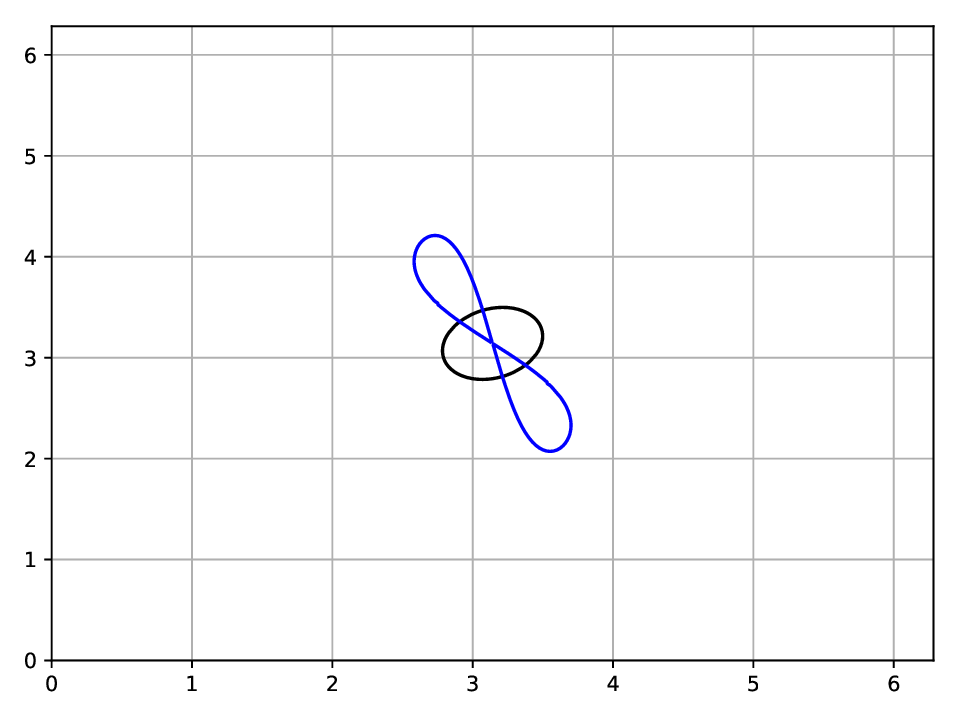}
  \caption{$(\theta_1,\theta_2,\theta_3) = \left(\dfrac{\pi}{8}, \dfrac{15\pi}{8}, \dfrac{3\pi}{8}\right)$}
 \end{subfigure}
 \caption{We list nine different selections of $(\theta_1, \theta_2, \theta_3)$. In the corresponding figures, parameterised by $(\psi_1, \psi_2) \in (0, 2\pi)^2$ (with $\psi_1$ on the horizontal axis and $\psi_2$ on the vertical axis), the intersection $I(\theta_1) \cap I(\theta_2)$ is a disk bounded by the provided circle. Although the locus of $Q(X,Y)=0$ varies significantly across configurations, where $X=\cot(\psi_1/2)$ and $Y=\cot(\psi_2/2)$, it always forms a crossing intersection with the disk $W(X,Y) \le 0$. Integer grids are also provided in each figure.}
 \label{figure::crossings}
\end{figure}

\begin{proof}[Proof of Theorem \ref{theorem::intersection_of_three}]
We consider the following cases.

\textbf{Case 1:} $c_{22}'' = 0$.
In this case, either $\theta_3 - (\theta_1 + \theta_2) = 0$ or $\theta_3 - (\theta_1 + \theta_2) = -2\pi$.

Suppose that $\theta_3 = \theta_1 + \theta_2$.
Since $c_{20}'' = \sin^2{(\theta_2/2)} \sin{\left( \theta_1 \right)}$ and
$c_{02}'' = \sin^2{(\theta_1/2)} \sin{\left( \theta_2 \right)}$,
at least one of $c_{20}''$, $c_{02}''$ is non-zero.
If $c_{20}''=0$ but $c_{02}''\neq 0$,
then $Q(X,Y) = Y (c_{02}''Y + 2 c_{11}''X)$ and therefore the locus of $Q=0$ is given by $\{Y = 0\} \cup \left\{X = - \dfrac{c_{02}''}{2c_{11}''} Y\right\}$.
If $c_{02}''=0$ but $c_{20}'' \neq 0$, then
$Q(X,Y) = X (c_{20}''X + 2c_{11}''Y)$
and therefore the locus of $Q=0$ is given by $\{X = 0\} \cup \left\{ X = -\dfrac{2c_{11}''}{c_{20}''} Y \right\}$;
see Figure \ref{figure::crossings} - (a) for an example.
Otherwise, we suppose that $c_{20}'' c_{02}'' \neq 0$.
Then $Q(X,Y) = c_{20}''X^2 + c_{02}''Y^2 + 2c_{11}''XY$,
which implies that
\[
Y = \frac{-c_{11}'' \pm \sqrt{ c_{11}''^2 - c_{20}'' c_{02}''}}{c_{02}''} X;
\]
see Figure \ref{figure::crossings} - (b) for an example.
We conclude that the
locus of $Q=0$ is given by the union of two distinct lines through $(X,Y) = (0,0)$, which is a crossing.
The open domain is then defined to be $\Omega=\R^2$.

The case that $\theta_3 = (\theta_1 + \theta_2) - 2\pi$ has the same conclusion.

\textbf{Case 2:} $c_{22}'' \neq 0$ but $c_{20}'' c_{02}'' = 0$.

Suppose that $c_{20}'' = c_{02}'' = 0$.
We get $\sin{\left(\dfrac{\theta_1 - \theta_2 + \theta_3}{2}\right)} = \sin{\left(\dfrac{\theta_2 - \theta_1 + \theta_3}{2}\right)} = 0$
and thus $\theta_3 = \pi$.
Therefore $Q(X,Y) = XY (c_{22}'' XY + 2 c_{11}'')$
and hence the locus
is given by $\{X=0\} \cup \{Y=0\} \cup \{XY = -2 c_{11}'' / c_{22}''\}$.
As $b = -2 c_{11}'' / c_{22}$ and $a = c = 0$, by Lemma \ref{lemma::a+2b+c=-1} and Lemma \ref{lemma::W_with_XY=1},
the desired domain $\Omega$ is obtained from $\R^2$ by removing regions bounded by the third branch.
See Figure \ref{figure::crossings} - (c) for an example.

Suppose that $c_{20}'' = 0$
but $c_{02}'' \neq 0$.
Then $Q(X,Y) = Y \cdot \left( \left( c_{22}'' X^2 + c_{02}'' \right) Y + 2 c_{11}'' X\right)$.
The locus is therefore given by
$\{Y = 0\} \cup \left\{ Y = \dfrac{-2c_{11}'' X}{c_{22}'' X^2 + c_{02}''} \right\}$
where the second branch is also given by $Y = \dfrac{2b \cdot X}{X^2 - c}$.
The locus of $W=0$ is then given by Lemma \ref{lemma::cneq0_but_a=0}.
See Figure \ref{figure::crossings} - (d) and (e) for example.

The case that $c_{02}''=0$ but $c_{20}'' \neq 0$ has the same argument.

\textbf{Case 3:} $c_{22}'' \neq 0$, $c_{20}'' \neq 0$ and $c_{02}'' \neq 0$.
In this case, we first observe that $X=0$ if and only if $Y=0$.
From now on, we assume that $XY \neq 0$.

Taking $\lambda \coloneqq XY \neq 0$, we rewrite $Q(X,Y) = 0$ as
$a \widehat{x}^2 + (2b\lambda - \lambda^2) \widehat{x} + c\lambda^2 = 0$,
where $\widehat{x} \coloneqq X^2$.
The real solution for $\widehat{x}$, if exists, is therefore given by $\widehat{x}_{\pm}(\lambda)$.
The existence of $\widehat{x}$ depends on if $\Delta \coloneqq \lambda^2 \cdot \left( \left(\lambda - 2b\right)^2 - 4ac \right) \ge 0$
but the existance of $(X,Y)$ also requires that $\widehat{x} > 0$ and $W \le 0$.

\textbf{Case 3.1:} $ac < 0$.

In this case, we have $\Delta > 0$ and
$\left| \lambda \left(\lambda - 2b\right) \right| < |\lambda| \cdot \sqrt{(\lambda - 2b)^2 - 4ac}$.
Therefore,
if $a > 0$, $c < 0$ and $\lambda > 0$, then the only solution for $\widehat{x}$ is $\widehat{x}_+$;
if $a > 0$, $c < 0$ and $\lambda < 0$, then the only solution for $\widehat{x}$ is $\widehat{x}_+$;
if $a < 0$, $c > 0$ and $\lambda > 0$, then the only solution for $\widehat{x}$ is $\widehat{x}_-$;
if $a < 0$, $c > 0$ and $\lambda < 0$, then the only solution for $\widehat{x}$ is $\widehat{x}_-$.
We thus defined $\Omega$ to be $\R^2$.
See Figure \ref{figure::crossings} - (f) for an example.

\textbf{Case 3.2:} $a > 0$, $c > 0$ and $b < 0$.

In this case, the discriminant $\Delta \ge 0$ if and only if either $\lambda \le \lambda_1 \coloneqq 2b - 2\sqrt{ac}$ or $\lambda \ge \lambda_2 \coloneqq 2b + 2\sqrt{ac}$, where $\lambda_1 < \lambda_2 < 0$ as $b^2 > ac$.
As $ac > 0$, when $\Delta \ge 0$,
we get 
$\left| \lambda \left(\lambda - 2b\right) \right| > |\lambda| \cdot \sqrt{(\lambda - 2b)^2 - 4ac}$.
Therefore, there exists a positive solution for $\widehat{x}$ if and only if
$\lambda (\lambda - 2b) > 0$,
i.e., either $\lambda > 0$ or $\lambda < 2b$.

We conclude with the following:
when $\lambda > 0$, there are two available solutions $\widehat{x}_+$ and $\widehat{x}_-$;
when $\lambda_1 < \lambda < 0$, there is no solution for $\widehat{x}$.
It suffices to show that $(X,Y)$ satisfying $\lambda = XY \le \lambda_1$ does not belong to the disk $W \le 0$.
Notice that $\lambda_1 = 2b-2\sqrt{ac} < 2b < 2b+a+c$.
We therefore get that $W \le 0$ by Lemma \ref{lemma::a+2b+c=-1} and Lemma \ref{lemma::W_with_XY=1}.
See Figure \ref{figure::crossings} - (g) for an example.

\textbf{Case 3.3:} $a > 0$, $c > 0$ and $b > 0$. In this case, we find:
\[
\sin{\left( (\theta_3 - (\theta_1 + \theta_2))/2 \right)} > 0, 
\quad
\sin{\left( (\theta_1 - \theta_2 + \theta_3)/2 \right)} < 0,
\quad
\sin{\left( (\theta_2 - \theta_1 + \theta_3)/2 \right)} < 0.
\]
Therefore, the angles $\theta_1$, $\theta_2$ and $\theta_3$ must satisfy
\begin{align}
(\theta_3 - (\theta_1 + \theta_2))/2 &\in (-2\pi, -\pi) \cup (0,\pi), \label{align::1}\\
(\theta_1 - \theta_2 + \theta_3)/2 &\in (-\pi, 0) \cup (\pi, 2\pi), \label{align::2}\\
(\theta_2 - \theta_1 + \theta_3)/2 &\in (\pi, 2\pi). \label{align::3}
\end{align}
Now, observe that $(\ref{align::2}) + (\ref{align::3}) = \theta_3 \in (0, 2\pi)$, which implies that $(\theta_1 - \theta_2 + \theta_3)/2 \in (-\pi, 0)$.
Similarly, since $(\ref{align::3}) - (\ref{align::1}) = \theta_2 \in (0, 2\pi)$, we also get $(\theta_3 - (\theta_1 + \theta_2))/2 \in (0, \pi)$.
However, considering their difference leads to the conclusion that 
$\theta_1 < 0$, which is a contradiction.
We conclude that it is impossible for $a$, $b$ and $c$ to all be positive.

Before considering the last two cases, we first claim that if \(a < 0\) and \(c < 0\), then \(\theta_1 \neq \pi\) and \(\theta_2 \neq \pi\).  
In this case, we observe that 
\(
\sin\left((\theta_3 - (\theta_1+\theta_2))/2\right)
\),
\(
\sin\left((\theta_1 - \theta_2 + \theta_3)/2\right)
\)
and
\(
\sin\left((\theta_2 - \theta_1 + \theta_3)/2\right)
\)  
all share the same sign.  
Therefore, either  
\[
(\theta_3 - (\theta_1 + \theta_2))/2 \in (-2\pi, -\pi) \cup (0, \pi),
\quad
(\theta_1 - \theta_2 + \theta_3)/2 \in (0, \pi), \quad
(\theta_2 - \theta_1 + \theta_3)/2 \in (0, \pi),
\]  
or  
\[
(\theta_3 - (\theta_1 + \theta_2))/2 \in (-\pi, 0), \quad
(\theta_1 - \theta_2 + \theta_3)/2 \in (-\pi, 0) \cup (\pi, 2\pi),
\quad
(\theta_2 - \theta_1 + \theta_3)/2 \in (\pi, 2\pi).
\]  
In both scenarios, it follows that \(\theta_1 \neq \pi\) and \(\theta_2 \neq \pi\).

\textbf{Case 3.4:} $a<0$, $c<0$ and $b>0$.

In this case, the discriminant $\Delta \ge 0$ if and only if
either $\lambda \le \lambda_1$ or $\lambda \ge \lambda_2$,
where $0 < \lambda_1 < \lambda_2$.
Since $ac > 0$, whenever $\Delta \ge 0$, the existance of a positive solution for $\widehat{x}$ requires that $\lambda (\lambda - 2b) < 0$, i.e., $\lambda \in (0,2b)$.
We conclude: when $\lambda \in (0,\lambda_1)$, there are two available solutions for $\widehat{x}$;
when $\lambda < 0$ or $\lambda > \lambda_1$, there is no available solution.

It remains to show that $(X,Y)$ satisfying $\lambda = XY = \lambda_1$ 
lies outside the disk $W \le 0$.
Note that the point $p_\nabla = (X_{\nabla}, Y_\nabla)$ introduced in Lemma \ref{lemma::fixed_point}
satisfies $Q(X_{\nabla}, Y_\nabla) = 0$.
Therefore, we must have $0 \le \lambda_\nabla \le \lambda_1$, where $\lambda_\nabla \coloneqq X_\nabla Y_\nabla$.
However, the point $p_\nabla$ also satisfies $W(X_{\nabla}, Y_\nabla) > 0$, which is the desired result.
See Figure \ref{figure::crossings} - (h) for an example.

\textbf{Case 3.5:} $a<0$, $c<0$ and $b<0$.

In this case, we have $\lambda_1 < \lambda_2 < 0$ and it requires that $\lambda (\lambda - 2b) < 0$.
When $\lambda \in (\lambda_2, 0)$, there are two available solutions, $\widehat{x}_+$ and $\widehat{x}_-$, for $\widehat{x}$.
When $\lambda < \lambda_2$ or $\lambda > 0$, there is no available solution.
As in the previous case,
the point $p_\nabla$ satisfies $Q(X_\nabla, Y_\nabla) = 0$.
Therefore, we must have $\lambda_2 \le \lambda_\nabla \le 0$, but $W(X_\nabla, Y_\nabla) > 0$, as desired.
See Figure \ref{figure::crossings} - (i) for an example.
\end{proof}

\begin{proof}[Proof of Proposition \ref{proposition::Q=0_and_W=0}]
Our proof follows the same case enumeration as in the proof of Theorem \ref{theorem::intersection_of_three}.
When $c_{22}''=0$, or when $c_{22}''\neq 0$ but $c_{20}''c_{02}''=0$, the locus of $Q=0$ in $\Omega$ is a union of two straight lines; these intersect $W=0$ in exactly $4$ points.
When $c_{22}'' \neq 0$, $c_{20}''=0$ and $c_{02}''\neq 0$, it suffices to show that the function
\[
\R_{\ge 0} \ni X \mapsto W\left(X,\dfrac{2bX}{X^2-c}\right)
= c_{22} \dfrac{4b^2 X^4}{(X^2-c)^2} + c_{02} \dfrac{4b^2 X^2}{(X^2-c)^2} + 2c_{11} \dfrac{2bX^2}{X^2-c} + c_{00}
\]
has exactly one zero.
Here $X$ is further restricted to $[0, \sqrt{c})$ if $c > 0$.
We observe that a value $X$ is a zero if and only if $X^2$ is a root of a certain quadratic polynomial in $X^2$.
This polynomial takes a negative value at one endpoint and a positive value at the other.
Consequently, there is exactly one viable zero for $X^2>0$.
The case where $c_{22}''\neq 0$, $c_{20}'' \neq 0$ and $c_{02}''=0$ follows by a similar argument.

From now on, we suppose that $\theta_3$ satisfies $c_{22}'' \neq 0$ and $abc \neq 0$.
By Theorem \ref{theorem::intersection_of_three}, the locus of $\{Q=0\} \cap \{W\le 0\}$ is contained in exactly four specific curves of the form $\gamma_{\epsilon,\tau,\sigma}$.
Moreover, every point $(X,Y)\neq (0,0)$ on $Q=0$ satisfies $XY \neq 0$.
We claim that for a given $\textsc{k} \in (-\infty,+\infty) \setminus {0}$, a point $(X, \textsc{k} X) \neq (0,0)$ on $Q=0$, if it exists, has a uniquely determined value for $X^2$.
Indeed, if $Q(X, \textsc{k} X)=0$, then
\(
\widehat{x} \coloneqq X^2 = - \left(c_{20}'' + c_{02}'' \textsc{k}^2 + 2 c_{11}'' \textsc{k}\right) /  \left(
c_{22}'' \textsc{k}^2 \right).
\)

We define $\textsc{k}_- < \textsc{k}_+$ by
\(
\textsc{k}_{\pm} = 
\left(
-c_{11}'' \pm
\sqrt{c_{11}''^2 - c_{02}'' c_{20}''}
\right) / c_{02}''.
\)
For $\widehat{x} > 0$ to hold:
if $c < 0$, then $\textsc{k} \in (\textsc{k}_-, \textsc{k}_+)$;
if $c > 0$, then $k \in (-\infty, \textsc{k}_-) \cup (\textsc{k}_+,+\infty)$.
We further claim that: if $c<0$, then either $\textsc{k}_- < \textsc{k}_+ < 1$ or $1 < \textsc{k}_- < \textsc{k}_+$;
if $c>0$, then $\textsc{k}_- < 1 < \textsc{k}_+$.
This claim follows from the identity:
\[
\left(\textsc{k}_- - 1\right)
\left(\textsc{k}_+ - 1\right)
=
\dfrac{\sin^2\left(\dfrac{\theta_1-\theta_2}{2}\right)}{\sin^2\left(\dfrac{\theta_1}{2}\right)}
\cdot
\dfrac{\sin\left(\dfrac{\theta_3-\theta_1-\theta_2}{2}\right)}{\sin\left(\dfrac{\theta_2-\theta_1+\theta_3}{2}\right)}
= \dfrac{c_{22}''}{c_{02}''} = -c.
\]
For a point $(X,Y)$ with $Y = \textsc{k} X$, we have
\(
W(X,Y) = c_{22} X^4 \textsc{k}^2
+ c_{20} X^2
+ c_{02} X^2 \textsc{k}^2
+ 2c_{11} X^2 \textsc{k}
+ c_{00}
\).
We define the function $\heartsuit: \textsc{k} \mapsto \textsc{k}^2 \cdot W(X,Y)$,
where $Y=\textsc{k} X$ and $(X,Y)$ satisfies $Q(X,Y) = 0$.
More precisely, we obtain
\[
\heartsuit(\textsc{k}) =
\dfrac{c_{22}}{c_{22}''^2}
\left( c_{20}'' + c_{02}'' \textsc{k}^2 + 2 c_{11}'' \textsc{k} \right)^2
-
\dfrac{1}{c_{22}''}
\left( c_{20} + c_{02} \textsc{k}^2 + 2c_{11} \textsc{k}
\right) \cdot
\left( c_{20}'' + c_{02}'' \textsc{k}^2 + 2 c_{11}'' \textsc{k} \right)
+ c_{00} \textsc{k}^2.
\]
Thus $\heartsuit$ is a quartic polynomial.
Furthermore, along each curve $\gamma_{\epsilon,\tau,\sigma}$ describing $\{Q=0\} \cap \Omega$, the function $\heartsuit$ takes a positive value at the head and a negative value at the tail.
We also claim that $\heartsuit(1) = 0$, 
which follows from Lemma \ref{lemma::a+2b+c=-1} and Lemma \ref{lemma::W_with_XY=1}.

\textbf{Case 1:} $ac < 0$.
In this case, we have $\textsc{k}_- < 0 < \textsc{k}_+$.
If $c<0$, then $\textsc{k}_- < 0 < \textsc{k}_+ < 1$.
If $c>0$, then $\textsc{k}_- < 0 < 1 < \textsc{k}_+$.
Given that $\heartsuit(1)=0$, the intersection of $Q=0$ and $W=0$ always consists of four points.

\textbf{Case 2:} $a>0$, $b<0$ and $c>0$.
In this case, we further have $c_{22}'' < 0$, $c_{20}'' > 0$ and $c_{02}'' > 0$.
Consequently, we obtain $0 < \textsc{k}_- < \textsc{k}_+$.
Since $c>0$, it follows that $0 < \textsc{k}_- < 1 < \textsc{k}_+$.
The desired property of the intersection follows from $\heartsuit(1) = 0$.

\textbf{Case 3:} $a<0$ and $c<0$.
In this case, we have either $0 < \textsc{k}_- < \textsc{k}_+$ or $\textsc{k}_- < \textsc{k}_+ < 0$.
Given that $c<0$, this further implies that either $0 < 1 < \textsc{k}_- < \textsc{k}_+$ or $\textsc{k}_- < \textsc{k}_+ < 1$.
Again, the desired result is implied by $\heartsuit(1) = 0$.
\end{proof}

%
\subsection{Foliation of the disk}
\label{subsection::foliation}

Let \(0 < \theta_1 < \theta_2 < 2\pi\).
If \(\theta_3\) satisfies \(c_{22}'' \neq 0\) and \(abc \neq 0\), then
\[
\sin\left(\dfrac{\theta_3 - (\theta_1 + \theta_2)}{2}\right) \neq 0, \quad
\sin\left(\dfrac{\theta_3}{2}\right) \neq 0, \quad
\sin\left(\dfrac{\theta_1 - \theta_2 + \theta_3}{2}\right) \neq 0, \quad
\sin\left(\dfrac{\theta_2 - \theta_1 + \theta_3}{2}\right) \neq 0.
\]  
Consequently, the singular values of \(\theta_3\) for which \(c_{22}'' \cdot c_{20}'' \cdot c_{02}'' \cdot c_{11}'' = 0\) form the set
\[
S \coloneqq \left\{
0,\ 2\pi,\ \theta_2 - \theta_1,\ 
2\pi - (\theta_2 - \theta_1),\ \theta_1 + \theta_2,\ 
(\theta_1 + \theta_2) - 2\pi
\right\} \cap [0, 2 \pi]
\subset [0, 2\pi].
\]

Fix \(\epsilon, \tau, \sigma \in \{\pm 1\}\).  
For every \(\theta_3 \in [0, 2\pi] \setminus S\), consider the curve \(\gamma_{\epsilon,\tau,\sigma} = \gamma_{\epsilon,\tau,\sigma}(\theta_3)\).  
Let \(U \coloneqq (l, r)\) be a connected component of \([0, 2\pi] \setminus S\) such that, for some \(\theta_3 \in U\), the interval \(J^x_{\epsilon,\tau,\sigma} = J^x_{\epsilon,\tau,\sigma}(\theta_3)\) is not a singleton.
Therefore, by Theorem \ref{theorem::intersection_of_three}, for every \(\theta_3 \in U\), the curve \(\gamma_{\epsilon,\tau,\sigma}(\theta_3)\) joins the origin to an exterior point of the disk \(W \le 0\).  
The curves \(\gamma_{\epsilon,\tau,\sigma}\) deform continuously with respect to \(\theta_3 \in U\).
For simplicity, we define $r_W(\theta_3)$ by
\[
\gamma_{\epsilon,\tau,\sigma}(\theta_3)\left([0, r_W(\theta_3)]\right)
=
\gamma_{\epsilon,\tau,\sigma}(\theta_3)\left(J^x_{\epsilon,\tau,\sigma}(\theta_3)\right) \cap \{W \le 0\}.
\]

\begin{lemma}
\label{lemma::derivative_of_hatx}
Fix $\chi > 0$.
For every $\theta_3 \in U$ such that
$\chi \in (0, r_W(\theta_3)]$,
we define the point
\[
(X, Y) = (X(\theta_3), Y(\theta_3)) \coloneqq \gamma_{\epsilon,\tau,\sigma}(\theta_3)(\chi)\]
and
define $\widehat{x} = \widehat{x}(\theta_3) \coloneqq X^2$.
Then, we get the following:
\begin{enumerate}[label=(\roman*)]
\item $\dfrac{\partial \widehat{x}}{\partial \theta_3} \cdot \left( aX^2 - cY^2 \right)
=
\dfrac{-X^2 
\cdot
(X - Y)
\cdot
\left(
\cos{(\theta_1/2)} 
\sin{(\theta_2/2)} X - 
\sin{(\theta_1/2)} 
\cos{(\theta_2/2)} Y
\right)
}{\sin^2{\left(\dfrac{\theta_1 - \theta_2}{2}\right)} 
\sin^2{\left(\dfrac{\theta_1 + \theta_2 - \theta_3}{2}\right)}}$;

\item $aX^2 \neq cY^2$, $X \neq Y$ and $\cos{(\theta_1/2)} 
\sin{(\theta_2/2)} X \neq 
\sin{(\theta_1/2)} 
\cos{(\theta_2/2)} Y$.
\end{enumerate}
\end{lemma}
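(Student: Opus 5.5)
For (i) I would use implicit differentiation of $Q(X,Y)=0$ along the curve with the product fixed. Along $\gamma_{\epsilon,\tau,\sigma}(\theta_3)$ the product $\chi$ is fixed, since $XY=\sigma\chi$, so $Y^2=\chi^2/\widehat{x}$. Dividing $Q=0$ by $-c_{22}''$ gives the relation used in the definition of $\widehat{x}_\pm$:
\[
F(\widehat{x},\theta_3)\coloneqq a\widehat{x}^2+(2b\lambda-\lambda^2)\widehat{x}+c\lambda^2=0,\qquad \lambda=\sigma\chi .
\]
Here $a,b,c$ are smooth functions of $\theta_3$ on $U$, because $c_{22}''\neq 0$ there. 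Implicit differentiation gives
\[
\partial_{\theta_3}\widehat{x}\cdot\partial_{\widehat{x}}F=-\partial_{\theta_3}F .
\]
The left factor is
\[
\partial_{\widehat{x}}F=2a\widehat{x}+2b\lambda-\lambda^2 .
\]
Eliminating $2b\lambda-\lambda^2$ with $F=0$ gives
\[
\widehat{x}\,\partial_{\widehat{x}}F=a\widehat{x}^2-c\lambda^2=X^2(aX^2-cY^2).
\]
This explains the factor $aX^2-cY^2$ in (i). The right side becomes
\[
-\widehat{x}\bigl(a'\widehat{x}^2+2b'\lambda\widehat{x}+c'\lambda^2\bigr)/\widehat{x}^2 ,
\]
which after multiplying by $X^2$ equals $-(a'X^4+2b'X^3Y+c'X^2Y^2)$. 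Equivalently, it is $-X^2(a'X^2+2b'XY+c'Y^2)$.

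The key computation is to factor the quadratic form $a'X^2+2b'XY+c'Y^2$, where primes denote $\theta_3$-derivatives. I would write $a=-c_{20}''/c_{22}''$ and similarly for $b,c$. Only $c_{22}''$, $c_{20}''$, $c_{02}''$, $c_{11}''$ depend on $\theta_3$, through single sine factors. Then
\[
a'=-(c_{20}''{}'c_{22}''-c_{20}''c_{22}''{}')/c_{22}''^2 ,
\]
and each such Wronskian of two sines collapses by the sine subtraction formula. For example,
\[
\sin(\tfrac{\theta_1-\theta_2+\theta_3}{2})'\sin(\tfrac{\theta_3-\theta_1-\theta_2}{2})-\sin(\tfrac{\theta_1-\theta_2+\theta_3}{2})\sin(\tfrac{\theta_3-\theta_1-\theta_2}{2})'=\tfrac12\sin(\theta_1)\cdot(\pm1),
\]
and similarly for the others. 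This yields
\[
a'\propto\sin^2(\tfrac{\theta_2}{2})\sin\theta_1,\qquad c'\propto-\sin^2(\tfrac{\theta_1}{2})\sin\theta_2,\qquad b'\propto\sin(\tfrac{\theta_1}{2})\sin(\tfrac{\theta_2}{2})\sin(\tfrac{\theta_1+\theta_2}{2}),
\]
all over the common denominator $2\sin^2(\tfrac{\theta_1-\theta_2}{2})\sin^2(\tfrac{\theta_3-\theta_1-\theta_2}{2})$. I would then check directly that this quadratic form equals $(X-Y)\bigl(\cos(\theta_1/2)\sin(\theta_2/2)X-\sin(\theta_1/2)\cos(\theta_2/2)Y\bigr)$ up to the constant. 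The check compares the coefficients of $X^2$, $XY$, $Y^2$ using $\sin\theta_1=2\sin(\theta_1/2)\cos(\theta_1/2)$ and the addition formula for $\sin(\tfrac{\theta_1+\theta_2}{2})$. Dividing by $X^2(aX^2-cY^2)$ gives (i), provided (ii) holds.

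For (ii), I would argue by contradiction in three parts.

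\textbf{First part, $aX^2\neq cY^2$.} If $aX^2=cY^2$, then $a\widehat{x}^2=c\lambda^2$. That makes $\partial_{\widehat{x}}F=0$, so $\widehat{x}$ is a double root and $(\lambda-2b)^2-4ac=0$. This contradicts the strict inequality $(\sigma\chi-2b)^2-4ac>0$ in the definition of $J^x_{\epsilon,\tau,\sigma}$. The point $\chi=r_W$ still lies inside the open interval by Theorem~\ref{theorem::intersection_of_three}, since the curves exit $\{W\le0\}$ before reaching endpoints of $J^x$.

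\textbf{Second part, $X\neq Y$.} If $X=Y$, then $XY=X^2\ge0$. From $Q(X,X)=0$ and $a+2b+c=-1$ (Lemma~\ref{lemma::a+2b+c=-1}) we get $X^4=X^2$, hence $X^2=1$ and $|XY|=1$. Lemma~\ref{lemma::W_with_XY=1} then gives $W>0$, contradicting $\chi\le r_W$.

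\textbf{Third part, the last inequality.} The third factor vanishes exactly on the line through the origin and $p_\nabla$ of Lemma~\ref{lemma::fixed_point}. On the line $Y=\textsc{k}X$, the proof of Proposition~\ref{proposition::Q=0_and_W=0} shows that $\widehat{x}$ is determined uniquely. Therefore the only nonzero point of $\{Q=0\}$ on this line, up to sign, is $\pm p_\nabla$, and Lemma~\ref{lemma::fixed_point} gives $W(p_\nabla)>0$. The case $\theta_1=\pi$ or $\theta_2=\pi$ has to be handled separately: the line is then an axis, and points of $\{Q=0\}$ with $XY\neq0$ avoid it.

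The main obstacle is the trigonometric factorisation in the key computation. I would verify it by comparing coefficients, using symbolic algebra if needed.
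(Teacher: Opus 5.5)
Your argument for (i) is the paper's. You differentiate $a\widehat{x}^2+(2b\sigma\chi-\chi^2)\widehat{x}+c\chi^2=0$ at fixed $\chi$, rewrite $\widehat{x}\,\partial_{\widehat{x}}F$ as $X^2(aX^2-cY^2)$, and factor $a'X^2+2b'XY+c'Y^2$.

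\textbf{Sign errors in $b'$ and $c'$.} The signs you record for $b'$ and $c'$ are wrong. Use $\frac{d}{d\theta_3}\frac{\sin(\theta_3/2+p)}{\sin(\theta_3/2+q)}=\frac{\sin(q-p)}{2\sin^2(\theta_3/2+q)}$ and set $\mathcal{D}=2\sin^2\frac{\theta_1-\theta_2}{2}\sin^2\frac{\theta_1+\theta_2-\theta_3}{2}$. Then
$\mathcal{D}a'=\sin\theta_1\sin^2\frac{\theta_2}{2}$, $\mathcal{D}b'=-\sin\frac{\theta_1}{2}\sin\frac{\theta_2}{2}\sin\frac{\theta_1+\theta_2}{2}$ and $\mathcal{D}c'=+\sin\theta_2\sin^2\frac{\theta_1}{2}$.
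With your signs the form does not, in general, vanish on the diagonal $X=Y$, so the coefficient check you plan would fail.

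With the correct signs, set $\alpha=\cos\frac{\theta_1}{2}\sin\frac{\theta_2}{2}$ and $\beta=\sin\frac{\theta_1}{2}\cos\frac{\theta_2}{2}$. Then $a'X^2+2b'XY+c'Y^2=\frac{2\sin(\theta_1/2)\sin(\theta_2/2)}{\mathcal{D}}(X-Y)(\alpha X-\beta Y)$. So what you will actually obtain is (i) with an extra positive factor $\sin(\theta_1/2)\sin(\theta_2/2)$ on the right. The displayed formula, and the paper's proof, drop this factor. It is harmless, since only the sign of $\partial\widehat{x}/\partial\theta_3$ is used in Proposition~\ref{proposition::circular-sector}.

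\textbf{Slip in the $X\neq Y$ part of (ii).} You have $Q(X,X)=c_{22}''X^2(X^2+1)$, that is $X^4=(a+2b+c)X^2=-X^2$, not $X^4=X^2$. So $X=Y$ forces $X=0$, which is excluded because $\widehat{x}>0$. Lemma~\ref{lemma::W_with_XY=1} is not needed; the paper's proof takes the same detour through $|XY|=1$.

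\textbf{The other two inequalities of (ii).} Here you take a genuinely different and cleaner route.

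For $aX^2\neq cY^2$, the paper uses $X^2Y^2=aX^2+2bXY+cY^2$ to force $\sgn a=\sgn c=\sgn(\sigma b)$. It then rules this out with the sign table of Theorem~\ref{theorem::intersection_of_three}. You observe instead that $aX^2=cY^2$ is exactly $\partial_{\widehat{x}}F=0$, i.e.\ a double root. The strict discriminant condition in the definition of $J^x_{\epsilon,\tau,\sigma}$ forbids this. Your argument is self-contained, and it also justifies the differentiability of $\widehat{x}$ in $\theta_3$ that (i) presupposes.

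For the last inequality, the paper combines (i) with $\partial_{\theta_3}(\alpha X-\beta Y)=\frac{\alpha X+\beta Y}{2X^2}\,\partial_{\theta_3}\widehat{x}$ and argues via signs along the family in $\theta_3$. That argument depends on (i) and is less direct. Your static argument needs only three facts:
the line $\alpha X=\beta Y$ passes through $p_\nabla$;
nonzero points of $\{Q=0\}$ on a line through the origin are determined up to sign;
$W(\pm p_\nabla)>0$, because $W$ is even.
It is independent of (i). Your separate treatment of $\theta_1=\pi$ or $\theta_2=\pi$ covers the remaining cases: there the line is a coordinate axis, while $XY=\sigma\chi\neq0$.
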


\begin{proof}
Notice that $\sigma \chi = XY$ and
\begin{align}
a \cdot \widehat{x}^2 + (2b\sigma \chi - \chi^2) \cdot \widehat{x} + c \chi^2 = 0, 
\label{equation::quadratic_hatx} \\
X^2Y^2 = a\cdot X^2 + 2b\cdot XY + c\cdot Y^2.
\label{equation::quartic}
\end{align}
Therefore, the derivative of the equation (\ref{equation::quadratic_hatx}) implies that
\begin{equation*}
\dfrac{\partial \widehat{x}}{\partial \theta_3} \cdot
\left(
aX^2 - cY^2
\right)
=
- \left(
\dfrac{\partial a}{\partial \theta_3} \cdot X^2
+
2 \cdot
\dfrac{\partial b}{\partial \theta_3} \cdot XY
+
\dfrac{\partial c}{\partial \theta_3} \cdot Y^2
\right).
\end{equation*}
Since
\begin{align*}
\partial a / \partial \theta_3 \cdot \mathcal{D}
=&
\sin{\left(\theta_1\right)}
\cdot \sin^2{(\theta_2 / 2)}, \\
\partial b / \partial \theta_3 \cdot \mathcal{D}
=&
- \sin{(\theta_1/2)}
\cdot \sin{(\theta_2/2)}
\cdot \sin{((\theta_1+\theta_2)/2)},
\\
\partial c / \partial \theta_3 \cdot \mathcal{D}
=&
\sin{\left(\theta_2\right)}
\cdot \sin^2{(\theta_1 / 2)},
\end{align*}
where $\mathcal{D} = 2 \cdot \sin^2{\left(\dfrac{\theta_1-\theta_2}{2}\right)}
\cdot
\sin^2{\left(\dfrac{\theta_1+\theta_2-\theta_3}{2}\right)}$,
we get the desired expression in assertion ($\RNum{1}$).

Assume that $aX^2 = cY^2$.
Then $X^2 / Y^2 = c / a$, hence $\sgn(a) = \sgn(c)$ and
$x / y = \sigma \sqrt{c / a}$.
From equation (\ref{equation::quadratic_hatx}), we also obtain
\(
0 < X^2 = a \cdot \dfrac{X^2}{Y^2} + 2b \cdot \dfrac{X}{Y} + c = \dfrac{2}{a} \sqrt{ac} \cdot \left( \sqrt{ac} + \sigma b \right).
\)
Therefore, we must have $\sgn(a)=\sgn(c)=\sgn(\sigma b)$,
since $b^2 > ac$.
However, this contradicts Theorem \ref{theorem::intersection_of_three}.

Assume that $X=Y$, i.e., $X/Y = 1$.
Then we get $\chi^2 = X^2 = Y^2 = a+2b+c$.
However, by Lemma \ref{lemma::a+2b+c=-1}, we have $\chi = |a+2b+c| = 1$ and, by Lemma \ref{lemma::W_with_XY=1}, this leads to a contradiction.

Assume that
$\cos{\dfrac{\theta_1}{2}} 
\sin{\dfrac{\theta_2}{2}} X = 
\sin{\dfrac{\theta_1}{2}} 
\cos{\dfrac{\theta_2}{2}} Y$.
Since $Y = \sigma \chi / X$
and $\dfrac{\partial \hat{x}}{\partial \theta_3} = 2X \cdot \dfrac{\partial X}{\partial \theta_3}$, we obtain
\begin{equation*}
\dfrac{\partial}{\partial \theta_3}\left(
\cos{\dfrac{\theta_1}{2}} 
\sin{\dfrac{\theta_2}{2}} X - 
\sin{\dfrac{\theta_1}{2}} 
\cos{\dfrac{\theta_2}{2}} Y
\right)
=
\dfrac{1}{2X^2}
\left(
\cos{\dfrac{\theta_1}{2}} 
\sin{\dfrac{\theta_2}{2}} X + 
\sin{\dfrac{\theta_1}{2}} 
\cos{\dfrac{\theta_2}{2}} Y
\right)
\cdot \dfrac{\partial \widehat{x}}{\partial \theta_3}.
\end{equation*}
Now, suppose that there exists a small neighbourhood $U_0$ of $\theta_3$ such that, for every $\theta'_3 \in U_0$, we have
$\cos{\dfrac{\theta_1}{2}} 
\sin{\dfrac{\theta_2}{2}} X(\theta'_3) + 
\sin{\dfrac{\theta_1}{2}} 
\cos{\dfrac{\theta_2}{2}} Y(\theta'_3) \neq 0$.
Then,
\begin{equation*}
\sgn
\left.\left(
\cos{\dfrac{\theta_1}{2}} 
\sin{\dfrac{\theta_2}{2}} X - 
\sin{\dfrac{\theta_1}{2}} 
\cos{\dfrac{\theta_2}{2}} Y
\right)
\right|_{\theta'_3}
= \mu \cdot
\sgn\left(
\dfrac{\partial}{\partial \theta_3}
\left.
\left(
\cos{\dfrac{\theta_1}{2}} 
\sin{\dfrac{\theta_2}{2}} X - 
\sin{\dfrac{\theta_1}{2}} 
\cos{\dfrac{\theta_2}{2}} Y
\right)
\right|_{\theta'_3}
\right)
\end{equation*}
for some $\mu \in \{\pm 1\}$.
However, a function $f$ satisfying $\sgn(f(x)) = \mu\cdot \sgn(f'(x))$ at all points where it is defined cannot vanish at any point, which leads to a contradiction.
Otherwise, we suppose that
$\cos{\dfrac{\theta_1}{2}} 
\sin{\dfrac{\theta_2}{2}} X + 
\sin{\dfrac{\theta_1}{2}} 
\cos{\dfrac{\theta_2}{2}} Y = 0$.
Combining this with the initial assumption, it follows that $X=Y=0$.
\end{proof}

As a consequence, we get the following homeomorphism.

\begin{proposition}
\label{proposition::circular-sector}
The map
\(
\Phi:
\left\{
r e^{i \theta} \in \C
~\middle|~
\begin{array}{c}
0 < \theta < \pi / 2  \\
0 \le r \le 1
\end{array}
\right\}
\rightarrow
\bigcup\limits_{\theta_3 \in U}
\gamma_{\epsilon,\tau,\sigma}(\theta_3)([0,r_W(\theta_3)])
\eqqcolon \mathcal{P}_{\epsilon,\tau,\sigma,U}
\)
defined by
\(
r e^{i \theta}
\mapsto
\gamma_{\epsilon,\tau,\sigma}
\left(l+\theta\dfrac{r-l}{\pi/2}\right)
\left(
r \cdot r_W
\left(l+\theta\dfrac{r-l}{\pi/2}
\right)
\right)
\)
is a homeomorphism.
\end{proposition}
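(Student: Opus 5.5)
The plan is to show that $\Phi$ is a continuous bijection from a (relatively) compact-type sector onto $\mathcal{P}_{\epsilon,\tau,\sigma,U}$, with a continuous inverse, using Lemma \ref{lemma::derivative_of_hatx} as the key monotonicity input. Note that the angular variable $\theta\in(0,\pi/2)$ is sent affinely onto $\theta_3\in U=(l,r)$, and the radial variable $r\in[0,1]$ is sent to $\chi = r\cdot r_W(\theta_3)\in[0,r_W(\theta_3)]$. All points with $r=0$ go to the origin $\gamma_{\epsilon,\tau,\sigma}(\theta_3)(0)=(0,0)$, so $\Phi$ should be understood with the point $r=0$ identified (the domain is a sector with its vertex as a single point), exactly as in the target, where all curves share the origin.

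\textbf{Continuity.} On $U$ the coefficients $a,b,c$ are real-analytic in $\theta_3$ with $c_{22}''\neq 0$ and $abc\neq 0$. Hence $\widehat{x}_\tau(\sigma\chi)$ is continuous in $(\theta_3,\chi)$ wherever the discriminant is positive, and so is $\gamma_{\epsilon,\tau,\sigma}(\theta_3)(\chi)$. Next, $r_W(\theta_3)$ is the unique $\chi$ on the curve at which $W=0$; uniqueness follows from Proposition \ref{proposition::Q=0_and_W=0}, which gives four intersection points, one per branch. I would prove $r_W$ is continuous via the implicit function theorem, using transversality of $\{Q=0\}$ and $\{W=0\}$ there. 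Transversality follows from the sign change of $\heartsuit$ from head to tail noted in that proof; if needed I would instead use a compactness and uniqueness argument, since a limit of crossing points is again a crossing point. Thus $\Phi$ is continuous.

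\textbf{Injectivity.} This is the main obstacle. Points with different $\chi=|XY|$ are distinct, so it suffices to fix $\chi>0$ and show that $\theta_3\mapsto \gamma(\theta_3)(\chi)$ is injective on the set where $\chi\le r_W(\theta_3)$. Since $\epsilon,\sigma$ are fixed, the point is determined by $\widehat{x}=X^2$. By Lemma \ref{lemma::derivative_of_hatx}(i) and (ii), $\partial\widehat{x}/\partial\theta_3$ never vanishes there. So $\widehat{x}$ is strictly monotone on each connected component of $\{\theta_3: \chi\le r_W(\theta_3)\}$.

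One must also rule out the same point arising from two separate components. I would try to show this set is an interval, by arguing that the sign of $W$ along the fixed-$\chi$ locus changes only once. Failing that, I would note that the curves for distinct $\theta_3$ are distinct level sets: a point lies on $I(\theta_3)$ for at most one $\theta_3$ besides $\theta_1,\theta_2$ unless it equals $\fp_f$, by Corollary \ref{corollary::intersection_of_four} applied in the disk. Together with $r=0$ collapsing to the origin, this gives injectivity.

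\textbf{Conclusion.} Surjectivity holds by definition of $\mathcal{P}_{\epsilon,\tau,\sigma,U}$. For continuity of the inverse, I would use the local homeomorphism given by the inverse function theorem. The Jacobian of $(\theta_3,\chi)\mapsto(X,Y)$ is nonzero because $\partial_{\theta_3}\widehat{x}\neq0$ while $\chi=|XY|$ is independent. This Jacobian is taken after rescaling $\chi$ by $r_W$, which is positive and continuous. Near the vertex, continuity of the inverse follows because $\chi\to0$ forces the point to the origin. A continuous open bijection is then a homeomorphism.
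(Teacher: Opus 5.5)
Your core mechanism is the paper's: strict monotonicity of $\widehat{x}$ in $\theta_3$ at fixed $\chi$ (Lemma~\ref{lemma::derivative_of_hatx}), combined with each curve crossing $\{W=0\}$ exactly once (Proposition~\ref{proposition::Q=0_and_W=0}). You correctly see that monotonicity only gives injectivity on each connected component of $\{\theta_3\in U : \chi\le r_W(\theta_3)\}$. Neither of your remedies closes this gap. The ``show it is an interval'' route is only announced. The fallback is circular: Corollary~\ref{corollary::intersection_of_four} is derived in the paper from Theorem~\ref{theorem::foliation}, whose proof glues the sectors $\mathcal{P}_{\epsilon,\tau,\sigma,U}$ together, so it presupposes the proposition you are proving.

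A non-circular replacement exists. Each of $c_{22}'',c_{20}'',c_{02}'',c_{11}''$ is a linear combination of $\cos(\theta_3/2)$ and $\sin(\theta_3/2)$. Hence $Q_{\theta_1,\theta_2,\theta_3}(X,Y)=A(X,Y)\cos(\theta_3/2)+B(X,Y)\sin(\theta_3/2)$, and a point with $(A,B)\neq(0,0)$ lies on $\{Q_{\theta_3}=0\}$ for at most one $\theta_3\in(0,2\pi)$. Suppose $A$ and $B$ had a common zero in $\{W\le 0\}$. By Proposition~\ref{proposition::intersection-of-three-isometric-spheres}, the corresponding point $\omega$ would lie on $I(\theta_3)$ for infinitely many $\theta_3$. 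Put $u=(1-i)\omega_1$ and $v=-(1+i)\omega_2$. Then $\theta\mapsto|ue^{-i\theta}+ve^{i\theta}-2|^2-|u+v-2|^2$ is a real trigonometric polynomial of degree at most $2$. It vanishes identically only if $u\bar v=0$ and $u+\bar v=0$, i.e.\ $\omega=[0,0,1]^t$, which is $(X,Y)=(0,0)$. So curves for distinct $\theta_3$ meet only at the origin. Since $\chi=|XY|$ parametrises each curve injectively, $\Phi$ is injective. This argument also proves Corollary~\ref{corollary::intersection_of_four} independently.

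Your treatment of the vertex is also incorrect. ``$\chi\to 0$ forces the point to the origin'' is what continuity of $\Phi$ would need, not continuity of $\Phi^{-1}$. It is moreover false uniformly in $\theta_3$. Let $\theta_3$ approach an endpoint of $U$ where $c_{20}''$ or $c_{02}''$ vanishes. Then two opposite arms of the crossing converge to a coordinate-axis segment running from the origin to $\{W=0\}$. Along these arms $\chi=|XY|$ is uniformly small, while the arms still reach $W=0$, and $r_W(\theta_3)\to 0$. Continuity of $\Phi$ and of $\Phi^{-1}$ at the vertex therefore needs a uniform comparison of $\chi$ with $r_W(\theta_3)$ as $\theta_3\to l^+$ or $r^-$. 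For example, on such an arm $X\approx sY$ with $s$ nearly constant, so $\chi/r_W(\theta_3)$ is comparable to $Y^2$ divided by the square of the ordinate where the arm meets $W=0$. Your local inverse-function argument does not supply this estimate.
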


\begin{proof}
By Lemma \ref{lemma::derivative_of_hatx}, the derivative $\partial \widehat{x} / \partial \theta_3$ does not change sign for $\theta_3 \in U$ and $0 < \lambda < r_W(\theta_3)$.
Therefore, Proposition \ref{proposition::Q=0_and_W=0} implies that
the circular sector $\mathcal{P}_{\epsilon,\tau,\sigma,U}$, with the origin and the boundary arc removed,
is foliated by the curves $\gamma_{\epsilon,\tau,\sigma}(\theta_3)(0,r_W(\theta_3))$. 
Furthermore, by the same proposition, this foliation extends to the boundary arc. 
We conclude that the map $\Phi$ is a homeomorphism.
\end{proof}

It is unclear whether the homeomorphism introduced in Proposition \ref{proposition::circular-sector} is orientation preserving. However, by the definition of $\gamma_{\epsilon,\tau,\sigma}$, all curves \(\gamma_{\epsilon,\tau,\sigma}(\theta_3)\) with \(\theta_3 \in U\) lie in a common quadrant. Consequently, the orientation of the circular sector \(\mathcal{P}_{\epsilon,\tau,\sigma,U}\) is determined by the relative position of  
\(
\lim\limits_{\theta_3 \to l_+} \gamma_{\epsilon,\tau,\sigma}(\theta_3)
\)
and
\(\lim\limits_{\theta_3 \to r_-} \gamma_{\epsilon,\tau,\sigma}(\theta_3)
\).

Note that since $Q_{\theta_1,\theta_2,0} = -Q_{\theta_1,\theta_2,2\pi}$, the equation $Q_{\theta_1,\theta_2,0} = 0$ defines the same locus as $Q_{\theta_1,\theta_2,2\pi} = 0$.
We therefore arrive at the following foliation of the disk $W \le 0$.

\begin{theorem}
\label{theorem::foliation}
For $0 < \theta_1 < \theta_2 < 2\pi$, the punctured disk $\{W(X,Y) \leq 0\} \setminus \{(0,0)\}$ is foliated
by the loci of ${Q_{\theta_1,\theta_2,\theta_3}(X,Y) = 0}$ as $\theta_3$ varies over $[0, 2\pi)$.
\end{theorem}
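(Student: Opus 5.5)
The plan is to prove the two properties of a foliation separately: every point of the punctured disk lies on some locus $Q_{\theta_1,\theta_2,\theta_3}=0$, and distinct values of $\theta_3$ give loci that meet only at the origin. Disjointness comes first and is purely algebraic. Suppose a point $(X,Y)\neq(0,0)$ with $W(X,Y)\le 0$ satisfies $Q_{\theta_1,\theta_2,\theta_3}(X,Y)=0$ for two values $\theta_3\neq\theta_3'$ in $[0,2\pi)$. The coefficients $c''_{22},c''_{20},c''_{02},c''_{11}$ are linear combinations of $\cos(\theta_3/2)$ and $\sin(\theta_3/2)$. Therefore
\[
Q_{\theta_1,\theta_2,\theta_3}(X,Y)=\cos(\theta_3/2)\,A(X,Y)+\sin(\theta_3/2)\,B(X,Y)
\]
for two fixed quartics $A,B$, and the family is a pencil. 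Two distinct members (modulo $\theta_3\sim\theta_3+2\pi$, which only flips the sign, consistent with $Q_{\theta_1,\theta_2,0}=-Q_{\theta_1,\theta_2,2\pi}$) vanish simultaneously only on the base locus $A=B=0$.

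The key step is to compute this base locus and show it meets $\{W\le 0\}$ only at the origin. I would write $A$ and $B$ explicitly. The condition $A=B=0$ means that $Q$ vanishes for every $\theta_3$, and the candidates are the origin and the point $p_\nabla$ of Lemma~\ref{lemma::fixed_point}, which lies on every locus but satisfies $W(p_\nabla)>0$. A resultant computation, or solving $A=B=0$ as two equations linear in $(X^2Y^2,X^2,Y^2,XY)$, should show that these are the only common zeros. The point $-p_\nabla$ is also one, by the symmetry $Q(-X,-Y)=Q(X,Y)$, and $W(-p_\nabla)=W(p_\nabla)>0$ because $W$ is even. The case $\theta_1=\pi$ or $\theta_2=\pi$ needs separate treatment; there the base point escapes to infinity.

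For coverage, I would argue that for any $(X,Y)$ in the punctured disk, the equation $\cos(\theta_3/2)A+\sin(\theta_3/2)B=0$ always has a solution in $\theta_3/2\in[0,\pi)$ since $(A,B)\neq(0,0)$ there. Combined with the crossing description from Theorem~\ref{theorem::intersection_of_three} and Corollary~\ref{corollary::intersection_of_three}, this shows that each point lies on a leaf of exactly one $\theta_3$. Local triviality should then follow from Proposition~\ref{proposition::circular-sector}. On each component $U$ of $[0,2\pi]\setminus S$ the arcs sweep out circular sectors homeomorphically, and at the singular values in $S$ the loci degenerate to lines or axes, as in Case~1 and Case~2 of the proof of Theorem~\ref{theorem::intersection_of_three}. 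These degenerate loci form the boundaries between adjacent sectors, so the sectors glue along them into a foliation of the punctured disk.

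The main obstacle is the base-locus computation, namely proving $A=B=0$ has no solutions in $\{W\le0\}\setminus\{0\}$. If the direct elimination becomes unwieldy, I would fall back on the monotonicity in Lemma~\ref{lemma::derivative_of_hatx}, which already gives injectivity within each $U$. Injectivity across different components would then be handled by tracking the quadrant and the sign of $XY$ of each $\gamma_{\epsilon,\tau,\sigma}$, using the table in Theorem~\ref{theorem::intersection_of_three}.
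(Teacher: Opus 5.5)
Your proof is correct, and for the partition property it takes a genuinely different route from the paper. The paper never uses the fact that $Q_{\theta_1,\theta_2,\theta_3}=\cos\frac{\theta_3}{2}A+\sin\frac{\theta_3}{2}B$. Instead it splits into cases by the order of the singular angles in $S$, tabulates the signs of $c''_{22},c''_{20},c''_{02}$ and $a,b,c$ on each interval $U_i$, and assembles $\{W\le 0\}$ from $16$ sectors $\mathcal{P}_{\epsilon,\tau,\sigma,U_i}$, each foliated via Proposition~\ref{proposition::circular-sector}. It then matches adjacent sectors through one-sided limits of $\widehat{x}_\tau(\sigma\chi)$ and treats degenerate $(\theta_1,\theta_2)$ by continuity. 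Your pencil gives existence and uniqueness of the leaf through each point at once, with no such case split, and it yields Corollary~\ref{corollary::intersection_of_four} for free. What the paper's route buys in exchange is an explicit description of which branches $\gamma_{\epsilon,\tau,\sigma}$ fill which parts of each quadrant.

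The elimination you flag as the main obstacle is short. The member $\cos\frac{\theta_1+\theta_2}{2}A+\sin\frac{\theta_1+\theta_2}{2}B$ of the pencil (the one with $c''_{22}=0$) equals $2\sin\frac{\theta_1}{2}\sin\frac{\theta_2}{2}\,(Y-X)\,(\sin\frac{\theta_1}{2}\cos\frac{\theta_2}{2}\,Y-\cos\frac{\theta_1}{2}\sin\frac{\theta_2}{2}\,X)$. So the base locus lies on two lines, which are exactly the factors appearing in Lemma~\ref{lemma::derivative_of_hatx}(i). On $Y=X$ one gets $(A,B)=\sin^2\frac{\theta_1-\theta_2}{2}\,X^2(X^2+1)\,(-\sin\frac{\theta_1+\theta_2}{2},\cos\frac{\theta_1+\theta_2}{2})$. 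On the other line, parametrised as $t\,p_\nabla$, both $A$ and $B$ restrict to multiples of $t^2(t^2-1)$, and the two multiples are not both zero. Hence the real base locus is exactly $\{0,\pm p_\nabla\}$, or $\{0\}$ when $\theta_1$ or $\theta_2$ equals $\pi$, as you predicted.

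Your sketch is loose on local triviality across the singular $\theta_3$, because those loci are not all lines or axes. At $\theta_3=0$ the locus is the curved quartic $A=0$, and when $c''_{20}c''_{02}=0$ it is an axis together with the curve $Y=2bX/(X^2-c)$. The pencil handles this more cleanly than sector gluing. Off the base locus, the leaf parameter is a smooth circle-valued function whose differential is a nonzero multiple of $\nabla Q_{\theta_3}$. So it suffices that each curve $Q_{\theta_3}=0$ be nonsingular in the punctured disk. In the generic range this follows from Lemma~\ref{lemma::derivative_of_hatx}(ii) via $X\partial_XQ-Y\partial_YQ=2(c''_{20}X^2-c''_{02}Y^2)=-2c''_{22}(aX^2-cY^2)$. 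At the finitely many singular values of $\theta_3$ it is a direct check.
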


Figure \ref{figure::foliations} displays a foliation of the disk ${W = 0}$ by the loci of $Q_{\theta_1,\theta_2,\theta_3}(X,Y) = 0$, parameterised by $(\psi_1, \psi_2)$.
Here, the angle $\theta_3$ takes values at regular intervals in $[0, 2\pi)$.
The topology of the resulting foliation varies with the choice of the pair $(\theta_1, \theta_2)$.

\begin{figure}[htb]
 \centering
 \begin{subfigure}[b]{0.325\textwidth}
  \centering
  \includegraphics[width=\textwidth]{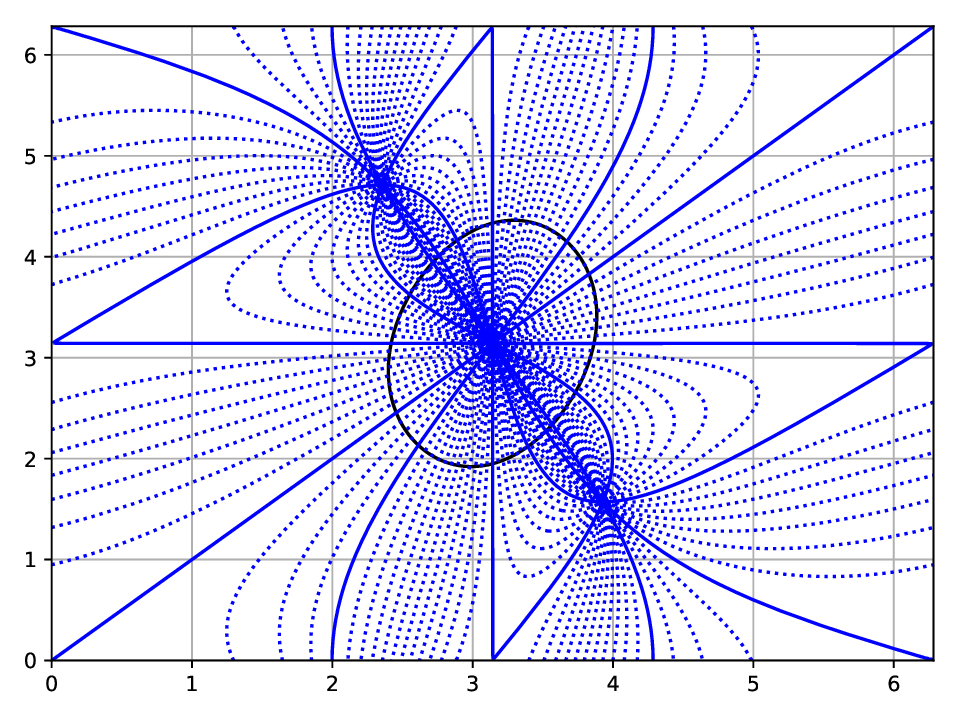}
  \caption{$(\theta_1,\theta_2) = \left(\dfrac{\pi}{4}, \dfrac{3\pi}{2}\right)$}
 \end{subfigure}
 \hfill
 \begin{subfigure}[b]{0.325\textwidth}
  \centering
  \includegraphics[width=\textwidth]{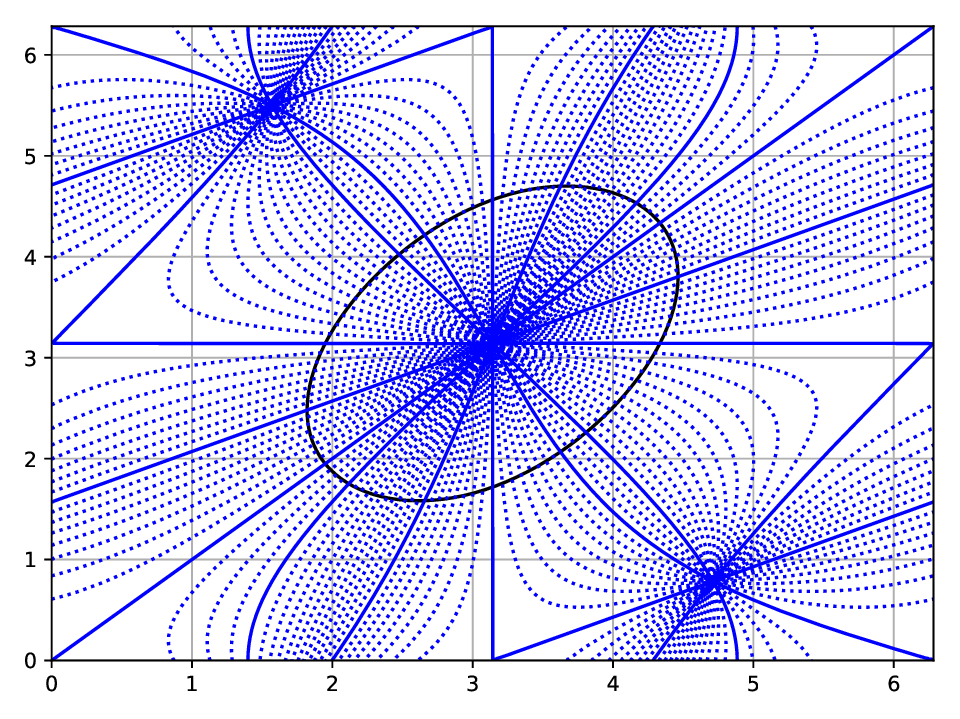}
  \caption{$(\theta_1,\theta_2) = \left(\dfrac{\pi}{2}, \dfrac{5\pi}{4}\right)$}
 \end{subfigure}
 \hfill
 \begin{subfigure}[b]{0.325\textwidth}
  \centering
  \includegraphics[width=\textwidth]{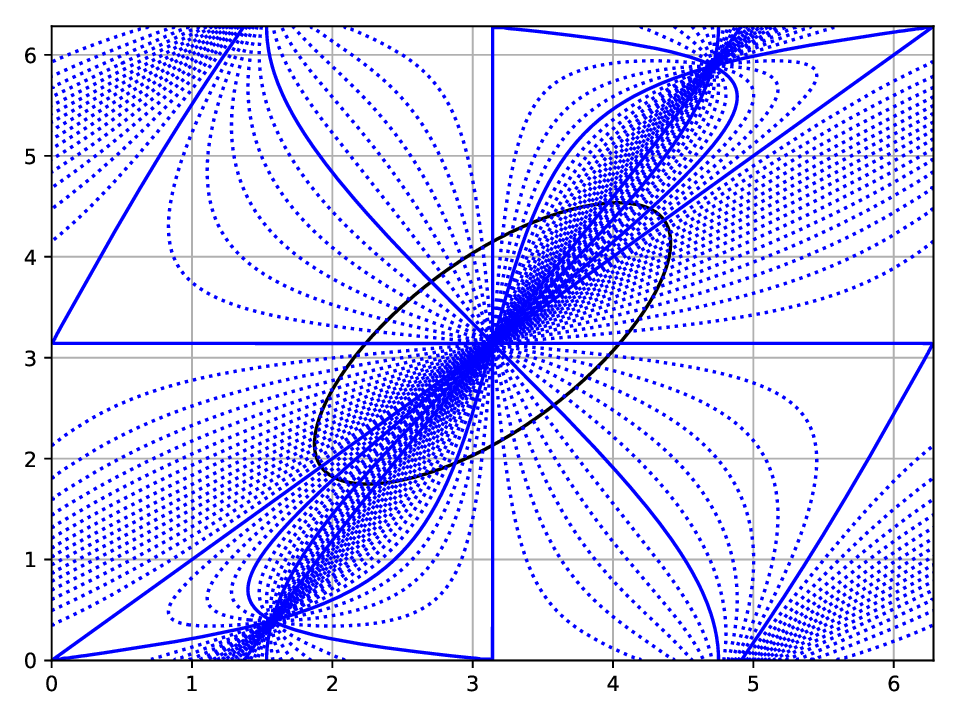}
  \caption{$(\theta_1,\theta_2) = \left(\dfrac{\pi}{2}, \dfrac{7\pi}{8}\right)$}
 \end{subfigure}
 \caption{We present three distinct choices for the pair $(\theta_1, \theta_2)$. For each, the intersection $I(\theta_1) \cap I(\theta_2)$ forms a disk, bounded by the displayed circle and visualized in coordinates parameterised by $(\psi_1, \psi_2) \in (0, 2\pi)^2$ (with $\psi_1$ horizontal and $\psi_2$ vertical). For each pair, we provode the loci of $Q_{\theta_1,\theta_2,\theta_3}(X,Y)=0$ for $\theta_3 \in \{k\pi/16 \mid k=0, 1, ..., 31\}$. Singular loci are plotted as solid curves, while generic loci are dotted. Integer grids are also provided in each figure.}
 \label{figure::foliations}
\end{figure}

Theorem \ref{theorem::foliation} allows us to deduce the following corollaries.

\begin{corollary}
\label{corollary::foliation_of_intersections}
For $0 < \theta_1 < \theta_2 < 2\pi$, the punctured disk $I(\theta_1) \cap I(\theta_2) \setminus \{[0,0,1]^t\}$, after the removal of $12$ disjoint curves connecting the origin to the boundary, is foliated by the intersections $I(\theta_1) \cap I(\theta_2) \cap I(\theta_3)$ as $\theta_3$ ranges over $(0,2\pi) \setminus \{\theta_1, \theta_2\}$.
\end{corollary}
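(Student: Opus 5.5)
We would deduce this corollary directly from Theorem~\ref{theorem::foliation}, transported from the $(X,Y)$-plane to the disk $I(\theta_1)\cap I(\theta_2)$. By Theorem~\ref{theorem::intersection_of_two} and Proposition~\ref{proposition::intersection-of-two-isometric-spheres}, the map $(\psi_1,\psi_2)\mapsto(X,Y)=(\cot(\psi_1/2),\cot(\psi_2/2))$ identifies $I(\theta_1)\cap I(\theta_2)$ homeomorphically with the disk $\{W\le 0\}$. The lemma excluding $\psi_1=0$ or $\psi_2=0$ guarantees that this coordinate system covers the whole intersection. Under this identification the point $[0,0,1]^t$ corresponds to the origin $(X,Y)=(0,0)$. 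This is the point common to every $I(\theta)$, and $(0,0)$ lies on every locus $Q_{\theta_1,\theta_2,\theta_3}=0$ because $Q$ has no constant term. By Corollary~\ref{corollary::Q(X,Y)}, for each $\theta_3\notin\{0,\theta_1,\theta_2,2\pi\}$ the triple intersection $I(\theta_1)\cap I(\theta_2)\cap I(\theta_3)$ corresponds exactly to $\{Q_{\theta_1,\theta_2,\theta_3}=0\}\cap\{W\le 0\}$. This set is a crossing by Corollary~\ref{corollary::intersection_of_three}, that is, four arcs emanating from the origin.

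Theorem~\ref{theorem::foliation} says that as $\theta_3$ ranges over $[0,2\pi)$ these loci foliate the punctured disk. The corollary omits three parameter values: $\theta_3=0$ (equivalently $2\pi$, since $Q_{\theta_1,\theta_2,0}=-Q_{\theta_1,\theta_2,2\pi}$), $\theta_3=\theta_1$, and $\theta_3=\theta_2$. The key step is to identify the leaves at these values. Each should be a crossing, hence four arcs from the origin to the boundary circle $W=0$. The four arcs are disjoint away from the origin, and they reach the boundary because $Q=0$ meets $W=0$ in exactly four points (Proposition~\ref{proposition::Q=0_and_W=0}, stated for all $\theta_3\in[0,2\pi]$), while Theorem~\ref{theorem::intersection_of_three} gives the crossing shape for all $\theta_3\in[0,2\pi]$. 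Removing the three omitted crossings therefore removes $3\times 4=12$ curves, each joining the origin to the boundary. Distinct leaves of a foliation are disjoint, so these $12$ curves are disjoint apart from their shared endpoint at the origin, which is itself removed. The remaining leaves, for $\theta_3\in(0,2\pi)\setminus\{\theta_1,\theta_2\}$, are exactly the triple intersections, and after deleting the origin they foliate the complement.

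The main obstacle is the status of the loci at $\theta_3=\theta_1,\theta_2$. The formulas of Proposition~\ref{proposition::intersection-of-three-isometric-spheres} were derived only for $\theta_3\ne\theta_1,\theta_2$. At $\theta_3=\theta_1$ the locus $Q=0$ is not a triple intersection at all, since the triple intersection would be the whole disk. One must nonetheless check that $Q_{\theta_1,\theta_2,\theta_1}=0$ is a genuine leaf that is removed rather than a degenerate set. We would verify this by substituting $\theta_3=\theta_1$ into $c''_{22},c''_{20},c''_{02},c''_{11}$. Lemma~\ref{lemma::parameters-Q} still gives $c_{11}''^2>c_{20}''c_{02}''$, so the case analysis of Theorem~\ref{theorem::intersection_of_three} applies verbatim and the locus is again a crossing. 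Continuity of the foliation in $\theta_3$ from Proposition~\ref{proposition::circular-sector} shows these special leaves are the boundaries of the adjacent foliated sectors. This justifies counting exactly $12$ removed curves and no more.
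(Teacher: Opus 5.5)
Your proposal is correct and follows the paper's proof: the $12$ removed curves are the three crossings $Q_{\theta_1,\theta_2,\theta_3}=0$ for $\theta_3\in\{0,\theta_1,\theta_2\}$, and everything else follows from Theorem~\ref{theorem::foliation}. Your extra checks at $\theta_3=\theta_1,\theta_2$ are more careful than the paper's one-line argument, with one small slip: $\theta_1,\theta_2$ are generically not in the singular set $S$, so their leaves lie inside the sectors $\mathcal{P}_{\epsilon,\tau,\sigma,U}$ rather than bounding them, which does no harm since Theorem~\ref{theorem::foliation} already covers all $\theta_3\in[0,2\pi)$.
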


\begin{proof}
By Corollary
\ref{corollary::intersection_of_three}, the 12 disjoint curves that were removed are the crossings defined by $Q_{\theta_1,\theta_2,\theta_3}(X,Y) = 0$ for $\theta_3 \in \{0, \theta_1, \theta_2\}$. This corollary follows immediately from Theorem \ref{theorem::foliation}.
\end{proof}

\begin{corollary}
\label{corollary::intersection_of_four}
Let $\theta_1, \theta_2, \theta_3, \theta_4$ be distinct angles in $(0, 2\pi)$.
Then the intersection of the four isometric spheres $I(\theta_1) \cap I(\theta_2) \cap I(\theta_3) \cap I(\theta_4)$ is the single point $[0,0,1]^t$.
\end{corollary}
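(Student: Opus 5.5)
We may assume $0<\theta_1<\theta_2<\theta_3<\theta_4<2\pi$ after relabelling. The plan is to work inside the disk $I(\theta_1)\cap I(\theta_2)$, using the coordinates $(X,Y)=(\cot(\psi_1/2),\cot(\psi_2/2))$ from Proposition~\ref{proposition::intersection-of-two-isometric-spheres}. In these coordinates the disk is $\{W\le 0\}$. By Corollary~\ref{corollary::Q(X,Y)}, for $j=3,4$ the set $I(\theta_1)\cap I(\theta_2)\cap I(\theta_j)$ is exactly
\[
\{Q_{\theta_1,\theta_2,\theta_j}=0\}\cap\{W\le 0\}.
\]
The quadruple intersection is therefore the set of points of $\{W\le0\}$ lying on both loci $Q_{\theta_1,\theta_2,\theta_3}=0$ and $Q_{\theta_1,\theta_2,\theta_4}=0$.

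\textbf{Step 1: the point $[0,0,1]^t$ lies in the intersection.} By the remark after Proposition~\ref{proposition::standard-spheres}, $[0,0,1]^t$ lies on every $I(\theta)$. In the $(X,Y)$ coordinates it corresponds to the origin, where every $Q$ vanishes and $W(0,0)=c_{00}<0$. To confirm this identification, set $\omega_1=\omega_2=0$ in the defining relations for $(\psi_1,\psi_2)$. This gives $e^{i\psi_j}=-1$, so $\psi_j=\pi$ and hence $X=Y=0$.

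\textbf{Step 2: no other common point.} Suppose $(X,Y)\neq(0,0)$ lies in $\{W\le0\}$ on both loci. Theorem~\ref{theorem::foliation} says that, as $\theta_3$ ranges over $[0,2\pi)$, the loci $Q_{\theta_1,\theta_2,\theta_3}=0$ foliate the punctured disk $\{W\le0\}\setminus\{(0,0)\}$. Leaves of a foliation are disjoint, so each point of the punctured disk lies on the locus for exactly one $\theta_3\in[0,2\pi)$. Since $\theta_3\neq\theta_4$ are both in $(0,2\pi)$, the two loci cannot share a point other than the origin. This contradiction shows the quadruple intersection is exactly $\{[0,0,1]^t\}$.

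\textbf{Main obstacle.} The crux is to read Theorem~\ref{theorem::foliation} correctly as a statement that distinct $\theta_3$ give disjoint leaves away from the origin. If the theorem is taken only as a covering statement, I would extract disjointness directly. Within one component $U$ of $[0,2\pi]\setminus S$, disjointness follows from the strict monotonicity of $\widehat{x}$ in $\theta_3$ at fixed $\chi=|XY|$, given by Lemma~\ref{lemma::derivative_of_hatx}, together with the homeomorphism of Proposition~\ref{proposition::circular-sector}. The sectors $\mathcal{P}_{\epsilon,\tau,\sigma,U}$ belonging to different components $U$ meet only along the singular leaves coming from $\theta_3\in S$.

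A second route to separating two leaves is algebraic. Suppose both $Q_{\theta_1,\theta_2,\theta_3}(X,Y)=0$ and $Q_{\theta_1,\theta_2,\theta_4}(X,Y)=0$, with $Q$ written in its $(a,b,c)$ form. Note that $\theta_3\mapsto Q_{\theta_1,\theta_2,\theta_3}(X,Y)$ is a linear combination of $\sin(\theta_3/2)$ and $\cos(\theta_3/2)$. It is therefore a sinusoid in $\theta_3/2$ with period $4\pi$ in $\theta_3$, so it has exactly one zero in $[0,2\pi)$ unless it vanishes identically. Identical vanishing for a nonzero point would force both $(X,Y)$-coefficient combinations to vanish simultaneously. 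I would check that this happens only at the origin, which is a short direct computation.
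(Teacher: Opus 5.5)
Your Steps 1--2 are exactly the paper's argument: the leaves of the foliation in Theorem~\ref{theorem::foliation} are disjoint, so the crossings $\{Q_{\theta_1,\theta_2,\theta_3}=0\}$ and $\{Q_{\theta_1,\theta_2,\theta_4}=0\}$ meet inside $\{W\le 0\}$ only at the origin, which is $[0,0,1]^t$. One caution about your algebraic fallback: the coefficients of $\sin(\theta_3/2)$ and $\cos(\theta_3/2)$ do not vanish together only at the origin, because by Lemma~\ref{lemma::fixed_point} and the evenness of $Q$ they also vanish at $\pm p_\nabla=\pm(\tan(\theta_1/2),\tan(\theta_2/2))$ when $\theta_1,\theta_2\neq\pi$, so that route only closes inside $\{W\le 0\}$, where you must exclude $\pm p_\nabla$ using $W(\pm p_\nabla)>0$.
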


\begin{proof}
The foliation of $W \le 0$ implies that the leaves do not intersect with each other.
Thus, the intersection of two crossings $I(\theta_1) \cap I(\theta_2) \cap I(\theta_3)$ and $I(\theta_1) \cap I(\theta_2) \cap I(\theta_4)$ is a singleton.
\end{proof}

\begin{proof}[Proof of Theorem \ref{theorem::foliation}]
The coefficients of the function \( Q_{\theta_1,\theta_2,\theta_3} \) satisfy the following properties: when \( \theta_1 + \theta_2 < 2\pi \), \( c_{22}'' > 0 \) if and only if \( \theta_3 > \theta_1 + \theta_2 \) and when \( \theta_1 + \theta_2 > 2\pi \), \( c_{22}'' > 0 \) if and only if \( \theta_3 < (\theta_1 + \theta_2) - 2\pi \); additionally \( c_{20}'' > 0 \) if and only if \( \theta_3 > \theta_2 - \theta_1 \) and \( c_{02}'' > 0 \) if and only if \( \theta_3 < 2\pi - (\theta_2 - \theta_1) \) with \( c_{11}'' \) always being negative.

The set of singular angles $S \subset [0,2\pi]$ has at most five elements, including $0$ and $2\pi$. For the case $|S| = 5$, the complement consists of four intervals $U_1, U_2, U_3, U_4$, ordered left to right. For each $U_i = (l, r)$, we consider an angle $\theta_3 \in U_i$.

\textbf{Case 1:} $\theta_1 + \theta_2 < 2\pi$. Consequently, we have $\theta_2 - \theta_1 < \theta_1 + \theta_2$ and the set of singular angles is
\[
S = \{0, \theta_2 - \theta_1, \theta_1 + \theta_2, 2\pi, 2\pi - (\theta_2 - \theta_1)\} \subset [0,2\pi].
\]

\textbf{Case 1.1:} $0 < 2\pi - (\theta_2 - \theta_1) < \theta_2 - \theta_1 < \theta_1 + \theta_2 < 2\pi$.

The signs of the coefficients are given in the following table.

\begin{center}
\begin{tabular}{cc|ccc|ccc|l}
\hline
$l$ & $r$ & $c_{22}''$ & $c_{20}''$ & $c_{02}''$ & $a$ & $b$ & $c$ & circular sectors \\
\hline
$0$ & $2\pi-(\theta_2-\theta_1)$ & $<0$ & $<0$ & $>0$ & $<0$ & $<0$ & $>0$ & $\mathcal{P}_{\epsilon,-1,\sigma,U_1}$ \\
$2\pi-(\theta_2-\theta_1)$ & $\theta_2-\theta_1$ & $<0$ & $<0$ & $<0$ & $<0$ & $<0$ & $<0$ & $\mathcal{P}_{\epsilon,\tau,-1,U_2}$ \\
$\theta_2-\theta_1$ & $\theta_1+\theta_2$ & $<0$ & $>0$ & $<0$ & $>0$ & $<0$ & $<0$ & $\mathcal{P}_{\epsilon,+1,\sigma,U_3}$ \\
$\theta_1+\theta_2$ & $2\pi$ & $>0$ & $>0$ & $<0$ & $<0$ & $>0$ & $>0$ & $\mathcal{P}_{\epsilon,-1,\sigma,U_4}$ \\
\hline
\end{tabular}
\end{center}

We claim that the disk $\{W \le 0\}$ consists of the closures of $16$ circular sectors. More precisely, the first quadrant comprises $\mathcal{P}_{+1,+1,+1,U_3}$,
$\mathcal{P}_{+1,-1,+1,U_4}$
and 
$\mathcal{P}_{+1,-1,+1,U_1}$ 
arranged counterclockwise, 
while the second quadrant consists of 
$\mathcal{P}_{-1,+1,-1,U_2}$,
$\mathcal{P}_{-1,+1,-1,U_3}$,
$\mathcal{P}_{-1,-1,-1,U_4}$,
$\mathcal{P}_{-1,-1,-1,U_1}$,
$\mathcal{P}_{-1,-1,-1,U_2}$ 
also in counterclockwise order. 
The lower half-plane contains sectors with analogous subscripts obtained by reversing the sign of $\epsilon$.
See Figure \ref{figure::foliations} - (a) for an example.
To prove this claim, let $\chi > 0$ be sufficiently small
and
revisit that
\begin{align*}
\widehat{x}_\tau(\sigma \chi)
&= \dfrac{\sigma\chi}{2a}
\left(
\left( \sigma\chi - 2b \right)
+ \tau \sigma
\sqrt{\left( \sigma\chi - 2b \right)^2 - 4ac}
\right) \\
&= \dfrac{\sigma\chi}{2 c_{20}''}
\left(
\left(-\sigma\chi \cdot c_{22}'' - 2 c_{22}'' \right)
+ \tau \sigma \cdot \sgn(c_{22}'') \cdot
\sqrt{
\left(-\sigma\chi \cdot c_{22}'' - 2 c_{11}''\right)^2
- 4 c_{20}'' c_{02}''
}
\right)
\end{align*}
for $\theta_3$ satisfying $c_{22}'' \neq 0$ and $abc \neq 0$.
For any two potentially adjacent circular sectors, i.e., 
$\mathcal{P}_{\epsilon,\tau,\sigma,[l,r]}$ and $\mathcal{P}_{\epsilon',\tau',\sigma',[l',r']}$ where $r=l'$,
one can then verify that the following limit holds:
\[
\lim_{\theta_3 \rightarrow r_{-}} \widehat{x}_{\tau}(\sigma \chi) = \lim_{\theta_3 \rightarrow l'_{+}} \widehat{x}_{\tau'}(\sigma' \chi).
\]

Since the curves $\gamma_{\epsilon,\tau,\sigma}(\theta_3)$ deform continuously for $\theta_3 \in [0,2\pi] \setminus S$
and the loci of $Q_{\theta_1,\theta_2,\theta_3}=0$ also vary continuously with $\theta_3 \in [0,2\pi]$,
we obtain a foliation of the open disk ${W < 0}$ that extends continuously to a foliation of the closed disk ${W \le 0}$.

\textbf{Case 1.2:} $0 < \theta_2 - \theta_1 < 2 \pi - (\theta_2 - \theta_1) < \theta_1 + \theta_2 < 2 \pi$.

The signs of the coefficients are given in the following table.

\begin{center}
\begin{tabular}{cc|ccc|ccc|l}
\hline
$l$ & $r$ & $c_{22}''$ & $c_{20}''$ & $c_{02}''$ & $a$ & $b$ & $c$ & circular sectors \\
\hline
$0$ & $\theta_2-\theta_1$ & $<0$ & $<0$ & $>0$ & $<0$ & $<0$ & $>0$ & $\mathcal{P}_{\epsilon,-1,\sigma,U_1}$ \\
$\theta_2-\theta_1$ & $2\pi-(\theta_2-\theta_1)$ & $<0$ & $>0$ & $>0$ & $>0$ & $<0$ & $>0$ & $\mathcal{P}_{\epsilon,\tau,+1,U_2}$ \\
$2\pi-(\theta_2-\theta_1)$ & $\theta_1+\theta_2$ & $<0$ & $>0$ & $<0$ & $>0$ & $<0$ & $<0$ & $\mathcal{P}_{\epsilon,+1,\sigma,U_3}$ \\
$\theta_1+\theta_2$ & $2\pi$ & $>0$ & $>0$ & $<0$ & $<0$ & $>0$ & $>0$ & $\mathcal{P}_{\epsilon,-1,\sigma,U_4}$ \\
\hline
\end{tabular}
\end{center}

We claim that the disk ${W \leq 0}$ is comprised of the closures of $16$ circular sectors; specifically, $3$ sectors lie in the first quadrant and $5$ in the second.
See Figure \ref{figure::foliations} - (b) for an example.
Indeed, an argument analogous to that in Case 1.1 shows that for a sufficiently small $\chi > 0$, one can glue the slices across all sectors to induce the desired foliation of the disk.

\textbf{Case 1.3:} $0 < \theta_2-\theta_1 < \theta_1 + \theta_2 < 2\pi - (\theta_2 - \theta_1) < 2\pi$.

The signs of the coefficients are given in the following table.

\begin{center}
\begin{tabular}{cc|ccc|ccc|l}
\hline
$l$ & $r$ & $c_{22}''$ & $c_{20}''$ & $c_{02}''$ & $a$ & $b$ & $c$ & circular sectors \\
\hline
$0$ & $\theta_2-\theta_1$ & $<0$ & $<0$ & $>0$ & $<0$ & $<0$ & $>0$ & $\mathcal{P}_{\epsilon,-1,\sigma,U_1}$ \\
$\theta_2-\theta_1$ & $\theta_1+\theta_2$ & $<0$ & $>0$ & $>0$ & $>0$ & $<0$ & $>0$ & $\mathcal{P}_{\epsilon,\tau,+1,U_2}$ \\
$\theta_1+\theta_2$ & $2\pi-(\theta_2-\theta_1)$ & $>0$ & $>0$ & $>0$ & $<0$ & $>0$ & $<0$ & $\mathcal{P}_{\epsilon,\tau,+1,U_3}$ \\
$2\pi-(\theta_2-\theta_1)$ & $2\pi$ & $>0$ & $>0$ & $<0$ & $<0$ & $>0$ & $>0$ & $\mathcal{P}_{\epsilon,-1,\sigma,U_4}$ \\
\hline
\end{tabular}
\end{center}

We claim that the disk ${W \leq 0}$ is comprised of the closures of $16$ circular sectors; specifically, $6$ sectors lie in the first quadrant and $2$ in the second.
See Figure \ref{figure::foliations} - (c) for an example.
The proof is similar to the argument in Case 1.1.

\textbf{Case 2:} $\theta_1+\theta_2 > 2\pi$. Consequently, we have $\theta_1 + \theta_2 - 2\pi < 2\pi - (\theta_2 - \theta_1)$ and the set of singular angles is
\(
S = \left\{0,\ \theta_1 + \theta_2 - 2\pi,\ 2\pi - (\theta_2 - \theta_1),\ 2\pi,\ \theta_2 - \theta_1\right\} \subset [0,2\pi]
\).
By enumerating the position of $\theta_2 - \theta_1$, we obtain three analogous cases where in each case the disk can be foliated in a similar manner.

\textbf{Degenerated case:}
A one-codimensional locus of $(\theta_1,\theta_2)$ results in the set of singular angles $S$ having fewer than five elements. Since the disk ${W \le 0}$ and the loci ${Q_{\theta_1,\theta_2,\theta_3} = 0}$ for $\theta_3 \in [0,2\pi]$ deform uniformly continuously, every degenerate configuration also yields a foliation of the disk, as desired.
\end{proof}

\section{Ford domain}
\label{section::ford_domain}

In the Siegel domain model of the complex hyperbolic space $\hc$,
consider a discrete subgroup $\Gamma \le \PU(H_2)$.
The \emph{Ford domain} centred at $q_{\infty}$ is defined to be the closed domain
\[
D_{\Gamma} \coloneqq \bigcap_{g \in \Gamma\setminus \Gamma_\infty} \overline{I_+(g)}
\setminus 
\Proj(V_{H_2}^0)
\subset
\Proj(V_{H_2}^-),
\]
where $\overline{I_+(g)} = I(g) \sqcup I_+(g)$
and
$\Gamma_\infty \le \Gamma$ is the stabilizer subgroup of $\Gamma$ with respect to $q_{\infty}$.
The Ford domain is a polytope of dimension $4$ in $\hc$, which is a special cellular complex,
whose cells of dimension $0$, $1$, $2$ and $3$ are called \emph{vertices}, \emph{edges}, \emph{ridges} and \emph{sides}, respectively.
We let $\overline{D_{\Gamma}}$ be the closure of $D_\Gamma$ in $\overline{\hc}$.
Define the \emph{ideal boundary} $\partial_\infty D_{\Gamma}$ to be the intersection of $\overline{D_{\Gamma}}$ with $\partial \hc$,
which is a $3$-dimensional submanifold of $\partial \hc$ with boundary.

Using the standard model of the intersection $\{I(\theta)\}$ introduced in Subsection~\ref{subsection::standard}, we obtain the following standard model for the Ford domain of a finite cyclic group of real elliptic isometries.

\begin{definition}
For $n \ge 3$, define the polytope
\(
D_n \coloneqq
\bigcap_{k=1}^{n-1}
\overline{I_+\left( 2\pi k/n \right)}
\setminus
\Proj(V_{H_1}^0)
\subset
\Proj(V_{H_1}^-)
\).
\end{definition}

\begin{proposition}
\label{proposition::standard_model_Dn}
Let $f$ be a regular, real elliptic isometry of order $n \ge 3$ such that $q_\infty \in \Torus_f$. Then there exists an isometry sending each $I(f^k)$ to $I(2\pi k/n)$, for $k = 1,\dots,n-1$, which induces an isometry between the Ford domain $D_{\langle f\rangle}$ and the polytope $D_n$.
\end{proposition}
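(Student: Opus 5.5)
Conjugate $f$ into the standard form and transport all the data. First, since $f$ is regular real elliptic of order $n\ge 3$, it is conjugate in $\PU(2,1)$ to $E_{2\pi j/n,-2\pi j/n}$ for some $j$ coprime to $n$. Replacing the generator $f$ by $f^{j'}$ with $jj'\equiv 1 \pmod n$ does not change $\langle f\rangle$, its Ford domain, or the set $\{I(f^k)\}_{k=1}^{n-1}$; the only effect is a relabelling of $k$. So we may assume $f$ is conjugate to $E_{2\pi/n,-2\pi/n}$. Let $h$ be a conjugating isometry with $h f h^{-1}=E_{2\pi/n,-2\pi/n}$, so that $hf^kh^{-1}=E_{2\pi k/n,-2\pi k/n}$. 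Then $h(\Torus_f)=\Torus_{E}=\Torus$, since fixed tori are defined by the $\R$-circles preserved and are therefore equivariant under conjugation. In particular $q\coloneqq h(q_\infty)\in\Torus$.

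Next, use transitivity on the torus. By the second part of Proposition~\ref{proposition::spheres_EEE}, for any two points of $\Torus$ there is an isometry carrying the family $I(\EE_{-\theta,\theta,q})$ to $I(\EE_{-\theta,\theta,q'})$ for every $\theta$. We therefore take $q'$ to be the standard point $(-1-2i,\sqrt2)$ of Subsection~\ref{subsection::standard}. I expect the main obstacle to lie here: keeping track of which marked point each isometric sphere is taken about. Isometric spheres are not conjugation-invariant objects in general; they depend on the marked point. So the key observation is that for any isometry $\phi$, the isometric sphere of $g$ about $q_\infty$ is sent by $\phi$ to the isometric sphere of $\phi g\phi^{-1}$ about $\phi(q_\infty)$. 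This holds because both are defined by equality of extended Cygan distances (equivalently of $|\langle\cdot,\cdot\rangle|$ up to normalisation by the marked point).

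With that observation in hand, the composite $\Phi\coloneqq (H_2T_{-q'}\Cayley)^{-1}\circ(\text{isometry of Prop.~\ref{proposition::spheres_EEE}})\circ h$ sends $q_\infty$ to $q^{\ball}$. It sends the sphere $I(f^k)$ about $q_\infty$ to the isometric sphere of the standard elliptic about $q^{\ball}$, which is $I(\theta)$ with $\theta=\pm 2\pi k/n$. The sign convention ($\EE_{-\theta,\theta}$ versus $\EE_{\theta,-\theta}$) only permutes $k\mapsto n-k$, so the collection $\{I(2\pi k/n)\}_{k=1}^{n-1}$ is preserved. Here I would check carefully that the isometry in Proposition~\ref{proposition::spheres_EEE} fixes or reflects the parameter $\theta$ consistently; if it reverses orientation along $C_q$, compose with the relabelling above.

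Finally, compute the Ford domain. The group $\langle f\rangle$ is finite and, being regular elliptic, has no boundary fixed point. Hence $\Gamma_\infty$ is trivial and $D_{\langle f\rangle}=\bigcap_{k=1}^{n-1}\overline{I_+(f^k)}\setminus\partial\hc$. Since $\Phi$ preserves interiors and exteriors (it maps the defining inequalities to the corresponding ones, by Proposition~\ref{proposition::standard-spheres}) and maps $\partial\hc$ to $\Proj(V_{H_1}^0)$, it carries $D_{\langle f\rangle}$ isometrically onto $D_n$.
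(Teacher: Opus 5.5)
Your proposal is correct and is essentially the argument the paper compresses into the single line ``this derives from Proposition~\ref{proposition::spheres_EEE}''. You conjugate $f$ to the standard real elliptic, use naturality of isometric spheres and their interiors under conjugation, move $h(q_\infty)\in\Torus$ to the standard point via Proposition~\ref{proposition::spheres_EEE}, and pull back along $H_2T_{-q'}\Cayley$.

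Two small bookkeeping points. First, the composite should read $\Phi=(H_2T_{-q'}\Cayley)^{-1}\circ\Psi\circ H_2T_{-q}\circ h$: the isometry $\Psi$ of Proposition~\ref{proposition::spheres_EEE} is a Heisenberg similarity acting on spheres already taken about $q_\infty$. Second, your relabelling $k\mapsto j'k$ is not a mere convenience: the literal assignment $I(f^k)\mapsto I(2\pi k/n)$ is only available when $f$ has angle $\pm 2\pi/n$, and otherwise only the family $\{I(f^k)\}_{k=1}^{n-1}$, hence the Ford domain, is matched.
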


\begin{proof}
This derives from Proposition \ref{proposition::spheres_EEE}.
\end{proof}

We now prove Theorem \ref{thmx::ford} by describing the cellular structure of $D_n$ below.

\begin{theorem}
\label{theorem::ford_domain}
For $n \ge 3$, we have:

\begin{enumerate}[label=(\roman*)]

\item The boundary of the ideal boundary $\partial \partial_\infty D_n$ is a $2$-dimensional cellular complex homeomorphic to the $2$-sphere.
The cellular structure is obtained by gluing the following polgons along their edges:

\begin{enumerate}
\item The polygon $f_1$ is a $2(n-2)$-gon.
Its boundary, listed in counterclockwise order, is given by the sequence of edges:
\(
e_{1,n-1},
e'_{1,2}, 
\ldots,
e'_{1,n-2},
e'_{1,n-1},
e_{1,2},
\ldots,
e_{1,n-2}
\).

\item The polygon $f_{n-1}$ is a $2(n-2)$-gon.
Its boundary, listed in counterclockwise order, is given by the sequence of edges:
\(
e'_{1,n-1},
e'_{n-1,n-2}, 
\ldots,
e'_{n-1,2},
e_{1,n-1},
e_{n-1,n-2},
\ldots,
e_{n-1,2}
\).

\item When $n = 4$, the polygons $f_2$ and $f'_2$ are bi-gons, whose boundaries are given by 
$e_{1,2}$, $e_{n-1,2}$ and $e'_{1,2}$, $e'_{n-1,2}$, respectively.
When $n \ge 5$, the polygons $f_2$ and $f'_2$ are triangles, whose boundaries, listed in counterclockwise order, are given by $e_{1,2}$, $e_{n-1,2}$, $e_{2,3}$ and
$e'_{1,2}$, $e'_{n-1,2}$, $e'_{2,3}$, respectively;
the triangles $f_{n-2}$ and $f'_{n-2}$ have the boundaries, listed in counterclockwise order, given by $e_{n-1,n-2}$, $e_{1,n-2}$, $e_{n-3,n-2}$ and
$e'_{n-1,n-2}$, $e'_{1,n-2}$, $e'_{n-3,n-2}$, respectively.

\item When $n \ge 6$, for $3 \le k \le n-3$, the quadrangle $f_k$ has the boundary, listed in counterclockwise order, given by $e_{1,k}$, $e_{k-1,k}$, $e_{n-1,k}$ and $e_{k,k+1}$;
the quadrangle $f'_k$ has the boundary, listed in counterclockwise order, given by $e'_{1,k}$, $e'_{k-1,k}$, $e'_{n-1,k}$ and $e'_{k,k+1}$.
\end{enumerate}
Here, the subscript $1\le k\le n-1$ of a polygon indicates that the polygon belongs to the the ideal $2$-sphere $\partial_\infty I(2\pi k/n) \coloneqq I(2\pi k/n) \cap \partial \hc$.
The subscripts $j, k$ of an edge indicate that the edge belongs to the ideal circle $I(2\pi j/n) \cap I(2\pi k/n) \cap \partial \hc$.

\item The closure of $\partial D_n$ in $\overline{\hc}$, denoted by $\overline{\partial D_n}$, is a $3$-dimensional cellular complex homeomorphic to the closed $3$-ball.
The cellular structure is obtained by taking the cone of $\partial\partial_\infty D_n$.
\end{enumerate}
\end{theorem}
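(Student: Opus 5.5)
The plan is to work entirely on the ideal boundary first, and then deduce (ii) from the cone structure supplied by Proposition~\ref{prox::intersection}. By Proposition~\ref{proposition::spheres_EEE} we may replace the unit-ball picture by the Heisenberg picture. There $q_\infty$ is the marked point, and $\partial_\infty I(2\pi k/n)$ is the ideal Cygan sphere $S_k$ centred at $[-\cot(\pi k/n),0]$ with radius $1/\sin(\pi k/n)$. The centres all lie on the real axis $\R\times\{0\}\subset\mathcal{N}$, which is the image of the $\R$-circle $C_q$. The ideal boundary $\partial_\infty D_n$ is the closed complement in $\mathcal{N}$ of the union of the open Cygan balls bounded by the $S_k$. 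Hence $\partial\partial_\infty D_n$ is the boundary of the union $B=\bigcup_k B_k$. The first step is to show that $B$ is a topological $3$-ball, so that its boundary is a $2$-sphere.

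\textbf{Ideal boundary: combinatorics.} Three structural facts drive the combinatorics.
\begin{itemize}
\item There is a reflection symmetry $k\leftrightarrow n-k$, coming from $\theta\mapsto 2\pi-\theta$, $[z,t]\mapsto[-\bar z,t]$. There is also the symmetry $[z,t]\mapsto[\bar z,-t]$, an antiholomorphic involution fixing the real axis. The latter produces the primed/unprimed duplication $f_k\leftrightarrow f'_k$ and $e\leftrightarrow e'$.
\item By Proposition~\ref{prox::intersection}(1)--(3) restricted to the boundary, each pair $S_j\cap S_k$ is a circle and each triple meets in at most four points, the ends of the crossing. Quadruple intersections are empty on $\partial\hc$. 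Hence $\partial B$ is a cell complex whose vertices are triple points, whose edges are arcs of the circles $S_j\cap S_k$, and whose faces are pieces of the $S_k$.
\item To identify which pieces are exposed, I use the ordering of the centres $-\cot(\pi k/n)$, which increase with $k$, together with the radii. $S_1$ and $S_{n-1}$ are the two largest spheres, sitting at the two far ends. The middle spheres $S_k$ with $2\le k\le n-2$ are smaller and are nested in the region around the real axis.
\end{itemize}

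I would then verify that $S_k$ contributes exactly two faces, $f_k$ and $f'_k$, lying in the two half-spaces swapped by $[z,t]\mapsto[\bar z,-t]$. Each such face is bounded by arcs on $S_1$, $S_{n-1}$, $S_{k-1}$ and $S_{k+1}$. I would check this using the triple-point data: along the circle $S_1\cap S_{n-1}$, only the spheres $S_2,\dots,S_{n-2}$ cut in, in order, and by the crossing structure each cuts at exactly two boundary points. Concretely, I would use the $(X,Y)$ coordinates from Subsection~\ref{subsection::3spheres} on the Giraud disk $I(2\pi/n)\cap I(2\pi(n-1)/n)$, with $\theta_1=2\pi/n$ and $\theta_2=2\pi(n-1)/n$, so that $\theta_1+\theta_2=2\pi$. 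The four endpoints of each crossing on $W=0$ are ordered around the circle by the foliation of Theorem~\ref{theorem::foliation}; this ordering gives the cyclic edge lists of $f_1$ and $f_{n-1}$. Doing the same for the Giraud disks $I(2\pi/n)\cap I(2\pi k/n)$ shows that on $S_1$ the faces are only $f_1$, and that its boundary alternates as stated. Adjacency of $S_k$ with $S_{k\pm1}$, and non-adjacency with $S_{k\pm2}$ on the exposed part, would come from checking which triple points lie outside all other balls, using the disjointness given by Proposition~\ref{prox::intersection}(3). The small cases $n=4$ (bigons) and $n=5$ (triangles only) fall out of the same list. Finally, an Euler characteristic count gives $V-E+F=2$: there are $2(n-2)$ faces, and the edge and vertex counts are read off from the lists. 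Together with the fact that each edge is shared by exactly two faces, this confirms that the complex is a $2$-sphere.

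\textbf{Cone structure (ii).} Each face of $\partial D_n$ in $\hc$ is a piece of some $I(2\pi k/n)$, and each of its ridges lies in a Giraud disk. By the foliation of Corollary~\ref{corollary::foliation_of_intersections} and the crossings of Corollary~\ref{corollary::intersection_of_three}, every crossing is a cone from $\fp_f$ over its four ideal endpoints. Likewise each Giraud disk is a cone from $\fp_f$ over its ideal circle, by the radial monotonicity of $W$ in the proof of Theorem~\ref{theorem::intersection_of_two}. I would then show that each $I(\theta)$ is star-shaped from $\fp_f$ in the same sense. Moving $\fp_f$ to the origin in the ball model and using the radial structure of the defining inequality in Proposition~\ref{proposition::standard-spheres}, the cellular structure of $\overline{\partial D_n}$ becomes the cone of $\partial\partial_\infty D_n$, and hence a closed $3$-ball.

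The main obstacle is the combinatorial step: certifying exactly which arcs and faces are exposed, that is, not swallowed by another ball. My first attempt would use the explicit crossing endpoints in $(X,Y)$ coordinates together with the sign tables in the proof of Theorem~\ref{theorem::foliation}.
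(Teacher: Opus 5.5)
You have correctly located the hard step, but the proposal contains no mechanism for resolving it, so part (i) is not established. All of the combinatorics depends on knowing, for each ideal Giraud circle $\partial_\infty(I(\theta_i)\cap I(\theta_j))$ and each further angle $\theta_l$, which arcs of the circle lie in $I_-(\theta_l)$. Note first that there are four cut points, not two. The crossing has four ideal ends, so the circle splits into four arcs that lie alternately inside and outside $I_-(\theta_l)$.

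None of the tools you cite decides which alternate pair lies inside:
\begin{itemize}
\item Proposition~\ref{prox::intersection}(3) only says that no ideal point lies on four spheres.
\item The foliation only places the cut points: after normalisation they sit at $\theta_l/4 \bmod \pi/2$, as in Proposition~\ref{proposition::homeomorphism-to-3ball}(d).
\item The sign tables in the proof of Theorem~\ref{theorem::foliation} record the signs of the coefficients of $Q$. By themselves they describe the shape of the crossing, and you do not say how to turn them into side information.
\end{itemize}

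The missing ingredient is Proposition~\ref{proposition::homeomorphism-to-3ball}(e). The side is decided by the sign of $\widehat{Q}$, not of $Q$. Here $\widehat{Q}$ equals $Q$ times $64\sin(\theta_3/2)\sin((\theta_1-\theta_3)/2)\sin((\theta_2-\theta_3)/2)$, a factor that is negative exactly for $\theta_3\in(\theta_1,\theta_2)$. The paper anchors the pattern at the point $(X,0)$ of $W=0$, which lies on the $(\theta_2-\theta_1)$-crossing and where $\widehat{Q}=c'_{20}X^2$. The result is that the arcs in $I_-(\theta_3)$ are the two opposite quarter-arcs starting at $\theta_3/4$ when $\theta_3\notin[\theta_1,\theta_2]$, and the complementary pair otherwise.

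This gives the nesting lemma, Corollary~\ref{corollary::beside-f1andfn-1}(b): $I_+(\theta_1)\cap I_+(\theta_2)\cap I(\theta_3)\cap I(\theta_5)\subset I_-(\theta_4)$ for $\theta_1<\theta_3<\theta_4<\theta_5<\theta_2$. That lemma does two jobs. It makes $f_1$ and $f_{n-1}$ single $2(n-2)$-gons with the alternating edge order. It also rules out exposed arcs on $S_j\cap S_k$ for $2\le j$, $j+2\le k\le n-2$. After that, the paper glues $f_1\cup f_{n-1}$ into an annulus and caps each boundary circle with the chain of triangles and quadrangles. No Euler characteristic count is needed, and one read off from the very edge lists you are trying to prove would be circular.

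For (ii), your step ``each $I(\theta)$ is star-shaped from $\fp_f$ by the radial structure of Proposition~\ref{proposition::standard-spheres}'' fails as stated. Write $L_\theta(\omega)=(1-i)\omega_1e^{-i\theta}-(1+i)\omega_2e^{i\theta}$. Along $t\omega$ the defining equation becomes $t^2(|L_\theta|^2-|L_0|^2)=4t\,\Re(L_\theta-L_0)$. So a generic ray from the origin, which in the ball model is a geodesic from $\fp_f$, meets $I(\theta)$ in at most one further point and is not contained in it.

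The step is also unnecessary. The paper uses only two facts: every edge inside $\hc$ is an arm of a crossing through $\fp_f$ (Corollary~\ref{corollary::intersection_of_three}), and $\fp_f$ is the only interior vertex (Corollary~\ref{corollary::intersection_of_four}). Hence ridges are cones on their ideal edges and sides are cones on their ideal polygons.

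A minor slip: $[z,t]\mapsto[-\bar z,t]$ is not a Cygan isometry. The symmetry $S_k\leftrightarrow S_{n-k}$ is realised by the Heisenberg rotation $[z,t]\mapsto[-z,t]$.
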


We postpone the proof of Theorem \ref{theorem::ford_domain} to the following subsections.

%
%
\subsection{Boundary of the ideal boundary of the Ford domain}

Throughout this subsection, we work in the unit ball model of $\hc$.

\begin{proposition}
\label{proposition::homeomorphism-to-3ball}
Let $0 < \theta_1 < \theta_2 < 2\pi$. There exists a homeomorphism $\Psi = \Psi_{\theta_1,\theta_2}: I(\theta_1) \to \Ball_3$, where $\Ball_3 \coloneqq \{(x,y,z) \in \R^3 \mid x^2+y^2+z^2 \le 1\}$, with the following properties:
\begin{enumerate}[label=(\alph*)]
    \item It fixes the origin: $\Psi([0,0,1]^t) = (0,0,0)$.
    \item It maps the intersection $I(\theta_1) \cap I(\theta_2)$ onto the unit disk in the $xy$-plane:
    \[
    \Psi(I(\theta_1) \cap I(\theta_2)) = \Ball_2 \coloneqq \{(x,y,0) \in \R^3 \mid x^2+y^2 \le 1\}.
    \]
    \item It sends $I_-(\theta_2) \cap I(\theta_1)$ to the lower half-ball $\{(x,y,z) \in \Ball_3 \mid z < 0\}$ and $I_+(\theta_2) \cap I(\theta_1)$ to the upper half-ball $\{(x,y,z) \in \Ball_3 \mid z > 0\}$.
    \item For any $\theta_3 \in (0,2\pi) \setminus \{\theta_1, \theta_2\}$, the image of the triple intersection is a union of four rays:
    \[
    \Psi(I(\theta_1) \cap I(\theta_2) \cap I(\theta_3)) = \{(0,0,0)\} \cup \left\{ (x,y,0) \in \Ball_2 \ \middle| \ \arg(x+iy) \equiv \theta_3 /4 \pmod{\pi/2} \right\}.
    \]
    \item For a point $(x,y,0) \in \partial \Ball_2$:
    \begin{itemize}
        \item If $\theta_3 \in (0, \theta_1) \cup (\theta_2, 2\pi)$, then $(x,y,0) \in \Psi(\partial_\infty(I(\theta_1) \cap I(\theta_2) \cap I_-(\theta_3)))$ if and only if
        \[
        \arg(x+iy) \in \left(\frac{\theta_3}{4}, \frac{\theta_3}{4}+\frac{\pi}{2}\right) \cup \left(\frac{\theta_3}{4}+\pi, \frac{\theta_3}{4}+\frac{3\pi}{2}\right).
        \]
        \item If $\theta_3 \in (\theta_1, \theta_2)$, then $(x,y,0) \in \Psi(\partial_\infty(I(\theta_1) \cap I(\theta_2) \cap I_+(\theta_3)))$ if and only if the same condition holds.
    \end{itemize}
\end{enumerate}
\end{proposition}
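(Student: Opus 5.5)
The plan is to build $\Psi$ in two stages. First we construct a homeomorphism of the Giraud disk $I(\theta_1)\cap I(\theta_2)$ onto $\Ball_2$ that straightens the foliation of Theorem \ref{theorem::foliation}. Then we extend it to all of $I(\theta_1)$ using the fact that $I(\theta_1)$ is a closed $3$-ball cut by the disk into two half-balls.

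\textbf{The disk.} In the coordinates $(X,Y)=(\cot(\psi_1/2),\cot(\psi_2/2))$, the disk is $\{W\le 0\}$ and the origin $(0,0)$ corresponds to $[0,0,1]^t$. Theorem \ref{theorem::foliation} shows that the punctured disk is foliated by the crossings $\{Q_{\theta_1,\theta_2,\theta_3}=0\}$ with $\theta_3\in[0,2\pi)$. Each crossing consists of four arcs from the origin to the boundary circle, and by Proposition \ref{proposition::Q=0_and_W=0} each arc meets $W=0$ in exactly one point.

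By Proposition \ref{proposition::circular-sector}, the disk is the union of the $16$ closed circular sectors $\mathcal{P}_{\epsilon,\tau,\sigma,U_i}$. Each sector is parametrised homeomorphically by a standard sector, with angular parameter $\theta_3\in U_i$ and radial parameter the normalised $\chi/r_W(\theta_3)$.

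Going once around the origin, we meet every leaf arc exactly once. As $\theta_3$ runs over $[0,2\pi)$, the four arcs of a single crossing occupy four positions, and the counterclockwise cyclic order of all arcs is the one listed in the proof of Theorem \ref{theorem::foliation}. Hence we can choose a monotone angular reparametrisation in which the arcs of the $\theta_3$-crossing sit at angles $\theta_3/4+k\pi/2$. The combinatorics of Cases 1.1--1.3, where each $U_i$ contributes four sectors and adjacent sectors glue by the limit identity for $\widehat{x}_\tau$, is exactly what makes this a continuous bijection.

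We then send the arc point at normalised parameter $r$ to $r e^{i(\theta_3/4+k\pi/2)}$. Continuity across the singular angles in $S$ comes from the continuity of the loci of $Q=0$ in $\theta_3$. The identity $Q_{\theta_1,\theta_2,0}=-Q_{\theta_1,\theta_2,2\pi}$ closes up the family after one full turn. This defines a homeomorphism $\psi_0$ from the disk onto $\Ball_2$ with $\psi_0(0)=0$, and it gives (a), (b) and (d).

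\textbf{Extension and sidedness.} $I(\theta_1)$ is a Cygan sphere, so it is a closed $3$-ball. Inside it, the disk $I(\theta_1)\cap I(\theta_2)$ is properly embedded, with boundary circle on $\partial_\infty I(\theta_1)\cong S^2$. It therefore separates $I(\theta_1)$ into two closed $3$-balls, namely its intersections with $\overline{I_\pm(\theta_2)}$. This follows from the Schoenflies theorem applied on the sphere, plus the coequidistant bisector structure, in which $|\langle \mathbf p,\mathbf q_\infty\rangle|/|\langle\mathbf p,f^{-1}\mathbf q_\infty\rangle|$ is monotone along slices. Using the Alexander trick, we extend $\psi_0$ to each half, sending the $I_-(\theta_2)$ side to $z<0$ and the $I_+(\theta_2)$ side to $z>0$. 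This gives (c).

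\textbf{Property (e).} This is the main obstacle. On $\partial\Ball_2$, the ideal boundary points of the four arcs of the $\theta_3$-crossing sit at angles $\theta_3/4+k\pi/2$. Between them, the sign of $\widehat Q$ (equivalently of $Q$) determines whether a point lies in $I_-(\theta_3)$. $\widehat Q$ and $Q$ differ by a positive constant factor times $\sin(\theta_3/2)\sin((\theta_1-\theta_3)/2)\sin((\theta_2-\theta_3)/2)$. This factor is negative exactly when $\theta_3\in(\theta_1,\theta_2)$, which explains the swap between $I_-$ and $I_+$ in the two bullets.

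It then remains to show that $Q<0$ on the boundary arcs with angle in $(\theta_3/4,\theta_3/4+\pi/2)$ and its antipode. The plan is to do this by a continuity-in-$\theta_3$ argument: the sign pattern can only change when a crossing passes through, which never happens by the foliation. So it suffices to check one value, for instance $\theta_3$ near $0$ where $c_{22}''<0$, using the sector table of Case 1.1. The delicate point is ensuring that the angular normalisation is compatible with this sign check. We expect to verify this through the signs of the coefficients in the tables of Theorem \ref{theorem::foliation}.
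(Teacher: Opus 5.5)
Your argument is correct. For (a)--(d) it uses the same ingredients as the paper, in the opposite order.

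\textbf{Parts (a)--(d).} The paper first takes an abstract homeomorphism $\Psi_1$ satisfying (a)--(c). It then isotopes $\Psi_1$, possibly after the reflection $(x,y,z)\mapsto(-x,y,z)$, so as to straighten the foliation. You instead straighten the foliation on the Giraud disk first and then extend to the ball by the Alexander trick. Two small points:
\begin{itemize}
\item You should also allow that reflection. You assert that the leaves appear in increasing $\theta_3$ counterclockwise, but the paper notes that the orientation of the sector homeomorphisms is not known.
\item Your radial parameter $\chi/r_W(\theta_3)$ degenerates on the axis arcs at the angles where $c_{20}''c_{02}''=0$, since $XY\equiv 0$ there. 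The paper glosses over this equally.
\end{itemize}

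\textbf{Part (e): the real difference.} The paper uses one fixed reference point, the boundary point $(X,0)$ with $X>0$.
\begin{itemize}
\item This point lies on the $(\theta_2-\theta_1)$-crossing, so its image has argument $\equiv(\theta_2-\theta_1)/4 \pmod{\pi/2}$.
\item Moreover $\widehat{Q}(X,0)=c_{20}'X^2$, and the sign of $c_{20}'$ changes only at $\theta_3\in\{\theta_1,\theta_2,\theta_2-\theta_1\}$.
\item This matches (e) for all $\theta_3$ at once, for either $\Psi_2$ or $((x,y,z)\mapsto(-y,x,z))\circ\Psi_2$.
\end{itemize}

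You instead use the factorisation $\widehat{Q}=64\sin(\theta_3/2)\sin((\theta_1-\theta_3)/2)\sin((\theta_2-\theta_3)/2)\,Q$. This explains the $I_-/I_+$ swap conceptually. You then argue by continuity that the sign pattern of $Q$ in the rotating frame is independent of $\theta_3$. That argument is valid, because on $W=0$ the function $Q_{\theta_3}$ vanishes only at the four endpoints (Proposition~\ref{proposition::Q=0_and_W=0}). You should add why the signs alternate on consecutive boundary arcs: $Q$ has no zeros in the disk off the crossing, and to leading order at the origin it is an indefinite quadratic form (Lemma~\ref{lemma::parameters-Q}).

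\textbf{Closing the open point.} The \emph{delicate point} you leave open needs no verification from the tables. The rotation $(x,y,z)\mapsto(-y,x,z)$ preserves (a)--(d) and swaps the two possible sign patterns. So you simply choose the offset of your angular normalisation after a single check, exactly as the paper does.

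Your version shows structurally why the pattern is uniform in $\theta_3$. The paper's version is a shorter explicit computation that avoids tracking the moving frame.
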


\begin{proof}
The isometric sphere $I(\theta_1)$ is homeomorphic to the closed $3$-ball and contains the point $[0,0,1]^t$. By Theorem \ref{theorem::intersection_of_two}, the intersection $I(\theta_1) \cap I(\theta_2)$ is homeomorphic to a $2$-disk. Therefore, the homeomorphism $\Psi_1$ exists that satisfies properties (a), (b) and (c).

By Corollary \ref{corollary::intersection_of_three} and Corollary \ref{corollary::foliation_of_intersections}, after potentially applying the involution $(x,y,z) \mapsto (-x,y,z)$, the homeomorphism $\Psi_1$ is isotopic to a homeomorphism $\Psi_2$ that satisfies properties (a), (b), (c) and (d).
We claim that either $\Psi_2$ or the map $\Psi_3 \coloneqq \left((x,y,z)\mapsto (-y,x,z)\right) \circ \Psi_2$ satisfies all the desired properties, including (e).

To prove this claim, we parameterise $I(\theta_1) \cap I(\theta_2)$ by $(\psi_1,\psi_2)$ as in Proposition \ref{proposition::intersection-of-two-isometric-spheres} and define $X=\cot(\psi_1/2)$, $Y=\cot(\psi_2/2)$. Let $(X,0)$ be the unique point with $W(X,0) = 0$ and $X > 0$.
By Proposition \ref{proposition::intersection-of-three-isometric-spheres}, we obtain that $(X,0)$ lies on $I(\theta_1) \cap I(\theta_2) \cap I(\theta_2-\theta_1)$,
hence the image $(x_0,y_0) \coloneqq \Psi_2(X,0)$ satisfies
$\arg(x_0+iy_0) \equiv (\theta_2-\theta_1)/4 \pmod{\pi/2}$.
Moreover, we have $(X,0) \in I(\theta_1) \cap I(\theta_2) \cap I_-(\theta_3)$
if and only if $c'_{20} < 0$, i.e.,
\[
\sin\left(\dfrac{\theta_1 - \theta_3}{2}\right)
\sin\left(\dfrac{\theta_2 - \theta_3}{2}\right)
\sin\left(\dfrac{\theta_1 - \theta_2 + \theta_3}{2}\right) < 0.
\]
The claim follows from this condition.
\end{proof}

\begin{corollary}
\label{corollary::beside-f1andfn-1}
Let $0 < \theta_1 < \theta_2 < 2\pi$. Then:

\begin{enumerate}[label=(\alph*)]
\item For any $\theta_3 \in (\theta_1, \theta_2)$, the ideal boundary $\partial_\infty ( I(\theta_1) \cap I_+(\theta_2) \cap I_+(\theta_3) )$ is connected, while the ideal boundary $\partial_\infty ( I(\theta_1) \cap I_-(\theta_2) \cap I_+(\theta_3) )$ has two connected components.

\item For any $\theta_1 < \theta_3 < \theta_4 < \theta_5 < \theta_2$, we have the inclusion
\(
I_+(\theta_1) \cap I_+(\theta_2) \cap I(\theta_3) \cap I(\theta_5) 
\subset I_-(\theta_4).
\)

\item For any finite sequence of angles $\theta_1 < \theta_3 < \theta_4 < \ldots < \theta_m < \theta_2$ with $m \ge 3$, the ideal boundary
\(
\partial_\infty\left( I(\theta_1) \cap I_+(\theta_2) \cap \bigcap_{j=3}^m I_+(\theta_j) \right)
\)
is an open $2(m-1)$-gon. Its boundary consists of sub-arcs of the following circles, arranged in counterclockwise order:
\begin{align*}
\partial_\infty(I(\theta_1) \cap I(\theta_2)),\ 
\partial_\infty(I(\theta_1) \cap I(\theta_3)),\ \ldots,\ 
\partial_\infty(I(\theta_1) \cap I(\theta_m)), \\
\partial_\infty(I(\theta_1) \cap I(\theta_2)),\ 
\partial_\infty(I(\theta_1) \cap I(\theta_3)),\ \ldots,\ 
\partial_\infty(I(\theta_1) \cap I(\theta_m)).
\end{align*}
\end{enumerate}
\end{corollary}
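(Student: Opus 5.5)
The plan is to work entirely inside the model $\Psi = \Psi_{\theta_1,\theta_2}: I(\theta_1) \to \Ball_3$ of Proposition~\ref{proposition::homeomorphism-to-3ball}, restricted to the ideal boundary. The ideal sphere $\partial_\infty I(\theta_1)$ corresponds to $\partial\Ball_3$. The circle $\partial_\infty(I(\theta_1)\cap I(\theta_2))$ is the equator $\partial\Ball_2$, with $I_+(\theta_2)$ on the upper hemisphere and $I_-(\theta_2)$ on the lower one. For each further $\theta_3$, the trace $\partial_\infty(I(\theta_1)\cap I(\theta_3))$ is a circle on $\partial\Ball_3$ meeting the equator in the four points $\arg = \theta_3/4 \pmod{\pi/2}$, by property (d). By property (e), this circle crosses the equator there, alternately passing from $I_-(\theta_3)$ to $I_+(\theta_3)$. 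Part (a) then comes from counting: for $\theta_3\in(\theta_1,\theta_2)$, property (e) says the equatorial arcs lying in $I_+(\theta_3)$ are the two opposite quarter-arcs $(\theta_3/4,\theta_3/4+\pi/2)$ and its antipode. The circle $\partial_\infty(I(\theta_1)\cap I(\theta_3))$ is a Jordan curve on $\partial\Ball_3$ through four equatorial points, so it splits into four arcs, alternately in the upper and lower hemispheres. I will use the sign pattern on the equator to show that $I_+(\theta_3)$ on the upper hemisphere is a single band joining the two $I_+$ equatorial arcs, hence connected. On the lower hemisphere, the complementary arrangement cuts $I_+(\theta_3)$ into two caps, one over each arc, giving two components. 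The main point to verify is which arcs of the $\theta_3$-circle lie above and which below; I would fix this by continuity in $\theta_3$, starting near $\theta_3\to\theta_1^+$, together with the foliation of Corollary~\ref{corollary::foliation_of_intersections}.

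For (b), I would use the four-sphere result, Corollary~\ref{corollary::intersection_of_four}: $I(\theta_3)\cap I(\theta_5)\cap I(\theta_4)$ is only the point $[0,0,1]^t$ once we also intersect with $I(\theta_1)$. The more relevant input is the three-sphere structure inside the disk $I(\theta_3)\cap I(\theta_5)$. Apply Proposition~\ref{proposition::homeomorphism-to-3ball} to the pair $(\theta_3,\theta_5)$. By (e) with the roles relabelled, the sectors of the disk $I(\theta_3)\cap I(\theta_5)$ lying in $I_+(\theta_1)$ are determined by the quarter-arcs at offset $\theta_1/4$, and likewise for $I_+(\theta_2)$ at offset $\theta_2/4$. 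Both $\theta_1$ and $\theta_2$ lie outside $(\theta_3,\theta_5)$, so their positive sectors are the quarter-sectors $(\theta_j/4,\theta_j/4+\pi/2)$ up to the $\pi$-symmetry. The intersection of these two sets of sectors is then a union of sectors lying between offsets $\theta_2/4$ and $\theta_1/4+\pi/2$, or something similar. Since $\theta_4\in(\theta_3,\theta_5)$, its $I_-$ sectors are exactly the quarter-arcs at offset $\theta_4/4$, again by (e). The inclusion then reduces to an inequality between the offsets $\theta_1/4,\theta_2/4,\theta_4/4$ modulo $\pi/2$, using $\theta_1<\theta_4<\theta_2$. The subtle part is the angle wrap-around, since $\theta_2/4$ may exceed $\theta_1/4+\pi/2$; I will handle it by writing everything in the angular coordinate $\arg(x+iy)$ on the disk and checking all arrangements. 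The interior points, not only the ideal ones, follow because the rays of (d) foliate the disk radially, so membership is decided by angle alone.

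For (c), I would induct on $m$, with base case $m=3$ from (a). Part (a) gives a connected region in the upper hemisphere of $\partial\Ball_3$, with boundary alternating between equatorial arcs and arcs of the $\theta_3$-circle. Since the two equatorial arcs are antipodal, the boundary consists of two equator arcs and two $\theta_3$-arcs, which is a $4$-gon, i.e.\ $2(m-1)$ with $m=3$. For the inductive step, adding $\theta_{j+1}$ with $\theta_j<\theta_{j+1}<\theta_2$ intersects the region with $I_+(\theta_{j+1})$. Part (b), applied with $\theta_j$ in the middle, shows that the new circle cuts off exactly the two vertices where the previous consecutive circles met, namely those for $\theta_j$ and $\theta_2$. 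Each of the two sides of the polygon therefore gains one edge on the $\theta_{j+1}$-circle, so the counterclockwise sequence of edges extends as stated. The main obstacle is this inductive step: one must show that the new circle meets precisely the last edge and the equatorial edge on each side and does not enter elsewhere. I would try to reduce this to (b) together with the fact that two such circles on $\partial_\infty I(\theta_1)$ meet only in ideal points of the corresponding triple intersections, which by (d) are four points.
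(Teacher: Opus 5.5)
Your plan follows the paper's architecture. You get (a) by continuity in $\theta_3$ inside $\Psi_{\theta_1,\theta_2}$, and (b) by passing to $\Psi_{\theta_3,\theta_5}$ and reading off Proposition~\ref{proposition::homeomorphism-to-3ball}~(e). Part (c) is then driven by (b). Your remark that in (b) membership is decided by the angle alone is correct and handles interior points, which the paper leaves unsaid: the defining function of $I(\theta)$ has constant sign on each open sector between consecutive $\theta$-rays. But two steps do not go through as written.

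In (a), the equatorial sign pattern decides nothing. With $p_k=\theta_3/4+k\pi/2$, (e) is compatible with two configurations of the upper arcs of $C_{\theta_3}=\partial_\infty(I(\theta_1)\cap I(\theta_3))$:
\begin{itemize}
\item pairing $p_1p_2,\ p_3p_0$, so $I_+(\theta_3)$ is a band above and two caps below, as claimed;
\item pairing $p_0p_1,\ p_2p_3$, giving two caps above and a band below.
\end{itemize}
Everything therefore rests on the continuity anchor, and you chose the wrong end. As $\theta_3\to\theta_1^+$, the circle tends to the zero set of a $\theta$-derivative on $\partial_\infty I(\theta_1)$, and you never determine where that set sits relative to the equator. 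The paper anchors at the other end of $(\theta_1,\theta_2]$. As $\theta_3\to\theta_2^-$, $C_{\theta_3}$ is a small perturbation of the equator whose upper arcs cut off thin slivers, while $I_+(\theta_3)$ tends to the open upper hemisphere $I_+(\theta_2)$. So the large upper region lies in $I_+(\theta_3)$. The pairing then persists on all of $(\theta_1,\theta_2)$, because the four crossing points move continuously and stay distinct by (d).

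In (c), (b) is stated for $I_+(\theta_1)$, but your polygon lies on $I(\theta_1)$. You need the closed version, which the same sector computation gives.

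Separately, in (b) you have (e) backwards. For $\theta\notin(\theta_3,\theta_5)$, the sectors $(\theta/4,\theta/4+\pi/2)$ mod $\pi$ are the $I_-(\theta)$ sectors; for $\theta_4\in(\theta_3,\theta_5)$ they are the $I_+(\theta_4)$ sectors. Reversing both amounts to a rotation by $\pi/2$, so your inclusion survives. Correctly, $I_+(\theta_1)\cap I_+(\theta_2)$ is $(\theta_2/4-\pi/2,\theta_1/4)$ mod $\pi$. Together with the $\theta_1$-rays at $\theta_1/4$, this lies in $I_-(\theta_4)=(\theta_4/4-\pi/2,\theta_4/4)$ mod $\pi$. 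There is no wrap-around, since $\theta_2/4-\theta_1/4<\pi/2$.

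The real gap is your inductive step. Write $R_j=\partial_\infty\bigl(I(\theta_1)\cap I_+(\theta_2)\cap\bigcap_{l=3}^{j}I_+(\theta_l)\bigr)$. Then (b), (e) and the four-point count show only that $C_{\theta_{j+1}}$ meets $\partial R_j$ in four points. Two are on the equatorial edges, at $\theta_{j+1}/4$ and $\theta_{j+1}/4+\pi$, and two are on $C_{\theta_j}$-edges. They do not say which point is joined to which inside $R_j$.

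For instance, the arc from $\theta_{j+1}/4$ could end on the $C_{\theta_j}$-edge at the vertex $\theta_j/4+\pi$ rather than at $\theta_j/4$. That leaves $R_{j+1}$ in two pieces, with the same vertex set and the same signs as the $2j$-gon you want.

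What excludes this is the chord structure obtained in (a), applied to $\theta_{j+1}$:
\begin{itemize}
\item The upper arc of $C_{\theta_{j+1}}$ leaving the equator at $\theta_{j+1}/4$ returns at $\theta_{j+1}/4-\pi/2$, inside the gap $(\theta_3/4-\pi/2,\theta_j/4)$.
\item The component of the upper hemisphere minus $\overline{R_j}$ over that gap is bounded by the chain of edges ending at the vertex $\theta_j/4$.
\item Since the arc cannot re-enter $R_j$, it must leave through the $C_{\theta_j}$-edge at that vertex, cutting off exactly that corner.
\end{itemize}
The paper's one-line reduction to ``non-consecutive vertices are cut off'' also leaves this connectivity point implicit.
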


\begin{proof}
The assertion (a) follows from the fact that $\Psi_{\theta_1,\theta_2}(I(\theta_1)\cap I_+(\theta_3))$ deforms continuously for $\theta_3 \in (\theta_1, \theta_2]$.
Applying the homeomorphism $\Psi_{\theta_3,\theta_5}$, we obtain the assertion (b) from Proposition \ref{proposition::homeomorphism-to-3ball} (e).
For (c), it suffices to show that, for $3 \le i < j < k \le m$, we always have
\(
I(\theta_i) \cap I(\theta_k) \cap I_+(\theta_1) \cap I_+(\theta_2) \subset I_-(\theta_j)
\),
which is given by the assertion (b).
\end{proof}

We therefore arrive at the boundary of the ideal boundary $\partial \partial_{\infty} D_n$.

\begin{proof}[Proof of Theorem \ref{theorem::ford_domain} (\RNum{1})]
Set $\theta_k = 2\pi k / n$ for $k = 1, \ldots, n-1$.
The intersection of the cellular structure of $\partial \partial_\infty D_n$ with $\partial_\infty I(\theta_1)$ is
$\partial_\infty\left( I(\theta_1) \cap \bigcap_{j=2}^{n-1} \overline{I_+(\theta_j)} \right)$, which is a $2(n-2)$-gon by Corollary \ref{corollary::beside-f1andfn-1} (c).
Denote this polygon by $f_1$.
As suggested by the statement of Theorem \ref{theorem::ford_domain} (\RNum{1}) (a),
we label its edges by the subscripts of the circles on which they lie.
Similarly, the only polygon in the cellular structure lying on $\partial_\infty I(\theta_{n-1})$ is also a $2(n-2)$-gon, denoted by $f_{n-1}$.

We glue $f_1$ and $f_{n-1}$ along their two pairs of opposite edges.
The union $f_1 \cup f_{n-1}$ then has $2$ boundary components, each consisting of $2(n-3)$ edges.
One such boundary component, in counterclockwise order, has edges labelled
$e_{1,n-2},\ldots,e_{1,2},e_{n-1,2},\ldots,e_{n-1,n-2}$.
We now complete the cellular structure of $\partial \partial_\infty D_n$ from the boundary of $f_1 \cup f_{n-1}$.
Applying Corollary \ref{corollary::beside-f1andfn-1} (b) to angles
$\theta_1 < \theta_i <\theta_j < \theta_k < \theta_2$, for $2 \le i < j < k \le n-1$,
we find that the missing edges lie only on $\partial_\infty\left( I(\theta_i) \cap I(\theta_{i+1}) \right)$ for some $2\le i\le n-3$.
Therefore, each boundary component of $f_1\cup f_{n-1}$ is completed by
a bi-gon when $n=4$,
two triangles when $n\ge 5$,
two triangles and $n-5$ quadrangles when $n\ge 6$.
\end{proof}

\subsection{Boundary of the Ford domain}

\begin{proof}[Proof of Theorem \ref{theorem::ford_domain} (\RNum{2})]
Set $\theta_k = 2\pi k / n$ for $k = 1, \ldots, n-1$.
Consider the cellular structure of $\partial \partial_\infty D_n$ 
from the assertion (\RNum{1}).
If a polygon $f$ lies in the ideal $2$-sphere $\partial_\infty I(\theta_k)$,
then it is the ideal boundary of a polyhedron (i.e., a side) $s$ contained in $I(\theta_k)$.
Similarly, if an edge $e$ lies in $\partial_\infty(I(\theta_j)\cap I(\theta_k))$,
then it is the ideal boundary of a ridge contained in $I(\theta_j) \cap I(\theta_k)$, since no sub-arc of $\partial_\infty (I(\theta_j) \cap I(\theta_k))$ can lie in any other intersection of two spheres.

Let $\mathcal{F}^3_\infty$ denote the set of sides in $\overline{\partial D_n}$ with non-empty ideal boundaries.
We claim that every $s \in \mathcal{F}^3_\infty$ has at least one additional vertex within $\partial \hc$.
Suppose, for contradiction, that this is not the case.
Then, some side of $\overline{\partial D_n}$ would be a prism whose upper and lower boundaries are $f_k$ and $f'_k$, respectively, for some $2\le k \le n-2$.
However, by Corollary \ref{corollary::intersection_of_three}, the point $[0,0,1]^t$ lies on every vertical edge of this prism, which is a contradiction.

Therefore, by Corollary \ref{corollary::intersection_of_four}, the only vertex in the cellular structure of $\overline{\partial D_n}$ within $\hc$ is $[0,0,1]^t$.
Hence, the cellular structure is a cone over its ideal boundary.
\end{proof}

\subsection{Ford domain centred away from the fixed torus}
\label{subsection::Ford_centred_away}

For comparison, we consider the Ford domain of $\langle E_{2\pi/n,-2\pi/n} \rangle$ for $n \ge 3$.
In this case, the centre point $q_{\infty}$ lies outside the fixed torus $\Torus = \Torus_{E_{2\pi/n, -2\pi/n}}$.

By Remark \ref{remark::not_all_R_circles}, the circles $C_{[0,0]}$ and $C_{[\sqrt{2},0]}$ are $\C$-circles,
and are also boundary orbits for every non-torsion real elliptic isometry $E_{\theta,-\theta}$.
The following proposition shows that, without the condition $q_\infty \in \Torus_f$ as in Proposition \ref{prox::intersection},
the Ford domain $D_{\langle E_{2\pi/n,-2\pi/n} \rangle}$ has a different cellular structure from $D_n$.

\begin{proposition}
\label{proposition::Ford_domain_centred_on_C_circle}
Let $0 < \theta < 2\pi$.
Then $C_{[\sqrt{2},0]} \subset I(E_{\theta,-\theta})$.
\end{proposition}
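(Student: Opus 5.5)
The plan is to verify the defining equation of the isometric sphere (Definition~\ref{definition:isometric_sphere}) directly at every point of $C_{[\sqrt{2},0]}$. The structure $C_q=\{E_{\phi,-\phi}(q)\}$ lets the whole computation reduce to reading off one matrix entry of $E_{\alpha,\beta}$.

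First I will write $C_{[\sqrt{2},0]}$ explicitly in the Siegel domain model. The Heisenberg point $[\sqrt{2},0]$ has standard lift $\bigl((-|\sqrt2|^2+0)/2,\sqrt{2},1\bigr)^t=(-1,\sqrt{2},1)^t$, so $z_1=-1$ and $z_2=\sqrt2$. Substituting into the formula for $C_q(\phi)$, the terms with $(z_1+1)/2$ vanish. This gives
\[
C_{[\sqrt2,0]}(\phi)=[-1,\ \sqrt{2}\,e^{-i\phi},\ 1]^t,\qquad \phi\in[0,2\pi).
\]
Each such point is a boundary point, since $2\Re(-1)+|\sqrt2 e^{-i\phi}|^2=0$. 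Moreover $\mathbf p_\phi=(-1,\sqrt2 e^{-i\phi},1)^t$ is already its standard lift.

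Next I will compute the two quantities in the definition of $I(E_{\theta,-\theta})$. Since $H_2$ swaps the first and third coordinates and $\mathbf q_\infty=(1,0,0)^t$, we have $\langle \mathbf p,\mathbf q_\infty\rangle_{H_2}=p_3$. Hence $|\langle\mathbf p_\phi,\mathbf q_\infty\rangle_{H_2}|=1$.

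For the other side, take $g=E_{\theta,-\theta}$ and use that a representative $\mathbf g$ is $H_2$-unitary: $\langle \mathbf p,\mathbf g^{-1}\mathbf q_\infty\rangle_{H_2}=\langle \mathbf g\mathbf p,\mathbf q_\infty\rangle_{H_2}=(\mathbf g\mathbf p)_3$. By the remark after the definition, the modulus does not depend on the choice of representative.

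Then I will apply the explicit matrix $E_{\alpha,\beta}$ with $\alpha=\theta$, $\beta=2\pi-\theta$. Write $A=e^{i(2\alpha-\beta)/3}$ and $B=e^{-i(\alpha+\beta)/3}$. The third row of the matrix is $\tfrac12(A-B,\,0,\,A+B)$, so
\[
(\mathbf g\mathbf p_\phi)_3=\tfrac12\bigl(-(A-B)+(A+B)\bigr)=B .
\]
This has modulus $1$. Therefore $|\langle\mathbf p_\phi,\mathbf q_\infty\rangle|=|\langle\mathbf p_\phi,g^{-1}\mathbf q_\infty\rangle|=1$ for every $\phi$, and so $C_{[\sqrt2,0]}\subset I(E_{\theta,-\theta})$.

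There is no real obstacle here; the only point needing care is bookkeeping. I must use the standard lift when testing membership, and the unitarity step must move $g^{-1}$ across the Hermitian form correctly. As a sanity check, $g\mathbf p_\phi$ is a scalar multiple ($B$) of the standard lift of $C_{[\sqrt2,0]}(\phi+\theta)$, which is consistent with $C_q$ being an $E$-orbit. The same check also explains geometrically why this $\C$-circle orbit is equidistant from $q_\infty$ and its preimage.
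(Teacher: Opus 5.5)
Your proof is correct and follows the paper's argument. The paper likewise takes the standard lift $(-1,\sqrt{2}e^{i\phi},1)^t$ and observes that $|\langle \mathbf{p},\mathbf{q}_\infty\rangle_{H_2}| = |\langle \mathbf{p},E_{\theta,-\theta}^{-1}(\mathbf{q}_\infty)\rangle_{H_2}| = 1$; you have only made that computation explicit, using $H_2$-unitarity and the third row of $E_{\alpha,\beta}$.
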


\begin{proof}
Every point $p \in C_{[\sqrt{2}, 0]}$ has a standard lift $\bf{p} = (-1, \sqrt{2}\, e^{i\phi}, 1)^t$ with $\phi \in [0, 2\pi)$.
This proposition follows from the computation
\(
|\langle \bf{p}, \bf{q}_\infty \rangle_{H_2}| = |\langle \bf{p}, E_{\theta,-\theta}^{-1}(\bf{q}_\infty) \rangle_{H_2}| = 1
\).
\end{proof}

\section{Complex hyperbolic \texorpdfstring{$(n,\infty,\infty)$}{(n,∞,∞)}-triangle groups}
\label{section::triangle}

We consider the complex hyperbolic space in the Siegel domain model.
The \emph{triangle group} of type $(n,\infty,\infty)$ has the presentation
\[
T_{n,\infty,\infty} = \left\langle \sigma_1, \sigma_2, \sigma_3
\middle|
\sigma_1^2 = \sigma_2^2 = \sigma_3^2 =
(\sigma_2\sigma_3)^n = 1
\right\rangle.
\]
A \emph{complex hyperbolic $(n,\infty,\infty)$-triangle group} is a subgroup of $\PU(H_2)$ generated by three complex reflections $I_1$, $I_2$, $I_3$ in complex geodesics $C_1$, $C_2$, $C_3$, respectively, such that $C_2$ and $C_3$ meet at angle $\pi/n$, while $C_3$ and $C_1$, and $C_1$ and $C_2$, both meet at angle $0$.
Equivalently, a complex hyperbolic $(n,\infty,\infty)$-triangle group is the image of a representation $\rho:T_{n,\infty,\infty} \rightarrow \PU(H_2)$ sending each $\sigma_i$ to a complex reflections $I_i$, such that $B \coloneqq I_2 I_3$ has order $n$ and the products $I_3 I_1$, $A \coloneqq I_1 I_2$ are unipotent.
In this case, one can further show that $B = I_2 I_3$ is real elliptic (i.e., conjugate to some $E_{\alpha,-\alpha}$)
and that $I_3 I_1$, $A = I_1 I_2$ are three-step unipotent
(see \cite[Proposition 4]{Paupert-Will-2017-involution}).
We denote the complex hyperbolic triangle group by $\Delta_\rho \coloneqq \rho(T_{n,\infty,\infty})$, or simply $\Delta(n,\infty,\infty)$ if $\rho$ is unspecified.

Two complex hyperbolic $(n,\infty,\infty)$-triangle groups $\Delta_{\rho_1}, \Delta_{\rho_2}$ are \emph{isometric} if there exists $Q \in \PU(H_2)$ such that $\rho_1(g) = Q^{-1} \rho_2(g) Q$ for all $g \in T_{n,\infty,\infty}$.
By \cite[Proposition 1]{Anna2005}, a complex hyperbolic $(n,\infty,\infty)$-triangle groups is determined up to isometry by the \emph{angular invariant}
\begin{equation*}
\mathcal{A} \coloneqq
\arg\left(
\langle n_3,n_2 \rangle
\langle n_1,n_3 \rangle
\langle n_2,n_1 \rangle
\right)
\in [0, 2\pi] \mod 2\pi,
\end{equation*}
where $n_i$ is a polar vector of $C_i$.
The moduli space of complex hyperbolic $(n,\infty,\infty)$-triangle groups up to isometry,
denoted by $\SSS(n,\infty,\infty)$, is homeomorphic to $\mathbb{S}^1$ and parameterised by $\mathcal{A}$.

Let $\Delta_{\rho_{\mathcal{A}}}$ be a representative of the isometry class parameterised by $\mathcal{A} \in [0, 2\pi)$.
We call $\rho_{\mathcal{A}}$ a \emph{discrete embedding} if the representation is discrete and faithful.
The moduli space of discrete embeddings
is denoted by $\MM(n,\infty,\infty) \subset \SSS(n,\infty,\infty)$.
An $\R$-Fuchsian representation must be a discrete embedding, so $\pi \in \MM(n,\infty,\infty)$,
whereas a $\C$-Fuchsian representation cannot be a discrete embedding, so $0, 2\pi \not\in \MM(n,\infty,\infty)$.

We prove Corollary \ref{colx::triangle_group} in this section.
Since complex hyperbolic triangle groups with angular invariant $\mathcal{A}$ and $2\pi - \mathcal{A}$ are conjugate via an
anti-holomorphic isometry,
we restrict to $\mathcal{A} \in (0,\pi]$.

\subsection{Parameterisation}

For $n \ge 3$ and $\mathcal{A} \in (0,\pi]$, we fix a representative $\Delta_{\rho_{\mathcal{A}}}$ by defining a homomorphism $\rho_{\mathcal{A}} : T_{n,\infty,\infty} \to \PU(H_2)$ via $\rho_{\mathcal{A}}(\sigma_i) = I_i$, where
\begin{align*}
I_1 &=
\begin{bmatrix}
-1 & 0 & 0 \\
0 & 1 & 0 \\
0 & 0 & -1
\end{bmatrix},
\quad
I_2 =
\begin{bmatrix}
-1 & -2 & 2 \\
0  &  1 & -2 \\
0  &  0 & -1
\end{bmatrix}, \\
I_3 &=
\begin{bmatrix}
-1 & 0 & 0 \\
2 - 2 \cos\left(\frac{\pi}{n}\right) e^{-i \mathcal{A}} & 1 & 0 \\
2 + 2 \cos\left(\frac{\pi}{n}\right) \Bigl( \cos\left(\frac{\pi}{n}\right) - 2 \cos\mathcal{A} \Bigr) & 2 - 2 \cos\left(\frac{\pi}{n}\right) e^{i \mathcal{A}} & -1
\end{bmatrix}.
\end{align*}
For \(i = 1,2,3\), suppose that \(I_i\) lifts to the matrix \(\bf{I}_i \in \mathrm{SU}(H_2)\) given by the same expression.
The isometry $I_i$ is the complex reflection in a complex geodesic $C_i$.
Then the polar vectors are $n_1 = (0,1,0)^t$, $n_2 = (1,-1,0)^t$ and $n_3 = \bigl(0,\frac{1}{1-\cos(\pi/n) e^{i \mathcal{A}}},1\bigr)^t$.
Set $A \coloneqq I_1 I_2$, which is the Heisenberg translation $T_{[-2,0]}$.
Set $B \coloneqq I_2 I_3$ and take the lift $\bf{B} \in \SU(H_2)$ to be the product of the lifts $\bf{I}_2$ and $\bf{I}_3$. The characteristic polynomial of $\bf{B}$ in $\lambda$ is $(\lambda - 1)(\lambda^2 - 2\mu \lambda + 1)$ with $\mu = \cos(2\pi/n)$, so its eigenvalues are $1$, $e^{i \cdot 2\pi/n}$, and $e^{-i \cdot 2\pi/n}$.
Hence $B$ is real elliptic. We also obtain the recurrence $\bf{B}^{k+2} - 2\mu \bf{B}^{k+1} + \bf{B}^k = v$ for all $k \ge 0$, where $v = \bf{B}^2 - 2\mu \bf{B} + I$. From this it follows that
\begin{equation*}
\bf{B}^k =
\cos(2\pi k / n) I +
\dfrac{\sin(2\pi k / n)}{\sin(2\pi / n)} \bigl( \bf{B} - \cos(2\pi / n) I - \tfrac{v}{2} \bigr) +
\dfrac{1 - \cos(2\pi k / n)}{4 \sin^2(\pi / n)} v.
\end{equation*}
One can therefore verify that $B$ has order $n$ and
\[
\sqrt{\dfrac{2}{|(\bf{B}^k)_{3,1}|}} =
\dfrac{\sqrt{2} \sin(\pi/n)}{\sqrt{3 + \cos(2\pi/n) - 4 \cos(\pi/n) \cos\mathcal{A}}} \cdot
\dfrac{1}{\sin(k\pi/n)} .
\]
We now describe the Ford domain $D_{\langle A^k B A^{-k} \rangle}$ for $k \in \mathbb{Z}$.

\begin{corollary}
\label{corollary::ford_domain_ABA}
For every $k \in \Z$, 
there exists an isometry that maps
the Ford domain $D_{\langle A^k B A^{-k} \rangle}$
into the polytope $D_n$.
Under this map, the intersection
$I(A^k B^{-j} A^{-k}) \cap D_{\langle A^k B A^{-k} \rangle}$
is sent into
$I(2\pi j/n) \cap D_n$,
for every $j \in \{1,\cdots, n-1\}$.
\end{corollary}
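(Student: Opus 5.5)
The plan is to reduce everything to Proposition~\ref{proposition::standard_model_Dn}. Once we know that $A^kBA^{-k}$ is a regular, real elliptic isometry of order $n$ whose fixed torus contains $q_\infty=\fp_A$, that proposition supplies an isometry sending $D_{\langle A^kBA^{-k}\rangle}$ onto $D_n$. It also sends each isometric sphere $I((A^kBA^{-k})^m)$ to some standard sphere $I(2\pi m'/n)$. The remaining work is to check regularity and the torus condition, and then to match the indices.

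\textbf{Step 1: reduce to $k=0$.} The map $A=T_{[-2,0]}$ is a Heisenberg translation fixing $q_\infty$, and it is a Cygan isometry. We have $(A^kBA^{-k})^{-1}(q_\infty)=A^kB^{-1}(q_\infty)$. Hence $I(A^kB^jA^{-k})=A^k\bigl(I(B^j)\bigr)$, and the same holds for interiors and exteriors. It follows that $D_{\langle A^kBA^{-k}\rangle}=A^k\bigl(D_{\langle B\rangle}\bigr)$, and that the face on $I(A^kB^{-j}A^{-k})$ is the $A^k$-image of the face on $I(B^{-j})$. So it suffices to treat $k=0$ and compose with $A^{-k}$.

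\textbf{Step 2: $B$ is regular real elliptic with $q_\infty\in\Torus_B$.} We already know $B$ has eigenvalues $1,e^{\pm 2\pi i/n}$, so it is real elliptic of order $n$. For $n\ge3$ these eigenvalues are distinct, so $B$ is regular. For the torus condition I would use the criterion from the proof of Proposition~\ref{proposition::torus_2}, read in reverse. Conjugate $B$ to $E_{2\pi/n,-2\pi/n}$ and let $q$ be the image of $q_\infty$. Then $q\in\Torus$ exactly when $|z_1+1|^2=2|z_2|^2$. Equivalently, by Proposition~\ref{proposition::torus_1}, $q_\infty$ lies on an $\R$-circle through which $B$ acts, i.e.\ $\mathbb{A}(\fp_B,q_\infty,B(q_\infty))=0$ in the sense used there, with $\fp_B$ in the corresponding Lagrangian plane. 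A cleaner route is direct. Since $I_2$ and $I_3$ are involutions with $B=I_2I_3$, there is the standard Lagrangian plane $L$ whose antiholomorphic reflection $\sigma_L$ conjugates $I_2$ to $I_3$; then $B$ is a product of two antiholomorphic involutions preserving $L$. So I would instead exhibit explicitly an $\R$-circle through $q_\infty$ that is invariant under $B$. A natural candidate is the $\R$-circle through $q_\infty$, $B(q_\infty)$ and $B^2(q_\infty)$; one computes $B^j(q_\infty)$ from the formula for $\bf{B}^k$. I would verify that the Cartan invariant $\mathbb{A}(q_\infty,B(q_\infty),B^2(q_\infty))$ vanishes. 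Then I would check that $\fp_B$, computed as the eigenvector of eigenvalue $1$, lies on the same Lagrangian plane, again via a vanishing angular invariant with lifts. This computation, which must hold for all $\mathcal{A}$, is the main obstacle. If it gets messy, I would instead check the torus equation $|z_1+1|^2=2|z_2|^2$ after an explicit conjugation diagonalising $\bf{B}$.

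\textbf{Step 3: index matching.} Proposition~\ref{proposition::standard_model_Dn} sends $I(B^m)$ to $I(2\pi m/n)$ under the identification $B\leftrightarrow f=f_{2\pi/n}$. However, the statement pairs $I(B^{-j})$ with $I(2\pi j/n)$. This pairing follows from the convention in Section~\ref{section::sphere}: the standard sphere $I(\theta)$ is the pullback of $I(\EE_{-\theta,\theta,q})$, so $\theta$ corresponds to the inverse rotation. If the orientation of the conjugation instead pairs $B$ with $E_{-2\pi/n,2\pi/n}$, we replace the conjugating isometry by its composition with the reflection $\theta\mapsto 2\pi-\theta$ of the parameter. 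This symmetry is realised by an isometry fixing $q_\infty$, as Proposition~\ref{proposition::spheres_EEE} shows, since it maps centre $-\cot(\theta/2)$ to $\cot(\theta/2)$ and preserves the radii. As a consistency check, the radii $\propto 1/\sin(j\pi/n)$ computed above match the radii $1/\sin(\theta/2)$ with $\theta=2\pi j/n$ after a dilation. Restricting the isometry to faces then gives the stated correspondence.
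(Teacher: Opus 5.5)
Your proposal is correct and is essentially the paper's argument. The paper checks $\mathbb{A}(\fp_B,q_\infty,B(q_\infty))=0$, which is your first option (Proposition~\ref{proposition::torus_2} read in reverse), to get $q_\infty\in\Torus_B$; it then uses $\fp_A=q_\infty$ to transfer this to $A^kBA^{-k}$ and applies Proposition~\ref{proposition::standard_model_Dn}. The computation you flag as the main obstacle is short: with the lifts $(\frac{1}{-1+\cos(\pi/n)e^{-i\mathcal{A}}},1,1)^t$, $\boldsymbol{q}_\infty$ and $\boldsymbol{B}\boldsymbol{q}_\infty$ one gets $\langle \fp_B,q_\infty\rangle=\langle B(q_\infty),\fp_B\rangle=1$ and $\langle q_\infty,B(q_\infty)\rangle=-2|1-\cos(\pi/n)e^{i\mathcal{A}}|^2<0$ for every $\mathcal{A}$, so the detour through $B^2(q_\infty)$ is unnecessary.
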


\begin{proof}
The fixed point of $B$ is $\fp_B = \bigl[\frac{1}{-1+\cos(\pi/n) e^{-i\mathcal{A}}}, 1, 1\bigr]^t$. Since $\mathbb{A}(\fp_B, q_{\infty}, B(q_{\infty})) = 0$, we have $q_{\infty} \in \Torus_B$ (cf. the proof of Proposition~\ref{proposition::torus_2}). Moreover, the fixed point $\fp_A$ coincides with $q_{\infty}$, which yields $q_{\infty} \in \Torus_{A^k B A^{-k}}$ for every $k$; this is a consequence of Proposition~\ref{proposition::standard_model_Dn}.
\end{proof}

\subsection{Applying the Poincar\'e polyhedron theorem}

Consider the index-$2$ subgroup $\langle A, B \rangle$ of $\Delta_{\rho_\mathcal{A}} = \langle I_1, I_2, I_3 \rangle$.
Set $I_{j,k} \coloneqq I(A^k B^j A^{-k}) = S_{\bf{c}_{j,k}}(\bf{r}_{j,k})$ for $j = 1, \dots, n-1$ and $k \in \Z$.
The pair of spheres \((I_{1,k}, I_{n-1,k+1})\) is isometric to \((I(I_1 I_3), I(I_3 I_1))\). Hence, for every \(k\), the unique common fixed point of \(A^k I_1 A^{-k}\) and \(A^k I_3  A^{-k}\) lies in \(I_{1,k} \cap I_{n-1,k+1}\). We therefore present the following sufficient condition for \(\langle I_1, I_2, I_3\rangle\) to be discrete and faithful.

\begin{proposition}
\label{proposition::apply_poincare}
Let \(n \ge 3\) and \(\mathcal{A} \in (0, \pi]\). Suppose that for all \(j, j' \in \{1,\ldots,n-1\}\) and all integers \(k < k'\), the isometric spheres \(I_{j,k}\) and \(I_{j',k'}\) are either disjoint or tangent at a single ideal point, with tangency occurring precisely when  
\[
(k', j', j) = (k+1, n-1, 1).
\]  
Then the group \(\langle A, B \rangle\) is discrete and has the presentation \(\langle A, B \mid B^n = 1 \rangle\).
\end{proposition}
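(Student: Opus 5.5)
We apply the Poincaré polyhedron theorem to the candidate Ford domain
\[
D \coloneqq \bigcap_{k\in\Z}\bigcap_{j=1}^{n-1} \overline{I_+(A^kB^jA^{-k})}\setminus \Proj(V_{H_2}^0),
\]
with the side-pairings $A^kB^jA^{-k}$, which send $I(A^kB^jA^{-k})$ to $I(A^kB^{-j}A^{-k})$. The stabiliser $\langle A\rangle$ of $q_\infty$ acts by the Heisenberg translation $T_{[-2,0]}$. The polytope is $A$-invariant, and a fundamental domain for $\langle A,B\rangle$ should be $D$ intersected with a vertical slab $\{-1\le \Re z\le 1\}$ (or any fundamental domain for $\langle A\rangle$ in $\mathcal N$). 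The first step is to describe the combinatorics of $D$. By the disjointness hypothesis, for $k\neq k'$ the spheres $I_{j,k}$ and $I_{j',k'}$ meet at most in a single ideal point. Hence every side, ridge and vertex of $D$ in $\hc$ comes from a single ``block'' $\{I_{j,k}\}_{j}$. By Corollary~\ref{corollary::ford_domain_ABA}, the structure of $D\cap\bigcup_j I_{j,k}$ is exactly that of the block polytope $D_n$, transported by an isometry. In particular, Theorem~\ref{theorem::ford_domain} gives the cellular structure: sides $s_{j,k}\subset I_{j,k}$, ridges $I_{j,k}\cap I_{j',k}$, and the unique interior vertex $\fp_{A^kBA^{-k}}$.

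The second step is to verify the side-pairing and ridge-cycle conditions. Since $I(g^{-1})=g(I(g))$ and $g$ maps $\overline{I_+(g)}$ onto $\overline{I_-(g^{-1})}$, the map $A^kB^jA^{-k}$ sends the side $s_{j,k}$ onto $s_{n-j,k}$. This holds because the block is just the Ford domain of the finite group $\langle A^kBA^{-k}\rangle$, which $B$-conjugates preserve. The ridges lie inside a single block, so each ridge cycle is a cycle for the finite cyclic group $\langle A^kBA^{-k}\rangle\cong\Z/n$. Each cycle transformation is therefore a power of $A^kBA^{-k}$ fixing the ridge pointwise, and the angle sum around the ridge cycle is $2\pi/m$, where $m$ is the order of that power. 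Summing over cycles reproduces the relation $B^n=1$. I would check this by noting that the block tiles a neighbourhood of $\fp_B$ by $n$ copies under $\langle B\rangle$; this is just the tessellation property of a Ford domain of a finite group.

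The third step, which I expect to be the main obstacle, concerns the ideal tangency points, where cusp/consistency conditions are needed. The tangency point of $I_{1,k}$ and $I_{n-1,k+1}$ is the common fixed point of $A^kI_1A^{-k}$ and $A^kI_3A^{-k}$, which is parabolic fixed. We must show that the cycle of tangency points is finite and that its cycle transformation is parabolic (unipotent), namely a conjugate of $I_3I_1$. Consider the pairing of $I_{1,k}$ by $A^kBA^{-k}$ followed by $A$-translation. The cycle transformation is $A\cdot B^{\pm1}$, up to conjugation, and $AB=I_1I_2I_2I_3=I_1I_3$ is unipotent by the setup. This makes the cycle consistent, and a horoball at that point satisfies the cusp condition. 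At $q_\infty$, the stabiliser $\langle A\rangle$ is a unipotent cyclic group acting cocompactly on the appropriate strip of $\partial D$, which gives the cusp condition there.

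Finally, the Poincaré polyhedron theorem (in the version allowing ideal vertices with parabolic cycle transformations) yields that $\langle A,B\rangle$ is discrete, that $D$ modulo $\langle A\rangle$ is a fundamental domain, and that the group has presentation given by the generators $A,B$ together with the ridge-cycle relations. The only such relation is $B^n=1$, since the tangency cycles produce no relation. This gives $\langle A,B\mid B^n=1\rangle$.
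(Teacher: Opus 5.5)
Your proposal is correct and follows essentially the paper's argument: the Poincar\'e polyhedron theorem with infinite stabiliser $\langle A\rangle$ applied to $\mathcal D$, the block decomposition $\overline{\partial\mathcal D}=\bigcup_k\mathcal X_k$ with each $\mathcal X_k$ a copy of $\overline{\partial D_n}$, ridge cycles confined to a single block, and tangency points $\fp_{A^k(AB)A^{-k}}$ whose cycle, closed up by $A$, has unipotent cycle transformation $A^k(AB)A^{-k}$ with $AB=I_1I_3$. One correction: no nontrivial power of $B$ fixes any point of $\overline{\hc}$ other than $\fp_B$, so every ridge cycle (and every cycle of ideal vertices at triple intersections, which you should also treat) has trivial cycle transformation, e.g.\ the paper's triangles with $B^{j}B^{-(j-1)}B^{-1}=1$; hence $B^n=1$ does not come from an angle sum $2\pi/m$ with $m>1$, but from combining these relations for the abstract side pairings $b_j$, which give $b_j=b_1b_{j-1}$ and, together with $b_{n-1}=b_1^{-1}$, yield $b_1^n=1$.
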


\begin{proof}
Define
\[
\mathcal{D} = \bigcap\limits_{j=1}^{n-1} \bigcap\limits_{k \in \Z} \overline{I_+(A^k B^j A^{-k})} \setminus \Proj(V_{H_2}^0)
\]
in $\hc$.
The closure of $\partial \mathcal{D}$ in $\overline{\hc}$, denoted by $\overline{\partial \mathcal{D}}$, is contained in the union of the isometric spheres $I_{j,k}$
and admits a natural decomposition
$\overline{\partial D} = \bigcup_{k \in \Z} \mathcal{X}_k$,
where
\[
\mathcal{X}_k \coloneqq \overline{\partial \mathcal{D}} \cap \bigcup_{j} I_{j,k}
=
\overline{\partial D_{\langle A^k B A^{-k} \rangle}}.
\]
By Proposition \ref{proposition::spheres_EEE} and Theorem \ref{theorem::ford_domain},
each $\mathcal{X}_k$ is mapped onto the polytope $\overline{\partial D_n}$ via a homeomorphism $\Phi_k: \mathcal{X}_k \rightarrow \overline{\partial D_n}$, which sends the intersection with $I(A^k B^{-j} A^{-k})$ into the intersection with $I(2\pi j/n)$ for every $j=1,\ldots,n-1$.
The cellular structure of $\mathcal{X}_k$ is inherited from that of
$\overline{\partial D_n}$, 
as given by Corollary \ref{corollary::ford_domain_ABA}.
Moreover, under the hypotheses of this proposition,
for $k \neq k'$,
the components $\mathcal{X}_k$ and $\mathcal{X}_{k'}$ intersect only when $|k-k'| = 1$.
In this case, they are tangent at a point lying on the ideal boundary.
Hence, the closed domain $\mathcal{D}$ is a $4$-dimensional polytope with a known cellular structure.

We apply a version of the Poincar\'e polyhedron theorem for $\mathcal{D}$ with an infinite stabilizer (see \cite[Theorem 5.1]{PW2017}).
For the general principle of this theorem, we refer to Beardon \cite[Section 9.6]{Beardon-1983}; see also \cite{Mostow1980,DPP2016}.
The following statement verifies that all hypotheses of the Poincar\'e polyhedron theorem are satisfied.

\begin{enumerate}[label=(\arabic*)]
\item \textbf{Cellular structure:}
The cells in $\mathcal{D}$ of dimension $0$, $1$, $2$ and $3$ are called vertices, edges, ridges and sides, respectively.
We denote by $C_k(\mathcal{D})$ the set of $k$-dimensional cells in $\mathcal{D}$.
The ideal boundary $\partial_{\infty} \mathcal{D}$ is a $3$-dimensional polytope
and we denote by $IC_k(\mathcal{D})$ the set of $k$-dimensional cells in the ideal boundary.
In particular, elements of $IC_0(\mathcal{D})$ are called \emph{ideal vertices}.
The stabilizer of $\mathcal{D}$ is $\Upsilon \coloneqq \langle A \rangle$.

\item
\textbf{The side pairing:}
For each side $s \in C_3(\mathcal{D})$ contained in the isometric sphere $I_{j,k}$, we assign the isometry $\sigma(s) = A^k B^j A^{-k}$.
The map $\sigma: C_3(\mathcal{D}) \rightarrow \PU(H_2)$ is called the \emph{side pairing}.
The stabilizer $\Upsilon$ is compatible with the side pairing, i.e., $\sigma(A^k(s)) = A^k \sigma(s) A^{-k}$ for all $s \in C_3(\mathcal{D})$ and $k \in \Z$.

\item
\textbf{Ridges and cycle relations:}
Let $r \in C_2(\mathcal{D})$ be a ridge defined as the intersection of two sides $s_{\text{head}}, s_{\text{tail}} \in C_3(\mathcal{D})$.
Since the isometry $A' \coloneqq \sigma(s_{\text{head}})$ preserves the cellular structure of $\mathcal{D}$,
the image $A'(r)$ is another ridge, denoted by $r'$.
Suppose that $A'$ sends $s_{\text{head}}$ to $s'_{\text{tail}}$ and
let $s'_{\text{head}}$ be the other side containing $r'$.
Then $r'$ is the intersection of $s'_{\text{head}}$ and $s'_{\text{tail}}$.
Countinuing this process yields a ridge cycle,
where the first triple and the last triple are required to satisfy
$(r',s'_{\text{head}},s'_{\text{tail}}) = (P r,P s_{\text{head}},P s_{\text{tail}})$
for some (unique) $P \in \Upsilon$.

Every ridge $r \in C_2(\mathcal{D})$ is mapped by some $\Phi_k$ to a ridge of $D_n$.
By Theorem \ref{theorem::ford_domain} (\RNum{2}), there is a one-to-one correspondence between the set of ridges $C_2(D_n)$ and the set of edges $C_1(\partial \partial_\infty D_n)$.
Therefore, by Theorem \ref{theorem::ford_domain} (\RNum{1}), the set $C_2(\mathcal{D})$ consists of
two ridges contained by $I_{-1,k} \cap I_{1,k}$,
two ridges contained by $I_{-1,k} \cap I_{j,k}$ for each $j=2,\ldots,n-2$,
two ridges contained by $I_{1,k} \cap I_{j,k}$ for each $j=2,\ldots,n-2$
and two ridges contained by $I_{j,k} \cap I_{j+1,k}$ for each $j=2,\ldots,n-3$,
for every $k \in \Z$.

In our setting, all cycles of intersections $I(g_1) \cap I(g_2)$ that contain at least one ridge under the action of $g_1$ and $g_2$ are given in the following graph:
\begin{center}
\begin{tikzcd}
I_{-1,k} \cap I_{-j,k} \arrow[rr, "A^kB^{-1}A^{-k}"] & & I_{1,k} \cap I_{-(j-1),k} \arrow[ld, "A^k B^{-(j-1)} A^{-k}"] \\
& I_{j,k} \cap I_{j-1,k} \arrow[lu, "A^kB^kA^{-k}"]
\end{tikzcd}
\end{center}
where $j=2,\ldots,n-2,n-1$ and $k \in \Z$.
Hence, every ridge cycle of $\mathcal{D}$ is a triangle with trivial cycle transformation.

\item
\textbf{Ideal vertices and consistent horoballs:}
For every ideal vertex $v \in IC_0(\mathcal{D})$, we assume that $\sigma(s)(v) \in IC_0(\mathcal{D})$ for each side $s\in C_3(\mathcal{D})$ containing $v$.
The Poincar\'e polyhedron theorem requires a system of consistent horoballs: a sufficiently small horoball $H_v$ at each $v \in IC_0(\mathcal{D})$ such that $\sigma(s)(H_v) = H_{\sigma(s)(v)}$.

Every ideal vertex $v \in IC_0(\mathcal{D})$ either lies in a single component $\mathcal{X}_k \subset \overline{\partial \mathcal{D}}$ or occurs as the ideal point of tangency of two adjacent components.
In the former case, the ideal vertex lies in the intersection of three isometric spheres.
All cycles of intersections $I(g_1) \cap I(g_2) \cap I(g_3)$ that contain at least one ideal vertex under the action of $g_1$, $g_2$ and $g_3$ are given in the following graph:
\begin{center}
\begin{tikzcd}
{I_{1,k} \cap I_{j+1,k} \cap I_{j+2,k}}
\arrow[rr,"A^kB^{j+1}A^{-k}"]
\arrow[rd,"A^kB^{j+2}A^{-k}"]
&
&
{I_{1,k}\cap I_{-j,k}\cap I_{-(j+1),k}}
\\
&
{I_{-1,k}\cap I_{-(j+1),k}\cap I_{-(j+2),k}}
\arrow[ru,"A^kB^{-1}A^{-k}"]
&
\\
&
{I_{-1,k}\cap I_{j,k}\cap I_{j+1,k}}
\arrow[u,"A^kB^{j+1}A^{-k}"]
\arrow[luu,"A^kB^{-1}A^{-k}",bend left]
\arrow[ruu,"A^kB^jA^{-k}"',bend right]
&
\end{tikzcd}
\end{center}
where $j=1,\ldots,n-3$ and $k \in \Z$.
From this, all cycles of ideal vertices at the intersection of three isometric spheres yield trivial cycle transformations.

Every ideal point of tangency $v$ is the fixed point $\fp_{A^k(AB)A^{-k}}$
lying on $I_{1,k} \cap I_{-1,k+1}$ for some $k \in \Z$.
Applying the side pairing isometries $A^kBA^{-k}$ and $A^{k+1}B^{-1}A^{k+1}$ to the point of tangency, we obtain an infinite graph:
\[
\longrightarrow
\fp_{A^{k-1}(AB)A^{-(k-1)}}
\xrightarrow{A^k B^{-1} A^{-k}}
\fp_{A^k(AB)A^{-k}}
\xrightarrow{A^{k+1} B^{-1} A^{-(k+1)}}
\fp_{A^{k+1}(AB)A^{-(k+1)}}
\longrightarrow.
\]
Hence, we can define a system of consistent horoballs.
\end{enumerate}

The group $\Gamma$ generated by $\Upsilon$ and the side pairing isometries is exactly $\Gamma = \langle A, B \rangle$.
By the Poincaré polyhedron theorem, the images of $\mathcal{D}$ under the cosets of $\langle A \rangle$ in $\langle A, B \rangle$ tessellate $\hc$, the group $\langle A, B \rangle$ is discrete and it has the presentation $\langle A, B \mid B^n = 1\rangle$, as desired.
\end{proof}

We thus arrive at Corollary \ref{colx::triangle_group}, which we restate for clarity.

\begin{theorem}
\label{theorem::apply_poincare}
Let \(n \ge 3\) and \(\mathcal{A} \in (0, \pi]\).
Suppose that for all \(j, j' \in \{1,\ldots,n-1\}\) and all \(k \in \mathbb{Z}_{>0}\) with \(k \le 2/\sin(\pi/n)\), the isometric spheres \(I_{j',0}\) and \(I_{j,k}\) are either disjoint or tangent at a single ideal point, and tangency occurs precisely when \((j', j, k) = (1, n-1, 1)\). Then the group \(\langle A, B \rangle\) is discrete and has the presentation \(\langle A, B \mid B^n = 1 \rangle\).
\end{theorem}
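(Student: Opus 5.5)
The plan is to reduce Theorem \ref{theorem::apply_poincare} to Proposition \ref{proposition::apply_poincare}. That proposition already yields discreteness and the presentation once we know that, for all $j,j'$ and all $k<k'$, the spheres $I_{j,k}$ and $I_{j',k'}$ are disjoint, or tangent at a single ideal point exactly when $(k',j',j)=(k+1,n-1,1)$. So the work is to show that these infinitely many conditions follow from the finitely many ones assumed, namely those with $0<k\le 2/\sin(\pi/n)$.

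\textbf{Step 1 (translation invariance).} Since $A=T_{[-2,0]}$ is a Heisenberg translation, we have $I(A^kgA^{-k})=A^k(I(g))$, and $A^k$ is a Cygan isometry. Hence $I_{j,k}=A^k I_{j,0}$, and the pair $(I_{j,k},I_{j',k'})$ is isometric to $(I_{j,0},I_{j',k'-k})$. It therefore suffices to treat pairs $(I_{j',0},I_{j,m})$ with $m=k'-k>0$. After this relabelling, the tangency pattern of Proposition \ref{proposition::apply_poincare} becomes $(j',j,m)=(1,n-1,1)$, matching the hypothesis as stated.

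\textbf{Step 2 (large shifts are automatically disjoint).} We bound centres and radii uniformly. The radius formula in the parameterisation gives
\[
\boldsymbol r_{j,0}=\frac{\sqrt2\sin(\pi/n)}{\sqrt{3+\cos(2\pi/n)-4\cos(\pi/n)\cos\mathcal A}}\cdot\frac{1}{\sin(j\pi/n)}.
\]
Since $\mathcal A\in(0,\pi]$, the denominator satisfies $3+\cos(2\pi/n)-4\cos(\pi/n)\cos\mathcal A\ge 2(1-\cos(\pi/n))^2$. We also have $\sin(j\pi/n)\ge\sin(\pi/n)$. Together these give an explicit bound $\boldsymbol r_{j,0}\le R$ independent of $j$.

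The centres $\boldsymbol c_{j,0}=B^{-j}(q_\infty)$ all lie on the orbit circle of $q_\infty$ under $\langle B\rangle$. By Lemma \ref{lemma::torus_to_line} and Proposition \ref{proposition::spheres_EEE}, this circle is an affine line through $q_\infty$ in $\mathcal N$ (an $\R$-circle). Following the $\sqrt2$-scaling in the proof of Proposition \ref{proposition::spheres_EEE}, the centres sit at parameter $\mp\cot(j\pi/n)$ times the same scale constant $\rho$ that appears in the radii. So every $I_{j,0}$ lies in a Cygan ball about a fixed base point $c_*$ of radius $R'$, where $R'$ is explicit and of order $1/\sin(\pi/n)$.

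Translating by $A^m$ moves $c_*$ by Cygan distance $d_{\Cyg}(c_*,A^mc_*)$. This distance is at least $\sqrt{|2m|^2}=2m$, because the horizontal component of $T_{[-2m,0]}$ has length $2m$ and the Cygan modulus dominates the Euclidean horizontal distance. Using the triangle inequality for $d_{\Cyg}$ (it is a metric on $\overline{\hc}\setminus\{q_\infty\}$), the spheres are disjoint as soon as $2m>2R'$. The aim is to choose the estimates so that this threshold is exactly $m>2/\sin(\pi/n)$.

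\textbf{Step 3 (conclusion).} For $0<m\le 2/\sin(\pi/n)$ the hypothesis gives disjointness, or tangency exactly in the case $(1,n-1,1)$. For larger $m$, Step 2 gives disjointness. Combining with Step 1, Proposition \ref{proposition::apply_poincare} applies. The count of $O(n^3)$ pairs in Corollary \ref{colx::triangle_group} then comes from $(n-1)^2$ choices of $(j,j')$ times $O(1/\sin(\pi/n))=O(n)$ values of $m$.

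The main obstacle is Step 2: obtaining the clean constant $2/\sin(\pi/n)$. The crude triangle inequality may give a worse constant, for two reasons. The $\R$-circle containing the centres is not horizontal, so its $t$-coordinates contribute; and the Cygan metric on $\overline{\hc}$ combines horizontal and vertical parts non-additively. I would first place the centres explicitly on the line $\{[x e^{i\theta_0}+z_0,\ v_0+\ell x]\}$ and bound the horizontal projection distance $|z-w|\le d_{\Cyg}$ between any point of $I_{j',0}$ and any point of $A^mI_{j,m}$. Because $A$ translates purely horizontally, it is enough that the horizontal projections of all the $I_{j,0}$ lie in a Euclidean disc of radius $1/\sin(\pi/n)$, using that a Cygan ball of radius $r$ projects into a disc of radius $r$. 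If that fails, I would accept a slightly larger explicit constant, which still gives $O(n)$ shifts.
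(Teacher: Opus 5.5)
Your reduction in Steps 1 and 3 matches the paper, but Step 2 has a genuine gap. Using only $\mathcal A\in(0,\pi]$ you cannot reach the threshold $2/\sin(\pi/n)$.

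Your own denominator bound $2(1-\cos(\pi/n))^2$ gives $\boldsymbol r_{j,0}\le 1/(1-\cos(\pi/n))$. This is of order $n^2$, not $1/\sin(\pi/n)$, so your claim that $R'$ is of order $1/\sin(\pi/n)$ does not follow. It is also not an artefact of crude estimation: as $\mathcal A\to 0$ one really has $\boldsymbol r_{1,0}\to 1/(1-\cos(\pi/n))$. Hence neither a sharper use of the triangle inequality nor a horizontal-projection argument can give disjointness for all $k>2/\sin(\pi/n)$ uniformly in $\mathcal A$; the projection of $I_{1,0}$ alone is already a disc of radius $\boldsymbol r_{1,0}$. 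Your fallback of accepting a larger constant would prove a different statement, one that assumes the conditions over a longer range of $k$. That range is of order $n^2$, not $O(n)$.

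The missing idea is to use the hypothesis itself to rule out small $\mathcal A$.

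1. Since $A$ fixes $q_\infty$, we have $I_{n-1,0}=I(AB^{-1})$ and $I_{1,1}=I(ABA^{-1})=I(BA^{-1})$. These are the isometric spheres of $g=AB^{-1}=I_1I_2I_3I_2$ and of $g^{-1}$.
2. The paper computes $\trace g=3+16\cos(\pi/n)\bigl(\cos(\pi/n)-\cos\mathcal A\bigr)$. If $\mathcal A<\pi/n$, this is real and lies in $(-1,3)$, so $g$ is regular elliptic.
3. The fixed point of $g$ then lies on both $I(g)$ and $I(g^{-1})$. This contradicts the assumed disjointness of the pair $(j',j,k)=(n-1,1,1)$, which is not the tangency case. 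Hence $\mathcal A\ge\pi/n$.
4. The denominator is then at least $3+\cos(2\pi/n)-4\cos^2(\pi/n)=2\sin^2(\pi/n)$, which gives $\boldsymbol r_j\le 1/\sin(\pi/n)$.

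From here the paper finishes more simply than your explicit placement of centres. All $I_{j,0}$ pass through $p_B$, so $d_{\Cyg}(\boldsymbol c_{j,0},\boldsymbol c_{j',0})\le \boldsymbol r_j+\boldsymbol r_{j'}\le 2/\sin(\pi/n)$. Combined with $d_{\Cyg}(\boldsymbol c_{j,0},\boldsymbol c_{j,k})\ge 2|k|$, this gives disjointness once $|k|>2/\sin(\pi/n)$. Once you know your scale constant satisfies $\rho\le 1$, your centre-ball version also works. It uses radius $R'=\rho\cot(\pi/(2n))$, which is less than $2/\sin(\pi/n)$.
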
 

\begin{proof}[Proof of Theorem \ref{theorem::apply_poincare}]
We first claim that \(\mathcal{A} \ge \pi/n\). Consider the element \(g = I_1 I_2 I_3 I_2\), whose trace is
\(
\operatorname{tr}(g) = 3 + 16 \cos(\pi/n)\bigl(\cos(\pi/n) - \cos \mathcal{A}\bigr)
\).
Assume that \(\mathcal{A} < \pi/n\). Then \(g\) is regular elliptic. Therefore, the pair of spheres \((I_{n-1,0}, I_{1,1})\), which is isometric to \((I(g), I(g^{-1}))\), cannot be disjoint, which is a contradiction.

For every $j \in \{1,\dots,n-1\}$, the radius $\bf{r}_j \coloneqq \bf{r}_{j,k}$ is independent of $k \in \mathbb{Z}$ and satisfies
\[
\bf{r}_j = \bf{r}_{n-j} = \sqrt{2/|(\bf{B}^j)_{3,1}|} \le \sqrt{2/|\bf{B}_{3,1}|} \le 1/\sin(\pi/n).
\]
For all distinct $j, j' \in \{1,\dots,n-1\}$, the nonempty intersection $I_{j,0} \cap I_{j',0} \neq \emptyset$ implies
\[
d_{\text{Cyg}}(\bf{c}_{j,0}, \bf{c}_{j',0}) \le \bf{r}_j + \bf{r}_{j'} \le 2 / {\sin(\pi/n)}.
\]
We now examine the Cygan distance between $\bf{c}_{j',0}$ and $\bf{c}_{j,k}$.
For $j',j \in \{1,\dots,n-1\}$ and $k \in \mathbb{Z}$, the triangle inequality gives
\[
d_{\text{Cyg}}(\bf{c}_{j',0}, \bf{c}_{j,k}) - (\bf{r}_{j'} + \bf{r}_j)
\ge d_{\text{Cyg}}(\bf{c}_{j,0}, \bf{c}_{j,k}) - \bigl(d_{\text{Cyg}}(\bf{c}_{j,0}, \bf{c}_{j',0}) + \bf{r}_{j'} + \bf{r}_j\bigr)
\ge 2|k| - 4 / {\sin(\pi/n)},
\]
which is positive when $|k| > 2/\sin(\pi/n)$.
Consequently $I_{j',0} \cap I_{j,k} = \emptyset$.
The theorem follows from Proposition~\ref{proposition::apply_poincare} together with the
isometry between $(I_{j,k}, I_{j',k'})$ and $(I_{j,k-k'}, I_{j',0})$.
\end{proof}

\sloppy
\printbibliography[
    heading=bibintoc,
    title={References}
]

\end{document}